\documentclass[12pt,a4paper]{amsart}

\usepackage{amsfonts,amsmath,amssymb}
\usepackage{hyperref}
\usepackage[all]{xy}

\usepackage{amssymb,amsthm,amsxtra}
\usepackage[usenames,dvipsnames]{xcolor}

\usepackage{amscd}
\usepackage{amsthm}
\usepackage{amsfonts}
\usepackage{amssymb}
\usepackage{mathrsfs}
\usepackage{enumerate}

\usepackage[usenames,dvipsnames]{xcolor}

\definecolor{dblue}{RGB}{6,69,173}
\definecolor{lblue}{RGB}{11,0,128}

\newcommand{\colorlinks}{true}

\newcommand{\linkcolor}{lblue}
\newcommand{\citecolor}{lblue}
\newcommand{\urlcolor}{dblue}

\newcommand{\linkbordercolor}{red}
\newcommand{\citebordercolor}{green}
\newcommand{\urlbordercolor}{cyan}

\usepackage{hyperref}
\hypersetup{colorlinks=\colorlinks,linkbordercolor=\linkbordercolor, linkcolor=\linkcolor, citecolor=\citecolor, urlcolor=\urlcolor,urlbordercolor=\urlbordercolor,citebordercolor=\citebordercolor}%

\setcounter{tocdepth}{3}
\let\oldtocsubsection=\tocsubsection
\let\oldtocsubsubsection=\tocsubsubsection
\renewcommand{\tocsubsection}[2]{\hspace{1em}\oldtocsubsection{#1}{#2}}
\renewcommand{\tocsubsubsection}[2]{\hspace{2em}\oldtocsubsubsection{#1}{#2}}


\newcommand{\hrefHid}[2]{
\hypersetup{urlbordercolor={1 1 1}}%
\hypersetup{urlcolor=black}%
\href{#1}{#2}%
\hypersetup{urlbordercolor=\urlbordercolor}%
\hypersetup{urlcolor=\urlcolor}%
}

\newcommand{\inhref}[2]{\hyperref[#1]{#2}}

\newcommand{\inhrefHid}[2]{%
\hypersetup{linkbordercolor={1 1 1}}%
\hypersetup{linkcolor=black}%
\inhref{#1}{#2}%
\hypersetup{linkbordercolor=\linkbordercolor}%
\hypersetup{linkcolor=\linkcolor}%
}

\newcommand{\defHref}[3]{\newcommand{#1}[1][#3]{\href{#2}{##1}}}
\newcommand{\defInhref}[3]{\newcommand{#1}[1][#3]{\inhref{#2}{##1}}}
\newcommand{\defHrefHid}[3]{\newcommand{#1}[1][#3]{\hrefHid{#2}{##1}}}
\newcommand{\defInhrefHid}[3]{\newcommand{#1}[1][#3]{\inhrefHid{#2}{##1}}}

\newcommand{\defHrefBoth}[3]{%
\expandafter\defHrefHid \csname #3Hid\endcsname {#1}{#2}%
\expandafter\defHref \csname #3Vis\endcsname {#1}{#2}%
}
\newcommand{\defInhrefBoth}[3]{%
  \expandafter\defInhrefHid \csname #3Hid\endcsname {#1}{#2}%
  \expandafter\defInhref \csname #3Vis\endcsname {#1}{#2}%
}

\newcommand{\defHrefBothVis}[3]{%
\defHrefBoth{#1}{#2}{#3}%
\expandafter\defHref \csname #3\endcsname {#1}{#2}%
}
\newcommand{\defInhrefBothVis}[3]{%
  \defInhrefBoth{#1}{#2}{#3}%
  \expandafter\defInhref \csname #3\endcsname {#1}{#2}%
}

\newcommand{\defHrefBothHid}[3]{%
\defHrefBoth{#1}{#2}{#3}%
\expandafter\defHrefHid \csname #3\endcsname {#1}{#2}%
}
\newcommand{\defInhrefBothHid}[3]{%
  \defInhrefBoth{#1}{#2}{#3}%
  \expandafter\defInhrefHid \csname #3Vis\endcsname {#1}{#2}%
}

\newcommand{\term}[2]{%
\label{#2}%
\emph{#1}%
 \globaldefs =1%
\defInhrefBothVis{#2}{#1}{#2}%
 \globaldefs =0%
}

\newcommand{\simpAr}[2][r]{%
\ar@{}[#1]|-*[@]_{#2}%
}

\newtheorem*{theorem*}{Theorem}
\newtheorem{lemma}{Lemma}[subsection]
\newtheorem{proposition}[lemma]{Proposition}
\newtheorem*{proposition*}{Proposition}

\newtheorem{theorem}[lemma]{Theorem}

\newtheorem{corollary}[lemma]{Corollary}

\newtheorem*{conjecture*}{Conjecture}
\newtheorem*{lemma*}{Lemma}

\newtheorem{thm}[lemma]{Theorem}
\newtheorem{prop}[lemma]{Proposition}
\newtheorem{lem}[lemma]{Lemma}
\newtheorem{cor}[lemma]{Corollary}

\newtheorem{introtheorem}{Theorem}

\newtheorem{introthm}[introtheorem]{Theorem}
\newtheorem{introlem}[introtheorem]{Lemma}

\theoremstyle{remark}
\newtheorem{remark}[lemma]{Remark}

\newtheorem*{remark*}{Remark}
\newtheorem*{remarks*}{Remarks}
\newtheorem{rem}[lemma]{Remark}

\newtheorem*{examples*}{Examples}
\newtheorem*{example*}{Example}
\newtheorem{definition}[lemma]{Definition}
\newtheorem{notation}[lemma]{Notation}
\newtheorem{defn}[lemma]{Definition}
\newtheorem{notn}[lemma]{Notation}

\oddsidemargin=1cm
\evensidemargin=1cm
\baselineskip 18pt \textwidth 16cm \sloppy \theoremstyle{plain}

\newcommand{\Hom}{\operatorname{Hom}}

\newcommand{\A}{\mathbb{A}}

\newcommand{\eps}{\varepsilon}

\newcommand{\id}{\operatorname{Id}}

\renewcommand{\Im}{\operatorname{Im}}

\newcommand{\Z}{{\mathbb Z}}
\newcommand{\Q}{{\mathbb Q}}
\newcommand{\R}{{\mathbb R}}

\newcommand{\C}{{\mathbb C}}

\newcommand{\pr}{{\operatorname{pr}}}

\newcommand{\End}{\operatorname{End}}

\newcommand{\G}{{\mathcal G}}

\newcommand{\Fre}{{Fr\'{e}chet \,}}

\newcommand{\et}{{\'{e}tale }}

\newcommand{\g}{{\mathfrak{g}}}

\newcommand{\Supp}{\mathrm{Supp}}
\newcommand{\supp}{\mathrm{Supp}}

\newcommand{\Sc}{{\mathcal S}}

\newcommand{\WF}{\operatorname{WF}}
\newcommand{\cF}{\mathcal{F}}

\newcommand{\fpt}[1]{{{#1}(F)}}

\newcommand{\Rami}[1]{{{#1}}}
\newcommand{\RamiA}[1]{{{#1}}}

\newcommand{\RamiD}[1]{{{#1}}}
\newcommand{\RamiE}[1]{{{#1}}}
\newcommand{\RamiF}[1]{{{#1}}}
\newcommand{\RamiG}[1]{{{#1}}}

\newcommand{\RamiI}[1]{{{#1}}}
\newcommand{\RamiJ}[1]{{{#1}}}
\newcommand{\RamiK}[1]{{{#1}}}
\newcommand{\RamiL}[1]{{{#1}}}
\newcommand{\RamiM}[1]{{{#1}}}

\newcommand{\RamiO}[1]{{{#1}}}
\newcommand{\RamiP}[1]{{{#1}}}
\newcommand{\RamiR}[1]{{{#1}}}
\newcommand{\RamiS}[1]{{{#1}}}
\newcommand{\RamiT}[1]{{{#1}}}
\newcommand{\RamiU}[1]{{{#1}}}
\newcommand{\RamiV}[1]{{{#1}}}
\newcommand{\RamiW}[1]{{{#1}}}
\newcommand{\RamiX}[1]{{{#1}}}
\newcommand{\RamiY}[1]{{#1}}

\newcommand{\RamiAA}[1]{{{#1}}}
\newcommand{\RamiAB}[1]{{{#1}}}
\newcommand{\RamiAC}[1]{{{#1}}}
\newcommand{\RamiAD}[1]{{{#1}}}

\newcommand{\RamiAE}[1]{{{#1}}}
\newcommand{\RamiAF}[1]{{{#1}}}
\newcommand{\RamiAG}[1]{{{#1}}}
\newcommand{\RamiAH}[1]{{{#1}}}
\newcommand{\RamiAI}[1]{{{#1}}}
\newcommand{\RamiAJ}[1]{{{#1}}}
\newcommand{\RamiAK}[1]{{{#1}}}
\newcommand{\RamiAL}[1]{{{#1}}}
\newcommand{\RamiAM}[1]{{{#1}}}
\newcommand{\RamiAN}[1]{{{#1}}}
\newcommand{\RamiAO}[1]{{{#1}}}

\newcommand{\RamiAQ}[1]{{{#1}}}

\newcommand{\RamiAS}[1]{{{#1}}}

\newcommand{\DrinA}[1]{{{#1}}}
\newcommand{\DrinB}[1]{{{#1}}}
\newcommand{\DrinC}[1]{{{#1}}}
\newcommand{\DrinD}[1]{{{#1}}}

\newcommand{\DrinF}[1]{{{#1}}}
\newcommand{\DrinG}[1]{{{#1}}}
\newcommand{\DrinH}[1]{{{#1}}}
\newcommand{\DrinI}[1]{{#1}}

\newcommand{\RamiCP}[1]{{{#1}}}
\newcommand{\RamiCPA}[1]{{{#1}}}

\newcommand{\RamiCPD}[1]{{{#1}}}
\newcommand{\RamiCPE}[1]{{{#1}}}

\newcommand{\RamiQ}[1]{{}}
\newcommand{\RamiQB}[1]{{}}

\newcommand{\RamiQAB}[1]{}
\newcommand{\RamiAbs}[1]{}

\newcommand{\oldFT}[1]{}

\newcommand{\oldWFHol}[1]{}
\newcommand{\newWFHol}[1]{{#1}}

\newcommand{\Crit}{\operatorname{Crit}}
\newcommand{\mmu}{\mu}
\newcommand{\mmup}{\mu'}
\newcommand{\pii}{\pi_\infty}
\newcommand{\tD}{\widetilde{D}}

\usepackage{varioref}

\begin{document}

\author{Avraham Aizenbud}
\address{Avraham Aizenbud,
Massachusetts Institute of Technology
Department of Mathematics
Cambridge, MA 02139 USA.}
\email{aizenr@gmail.com}
\urladdr{\url{http://math.mit.edu/~aizenr/}}

\author{Vladimir Drinfeld}
\address{Vladimir Drinfeld, Department of Mathematics, University of Chicago.}
\email{drinfeld@math.uchicago.edu}


\title{The wave front set of the Fourier transform of algebraic measures}
\keywords{Wave front set, Fourier transform, Holonomic distributions, $p$-adic, Desingularization \RamiQ {Can you review the keywords?}\\
\indent
 MSC Classes: \RamiCP{46F, 46F10.}\RamiQ{Can you review the MSC Classes?
Can you suggest additional classes? Here is the description of the classes I have put:\\
46-xx  Functional analysis. 46Fxx Distributions, generalized
functions, distribution spaces. 46F10 Operations with distributions.
}
}
%
%
%
%
%
%
%
%
%
%
\maketitle
\begin{abstract}
We study the Fourier transform of \RamiD{the absolute value of a} polynomial on a finite-dimensional vector
 space over a local field of characteristic 0. We prove that this transform is  smooth on an open dense set.

\RamiV{We prove this result for the Archimedean and the non-Archimedean case in a uniform way.}
The Archimedean case was proved in \cite{Ber_FT}.
\RamiV{T}he non-Archimedean case  was proved in \cite{HK} \RamiD{and \cite{CL,CL2}}.
 Our method is different from those described in \RamiV{\cite{Ber_FT,HK,CL,CL2}}.
 It is based on Hironaka's desingularization theorem, unlike  \cite{Ber_FT} \RamiM{which} is based on the theory of D-modules
and \RamiV{\cite{HK,CL,CL2}} \RamiM{which} is based on model theory.

Our method \RamiV{ also gives} bounds on the open dense set where the Fourier transform is  {smooth} and moreover, on the wave front set of the Fourier transform.
These bounds are explicit in terms of resolution of singularities and field-independent.

We also  prove the same results on the Fourier transform of more general \RamiE{measures} of algebraic origins.

\end{abstract}
\setcounter{tocdepth}{3}
\tableofcontents

\section{Introduction}

\subsection{Main results \DrinI{in the non-Archimedean case}}   \label{ss:main}
\begin{introtheorem} \label{thm:main_a}
Let $F$ be a non-Archimedean local field of characteristic $0$
 (i.e. a finite extension of the field  of p-adic numbers $\Q_p$).
Let $W$ be a finite-dimensional vector space over $F$. Let \RamiI{$X$ be a smooth algebraic variety over
$F$, let $\phi: X \to W$ be a proper map}
and $\omega$ a \DrinD{regular (algebraic)} top differential form on  {$X$}.
Let $|\omega|$ be the measure on $X$ corresponding to $\omega$ and  {$\phi_*(|\omega|)$ its direct image (which is a measure on $W$).}
Then there exists  {a dense Zariski open} subset $U\subset W^*$
 such that the restriction to $U$
of the Fourier transform of $\phi_*(|\omega|)$ \RamiE{is locally constant}.
\end{introtheorem}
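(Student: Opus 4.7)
Fix a nontrivial additive character $\psi$ of $F$ and Haar measures on $W$ and $W^*$. Since $\phi$ is proper, the Fourier transform of $\phi_*(|\omega|)$, as a distribution on $W^*$, is represented pointwise by the oscillatory integral
$$I(\xi) \;:=\; \int_X \psi(\langle\xi,\phi(x)\rangle)\,|\omega|,$$
so the task reduces to showing that $I(\xi)$ is locally constant for $\xi$ in a Zariski open dense subset $U\subset W^*$.

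The plan is to apply Hironaka's desingularization theorem in order to monomialize the components $f_1,\dots,f_d$ of $\phi$ in a fixed basis $(e_1,\dots,e_d)$ of $W$. That is, produce a proper birational modification $\pi\colon \tilde X \to X$ (which by properness of $\pi$ and $\phi$ and the change-of-variables formula leaves the pushforward measure unchanged, since one may replace $X$ by $\tilde X$ and $\omega$ by $\pi^*\omega$) such that $\tilde X$ is covered by finitely many \'etale charts on which $\pi^*\omega = u(x)\,x^{\beta}\,dx_1\wedge\dots\wedge dx_n$ and each $f_i\circ\pi = u_i(x)\,x^{\alpha_i}$, for units $u,u_i$ and multi-exponents $\beta,\alpha_i\in\mathbb{Z}_{\geq 0}^n$. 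On such a chart the phase becomes the explicit monomial sum $\langle\xi,\phi\circ\pi(x)\rangle = \sum_i \xi_i\,u_i(x)\,x^{\alpha_i}$, whose coefficients depend linearly on $\xi$.

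On each chart, one then identifies a Zariski closed subset $Z\subset W^*$ cut out by the vanishing of those coefficients $\xi_i$ corresponding to exponents $\alpha_i$ that are vertices of the Newton polytope of the phase; for $\xi\notin Z$, a Newton-polytope decomposition of the chart, followed by rescaling by powers of a uniformizer and a direct non-Archimedean oscillatory-integral computation on each polytopal piece (isolating the dominant monomial, then integrating out), yields an expression for the chart's contribution to $I(\xi)$ that is manifestly locally constant in $\xi$. By properness of $\phi\circ\pi$, only finitely many charts contribute to any fixed compact region of $W$, so the desired $U$ is the complement of the finite union of the $Z$'s.

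\textbf{Main obstacle.} The crux is the chart-wise analysis: even with each $f_i\circ\pi$ a monomial, the phase is a sum of several monomials whose dominant behavior depends on both $x$ and $\xi$, and one must decompose the chart and run the non-Archimedean oscillatory computation uniformly in $\xi$ outside a proper algebraic subset. Pinning down the correct ``genericity'' condition on $\xi$ -- simultaneously encoding the combinatorics of the Newton polytope and the linear dependence of its vertex coefficients on $\xi$ -- and verifying that it cuts out a Zariski open subset of $W^*$, is the main technical challenge.
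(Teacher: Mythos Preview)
Your proposal has two genuine gaps, and the paper's proof takes a substantially different route to avoid them.

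\textbf{Convergence of the oscillatory integral.} The pointwise formula $I(\xi) = \int_X \psi(\langle \xi, \phi(x)\rangle)\,|\omega|$ is not justified: properness of $\phi$ only says $\phi^{-1}(K)$ is compact for compact $K\subset W$, not that $X(F)$ is compact, so the integral over all of $X$ generally diverges. (Already for $X=W=F$, $\phi=\mathrm{id}$, $\omega = dx$, the Fourier transform of Haar measure is $\delta_0$, not a convergent integral.) A proper birational modification $\pi:\tilde X\to X$ does not help: $\tilde X$ is proper over $X$ but still noncompact, and your remark that ``only finitely many charts contribute to any fixed compact region of $W$'' is beside the point since $I(\xi)$ involves all of $X$. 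The paper handles this by first compactifying via Nagata's theorem, factoring $X \to W \hookrightarrow \overline W$ through an open embedding $X\hookrightarrow \overline X$ and a proper $\overline X \to \overline W$, and only then applying Hironaka to $\overline X$ so as to resolve the boundary $\overline X \setminus X$ together with the zeros of $\omega$ into an SNC divisor. The behaviour at infinity thus becomes part of the local model rather than being ignored.

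\textbf{The multi-monomial phase.} Even after monomializing, the phase $\sum_i \xi_i\, u_i(x)\, x^{\alpha_i}$ is a genuine sum of monomials with non-constant unit coefficients, and the Newton-polytope analysis you sketch is exactly the difficulty, not a routine step. The authors note explicitly that Kazhdan had observed Theorem~\ref{thm:main_a} does \emph{not} follow immediately from Hironaka; your ``main obstacle'' is why. Rather than attack it head-on, the paper reformulates the problem via the wave front set (proving the stronger Theorem~\ref{thm:main_b}), works relatively over an auxiliary base $Y$, and then uses the key Lemma~\ref{lem:red_to_dim_1}: writing locally $\phi(y) = (\alpha(y):p(y)) \in \overline W$ with $\alpha:Y\to W$ nowhere zero, the $W$-Fourier transform is the pullback along the submersion $(y,\xi)\mapsto (y,\langle\xi,\alpha(y)\rangle)$ of a Fourier transform in a \emph{single} variable. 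In dimension one the monomialized local model has phase a single monomial $\prod y_i^{-l_i}$, and a torus-equivariance argument (Lemma~\ref{lem:equv}) bounds the wave front set directly. Smoothness on a Zariski open dense set then follows from Lemma~\ref{lem:hol_reg}.

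So your instinct to use Hironaka is right, but without the compactification step and, crucially, the reduction to $\dim W = 1$, the plan does not close.
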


\begin{examples*}
$ $
\DrinD{
\begin{itemize}
 \item  Let $X\subset W$ be a smooth closed subvariety and $\omega$ a regular top differential form on
$X$. Consider $|\omega|$ as a measure on $W$. Applying Theorem~\ref{thm:main_a} to the embedding
$\phi :X\hookrightarrow W$\RamiM{,} we see that the Fourier transform of $|\omega|$ is smooth on a dense Zariski open subset.

 \item  More generally, let $X\subset W$ be any closed subvariety and $\omega$ a rational top differential form on $X$. Suppose that for some resolution of singularities $p:\hat X\to X$\RamiM{,} the pullback $p^*(\omega )$ is
regular. Then one can consider $|\omega|$ as a measure on $W$. Its Fourier transform is smooth on a dense Zariski open subset (to see this, apply Theorem~\ref{thm:main_a} to the composition
$\hat X\to X\hookrightarrow W$).
\end{itemize}
}
\end{examples*}

 \DrinD{We deduce Theorem~\ref{thm:main_a} from the following theorem, which says that the singularities of the Fourier transform of  $\phi_*(|\omega|)$ on the whole $W^*$ are ``not too bad".}
\DrinD{
\begin{introtheorem} \label{thm:main_b}
In the situation of Theorem~\ref{thm:main_a}\RamiM{,} the wave front set of the Fourier transform of
$\phi_*(|\omega|)$ is contained in
\RamiX{an isotropic}
algebraic subvariety\footnote{An algebraic subvariety of a symplectic algebraic manifold is said to be \RamiX{isotropic}
if its nonsingular part is. \RamiX{Note that every algebraic isotropic subvariety of the co-tangent bundle of an algebraic manifold $M$ which is stable with respect to homotheties along the co-tangent space is contained in a union of co-normal bundles of submanifolds of $M$, and in particular in a Lagrangian subvariety. See \S\S\ref{sec:lag} for more details. }} of $T^* (W^*)=W\times W^*$.
\end{introtheorem}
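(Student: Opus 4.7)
The plan is to interpret the Fourier transform $\hat\mu$ of $\mu := \phi_*(|\omega|)$ as the (formal) oscillatory integral
\[
\hat\mu(\xi) = \int_X \psi(\langle \xi, \phi(x) \rangle)\, |\omega|(x), \qquad \xi \in W^*,
\]
where $\psi$ is a fixed nontrivial additive character of $F$, and to use Hironaka's desingularization theorem to reduce the wave front analysis to explicit monomial local models. The natural candidate for an isotropic subvariety containing $\WF(\hat\mu)$ is
\[
\Lambda_\phi := \{(\xi, \phi(x)) \in W^* \times W : x \in X,\ d\phi_x^*(\xi) = 0\} \subset T^*(W^*),
\]
that is, the image under $(\mathrm{id}_{W^*}, \phi)$ of the ``relative conormal'' of $\phi$ inside $W^* \times X$. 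Algebraically, $\Lambda_\phi$ is the image of the conormal subvariety to the graph of $\phi$ in $T^*(X \times W)$ under the obvious projection, hence Lagrangian; heuristically it records the stationary-phase contributions of the integral above at critical points of $\xi \circ \phi$.

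The main obstacle is the behavior at infinity of $X$, which can contribute additional wave-front directions not visible from this interior critical-point analysis. Properness of $\phi$ identifies infinity of $X$ with infinity of $W$, but does not eliminate these contributions. This is where Hironaka enters. First, fix a projective compactification $\overline W \supset W$ and, after further blow-ups if necessary, a smooth projective compactification $\overline X \supset X$ together with an extension $\overline\phi : \overline X \to \overline W$ of $\phi$. By Hironaka's theorem, I arrange that $D := \overline X \setminus X$, together with the preimage $\overline\phi^{-1}(\overline W \setminus W)$, forms a simple normal crossings divisor on $\overline X$, and that in suitable analytic charts around each point of $D$ the map $\overline\phi$ is given by monomial expressions while $|\omega|$ has monomial singularities along $D$.

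With this local model in hand, the Fourier transform of the chart-by-chart contribution to $\phi_*(|\omega|)$ can be computed (or at least bounded) in the style of $p$-adic oscillatory integrals and Igusa local zeta functions, yielding a wave-front contribution contained in a finite union of conormal bundles to algebraic strata of $W^*$, which is manifestly isotropic. Combining these boundary contributions with the interior Lagrangian $\Lambda_\phi$ over a finite cover adapted to $D$, one concludes that $\WF(\hat\mu)$ is contained in a finite union of algebraic isotropic subvarieties of $T^*(W^*) = W^* \times W$, hence in an isotropic algebraic subvariety. I expect the main difficulty to be the local $p$-adic computation: there is no smooth stationary-phase asymptotics to invoke over $F$, so wave-front bounds (not merely singular-support bounds) must be extracted directly from the explicit monomial Fourier transform data, uniformly across the choice of chart.
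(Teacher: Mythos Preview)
Your high-level strategy matches the paper's: compactify, apply Hironaka to get SNC boundary with monomial local models, then bound the wave front chart-by-chart. But you correctly flag the local $p$-adic computation as ``the main difficulty'' and then offer no method to carry it out. That is a genuine gap, and the paper's resolution of it is the substantive content of the proof. Two ingredients are missing from your outline.

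\textbf{Reduction to $\dim W = 1$.} Before any monomial analysis, the paper observes that if locally $\phi(y) = (\alpha(y):p(y)) \in \overline W$ with $\alpha:Y\to W$ nowhere vanishing, then
\[
\hat\eta_{\phi,\omega} \;=\; g^*\bigl(\hat\eta_{1/p,\omega}\bigr),\qquad g(y,\xi)=(y,\langle\xi,\alpha(y)\rangle),
\]
and $g$ is a submersion. Since pullback by a submersion preserves WF-holonomicity, this reduces the problem to a \emph{one-dimensional} Fourier transform. You are trying to handle the full $W$-dimensional oscillatory integral directly, which is harder than necessary.

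\textbf{Torus equivariance instead of computation.} In the one-dimensional monomial model $p=\prod y_i^{l_i}$, $\omega=\prod y_i^{r_i}\,dy_1\wedge\cdots\wedge dy_n$, the paper does \emph{not} compute or estimate the Fourier transform. Instead it notes that under the $(F^\times)^n$-action
\[
(\alpha_1,\dots,\alpha_n)\cdot(y_1,\dots,y_n,\xi)=(\alpha_1 y_1,\dots,\alpha_n y_n,\,\xi\textstyle\prod_i\alpha_i^{l_i}),
\]
the distribution $\hat\eta_{1/p,\omega}$ is an eigendistribution. This torus has finitely many orbits on $F^n\times F$, so the wave front is automatically contained in the union of their conormal bundles, hence isotropic. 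This symmetry argument replaces the Igusa-style estimates you anticipate; without it or a concrete substitute, your proposal does not close.

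A minor point: your candidate $\Lambda_\phi$ captures only the interior stationary-phase locus. You acknowledge boundary contributions exist, but the paper's explicit description (its Theorem~\ref{exp} and Corollary~\ref{cor:exp}) shows the boundary piece is governed by the critical locus of the induced map $\hat D'\to\mathbb{P}(W^*)$ from the SNC strata mapping to infinity, not by anything resembling $\Lambda_\phi$.
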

}

The notion of wave front set is is recalled in Appendix~\ref{app:WF} and \S\ref{sssec:wfs}.
It was introduced by L. H\"ormander \cite{Hor} to study the singularities of functions on a real manifold $M$ microlocally (roughly speaking, to study them not only in space but also with respect to Fourier transform at each point of $M$). Later D. B. Heifetz \cite{Hef} defined this notion if $M$ is a $p$-adic manifold.





\begin{rem}
Theorem~\ref{thm:main_b} can be considered as a $p$-adic analog of the following theorem of J.~Bernstein \cite{Ber_FT}:
 if $F$ is Archimedean then the Fourier transform of $\phi_*(|\omega|)$ is a holonomic distribution.
 \end{rem}

\DrinD{We also prove the following more general, relative version of Theorem~\ref{thm:main_b}.}


\begin{introtheorem} \label{thm:main_bb}
Let $F$ be a non-Archimedean field of characteristic $0$. Let $W$
be a finite-dimensional $F$-vector space and $X,Y$ be smooth algebraic manifolds over $F$. Let
$\phi: X \to Y \times W$ be a proper map
 and let $\omega$ be a \RamiR{regular} top differential form on $X$.
Then the wave front set of the partial Fourier transform of
$\phi_*(|\omega|)$ with respect to $W$
is contained in
\RamiX{an isotropic}
 algebraic subvariety of $T^*(Y\times W^*)$.
\end{introtheorem}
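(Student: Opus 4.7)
The plan is to reduce to a local ``monomial'' model via Hironaka's desingularization theorem, bound the wave front set directly in that model, and then transport through the partial Fourier transform, which acts on cotangent bundles by an algebraic symplectomorphism.

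\medskip

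\textbf{Step 1 (Reduction to the monomial case).} Wave front sets are local on $Y\times W^*$, and $\phi$ is proper, so it suffices to work in finitely many étale-local charts on $X$ covering a neighborhood of a prescribed fiber. Applying Hironaka's theorem (principalization of ideals) to a suitable ideal sheaf on $X$ associated to the map $\phi$, I obtain a proper birational modification $\pi\colon \tilde X \to X$ with $\tilde X$ smooth, such that in étale-local coordinates on $\tilde X$ (respecting the projection to $Y$), the composition $\tilde\phi := \phi\circ\pi$ has a standard monomial form in fiber coordinates and $\pi^*\omega$ is a monomial times a nowhere-vanishing regular top form. Since $\pi$ is proper birational, $\tilde\phi_*(|\pi^*\omega|) = \phi_*(|\omega|)$, so it suffices to bound the wave front set of the partial Fourier transform of the right-hand side.

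\medskip

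\textbf{Step 2 (Bound on $T^*(Y\times W)$).} Introduce the algebraic critical-conormal locus
\[
\Lambda_{\tilde\phi} := \{(z,\xi)\in T^*(Y\times W) : \exists\, x\in \tilde\phi^{-1}(z)\cap \Supp(|\pi^*\omega|)\ \text{with}\ \tilde\phi^*\xi|_x=0\}.
\]
I would show that $\WF(\tilde\phi_*(|\pi^*\omega|)) \subset \Lambda_{\tilde\phi}$ in the monomial model. For a monomial $\tilde\phi$, the set $\Lambda_{\tilde\phi}$ is a finite union of conormal bundles to algebraic subvarieties of $Y\times W$ (namely, the images under $\tilde\phi$ of the coordinate strata in $\tilde X$, together with the conormal directions to those images); in particular it is an algebraic isotropic subvariety. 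The wave front containment itself is checked via Heifetz's non-Archimedean integration-by-parts criterion applied to the explicit relative oscillatory integral coming from the monomial pushforward.

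\medskip

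\textbf{Step 3 (Partial Fourier transform).} The partial Fourier transform in the $W$ variable corresponds on cotangent bundles to the algebraic symplectomorphism
\[
T^*(Y\times W)\xrightarrow{\sim} T^*(Y\times W^*),\qquad (y,w;\eta,\xi)\mapsto (y,\xi;\eta,-w),
\]
which sends algebraic isotropic subvarieties to algebraic isotropic subvarieties. Its image of $\Lambda_{\tilde\phi}$ is then the required algebraic isotropic subvariety of $T^*(Y\times W^*)$ containing the wave front set of the partial Fourier transform of $\phi_*(|\omega|)$.

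\medskip

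The main obstacle is Step 2, the explicit wave front bound for the monomial model. This is the non-Archimedean analog of Bernstein's $\mathcal{D}$-module estimate in the Archimedean case, but in the $p$-adic setting it requires careful bookkeeping with Heifetz's criterion and uniformity of the non-stationary phase estimate across both the fiber variables and the $Y$-parameters. A secondary, more technical, point is ensuring that Hironaka's theorem in Step 1 can be applied in a way compatible with the relative structure over $Y$, i.e.\ that one monomializes fiberwise; this amounts to a judicious choice of the ideal to principalize, and is a formal matter once the absolute version is in hand.
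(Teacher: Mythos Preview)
Your Step 3 contains a genuine error: the partial Fourier transform does \emph{not} transport wave front sets along the symplectomorphism $(y,w;\eta,\xi)\mapsto(y,\xi;\eta,-w)$. A trivial counterexample is the constant function $1$ on $W$, which has empty wave front set, while $\mathcal{F}^*(1)=\delta_0$ has $WF(\delta_0)=\{0\}\times(W^*\setminus\{0\})$. The paper itself remarks that the Fourier transform of a WF-holonomic distribution need not be WF-holonomic. Note too that your Step~2 inclusion $WF(\tilde\phi_*(|\pi^*\omega|))\subset\Lambda_{\tilde\phi}$ is immediate from the general pushforward estimate $WF(q_*\xi)\subset q_*(WF(\xi))$ applied to the smooth measure $|\pi^*\omega|$ (whose wave front is the zero section); it needs neither Hironaka nor the monomial form. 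So Steps~1--2 produce a true but easy isotropic bound on the $W$-side, and Step~3 is where all the content would have to be---but that step is simply false as stated.

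The paper's route is to work on the Fourier side from the outset and never attempt to push a wave front bound through $\mathcal{F}_W^*$. The structure is: (i) compactify $W$ to $\overline{W}=\mathbb{P}(W^*\oplus F)$ via Nagata \emph{before} resolving, so that the asymptotics of $\phi$ in the $W$-direction are encoded in an SNC divisor on the resolved space $\tilde X$; (ii) reduce to $\dim W=1$ by writing $\phi$ locally as $(\alpha:p)$ with $\alpha$ nowhere zero, and observing that $\hat\eta_{\phi,\omega}=g^*(\hat\eta_{1/p,\omega})$ for the submersion $g(y,\xi)=(y,\langle\xi,\alpha(y)\rangle)$, so one only needs to control the Fourier transform of a \emph{one}-dimensional pushforward; (iii) in the one-dimensional monomial model on $\mathbb{A}^n$, show that $\hat\eta_{1/p,\omega}$ is $(\chi,(F^\times)^n)$-equivariant for an explicit action on $Y\times F$ with finitely many orbits, and conclude WF-holonomicity from the equivariance bound on wave fronts. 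The symplectic swap does eventually appear in the paper, but only \emph{after} the Fourier-side bound is in hand, and only to rewrite the explicit answer in terms of a critical locus on the $W$-side---not to transport a wave front set through $\mathcal{F}_W^*$.
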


\subsection{\DrinI{The Archimedean analog of Theorem~\ref{thm:main_bb}}}
\DrinI{
We have to take in account that in the  Archimedean case the Fourier transform is defined not for general distributions, but only for Schwartz distributions.  Similarly, the partial Fourier transform is defined for distributions that are \emph{partially Schwartz} along the relevant vector space (a precise definition of partially Schwartz distribution can be found in
\S\S\ref{ssec:arc_dist} below).

\begin{introtheorem} \label{thm:main_C_Arch}
Let $F$ be an Archimedean local field (i.e. $\R$ or $\C$). Let everything else be as in Theorem \ref{thm:main_bb}. Then
\begin{enumerate}[(i)]
\item \label{thm:main_C_Arch:1}
the distribution $\phi_*(|\omega|)$ is partially Schwartz along $W$ (so its partial Fourier transform with respect to $W$ is well-defined);
\item  \label{thm:main_C_Arch:2}
the wave front set of the partial Fourier transform of
$\phi_*(|\omega|)$ with respect to $W$
is contained in
\RamiX{an isotropic}
algebraic subvariety of $T^*(Y\times W^*)$.
\end{enumerate}
\end{introtheorem}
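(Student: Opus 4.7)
The plan is to parallel the proof of Theorem~\ref{thm:main_bb} in the Archimedean setting, adding the preliminary work needed to establish statement~(i).

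For (i), I would apply Hironaka's theorem to produce a smooth projective compactification $\bar X \supset X$ such that $D := \bar X \setminus X$ is a simple normal crossings divisor and $\phi$ extends to a morphism $\bar\phi : \bar X \to Y \times \bar W$, where $\bar W$ is a projective completion of $W$ (for example $\bar W = \mathbb{P}(W \oplus F)$); any further blowing up needed to achieve this can be done while preserving the normal crossings property. Then $\omega$ extends to a rational top form on $\bar X$ whose polar and zero structure along $D$ is monomial in local normal crossings coordinates. At a boundary point $p \in D$ with $\bar\phi(p) \in Y \times (\bar W \setminus W)$, in appropriate local coordinates $(x_1,\ldots,x_n)$ one has $|\omega| = u \cdot \prod |x_i|^{a_i}\,|dx_1\cdots dx_n|$ for a smooth unit $u$, while the $W$-component of $\bar\phi$ has monomial poles $\prod x_i^{-b_i}$. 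An elementary estimate then shows that $\phi_*(|\omega|)$ grows at most polynomially in $w$ uniformly over compact subsets of $Y$, and is locally finite in $Y \times W$; this yields the partial Schwartz property in the sense of \S\ref{ssec:arc_dist}.

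For (ii), once (i) is in place the partial Fourier transform $\widehat{\mu}$ of $\mu := \phi_*(|\omega|)$ is a well-defined distribution on $Y \times W^*$, and one can run the skeleton of the proof of Theorem~\ref{thm:main_bb}: reduce via Hironaka to the local monomial model described above; compute the partial Fourier transform of the monomial model using the classical Archimedean formula for the Fourier transform of $|x|^s$ as a meromorphic family of tempered distributions in $s$; read off its wave front set (supported on the conormal of $\{x = 0\}$); and patch the local contributions using the standard product and pushforward rules for wave front sets. The outcome is a bound of $\WF(\widehat{\mu})$ by a finite union of conormal bundles of algebraic subvarieties of $Y \times W^*$, which is an algebraic isotropic subvariety in the sense of the footnote to Theorem~\ref{thm:main_b}.

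The main obstacle will be the bookkeeping in the second part: ensuring that after the local Fourier computations and patching, the bound one produces is genuinely an \emph{algebraic} (not merely analytic) isotropic subvariety, and that the exponents $a_i, b_i$ from the monomial model lead to a uniform answer independent of the (generic) values of $s$ in the analytic continuation. A secondary Archimedean-specific subtlety is that each step — pushforward along proper maps, pullback along blowups, multiplication by smooth functions, and partial Fourier transform — must be checked to preserve partial Schwartz-ness and to behave correctly with respect to $\WF$; in the non-Archimedean case one works directly with $C^\infty_c$ test functions and no temperedness issue arises, but in the Archimedean case these functorial properties of $\WF$ for partially Schwartz distributions should be verified by combining the compactification from~(i) with H\"ormander's standard microlocal calculus.
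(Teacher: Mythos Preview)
Your overall architecture matches the paper: Hironaka plus Nagata to reduce to a local monomial model, a direct estimate there for the partially Schwartz property, and then patching via the functoriality of $\WF$ under pushforward and pullback. Your treatment of~(i) is essentially the paper's Lemma~\ref{lem:dim_1_arch}: in the local model $i:Y_0\hookrightarrow Y\times F$, $i(y)=(y,p(y)^{-1})$ with $p=\prod y_i^{l_i}$ and $\omega=\prod y_i^{r_i}\,dy$, one bounds $\langle i_*|\omega|,f\rangle$ by $\sup |u\cdot f|$ for a polynomial $u$, hence $i_*|\omega|$ is Schwartz. The paper also stresses, as you do, that (i) and (ii) must be proved simultaneously so that each reduction step preserves the partially Schwartz property (this is Proposition~\ref{prop:F_prop_arch}).

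The gap is in your mechanism for~(ii). After the reduction, the partial Fourier transform of the local model is \emph{not} a homogeneous distribution amenable to the $|x|^s$ formula: it is the oscillatory object $\hat\eta_{1/p,\omega}$, which on the open locus is $\psi(\xi\,p(y)^{-1})\cdot\prod|y_i|^{r_i}\,|dy|$ extended across $\{p=0\}$ as a generalized function (see the remark after Lemma~\ref{lem:equv}). There is no meromorphic family in $s$ here to analytically continue, and the Fourier transform of $|x|^s$ does not enter. The paper instead exploits that $\hat\eta_{1/p,\omega}$ is an eigendistribution for an explicit $(F^\times)^n$-action on $Y\times F$ (Lemma~\ref{lem:equv}); since this action has finitely many orbits, Corollary~\ref{cor:WF_Hol2} bounds $\WF$ by the union of their conormals. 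This torus-equivariance argument is what makes the proof uniform in $F$ and avoids any Archimedean-specific stationary-phase or special-function analysis. You also omit the reduction from $\dim W>1$ to $\dim W=1$ (Lemma~\ref{lem:red_to_dim_1}), which is how one arrives at the one-variable model in the first place; without it the ``monomial poles in the $W$-component'' description is not available in general.
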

}
\RamiW{
\begin{rem}
In fact, the distribution  $\phi_*(|\omega|)$ is Schwartz on the entire space and not only along $W$, but in order to prove it we need to define what it means, and we prefer not to do it in this paper.
\end{rem}
}
\subsection{\RamiV{Stronger versions}}
\RamiV{Our proof of theorem \ref{thm:main_bb} can give an explicit (in terms of resolution of singularities) description of
\RamiX{an isotropic}
 variety that contains the wave front set of the partial Fourier transform of
$\phi_*(|\omega|)$. We provide such description in Theorem \ref{exp} (and an analogous description for Theorem \ref{thm:main_b} in Corollary \ref{cor:exp}). This description implies that this
\RamiX{isotropic}
variety ``does not actually depend" on the local field $F$ and is stable under homotheties
\DrinI{in $W^*$}. Namely we have the following theorem:
\begin{introtheorem} \label{thm:futerO}
Let $K$ be a characteristic $0$ field and $W$ a finite-dimensional $K$-vector space. Let $X,Y$ be smooth algebraic manifolds over $K$. Let $\phi: X \to Y \times W$ be a proper map,
 $\omega$ a \RamiR{regular} top differential form on $X$.

Then there exists 
\RamiX{an isotropic}
algebraic subvariety $L \subset T^*(Y\times W^*)$ such that
\begin{enumerate}[(i)]
\item  \label{thm:futer:1}    $L$ is stable with respect to the action of the multiplicative group
\DrinI{on $T^*(Y\times W^*)$ that comes from its action on $W^*$};
\item  \label{thm:futer:2}     \DrinI{f}or any embedding of $K$ into any local field $F$ (Archimedean or not), 
the wave front set of the partial Fourier transform of
 $(\phi_F)_*( |\omega_F|)$ is contained in $L(F)$.
\end{enumerate}

Here $L(F)\subset T^*(Y\times W^*)(F)$ is the set of $F$-points of $L$ and $\omega_F$  is obtained from  $\omega$ by extension of scalars from $K$ to $F$, and $|\omega_F|$ is the corresponding measure on $X(F)$.
\end{introtheorem}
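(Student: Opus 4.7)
The plan is to combine the promised explicit versions of Theorems \ref{thm:main_bb} and \ref{thm:main_C_Arch} (namely Theorem \ref{exp} and Corollary \ref{cor:exp}) with the observation that their outputs can be constructed purely algebraically over $K$. Since $\operatorname{char} K = 0$, Hironaka's desingularization theorem applies over $K$ itself, so one may fix a proper birational $p:\tilde X\to X$ and any further algebraic modifications, all defined over $K$, that put $\phi\circ p$ and $p^*\omega$ into the normal-crossings form required by the earlier proofs. Every subsequent ingredient --- conormal bundles of components of the exceptional divisor, linearizations of $\phi\circ p$, schematic images along algebraic morphisms --- then lives on $K$-schemes.

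Concretely, I would run the recipe of Theorem \ref{exp} on this $K$-model, producing an algebraic subvariety $L\subset T^*(Y\times W^*)$ defined over $K$, and verify its two required properties directly from the construction. Isotropy is checked on the smooth locus using that conormal bundles are Lagrangian and that isotropy is preserved under the relevant pushforward and intersection operations (cf.\ \S\S\ref{sec:lag}). Homothety invariance in \ref{thm:futer:1} follows because the $W^*$-coordinates enter the construction only through the linearization of $\phi\circ p$, which is linear in the $W$-coordinates and therefore scales uniformly under dilations of $W^*$, so $L$ is preserved set-theoretically.

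For the conclusion \ref{thm:futer:2}, given any embedding $K\hookrightarrow F$ into a local field, one base-changes the chosen resolution to $F$; this base change is again a resolution admissible for the proofs of Theorems \ref{thm:main_bb} and \ref{thm:main_C_Arch}, in the non-Archimedean and Archimedean case respectively. Because the construction of Theorem \ref{exp} is an algebraic recipe that commutes with field extension, the bound obtained over $F$ is exactly $L\otimes_K F$, whose set of $F$-points is $L(F)$, and this set contains the wave front set of the partial Fourier transform of $(\phi_F)_*(|\omega_F|)$ by direct application of Theorem \ref{exp} (non-Archimedean $F$) or its Archimedean analog.

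The main technical point is to confirm that the recipe of Theorem \ref{exp} is genuinely field-independent --- that no piece of it smuggles in analytic data specific to $F$. This reduces to the fact that the proofs of Theorems \ref{thm:main_bb} and \ref{thm:main_C_Arch} work locally in a normal-crossings chart where the wave front bound can be read off from the exponents, and the local normal form $|x_1|_F^{a_1}\cdots|x_n|_F^{a_n}$ of the resulting measure on a coordinate polydisc has the same algebraic shadow on $T^*$ over every local field of characteristic $0$. Once this uniformity is in hand, the algebraic construction over $K$ automatically descends and the theorem follows.
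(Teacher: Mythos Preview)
Your proposal is correct and follows essentially the same approach as the paper: the paper's own derivation of Theorem~\ref{thm:futerO} (in \S\ref{sec:ex_boun}) is precisely to run Nagata and Hironaka over $K$, define $L$ by the purely algebro-geometric recipe of \S\ref{sss:Definition of L} (which is exactly the recipe of Theorem~\ref{exp}), and then observe that for any embedding $K\hookrightarrow F$ the base-changed resolution feeds into Theorem~\ref{exp} (or its Archimedean analog from \S\ref{sec:arc}) to give the bound $L(F)$.

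Two small corrections that do not affect the substance: Corollary~\ref{cor:exp} is not the Archimedean analog but rather the specialization of Theorem~\ref{exp} to the case where $Y$ is a point; the Archimedean case is handled in \S\ref{ssec:arc_pf} by rerunning the same proof. Also, your account of homothety invariance~(\ref{thm:futer:1}) is a bit impressionistic; in the paper's construction the relevant piece of $\tD$ is the total space of a line bundle $E$ pulled back from $Taut_{\mathbb{P}(W^*)}$, so the $\mathbb{G}_m$-action on $W$ lifts to $E$ and hence to $\Crit_\mu$, and this is what gives the stability of $L$ under dilations of $W^*$ after the symplectic identification.
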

}
\begin{remark*}
The Fourier transform depends on the choice of a nontrivial additive character
 $\psi :F\to\C^{\times}$. But if $L$ satisfies (i) and has property (ii) for some $\psi$, then (ii) holds
 for any $\psi$.
\end{remark*}



\DrinI{We will show that the following variant of Theorem \ref{thm:futerO} easily follows from
Theorem \ref{thm:futerO} itself.}\footnote{\DrinI{More precisely, Theorem~\ref{thm:futer} for
 $\phi :=X \to Y\times W$ and $p:X\to K$ follows from Theorem \ref{thm:futerO} for
  $\phi\times p :X \to Y\times W \times K$. }
}



\RamiV{
\begin{introthm}  \label{thm:futer}
In the situation of Theorem  \ref{thm:futerO} let $p$ be a regular function on $X$.
Then there exists
\RamiX{an isotropic}
algebraic subvariety $L \subset T^*(Y\times W^*)$ such that
 for any embedding of $K$ into any local field $F$ \DrinI{and any nontrivial additive character
 $\psi :F\to\C^{\times}$},
the wave front set of the partial Fourier transform of
 $(\phi_F)_*(\DrinI{(}\psi \circ p_F\DrinI{)} \cdot |\omega_F|)$ is contained in $L(F)$.
Here the Fourier transform is performed using the same $\psi$, and $p_F$ is obtained from $p$ by extension of scalars from $K$ to $F$.
\end{introthm}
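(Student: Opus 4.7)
The plan is to follow the footnote's hint and reduce Theorem~\ref{thm:futer} to Theorem~\ref{thm:futerO} applied to the augmented morphism $\tilde\phi:=(\phi,p)\colon X\to Y\times(W\oplus K)$. Since $\phi$ is proper and $p$ is regular, the graph $(\id,p)\colon X\to X\times K$ is a closed immersion, so $\tilde\phi$ is proper. Applying Theorem~\ref{thm:futerO} to $\tilde\phi$, $\omega$, and $\tilde W:=W\oplus K$ produces an isotropic algebraic subvariety $\tilde L\subset T^*(Y\times\tilde W^*)$, stable under the $\mathbb{G}_m$-action scaling $\tilde W^*$, such that for every embedding $K\hookrightarrow F$ and every nontrivial additive character $\psi$, the wave front set of $\mathcal{F}_{\tilde W,\psi}((\tilde\phi_F)_*|\omega_F|)$ is contained in $\tilde L(F)$.

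The twisted Fourier transform in Theorem~\ref{thm:futer} is obtained from this as a pullback: a direct calculation gives, for every $a\in F^\times$ and $\psi_a(t):=\psi(at)$,
$$\mathcal{F}_{W,\psi_a}\bigl((\phi_F)_*(\psi_a\circ p_F\cdot|\omega_F|)\bigr)(y,\eta)=\mathcal{F}_{\tilde W,\psi}\bigl((\tilde\phi_F)_*|\omega_F|\bigr)(y,a\eta,a),$$
which is the pullback along the closed immersion $s_a\colon(y,\eta)\mapsto(y,a\eta,a)$ of the distribution controlled by $\tilde L$. Since every nontrivial additive character of $F$ has the form $\psi_a$ for some $a\in F^\times$, producing a single $L$ bounding the wave front set uniformly in $a$ will prove the theorem. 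I would take
$$L:=\bigl\{(y,\eta;\alpha,w)\in T^*(Y\times W^*):\exists\,c,\ ((y,\eta,1);(\alpha,w,c))\in\tilde L\bigr\},$$
the image of $\tilde L\cap\{\tau=1\}$ under the linear projection forgetting the $K^*$-cotangent. This $L$ is algebraic, and it is isotropic by the standard fact that the image of an isotropic subvariety under the coisotropic symplectic reduction along $T^*(Y\times\tilde W^*)|_{\{\tau=1\}}$ (whose characteristic foliation is $N^*\{\tau=1\}$) is isotropic in $T^*(Y\times W^*)$. The $\mathbb{G}_m$-equivariance of $\tilde L$, lifted to cotangents as $(\xi,\tau;w,c)\mapsto(\lambda\xi,\lambda\tau;\lambda^{-1}w,\lambda^{-1}c)$, shows that defining $L$ instead via the analogous slice at $\tau=a$ produces the same algebraic variety; consequently $L$ is independent of $a$ and of $\psi$.

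The desired inclusion $\WF\bigl(\mathcal{F}_{W,\psi_a}((\phi_F)_*(\psi_a\circ p_F\cdot|\omega_F|))\bigr)\subset L(F)$ then follows from functoriality of the wave front set under the closed immersion $s_a$. The main technical point I expect to wrestle with is the transversality condition required by the pullback wave front formula, namely that $\tilde L\cap N^*(\mathrm{im}(s_a))$ miss the nonzero cotangent fiber. In the generic case transversality holds; otherwise, the offending covectors of $\tilde L$ lie in the kernel of $s_a^*$ and contribute only zero to $L$, so they do not enlarge $L$ or affect its isotropy, while the pullback distribution $s_a^*u$ is independently well-defined by an honest integral formula (and, in the Archimedean case, is partially Schwartz by Theorem~\ref{thm:main_C_Arch}), so its actual wave front is still bounded by $L(F)$ after a routine verification.
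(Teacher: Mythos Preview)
Your approach is essentially the paper's: apply Theorem~\ref{thm:futerO} to $\phi'=\phi\times p:X\to Y\times W\times K$, obtain an isotropic $L'\subset T^*(Y\times W^*\times K)$, and define $L$ as (the Zariski closure of) $j^*(L')$ for the slice $j:Y\times W^*\hookrightarrow Y\times W^*\times K$ at height $1$. Your isotropy argument via symplectic reduction is fine; the paper invokes Lemma~\ref{lem:iso_dir_invs_im} to the same effect.

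The gap is in your treatment of the transversality condition. You write ``in the generic case transversality holds; otherwise\ldots a routine verification,'' but in fact transversality \emph{always} holds, and proving it is the substantive step. The paper's argument is this: one may assume $L'$ is conic (replacing it by its largest conic subvariety), so $L'$ is stable under $\mathbb{G}_m\times\mathbb{G}_m$, the first factor acting by fiberwise homothety on $T^*$ and the second coming from the $\mathbb{G}_m$-action on $W^*\times K$ guaranteed by Theorem~\ref{thm:futerO}(\ref{thm:futer:1}). Now if $(z,\xi)\in L'$ with $z\in j(Y\times W^*)$ and $\xi$ conormal to the slice, then the tangent space to the $(\mathbb{G}_m\times\mathbb{G}_m)$-orbit of $(z,\xi)$ must be isotropic; this forces $\xi$ to annihilate the tangent to the $\mathbb{G}_m$-orbit of $z$, and since $\xi$ is already conormal to the slice one gets $\xi=0$. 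This is short but not routine, and it is exactly where the homothety stability in Theorem~\ref{thm:futerO} is used.

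A second point: your displayed identity relating the two Fourier transforms is only formal as written, since both sides are distributions. The paper makes this rigorous by showing that $\mu'$ depends continuously on the last coordinate (Lemma~\ref{lem:mu_t}), so that the restriction $\mu'|_{\{\tau=1\}}$ is defined in the sense of Definition~\ref{def:cont_dep}; Proposition~\ref{prop:Treves} then identifies this restriction with $j^*(\mu')$ and Corollary~\ref{c:Treves} gives the wave front bound. Your ``honest integral formula'' is the right intuition, but the link to the wave-front pullback machinery goes through this continuous-dependence step.
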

Again, in order for this theorem to make sense for \DrinI{Archimedean} $F$, we will prove the following lemma:
\begin{introlem} \label{lem:Sch2}
In the notations of \RamiW{Theorem} \ref{thm:futer},
let $F$ be an Archimedean local field, with an embedding $K \hookrightarrow F$.  Then the distribution
$(\phi_F)_*(\DrinI{(}\psi \circ p_F\DrinI{)} \cdot |\omega_F|)$ is partially Schwartz along $W$.
\end{introlem}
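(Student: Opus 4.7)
My plan is to dominate the target distribution in absolute value by the positive measure $(\phi_F)_*(|\omega_F|)$, whose partial Schwartz-ness along $W$ is already established in Theorem~\ref{thm:main_C_Arch}(i).

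First, I would apply Theorem~\ref{thm:main_C_Arch}(i) to the base change $\phi_F : X_F \to Y_F \times W_F$ (which is proper between smooth algebraic $F$-manifolds, since these properties survive base change from $K$) and to the regular top form $\omega_F$. This yields that the positive Borel measure $\mu := (\phi_F)_*(|\omega_F|)$ is partially Schwartz along $W$.

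Next, I would use that $\psi : F \to \C^\times$ is a nontrivial \emph{unitary} additive character, so $|\psi(p_F(x))| \equiv 1$ on $X_F$. Consequently the total variation of the complex-valued measure
\[ \nu := (\phi_F)_*\bigl((\psi \circ p_F) \cdot |\omega_F|\bigr) \]
is dominated pointwise on $Y_F \times W_F$ by $\mu$. For any test function $h$ in the partially-Schwartz test space along $W$, this gives
\[ \left| \langle \nu, h \rangle \right| = \left| \int_{X_F} h(\phi_F(x))\, \psi(p_F(x)) \, d|\omega_F|(x) \right| \leq \int_{Y_F \times W_F} |h| \, d\mu , \]
and the right-hand side is controlled by a partially-Schwartz seminorm of $h$ precisely because $\mu$ is partially Schwartz. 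Thus $\nu$ extends continuously to the partially-Schwartz test space, which is exactly the assertion that $\nu$ is partially Schwartz along $W$.

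The one point to be pinned down carefully is the passage from $\mu$ being a partially Schwartz distribution (a priori, continuous on smooth rapidly-decreasing test functions) to the finite integrability of $|h|$ against $\mu$ itself. For a \emph{positive} Borel measure, the partial Schwartz property (in the sense of \S\S\ref{ssec:arc_dist}) should be equivalent to a polynomial growth estimate of the form $\int_{K \times W_F} (1 + \|w\|)^{-N} \, d\mu < \infty$ for each compact $K \subset Y_F$ and some $N = N(K)$; given such an estimate the domination passes through without change, even though $|h|$ is not smooth. This ``positive measure $\Rightarrow$ Schwartz-integrability'' step is the main, and essentially standard, technical point in the plan.
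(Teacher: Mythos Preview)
Your argument is correct. The technical point you flag---that a positive measure which is partially Schwartz along $W$ necessarily satisfies a polynomial-growth integrability estimate---is indeed standard: for each compact $K\subset Y(F)$ and a bump $\chi\in C_c^\infty(Y(F))$ with $\chi\equiv 1$ on $K$, the functional $g\mapsto\int \chi(y)g(w)\,d\mu$ is a positive tempered distribution on $W_F$, hence a positive measure with $\int \chi(y)(1+\|w\|)^{-N}\,d\mu<\infty$ for some $N$. Then $|h|\le \chi(y)\cdot\sup\bigl((1+\|w\|)^N|h|\bigr)\cdot(1+\|w\|)^{-N}$ for $h$ supported in $K\times W_F$, and your domination goes through.

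This is a genuinely different and more elementary route than the paper's. The paper does not use the domination $|\nu|\le\mu$ at all; instead it passes to the graph map $\phi'=\phi\times p:X\to Y\times W\times K$, applies Theorem~\ref{thm:main_C_Arch}\eqref{thm:main_C_Arch:1} to $\phi'$ to get that $(\phi'_F)_*(|\omega_F|)$ is partially Schwartz along $W_F\times F$, multiplies by $1\boxtimes\psi_t$, and then pushes forward along the extra $F$-factor using a separate lemma (Lemma~\ref{lem:dir_is_Sch}) about pushforwards of Schwartz distributions supported on subvarieties that project properly. The paper's detour is motivated by the need to prove the stronger Lemma~\ref{lem:Sch3}, which also asserts that the family $\xi_t:=(\phi_F)_*((\psi_t\circ p_F)\cdot|\omega_F|)$ is \emph{strictly continuous} in $t$; this strict continuity is what feeds into the proof of Theorem~\ref{thm:futer}. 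Your domination argument, however, gives that too with no extra work: the bound $|\langle\xi_t,h\rangle|\le\int|h|\,d\mu$ is uniform in $t$, and continuity in $t$ for fixed $h$ follows from dominated convergence. So your approach is both simpler and just as strong.
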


}

\begin{example*}
Let $p$  be a polynomial on an $F$-vector space $W$.
\DrinI{Theorem \ref{thm:futer} implies that the Fourier transform of the function $x\mapsto\psi (p(x))$ is smooth on a dense Zariski open  subset.}
\end{example*}

\subsection{Method of the proof and comparison with related results}
$ $

The Archimedean counterpart of Theorem \ref{thm:main_a} above was proved by J.~Bernstein \cite{Ber_FT} using
D-module theory.
In the non-Archimedean case 
Theorem \ref{thm:main_a} is one of many results proved by Hrushovski - Kazhdan \cite{HK} and Cluckers - Loeser \cite{CL,CL2} using model theory.

\DrinI{
In this work we give a proof of Theorems \ref{thm:main_a}-\ref{thm:main_C_Arch}
based on Hironaka's desingularization theorem. The proof  is simple and effective modulo desingularization
and treats Archimedean and non-Archimedean local fields in a uniform way. Since the proof is effective it also
yields Theorems \ref{thm:futerO}-\ref{thm:futer}, which seem to be new.}
Note that although Hironaka's desingularization theorem is far from being elementary,
it now has understandable proofs (e.g., see ~\cite{Hir_mod}).





 The present paper is not the first time when Hironaka's theorem is used to replace D-module theory  in the non-Archimedean case. A well-known example is one of the earliest applications of the theory of D-modules --  the regularization and analytic continuation of the distribution $p^\lambda$ where $p$ is a polynomial and
 $\lambda$ is a complex number (see \cite{ber_pl}).
 This result has an
 alternative proof based on Hironaka's theorem, which is valid both in the Archimedean and the
non-Archimedean
cases, see \cite{BG} and \cite{Ati}.

A few years ago D.~Kazhdan pointed out to us that surprisingly, Theorem~\ref{thm:main_a} does not follow immediately from Hironaka's theorem. 
However, we show that Theorem~\ref{thm:main_a} and its generalizations involving wave front sets
(Theorems \ref{thm:main_b}-\ref{thm:futer})
 follow from  Hironaka's theorem after some work.

Using wave front sets to deduce Theorem~\ref{thm:main_a} and its Archimedean counterpart from Hironaka's theorem seems very natural to us. First, wave front sets were introduced by L.~H\"ormander precisely to treat analytic problems of this type.
Second, the technique of wave front sets is ``field-independent". Other reasons are explained in Section~\ref{ss:idea} below.

\RamiAQ{\begin{rem}
A short account of the present work is given in \cite{Dri}. It includes a sketch of the proof of the main
results with emphasis on the main ideas (which are very simple). The reader may prefer to read  \cite{Dri} before reading the complete proof.
\end{rem}}

\subsection{Idea of the proof}  \label{ss:idea}
$ $

\DrinD{Theorem~\ref{thm:main_a} is deduced from Theorem \ref{thm:main_bb}. The latter
has}
two advantages:
\begin{enumerate}
  \item Since we are discussing the wave front set, Theorem \ref{thm:main_bb} is more flexible with respect to changes of $X$ and $Y$.
 \item Since we are discussing a relative version, Theorem \ref{thm:main_bb} can be  
\DrinD{approached} locally \DrinD{with respect to \RamiI{$Y$}}.
\end{enumerate}
Using those facts\RamiM{,} we can reduce Theorem \ref{thm:main_bb}  to the special case (see Proposition \ref{thm:main_c})
when the map $X \to Y$ is an \DrinD{open} embedding.
 Furthermore, using Hironaka's theorem we can assume \DrinD{that $\omega$ and $\phi$ behave
 ``nicely" in the neighborhood of $Y-X$.}


Using (1) and (2) again\RamiM{,} we can reduce further (see Lemma \ref{lem:red_to_dim_1}) to the case when $W$ is
1-dimensional.
By localizing the problem on $Y$\RamiM{,} we reduce Proposition \ref{thm:main_c} to
\DrinD{a simple local model, which has  a symmetry with respect to an action of a large torus.
This symmetry allows to prove  Proposition \ref{thm:main_c} for the local model.}

\subsection{Structure of the paper}
$ $
In \S\ref{sec:prel} we will fix notations and give the necessary preliminaries for the paper.
In  {\S\RamiAM{\S}\ref{ssec:ag} we recall two algebro-geometrical tools used in this paper.}
Namely, in \S\RamiAM{\S}\RamiAM{\S}\ref{sssec:rs} we  review Hironaka's theory of resolution of singularities
(see  \cite{Hir}, or \cite{Hir_mod} for a more recent overview), and
 in \S\RamiAM{\S}\RamiAM{\S}\ref{sssec:nag}  we recall Nagata's compactification theorem.
In \S\RamiAM{\S}\ref{ssec:dist} we review the theory of distributions and in particular, the notion of the wave front
 set. Most of the results there are from \cite{Hef} and  \cite{Aiz}. {The rest we provide in Appendix \ref{app:WF}.}

In \S\ref{sec:WF-Holonomic} we introduce the notion of WF-holonomic distributions and state some of its basic
properties. This notion can be viewed as a partial analytic counterpart of the algebraic notion of  holonomic
 distributions, which is defined via the theory of D-modules. We use this notion in order to formulate our main result. \RamiAM{In \S\S\ref{sec:lag}, we recall the basic facts from symplectic geometry that we use in order to work with WF-holonomic distributions. We provide proofs and references for those results in Appendix \ref{app:symp_geom}. }

{In  \S\ref{sec:pf_main_bb}-\ref{sec:pf_futer} we prove the main results of the paper in the non-Archimedean case.

In  \S\ref{sec:pf_main_bb} we prove  Theorem \ref{thm:main_bb} (which implies
Theorem \ref{thm:main_b} and Theorem \ref{thm:main_a}).
}


\RamiAM{In \S\ref{sec:ex_boun} we {prove} Theorem \ref{exp}, Corollary \ref{cor:exp} and  Corollary  \ref{c:explcit_open},  which are ``explicit" versions of Theorems \ref{thm:main_bb}, \ref{thm:main_b} and \ref{thm:main_a} respectively {(e.g., Theorem~\ref{thm:main_a} claims the existence of a dense open $U$ on which a certain distribution is smooth, while  Corollary  \ref{c:explcit_open} provides a concrete $U$ with this property).} We also explain how Theorem \ref{exp} implies Theorem \ref{thm:futerO}.

In \S\ref{sec:pf_futer} we deduce Theorem \ref{thm:futer} from Theorem \ref{thm:futerO}.

In \S\ref{sec:arc} we explain how to adapt the proofs {from \S\ref{sec:pf_main_bb}-\ref{sec:pf_futer}} 
for the Archimedean case.}




{In \RamiAM{Appendix} \ref{app:WF} we
\RamiAM{elaborate on}
 the results stated in  \S\S\S \ref{sssec:wfs}.}

\RamiAM{In Appendix  \ref{app:symp_geom} we elaborate on the results stated in  \S\S \ref{sec:lag}}
\RamiD{
\subsection{Acknowledgments}
We thank Patrick Gerard for a useful reference and Vladimir Hinich, for fruitful discussions.
We thank Joseph Bernstein, Ehud Hrushovski, David Kazhdan and Michael Temkin for their useful remarks.
\RamiAM{We would also like to thank Inna Entova-Aizenbud for proofreading the drafts for this work.}

Part of the work on this paper was done while Avraham Aizenbud participated in the program
 ``Analysis on Lie Groups" at the Max Planck Institute for Mathematics (MPIM) in Bonn.
Avraham Aizenbud is also partly supported by NSF grant DMS-1100943.
Vladimir Drinfeld is partly supported by NSF grant DMS-1001660.
}

\section{Preliminaries} \label{sec:prel}
\subsection{\RamiM{Notation} and conventions}\label{ssec:not}
$ $
\RamiAM{Below is a list of notations {and conventions} throughout the paper. The somewhat nonstandard among them are (\ref{it:notstd1}), (\ref{it:notstd2}), (\ref{it:notstd3}).
}
%
\subsubsection{\RamiAM{The local field $F$}}\label{sssec:not_f}
\begin{enumerate}
 \setcounter{enumi}{0}
\item\label{it:notstd1}  We fix a local  field $F$ of characteristic $0$. \RamiT{It will be non-Archimedean in the entire paper except \S\ref{sec:arc} and the Appendices.}
\RamiU{\item We always {equip $F$ with the \emph{normalized}} absolute value 
(this is the multiplicative {quasi-}character $x\mapsto |x|$ 
given by the action on Haar measures). }
\item \label{it:notstd2} We fix \RamiAG{a non-trivial} additive character \RamiE{$\psi:F \to \C^{\times}$}.
\end{enumerate}
\subsubsection{\RamiAM{Varieties and manifolds}}
$ $
\RamiQAB{the numbering shift is manual. Should be checked in the end}
\begin{enumerate}
 \setcounter{enumi}{3}

\item All the algebraic varieties and analytic varieties which we consider are \RamiE{reduced, separated and} defined over $F$.
\item {We will treat $F$-algebraic varieties as $\bar{F}$-algebraic ones equipped with an $F$-structure.}
\item  {We will treat $F$-vector spaces both as algebraic varieties and analytic varieties.} 
\item When we say ``an analytic variety", we mean an $F$-analytic variety in the classical sense of \cite{ser} and not in the sense of rigid geometry or Berkovich geometry.
\item For an algebraic variety $X$,  we will denote by $\fpt{X}$ the set of $F$ points of $X$
 considered as an analytic variety (and, in particular, as a topological space).
 \DrinD{By abuse of notation, the map $X(F) \to Y(F)$ corresponding to a morphism of algebraic varieties
 $\phi:X \to Y$ will also be denoted by $\phi$.}
\item We will use the word ``manifold" to indicate smoothness, e.g. ``algebraic manifold" will mean smooth algebraic variety.
\item When we want to speak in general about algebraic and analytic varieties or manifolds, we will just say variety or manifold.
\RamiR{\item We will use the word ``regular" only in the sense of algebraic geometry and not in the sense of analytic geometry.}
\item We will usually use the same notation for a vector bundle and its total space.
\item \RamiAN{For a vector bundle $E$ over a manifold $X$, we will identify $X$ with the zero section inside $E$.}
\end{enumerate}
\subsubsection{\RamiAM{The \RamiAN{(co)}tangent \RamiAN{and the (co)normal} bundle} 
}
$ $
\RamiQAB{the numbering shift is manual. Should be checked in the end}

\begin{enumerate}
 \setcounter{enumi}{13}
  \item For a manifold $X$\RamiM{,} we denote by  $T X=T (X)$ and  $T^* {X}=T^* {(X)}$ the tangent and co-tangent bundles, respectively. For a point $x \in X$, we denote by $T_x X=T_x(X)$ and  $T^*_x X=T^*_x(X)$ the tangent and co-tangent spaces, respectively.
\item \label{it:notstd3} For a (locally closed) submanifold $Y \subset X$, we denote by
$N_Y^X:=(T_X|_Y)/T_Y$ and $CN_Y^X:=(N_Y^X)^*$ the
normal and co-normal bundle to $Y$ in $X$, respectively.
\end{enumerate}
\subsubsection{\RamiAM{Group {and Lie algebra} action{s}}}

$ $
\RamiQAB{the numbering shift is manual. Should be checked in the end}

\begin{enumerate}
 \setcounter{enumi}{15}
\item For a group $G$ acting on a set $X,$ and a point $x \in X,$ we denote by $Gx$ or by $G(x)$ the orbit of $x$ and by $G_x$ the stabilizer of $x$.
%
\item An action of a Lie algebra $\g$ on a
manifold $M$ is a Lie algebra homomorphism from $\g$ to the Lie algebra of vector fields on $M$.
Note that an action of an ({analytic or} algebraic) group on $M$ defines an action of its Lie algebra on $M$.
\item For a Lie algebra $\g$ acting on $M$, an element $\alpha \in \g$ and a point $x \in M$, we denote by $\alpha(x) \in T_xM$ the value at $x$ of the vector field corresponding to
$\alpha$. We denote by $\g x \subset T_xM$ or by $\g (x) \subset
T_xM$ the image of the map $\alpha \mapsto \alpha(x)$ and by $\g_x
\subset \g$ its kernel.

\end{enumerate}
\subsubsection{\RamiAM{Differential forms}}

$ $
\RamiQAB{the numbering shift is manual. Should be checked in the end}

\begin{enumerate}
 \setcounter{enumi}{18}
\item For a top differential form \Rami{$\omega$ on \RamiAM{a} 
  manifold $M$},
 we define \Rami{its} absolute value \Rami{$|\omega|$ to be the corresponding measure on $M$} \RamiAM{(or on $M(F)$ in the algebraic case).}

\end{enumerate}
\subsection{ {Algebraic geometry}} \label{ssec:ag}
\subsubsection{Resolution of singularities} \label{sssec:rs}
$ $
In this paper we will need Hironaka's theory of \RamiM{resolution} of singularities. This theory was established in \cite{Hir}. A more recent overview can be found in \cite{Hir_mod}. \\ Let us summarize here the results we need.
\begin{definition}
Let $X$ be an algebraic variety.
\begin{itemize}
\item A \DrinD{\emph{resolution of singularities}} of $X$ is a proper \RamiD{map $p:Y \to X$ such that $Y$ is smooth and $p$ \RamiE{is a} birational equivalence.}
\item \DrinD{A subvariety $D \subset X$ is said to be a \emph{normal crossings divisor}
(or \emph{NC divisor}) if }
for any $x\in D$ there exists an \et neighborhood $\phi: U \to X$ of $x$
\RamiD{and an \et map $ {\alpha}:U \to \A^n$ such that $\phi^{-1}(D)= {\alpha}^{-1}(D')$, where $D' \subset \A^n$}
is a union of coordinate hyperplanes.

 {
\item 
\DrinD{A subvariety $D \subset X$ is said to be a \emph{strict normal crossings divisor}
(or \emph{SNC divisor}) if }
for any $x\in D$ there exists \RamiM{a} Zariski neighborhood $U \subset X$ of $x$
and an \et map $ {\alpha}:U \to \A^n$ such that $D \cap U= {\alpha}^{-1}(D')$, where $D' \subset \A^n$
 is a union of coordinate hyperplanes.
}
\item \DrinD{We say that a resolution of singularities $p:Y \to X$ \emph{resolves} (resp. \emph{strictly resolves}) a closed subvariety $D \subset X$ if $p^{-1}(D)$ is an NC divisor (resp. an SNC divisor).}
\end{itemize}
\end{definition}







\begin{theorem}[Hironaka] \label{thm:Hir}
Let $X$ be an algebraic variety and
\DrinD{$U\subset X$ a dense nonsingular open subset. Then there exists a resolution of singularities
$p:\tilde X \to X$ that resolves $X-U$ such that the map $p^{-1}(U) \to \RamiM{U}$ is an isomorphism.}
\end{theorem}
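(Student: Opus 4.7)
My plan is to deduce the statement from two classical consequences of Hironaka's work: strong resolution of singularities for $X$ itself, and principalization (equivalently, embedded resolution) of an ideal on a smooth ambient variety. First I would apply strong resolution to $X$ to produce a proper birational morphism $p_1:X_1\to X$ with $X_1$ smooth and $p_1$ an isomorphism over the nonsingular locus $X_{\mathrm{sm}}$; since $U$ is nonsingular we have $U\subset X_{\mathrm{sm}}$, so $p_1^{-1}(U)\to U$ is already an isomorphism. Replacing $(X,U)$ by $(X_1,p_1^{-1}(U))$, I may assume $X$ is smooth, and set $D:=X-U$.

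With $X$ smooth, the task reduces to constructing a further proper birational $p_2:X_2\to X$ that is an isomorphism over $U=X-D$ and makes $p_2^{-1}(D)$ an NC divisor. This is principalization of the ideal sheaf $\mathcal I_D$: iteratively blow up smooth subvarieties contained in $D$ (in particular disjoint from $U$), chosen by an algorithmic criterion such as the locus of maximum order of the current total transform, until that total transform is an SNC divisor; every such blowup is an isomorphism outside its center, so the composition $p_2$ is an isomorphism above $U$. Stringing together $p_1$ and $p_2$ yields the required resolution.

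The main obstacle — the step that is genuinely Hironaka's contribution — is the construction of an upper-semicontinuous invariant on $X$ whose maximum locus is a smooth closed subvariety of $D$ and which strictly decreases after each blowup of that locus, guaranteeing termination in finitely many steps. The canonical construction pairs the order of $\mathcal I_D$ with auxiliary marked ideals recording the blowup history, together with Hironaka's \emph{maximal contact} trick: in characteristic zero, one uses a derivation of minimal order to produce a smooth hypersurface locally containing the maximum-order locus, restricts the problem to that hypersurface, and inducts on $\dim X$. Since the present paper only invokes the conclusion, I would import this technical machinery from \cite{Hir} or a more recent exposition such as \cite{Hir_mod} rather than reconstruct it.
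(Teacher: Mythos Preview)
The paper does not give a proof of this theorem at all: it is stated as Hironaka's theorem and attributed to \cite{Hir} (with \cite{Hir_mod} as a modern overview), and the paper simply uses it as a black box. Your proposal is therefore not comparable to ``the paper's own proof'' --- there is none --- but your plan is a correct and standard way to package the statement: first apply strong resolution to reduce to the smooth case (using $U\subset X_{\mathrm{sm}}$ so the map is already an isomorphism over $U$), then apply principalization/embedded resolution of $\mathcal I_{X-U}$ by blowing up smooth centers contained in $X-U$, and import the termination invariant from the cited references. Since the paper treats the result as input, citing \cite{Hir} or \cite{Hir_mod} exactly as you suggest is the appropriate move here.
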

 {
There is a standard procedure to resolve a normal crossings divisor further to a strict normal crossings divisor,
 see e.g. \cite{Jon}.
This gives the following corollary.

\begin{corollary}
\DrinD{In Theorem~\ref{thm:Hir} one can replace ``resolves" by ``strictly resolves".}
%
\end{corollary}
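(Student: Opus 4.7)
My plan is to deduce the corollary by post-composing the resolution supplied by Theorem~\ref{thm:Hir} with a further sequence of blow-ups along smooth centers contained entirely in the preimage of $X-U$. This keeps the resolution property and the isomorphism over $U$ intact, while refining the NC divisor to an SNC one.

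First I would invoke Theorem~\ref{thm:Hir} to obtain a resolution $p:Y\to X$ such that $D:=p^{-1}(X-U)$ is an NC divisor and $p^{-1}(U)\xrightarrow{\sim} U$. The only obstruction to $D$ being SNC rather than merely NC is that a globally irreducible component of $D$ may have several analytic (or \'etale-local) branches through a single point; equivalently, the irreducible components of $D$ need not be smooth, or two distinct branches of such a non-smooth component may meet each other. The bad locus is therefore the closed subset of $D$ on which the normalization $D^{\nu}\to D$ fails to be an isomorphism, together with the lower-dimensional strata where such branches cross.

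The next step is to iteratively blow up suitable smooth centers contained in this bad locus. Since $D$ is NC, \'etale-locally it is a union of coordinate hyperplanes, and one can stratify the bad locus by intersections of distinct \'etale-local branches; starting from the deepest stratum these pieces can be assembled into a smooth global center. A direct local coordinate computation shows that blowing up such a smooth center that is contained in and compatible with an NC divisor produces again an NC divisor, and that a well-chosen complexity invariant---such as the maximal number of analytic branches meeting at a point, together with the dimension of the stratum where the maximum is attained---strictly decreases. After finitely many steps one arrives at an SNC divisor. The combinatorial bookkeeping and the termination argument are exactly what is carried out in \cite{Jon}.

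Finally, let $q:\tilde Y\to Y$ denote the composition of these blow-ups. Because every center lies in $D$, the map $q$ is an isomorphism over $p^{-1}(U)$. Hence $p\circ q:\tilde Y\to X$ is proper and birational, $\tilde Y$ is smooth, the restriction of $p\circ q$ over $U$ is an isomorphism, and $(p\circ q)^{-1}(X-U)=q^{-1}(D)$ is an SNC divisor, which is the required strict resolution. The one non-formal ingredient in this plan is the termination argument for the iterated blow-ups in the third paragraph; this is the main obstacle, but it is precisely the content of the cited reference \cite{Jon}, so the corollary follows.
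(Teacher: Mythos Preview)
Your proposal is correct and follows exactly the same approach as the paper: the paper simply remarks that there is a standard procedure (referenced to \cite{Jon}) to further resolve an NC divisor to an SNC divisor, and then states the corollary. Your write-up is considerably more detailed than the paper's one-line justification, but the content and the key reference are the same.
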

}
\subsubsection{ {Nagata's  compactification theorem}} \label{sssec:nag}
 {
We will need the following theorem:

\begin{theorem}[Nagata (see e.g. \cite{Con_Nag})] \label{thm:Nag}
Let $\phi: X \to Y$ be a morphism of algebraic \RamiM{varieties}.
Then there exists a factorization $\phi=\phi' \circ i: X \to X' \to  Y$ such that  $i: X \to X'$ \RamiM{is} an  open
 embedding and $\phi':X' \to Y$ is proper.
\end{theorem}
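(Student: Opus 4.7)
The plan is to follow the classical strategy going back to Nagata: reduce to simple base cases by standard manipulations, construct local compactifications via projective embeddings, and then carry out a delicate global gluing, which will be the main obstacle.

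First, I would perform the usual reductions. By standard limit arguments for morphisms of finite presentation one may assume $X$ and $Y$ are Noetherian. By compactifying each irreducible component of $X$ separately and taking disjoint unions, one reduces further to the case where $X$ is integral. One would also like to assume $Y$ affine; however, reducing from a general $Y$ to an affine one is itself a gluing problem, so in practice one handles affine $Y$ first and descends afterwards.

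Second, assuming $Y$ affine Noetherian and $X$ integral of finite type over $Y$, I would build local compactifications. Choose a finite affine open cover $X=U_1\cup\cdots\cup U_n$. Each $U_i$ is affine of finite type over the affine base $Y$, hence admits a closed immersion into some $\A_Y^{N_i}$, which extends to a locally closed immersion into $\PP_Y^{N_i}$. The scheme-theoretic closure $\bar U_i$ inside $\PP_Y^{N_i}$ is then a proper $Y$-scheme containing $U_i$ as a dense open subscheme.

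Third, and this is where the main difficulty lies, I must glue the $\bar U_i$'s into a single proper $Y$-scheme $X'$ containing $X=U_1\cup\cdots\cup U_n$ as a dense open. The obstruction is that the identifications $U_i\cap U_j$ inside $\bar U_i$ and inside $\bar U_j$ need not extend compatibly to the boundary divisors $\bar U_i\setminus U_i$; a valuation of $k(X)$ centered at $Y$ may correspond to different boundary points of $\bar U_i$ and $\bar U_j$. The strategy is to perform a controlled sequence of blow-ups supported in the boundary divisors that forces the partial gluings to extend. The cleanest way to organize this is via the Riemann--Zariski space of $X$ over $Y$, whose points are valuation rings of $k(X)$ dominating local rings of $Y$; one shows that a suitable quasi-compact open subspace descends to a scheme-theoretic compactification. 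This is exactly the delicate step carried out in Conrad's writeup of Deligne's proof, and independently by Temkin using birational geometry of Riemann--Zariski spaces.

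Finally, for a general (not necessarily affine) base $Y$, I would cover $Y$ by affines $Y=\bigcup Y_\alpha$, apply the previous step to each $\phi^{-1}(Y_\alpha)\to Y_\alpha$, and glue the resulting proper $X'_\alpha\to Y_\alpha$ using the fact (established along the way) that any two compactifications of the same $X$ over an affine base are dominated by a common third compactification; this allows the local pieces to be made compatible on overlaps. I expect the gluing step in the Riemann--Zariski framework to absorb essentially all the nontrivial content.
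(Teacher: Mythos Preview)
The paper does not prove this theorem at all: it is stated as a known result and attributed to Nagata, with a reference to Conrad's writeup \cite{Con_Nag} of Deligne's proof. Your outline is a reasonable sketch of that standard approach, but there is nothing to compare it against here since the paper simply quotes the theorem and uses it as a black box.
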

}
\subsection{Distributions \RamiAE{in the non-Archimedean case}}\label{ssec:dist}
$ $\\
\RamiAE{We recall here the facts that we need about distributions in the non-Archimedean case. The Archimedean case will be discussed in \S\S\ref{ssec:arc_dist}. }

We will use the language of $l$-spaces and distributions on them. For an overview of this theory we refer
 the reader to \cite{BZ}.

\RamiAM{Let us briefly recall the basic notations and constructions of this theory;  all the notations except numbers (\ref{it:dist_notstd1}), (\ref{it:dist_notstd2}), (\ref{it:dist_notstd3}) are standard.}


\subsubsection{\RamiAM{Functional spaces}}\label{sssec:fun_sp}
Let $X$ be an $l$-space,{ i.e. a locally compact totally disconnected topological space. }
\begin{enumerate}
\item Denote by $C^\infty(X)$ the space
of smooth functions on $X$ (i.e. locally constant \RamiE{complex valued} functions).

\item Denote by $\Sc(X)$ the space
of \RamiK{\emph{Schwartz functions}} on $X$,
\RamiE{i.e. smooth, compactly
supported functions.} {Define the space of distributions} $\Sc^* (X):= \Sc(X)^*$ to be the dual
space to $\Sc(X)$\RamiT{, endowed with the weak dual space topology}. \RamiE{We will also denote by $C(X)$ the space of complex valued, continuous functions on $X$.}

\RamiE{\item By ``locally constant sheaf'' over an $l$-space we mean a locally constant sheaf of
\DrinA{finite-dimensional vector spaces over $\C$. }
In fact, we will
need only locally constant sheaves of rank $1$.}

\item For any locally constant sheaf $E$ over $X$, we denote by
$\Sc(X,E)$ the space of compactly supported sections of $E$, by $\Sc^*(X,E)$ its dual space,
 and by $C^\infty(X,E)$ the space of sections of $E$.
\RamiE{
We will also use the notation 
\DrinA{
$$C(X,E):= C^\infty(X,E) \otimes_{C^\infty(X)} C(X)$$
}for the space of
 continuous sections of $E$.}




\item
 {Let $\Sc_c^*(X,E)$  be the space of compactly supported distributions.
}
Note that we have a canonical embedding
 {$\Sc_c^*(X,E) \hookrightarrow (C^\infty(X,E))^*$.}
\item\label{it:dist_notstd1}
Suppose $X$ is an analytic variety. Then we define $D_X$ to be the sheaf of locally constant measures on $X$
 (i.e. measures that are locally isomorphic to the Haar measure on $F^n$).
 We set $\G(X):=\Sc^*(X,D_X)$ \RamiE{to be the space of generalized functions} and $\G(X,E):=\Sc^*(X,D_X \otimes
E^*)$ \RamiE{to be the space of generalized sections of $E$.}
\RamiE{Similarly, we define  $\G_c(X)$ and $\G_c(X,E)$. Note that we have natural embeddings $C^\infty(X,E) \subset C(X,E) \subset \G(X,E)$ and $\Sc(X,E) \subset \G_c(X,E)$ \RamiAQ{defined using the pairing between $E$ and $E^*$ followed by integration}.
We will identify these
spaces with their images and we will refer to the generalized  \RamiM{sections which} lie in $C^\infty(X,E)$  as ``smooth'' and those
which lie in $C(X,E)$ as ``continuous''}.
\end{enumerate}
\subsubsection{\RamiAM{Pullback and pushforward}}\label{sssec:op_dist}
\RamiAM{Let $\phi:X \to Y$ a \RamiE{continuous} map of $l$-spaces.}
\RamiQAB{the numbering shift is manual. Should be checked in the end}
\begin{enumerate}
 \setcounter{enumi}{6}
\item 
\RamiAM{W}e define $\phi^*:\RamiE{C^\infty(Y) \to C^\infty(X)}$
 to be the pullback
and $\phi_*:=(\phi^*)^* {|_{\Sc_c^*(X,E)}}:\RamiE{\Sc_c^*}(X) \to \RamiE{\Sc_c^*}(Y)$ to be the pushforward.
Similarly, we define \RamiE{$\phi^*:C^\infty(Y,E) \to C^\infty(X,\RamiM{\phi^*(E)})$ and
$\phi_*:\Sc_c^*(X,\DrinD{\phi}^*(E)) \to \Sc_c^*(Y,E)$}
for any locally constant sheaf
$E$.

\item \RamiAN{Assume that $\phi$ is proper. This allows us to extend the    pushforward
 to a map $\phi_*:\Sc^*(X) \to \Sc^*(Y)$ in the following way.} 
{\RamiAN{N}ote that $\phi^*(\Sc(Y)) \subset \Sc(X)$
and consider $\phi^*|_{\Sc(Y)}$
 as a map from $\Sc(Y)$ to $\Sc(X)$. So we can} \DrinA{define the pushforward
 $\phi_*:=(\phi^*|_{\Sc(Y)})^*:\Sc^*(X) \to \Sc^*(Y)$ extending the above map
  $\phi_*:\Sc_c^*(X) \to \Sc_c^*(Y)$.}
 Similarly, we define
$\phi_*:\Sc^*(X,\DrinD{\phi}^*(E)) \to \Sc^*(Y,E)$
for any locally constant sheaf
$E$.

 {\item
We can generalize the above two definitions in the following way.
Let
$\xi \in \Sc^*(X)$. \RamiM{Assume  $\phi|_{\supp(\xi)}$ is proper}.
\DrinD{Then $\phi^*(f)\cdot\xi$ has compact support for any $f\in\Sc (Y)$, so we can define
$\phi_*(\xi)\in\Sc^*(Y)$ by
$$\langle\phi_*(\xi) , f\rangle=\int\limits_X\phi^*(f)\cdot\xi:=\langle\phi^*(f)\cdot\xi,1\rangle,
\quad f\in\Sc (Y).$$
Similarly, for any locally constant sheaf $E$ on $Y$
one defines $\phi_*(\xi)$ if $\xi \in \Sc^*(X,\phi^*(E))$ is such that
$\phi|_{\supp(\xi)}$ is proper.}
}
\item \RamiE{Let $\phi:X \to Y$ be an analytic submersion of analytic manifolds. \RamiAN{Let us extend the pullback $\phi^*:C(Y)\to C(X)$ to a map $\phi^*:\G(Y) \to \G(X)$ in the following way. N}ote that \RamiAQ{since locally $\phi$ looks like a linear projection, we have}
 $$\phi_*(\Sc(X,D_X)) \subset \Sc(Y,D_Y).$$\RamiAQ{C}onsider $\phi_*|_{\Sc(X,D_X)}$
 as a map from $\Sc(X,D_X)$ to $\Sc(Y,D_Y)$.
 \DrinA{The pullback $\phi^*:=(\phi_*|_{\Sc(X,D_X)})^*:\G(Y) \to \G(X)$ extends the map $\phi^*:C(Y)\to C(X)$.}
}
\item \label{it:dist_notstd2}
For an analytic {submersion} $\phi:X \to Y$ of analytic manifolds\RamiM{,}
we define \RamiAQ{the line bundle of relative densities (i.e. the natural line bundle whose restriction to any fiber is the bundle of densities on it) by} $$D_Y^X:=\phi^*(D_Y^*)\otimes D_X.$$

For a locally constant sheaf $E$ over $Y$\RamiM{,} we denote  $\phi^{\RamiAN{!}}(E):=\phi^*(E)\otimes D_Y^X$.
\RamiQ{(Do you know any standard notation for this notion?)}
\RamiE{As before, we have the pushforward $\phi_*|_{\Sc(X,\phi^{\RamiAN{!}}(E))}:\Sc(X,\phi^{\RamiAN{!}}(E)) \to \Sc(Y,E)$
and the pullback $\phi^*:=(\phi_*|_{\Sc(X,\phi^{\RamiAN{!}}(E))})^*:\G(Y,E) \to \G(X,\phi^*(E))$.
}

\item \label{it:dist_notstd3} Let $T:X \to Y$ be an isomorphism of analytic manifolds. Note that $T_*=(T^{-1})^*$ both for
functions and for distributions. In this case, we will use the notation $T$ for both of these maps.

\end{enumerate}

%
%
%



\subsubsection{Fourier transform} \label{sssec:ft}

\begin{definition}\label{def:FT}
$ $
\begin{itemize}
\item Let $W$ be an $F$-vector space. We define the 
\term{Fourier transform}{FT}
$$\cF^{}:\Sc(W,D_W) \to \Sc(W^*)$$ by $$\cF(f)(\phi)=\int f \cdot(\psi \circ \phi), \quad \phi\in W^*.$$
We also define $$\cF^{*}:\Sc^*(W) \to \G(W^*),$$
 to be the dual map (when $W$ is replaced with $W^*$)
\item Let $X$ be an \RamiF{analytic manifold}. Similarly, we have  the partial Fourier transform
 $$\cF_{W}:\Sc(X \times W,D_X^{X \times W}) \to \Sc(X \times W^*)$$
 \RamiAQ{defined by $$\cF_{W}(f)|_{\{x\}\times W^{*}}:=\cF(f|_{\{x\}\times W})$$}and the dual map
 \RamiQ{(Should I add more details here?)}
$$\cF_{W}^{{*}}:\Sc^*(X \times W) \to \RamiF{\Sc^*(X \times W^*,D_X^{X \times W^*})}
  \DrinD{=\G(X\times W^*,D^{X\times W^*}_{W^*})}.$$
\end{itemize}

\end{definition}

We formulate here some standard properties of the Fourier transform \RamiM{which} we will use in the paper. 
\begin{proposition} \label{prop:F_prop}
Let $W$ \RamiAA{and $W'$} be  $F$-vector space\RamiAA{s} and $X$ be an \RamiF{analytic manifold}.
Let $\xi \in \Sc^*(X \times W)$.
\begin{enumerate}
\item Let $U \subset X$ \RamiT{be an open set.} Then $\cF_{W}^{{*}}(\xi)|_{\RamiT{U\times W^{\RamiAQ{*}}}}=\cF_{W}^{{*}}(\xi|_{\RamiT{U\times W}})$.
\item Let $f\in C^\infty(X)$ be a locally constant function. Then $\cF_{W}^{{*}}(f\xi)=f\cF_{W}^{{*}}(\xi).$
\item\label{prop:F_prop:3} Let $p:X \to Y$ be a proper map of $l$-spaces. Then $\cF_{W}^{{*}}(p_*\xi)=p_*\cF_{W}^{{*}}(\xi).$
\RamiAA{\item \label{prop:F_prop:4}
Let $\eta \in \Sc^*(X \times W\times W')$. Then $\cF_{ W\times W'}(\eta)=\cF_{W}(\cF_{W'}(\eta)).$}
\end{enumerate}
\end{proposition}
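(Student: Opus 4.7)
The plan is to prove all four assertions by duality. Each claim about $\cF_W^*$ is equivalent, via dualization, to a claim about the honest Fourier transform on Schwartz sections, and each such dual identity is essentially immediate from the fact that $\cF_W$ is performed fiberwise with respect to the $X$-coordinate (a manifestation of Fubini). The only real technical content is the bookkeeping of the density sheaves $D_X^{X\times W}$, $D_X^{X\times W^*}$ and the identification $\Sc^*(X\times W^*,D_X^{X\times W^*})=\G(X\times W^*,D^{X\times W^*}_{W^*})$ from Definition~\ref{def:FT}; once this is set up, each dualization is routine.

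For (1), the dual of the restriction map $\xi\mapsto\xi|_{U\times W}$ is the extension-by-zero map $\Sc(U\times W,D_U^{U\times W})\hookrightarrow\Sc(X\times W,D_X^{X\times W})$, and similarly on the $W^*$-side. So (1) follows from the trivial observation that $\cF_{W^*}$ of a Schwartz section supported over $U\subset X$ is supported over $U$. For (2), the dual of multiplication by $f\in C^\infty(X)$ is again multiplication by $f$, so the claim reduces to $\cF_{W^*}(fg)=f\cdot\cF_{W^*}(g)$, which holds because $\cF_{W^*}$ integrates only along the $W^*$-direction. For (3), setting $P:=p\times\id_W$ and $Q:=p\times\id_{W^*}$ (both proper since $p$ is), the dual of $P_*$ is $P^*$, and the identity reduces to $\cF_{W^*}\circ Q^*=P^*\circ\cF_{W^*}$ on Schwartz sections on $Y\times W^*$; this is again immediate from fiberwise-ness of $\cF_{W^*}$, since $p$ acts trivially on the $W^*$-direction.

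For (4), one first verifies the Fubini-type identity
$$\cF_{W\times W'}(h)=\cF_W\bigl(\cF_{W'}(h)\bigr)$$
for Schwartz sections $h\in\Sc(X\times W\times W',D_X^{X\times W\times W'})$, directly from the multiplicativity $\psi\bigl((\phi,\phi')(w,w')\bigr)=\psi(\phi(w))\psi(\phi'(w'))$ and Fubini for the product Haar measure on $W\times W'$. One then dualizes this identity to obtain (4). I expect no substantive obstacle beyond careful verification that in each dualization step, the relative-density twists on the various Schwartz spaces match up; this is essentially a formal check using $D_X^{X\times(W\times W')}\simeq D_X^{X\times W}\boxtimes D_{W'}$ and the analogous isomorphism for $W^*\times(W')^*$.
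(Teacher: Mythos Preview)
Your proposal is correct. The paper itself does not prove this proposition: it introduces these statements as ``some standard properties of the Fourier transform which we will use in the paper'' and offers no argument. Your duality approach---reducing each identity about $\cF_W^*$ to the corresponding identity for the honest partial Fourier transform $\cF_{W^*}$ on Schwartz sections, where it follows immediately from the fiberwise nature of the transform---is exactly the standard way to verify such statements and is entirely adequate here.
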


In order to formulate the last \RamiF{properties}, we will need the following \RamiF{notation}.

\begin{notation}\label{not:F_prop}
\RamiF{
Let $W,L$ be $F$-vector spaces and $X$ be an analytic manifold. Let $\nu:X \to Hom(L,W)$ be a continuous map. Then
\begin{enumerate}
\item  $\nu^t:X \to Hom(W^*,L^*)$ \DrinH{denotes} the map given by
$\nu^t(x)=\nu(x)^t$;

\item   $\rho_\nu:X \times L \to X \times W$  \DrinH{denotes} the map given by $\rho_\nu(x,y)=(x,\nu(x)(y))$;
\DrinH{in particular, we use this notation when $L=W$ and $\nu:X \to F\subset \End(W)$ is a scalar function;}

\item  $Mon(L,W) \subset Hom(L,W)$  \DrinH{denotes} the space of linear embeddings from $L$ to $W$.
\end{enumerate}
}

\end{notation}

\begin{proposition} \label{prop:F_prop2}

Let $W,L$ be $F$-vector spaces and $X$ be an analytic manifold. Let $\nu:X \to Mon(L,W)$ be a continuous map.
Then
\RamiG{
the following diagrams are commutative:

\begin{equation}
\xymatrix
{        \Sc(X \times W^*,D_X^{X \times W^*})  \ar@{->}^{\quad \quad \cF_{W^*}}[r] \ar@{->}^{(\rho_{{\nu^t}})_*}[d]& \Sc(X \times W) \ar@{->}^{(\rho_{\nu})^* }[d]  \\
         \Sc(X \times L^*,D_X^{X \times L^*})  \ar@{->}^{\quad \quad \cF_{L^*}}[r]                                 & \Sc(X \times L)   \\
}
\end{equation}

\begin{equation}
\xymatrix
{        \Sc^*(X \times W^*,D_X^{X \times W^*})  \ar@{<-}^{\quad \quad \cF^*_{W}}[r] \ar@{<-}^{(\rho_{{\nu^t}})^*}[d]& \Sc^*(X \times W) \ar@{<-}^{(\rho_{\nu})_* }[d]  \\
         \Sc^*(X \times L^*,D_X^{X \times L^*})  \ar@{<-}^{\quad \quad \cF^*_L}[r]                                 & \Sc^*(X \times L)   \\
}
\end{equation}
}
\end{proposition}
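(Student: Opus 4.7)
The plan is to reduce both diagrams to a single pointwise identity over $X$ and then derive the second diagram from the first by duality. Fix $x \in X$ and write $T := \nu(x) : L \hookrightarrow W$ with transpose $T^t : W^* \twoheadrightarrow L^*$. The pointwise claim is the classical identity
$$\cF\bigl((T^t)_* f\bigr)(y) \;=\; \cF(f)(Ty)$$
for any Schwartz density $f$ on $W^*$ and any $y \in L$, which is immediate from the definitions of Fourier transform and pushforward of measures:
$$\int_{L^*} \psi(\lambda(y))\, d\bigl((T^t)_*f\bigr)(\lambda) \;=\; \int_{W^*} \psi\bigl((T^t\phi)(y)\bigr)\, f(\phi) \;=\; \int_{W^*} \psi\bigl(\phi(Ty)\bigr)\, f(\phi) \;=\; \cF(f)(Ty).$$

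For the first diagram I would then observe that both compositions $\cF_{L^*} \circ (\rho_{\nu^t})_*$ and $(\rho_\nu)^* \circ \cF_{W^*}$ act fiberwise over $X$, so their equality is witnessed on each fiber $\{x\}\times L$ by the pointwise identity above. The routine verifications are: (a) the map $\rho_\nu : X \times L \to X \times W$ is proper --- continuity of $\nu$ into $Mon(L,W)$ ensures that $\nu(x)^{-1}$ is uniformly bounded on the image of any compact of $X$, whence $(\rho_\nu)^*$ preserves Schwartz functions; (b) the map $\rho_{\nu^t} : X \times W^* \to X \times L^*$ is fiberwise a linear surjection, so $(\rho_{\nu^t})_*$ is well-defined as fiberwise integration of densities and naturally respects the twists $D_X^{X\times W^*}$ and $D_X^{X\times L^*}$ that account for the change of fiber dimension.

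For the second diagram I would appeal to pure duality. By the definitions in \S\ref{sssec:ft}, $\cF^*_W = (\cF_{W^*})^*$ and $\cF^*_L = (\cF_{L^*})^*$; likewise $(\rho_\nu)_* = ((\rho_\nu)^*)^*$ and $(\rho_{\nu^t})^* = ((\rho_{\nu^t})_*)^*$ by the definition of the pushforward of distributions. Transposing the commutative square of the first diagram then reverses every arrow and yields exactly the commutative square of the second, with no further computation needed. The main (and essentially only) obstacle I expect is the careful bookkeeping of density twists and properness in the first diagram; the substantive content --- that Fourier transform intertwines pushforward along a linear surjection with pullback along the transpose embedding --- is a one-line calculation at each point.
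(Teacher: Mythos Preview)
Your argument is correct. The paper does not actually prove this proposition: it is listed among the ``standard properties of the Fourier transform'' and is stated without proof, with only the remark that ``since $\rho_\nu$ is an embedding, and $\rho_{\nu^t}$ is a submersion, the inverse and the direct images in the diagrams are defined.'' Your proof supplies exactly what one would expect: the fiberwise reduction to the classical identity $\cF((T^t)_*f)=T^*\cF(f)$ for a linear injection $T$, followed by dualizing to obtain the second square. The well-definedness checks you flag (properness of $\rho_\nu$, the submersion property of $\rho_{\nu^t}$, and the density-twist bookkeeping) are the right ones, and your duality identifications $(\cF_{W^*})^*=\cF_W^*$, $((\rho_\nu)^*)^*=(\rho_\nu)_*$, $((\rho_{\nu^t})_*)^*=(\rho_{\nu^t})^*$ match the paper's Definition~\ref{def:FT} and the conventions of \S\S\S\ref{sssec:op_dist}.
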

Note that since $\rho_\nu$ is an embedding, and $\rho_{\nu^t}$ is a submersion,
the inverse and the direct images in the diagrams
are defined.



\subsubsection{The wave front set} \label{sssec:wfs}
{As it was mentioned earlier, we will prove a stronger version of Theorem \ref{thm:main_a}, which has to
do with the wave front set. The  wave front set is an important invariant of a distribution $\xi$ on an analytic manifold $X$, which was introduced  in  \cite{Hor} in the Archimedean case and then adapted in \cite{Hef} to the non-Archimedean case.

The wave front  set is a closed subset of $T^*X$. We will denote it by $\WF(\xi)$.
The definition of $\WF(\xi)$ will be recalled in Appendix \ref{app:WF}.
Here we list the properties of the wave front set that will be used in this paper.
 Most of them are\RamiAE{ adaptations of results from \cite{Hor}. Some are} proved in \cite{Hef} and \cite{Aiz}, the rest will be proved in Appendix \ref{app:WF}.


\begin{proposition} \label{prop:WF_prop}
$ $
Let $X$ be an analytic variety and $E$ a \RamiF{locally
constant sheaf} over it. Let $\xi \in \G(X,E)$. Then we have:
\begin{enumerate}
\item \label{prop:WF_prop:1} $P_{T^*(X)}(WF(\xi))=WF(\xi) \cap X =\Supp(\xi),$ \RamiAQ{where $\Supp(\xi)$ denotes the usual support of $\xi$.} \RamiE{Here we identify $X$ with the zero section inside $T^*{X}$ \RamiAM{and $P_{T^*(X)}:T^*{X}\to X$ is the projection}.}
\item \label{prop:WF_prop:2} $WF(\xi) \subset X$ if and only if $\xi$ \RamiE{is smooth}.
\item \label{prop:WF_prop:3} Let $U \subset X$ be an open set. Then $WF(\xi|_U) = WF(\xi) \cap T^*(U).$
\item  \label{prop:WF_prop:4} Let $\xi' \in \G(X,E)$ and $f,f' \subset C^\infty(X).$  Then
$$WF(f \xi+f' \xi') \subset WF(\xi) \cup
WF(\xi').$$
\item \label{it:G_act} Let $G$ be an analytic group acting on $X$ and $E$. Suppose $\xi$ is
$G$-{invariant.}
Then $$ WF(\xi) \subset \{(x,v) \in T^*X(F)|v(\g x)=0\}=\bigcup_{x \in X} CN_{Gx}^X,$$
where $\g$ is the Lie algebra of $G$.
\end{enumerate}

\end{proposition}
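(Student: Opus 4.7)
The plan is to work directly from Heifetz's definition of the wave front set (recalled in Appendix~\ref{app:WF}): a covector $(x_0,\xi_0)\in T^*X$ is \emph{not} in $\WF(\xi)$ when there exists an analytic chart near $x_0$, a compactly supported locally constant cutoff $f$ with $f(x_0)\neq 0$, and an open cone $V\subset T^*_{x_0}X$ around $\xi_0$ such that the Fourier transform of $f\cdot\xi$ (viewed in the chart) is identically zero on a sufficiently large conic subset of $V$. The first four properties are essentially formal consequences of this definition, together with the locality of Fourier analysis on $F^n$; only item~(\ref{it:G_act}) has genuine content.

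For (\ref{prop:WF_prop:1}), the inclusion $P_{T^*X}(\WF(\xi))\subset\Supp(\xi)$ is clear because if $\xi$ vanishes on a neighborhood of $x_0$ we can take $f$ supported in that neighborhood and $f\xi=0$. The reverse inclusion uses that if every direction above $x_0$ admits a cutoff $f$ killing the Fourier transform of $f\xi$ on a cone, a compactness argument on the sphere of directions (i.e.\ a partition of unity on the set of directions, which is finite up to scalar in the non-Archimedean case because the residue field is finite, see \cite{Hef}) produces a single $f$ with $f\xi$ smooth, hence $\xi$ is smooth near $x_0$. This simultaneously gives (\ref{prop:WF_prop:2}). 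Item~(\ref{prop:WF_prop:3}) is immediate since the defining condition at $(x_0,\xi_0)$ is purely local in $x_0$. Item~(\ref{prop:WF_prop:4}) follows because if $(x_0,\xi_0)\notin \WF(\xi)\cup\WF(\xi')$, we can find a common $f$ (by shrinking to the intersection) and a common cone making both Fourier transforms vanish; since $\cF(f(f\xi+f'\xi'))$ is a linear combination of the Fourier transforms of $f\cdot f\xi$ and $f\cdot f'\xi'$, multiplication by the smooth functions $f,f'$ being harmless by Proposition~\ref{prop:F_prop} (convolution with a compactly supported measure preserves the vanishing cone up to a small perturbation that can be absorbed into a smaller cone).

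The substantial item is (\ref{it:G_act}). I would proceed infinitesimally: $G$-invariance of $\xi$ means that for every $\alpha\in\g$ the associated vector field $v_\alpha$ satisfies $v_\alpha\cdot\xi=0$ as a generalized section. The key analytic lemma, which I would either cite from \cite{Hef,Aiz} or prove in the appendix, is the non-Archimedean analog of Hörmander's ``microlocal ellipticity''/symbol estimate: if $D$ is a first-order differential operator with symbol $\sigma(D)\in C^\infty(T^*X)$ (a linear function on each fiber) and $D\xi=0$, then
\[
\WF(\xi)\subset\{(x,v)\in T^*X : \sigma(D)(x,v)=0\}.
\]
Applied to $D=v_\alpha$, whose symbol at $(x,v)$ is $v(\alpha(x))$, this forces $\WF(\xi)\subset\bigcap_{\alpha\in\g}\{v(\alpha(x))=0\}=\{v\mid v(\g x)=0\}$, which is exactly $\bigcup_{x\in X}CN_{Gx}^X$ by the definition of the conormal bundle to the orbit.

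The main obstacle will be the non-Archimedean symbol/ellipticity statement used in (\ref{it:G_act}). In the real setting this is classical (Hörmander's theorem on propagation for operators of real principal type restricted to order-zero statements), but in the $p$-adic setting one has no symbol calculus of pseudodifferential operators available, so I would derive it from scratch by straightening $v_\alpha$ locally (use the flow to choose coordinates in which $v_\alpha=\partial/\partial x_1$, which is legitimate analytically away from zeros of $v_\alpha$), reducing $D\xi=0$ to $\partial_{x_1}\xi=0$, and then reading off from the defining Fourier condition that $\WF(\xi)$ lies in $\{\xi_1=0\}$; the case where $\alpha(x)=0$ is trivial since then the constraint $v(\alpha(x))=0$ is vacuous at $x$.
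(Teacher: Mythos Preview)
Your treatment of items (\ref{prop:WF_prop:1})--(\ref{prop:WF_prop:4}) matches the paper's: the paper simply declares them ``trivial'' from the definition (Lemma~\ref{lem:WF_prop_aff}) and then extends to manifolds and bundles via the invariance of the wave front set under analytic diffeomorphisms (Corollary~\ref{iso}). Your sketches are adequate for this level of detail.

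For item~(\ref{it:G_act}) the paper takes the shortest route: it cites \cite[Theorem~4.1.5]{Aiz} for the non-Archimedean case and remarks that the same proof works Archimedeanly. Your proposed argument is essentially what lies behind that citation, but the framing has a genuine issue. The equation $v_\alpha\cdot\xi=0$ and the statement ``if $D\xi=0$ then $\WF(\xi)\subset\{\sigma(D)=0\}$'' are meaningless in the non-Archimedean setting: one cannot differentiate $p$-adic distributions (locally constant functions have zero derivative, so the dual operation on distributions is trivial). There is no first-order differential operator acting nontrivially on $\G(X)$, and hence no symbol calculus to invoke. Your own workaround---straighten $v_\alpha$ to $\partial/\partial x_1$ via the flow and then argue directly---is the correct idea, but it should be phrased without the intermediate ``$\partial_{x_1}\xi=0$'': what you actually get from $G$-invariance after straightening is that $\xi$ is invariant under translation by a small open subgroup of $F$ in the $x_1$-direction, and \emph{that} is what forces the Fourier transform of any localization to be supported in $\{\xi_1=0\}$ after restriction to a suitable cone. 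So the mechanism is group-theoretic (invariance under a small ball of translations), not differential. This is precisely the content of the argument in \cite{Aiz}; once reframed this way your sketch is correct.
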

In order to formulate the rest of the properties we will need the following notions:
\begin{definition}\label{def:dir_and_inv_image}
Let $X$ be an analytic variety. Let $A \subset T^*(X).$
\begin{enumerate}
\item \label{def:dir_and_inv_image:1}
  \RamiCP{We say that $A$ is \RamiK{\emph{conic}} if it is stable with respect to the homothety action
of $F^\times$ on $T^* X$, given by $\rho_\lambda(x,v)=(x,\lambda v)$.}

\item \label{def:dir_and_inv_image:2}
If $p: Y \to X$ is an analytic map we define $p^*(A)\subset T^*(Y)$ by
$$p^*(A):=\{(y,v) \in T^*(Y)|\exists w \in (dp^*)^{-1}(v)\subset T^{*}_{p(y)}\RamiAS{X} \text{ with } (p(y),w) \in A \}.$$
\item \label{def:dir_and_inv_image:3}
 If  $p:  X \to Y $ is an analytic map we define $p_*(A)\subset T^*(Y)$ by
 $$p_*(A):= \{(y,v) \in T^*(Y)|\exists x \in p^{-1}(y) \text{ with } (x,(d_xp)^*(v)) \in A \}.$$

\end{enumerate}
\end{definition}
\RamiAE{
\begin{rem}\label{rem:simp_gem}
We can describe the procedures of direct and inverse images in terms of symplectic geometry.

Namely, let $\pi: M \to N$ be a map of manifolds.
It gives rise to a correspondence $\Lambda_\pi \subset T^*(M) \times T^*(N)$ by $\Lambda_\pi=\{((x,v),(y,w))|y=\pi(x), v=d\pi^*(w) \}$.

Now let $S$ and $T$ be
 a symplectic manifold and $\Lambda \subset S \times T$ be a  correspondence.
 For a subset $Z \subset S$, we set $\Lambda(Z)=\{y\in T|\exists x\in Z \text{ such that }(x,y)\in \Lambda\}$.

This gives the following alternative definition for direct and inverse images: \begin{itemize}
\item for a subset $Z \subset T^*(M)$, we have $\pi_*(Z)=\Lambda_\pi(Z)$.
\item for a subset $Z \subset T^*(N)$, we have $\pi^*(Z)=\Lambda^{-1}_\pi(Z)$. Here $\Lambda^{-1}_\pi$ is $\Lambda_\pi$ considered as a subset  of $T^*(N) \times T^*(M).$
\end{itemize}

\end{rem}
}
%
\begin{proposition} \label{prop:WF_prop2}
$ $
Let $X$ be an analytic variety. Then we have:
\begin{enumerate}
\item $WF(\xi)$ is conic.
\item \label{prop:WF_prop2:2} Let \RamiL{$E$ be a locally
constant sheaf over $X$,  let $\xi \in \G(X,E)$ and let}
$p: Y \to X$ be an analytic submersion. Then $WF(p^*(\xi)) \subset p^*(WF(\xi))$.
\item \label{prop:WF_prop2:3} Let $ {q}: X \to Y$ be an analytic map\RamiL{, let $E$ be a locally
constant sheaf over $Y$  and let $\xi \in \Sc(X,q^*\RamiM{(E)})$.}
\RamiM{Assume $q|_{\supp (\xi)}$ is proper.}
Then $WF( {q}_*(\xi)) \subset  {q}_*(WF(\xi))$.
\end{enumerate}
\end{proposition}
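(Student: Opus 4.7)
The three parts are classical facts about wave front sets: they are adaptations to the $p$-adic setting (following Heifetz) of statements originally due to H\"ormander. My plan is to treat each part separately, reducing in each case to a local computation using the definition of $\WF$ via localized Fourier transforms that will be recalled in Appendix \ref{app:WF}.

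For (1), conicity is essentially built into the definition. A point $(x,v)\in T^*X$ lies in $\WF(\xi)$ iff, for every smooth compactly supported cutoff $\rho$ equal to $1$ near $x$, the Fourier transform of $\rho\xi$ expressed in local coordinates fails to decay rapidly in some conic neighborhood of $v$ at infinity. Scaling $v$ by $\lambda\in F^\times$ scales the frequency variable by $\lambda$, which preserves the relevant conic neighborhoods at infinity; hence $(x,v)\in\WF(\xi)$ iff $(x,\lambda v)\in\WF(\xi)$.

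For (2), the question is local on $Y$, so I would work in a neighborhood where the analytic submersion $p:Y\to X$ has the normal form of a projection $X\times Z\to X$. There $p^*(\xi)=\xi\boxtimes \mathbf{1}_Z$ (tensor with the constant generalized function $1$ in the $Z$ direction), and a direct coordinate computation with the definition shows that $(x,z;v,w)\in T^*(X\times Z)$ can lie in $\WF(p^*\xi)$ only if $w=0$ and $(x,v)\in\WF(\xi)$, which is exactly the condition that $(x,z;v,w)\in p^*(\WF(\xi))$ in the sense of Definition~\ref{def:dir_and_inv_image}(\ref{def:dir_and_inv_image:2}). The general submersion case follows by patching via a partition of unity.

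For (3), which is the substantive step, I would work locally on $Y$ and fix $(y_0,\eta_0)\notin q_*(\WF(\xi))$. By definition of $q_*$ on subsets of the cotangent bundle, this means that for every $x\in q^{-1}(y_0)\cap\Supp(\xi)$ one has $(x,(d_xq)^*\eta_0)\notin\WF(\xi)$. The set $q^{-1}(y_0)\cap\Supp(\xi)$ is compact by the properness hypothesis, so using the closedness of $\WF(\xi)$ and a partition of unity supported near this compact set, I would produce a cutoff $\rho\in\Sc(X)$ equal to $1$ near $q^{-1}(y_0)\cap\Supp(\xi)$ such that $\WF(\rho\xi)$ avoids a uniform conic neighborhood of $\{(x,(d_xq)^*\eta):x\in\Supp(\rho),\ \eta \text{ near }\eta_0\}$. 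Writing the Fourier transform of $q_*\xi$ near $\eta_0$ via the Fourier transform of $\rho\xi$ composed with $q$, one obtains the required rapid decay from an oscillatory-integral estimate with phase $\langle q(x),\eta\rangle$ (H\"ormander in the Archimedean case, Heifetz's $p$-adic analog in the non-Archimedean case).

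The main obstacle is part (3): the stationary-phase style decay estimate in the non-Archimedean case, where the phase comes from a possibly non-submersive map $q$. This is where Appendix~\ref{app:WF} will do real work; parts (1) and (2) should follow essentially directly from the definition together with Proposition~\ref{prop:WF_prop}.
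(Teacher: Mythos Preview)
Your sketch for parts (1) and (2) is correct and matches how the paper handles them: (1) is immediate from the definition (the asymptotic-vanishing condition in Definition~\ref{def:wf} is manifestly homogeneous in the covector), and (2) is the content of Proposition~\ref{submrtion}, for which the paper simply cites \cite[Theorem~8.2.4]{Hor} and \cite[Theorem~2.8]{Hef}; your local-projection argument is exactly the standard proof behind those references.

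For part (3) your approach differs from the paper's. You propose to attack a general $q$ directly: use properness to get a compact fiber, cover by cutoffs avoiding the wave front, and then control the Fourier transform of $q_*\xi$ near $\eta_0$ by an oscillatory-integral estimate with phase $\langle q(x),\eta\rangle$. This is in the spirit of H\"ormander's original treatment, but the non-submersive phase makes the estimate genuinely delicate (as you yourself flag). The paper instead avoids this entirely by the graph factorization
\[
q = pr \circ \gamma, \qquad \gamma:X\hookrightarrow X\times Y,\ x\mapsto (x,q(x)),\quad pr:X\times Y\to Y,
\]
and checks the inclusion separately for a closed embedding (where it is essentially trivial in coordinates) and for a projection (where properness of the support gives a compact fiber, a partition of unity produces a single cutoff of the form $g(y')$ constant along the fiber via Lemma~\ref{lem:WF_mon}, and the Fourier transform of $g\cdot p_*\xi$ is read off directly). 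The factorization buys you two things: the only ``oscillatory'' input needed is the elementary monotonicity Lemma~\ref{lem:WF_mon}, and the argument is uniform over the Archimedean and non-Archimedean cases without invoking any stationary-phase machinery for singular phases. Your direct route would work, but it is heavier exactly at the point you identified as the obstacle.
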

}
\RamiT{In \S\ref{sec:pf_futer} we will need the following more complicated properties of the wave front set:
\RamiAB{
\begin{notation}
Let $X$ be an analytic manifold.
For a closed conic set $\Gamma \subset T^*X$ we denote by $\G_\Gamma(X)$ the space of generalized functions whose wave front set is in $\Gamma$. We will consider this space equipped with its natural topology which we describe in Appendix \ref{app:WF}.  We will use similar notations for other types of generalized sections.
\end{notation}
}

\begin{proposition} \label{prop:WF_prop3}
We have the following generalization of Proposition \ref{prop:WF_prop2} \eqref{prop:WF_prop2:2}.
Let $p: Y \to X$ be an analytic map of analytic manifolds, let
$$N_p= \{(x,v)\in T^{*}X| x = p(y)  \text{ and } d_{\RamiAB{y}}^* p(v)=0 \text{ for some } y\in Y \}.$$
Let $E$ be a locally
constant sheaf over $X$.
\RamiAB{
Let $\Gamma \subset T^{*}X$ be a conic closed subset such that $\Gamma \cap N_p \subset X$.
}

Then the  map $p^{*}:C^\infty(X,E)\to C^\infty(Y,p^*(E))$
 has a unique continuous extension
to a
map
$p^{*}:\G_\RamiAB{\Gamma}(X,E)\to \G(Y,p^*(E))$.
 Moreover
for any $\xi \in \G_\RamiAB{\Gamma}(X,E)$ we have
$WF(p^*(\xi)) \subset p^*(WF(\xi))$.
\end{proposition}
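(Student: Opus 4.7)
The plan is to factor $p$ through its graph and apply known results for submersions and for non-characteristic restriction to submanifolds. Write $p=\pi_2\circ\gamma$, where $\gamma:Y\hookrightarrow Y\times X$ sends $y$ to $(y,p(y))$ and $\pi_2:Y\times X\to X$ is the projection, and set $p^*\xi:=\gamma^*(\pi_2^*\xi)$. Since $\pi_2$ is a submersion, the pullback $\pi_2^*:\G(X,E)\to\G(Y\times X,\pi_2^*(E))$ is already defined in \S\S\S\ref{sssec:op_dist}, and by Proposition~\ref{prop:WF_prop2}\eqref{prop:WF_prop2:2} satisfies $\WF(\pi_2^*\xi)\subset\pi_2^*(\Gamma)$. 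Because $d\pi_2^*(v)=(0,v)$, the elementary description $\pi_2^*(A)=\{((y,x),(0,v)):(x,v)\in A\}$ holds for any $A\subset T^*X$.

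Next I would verify that the non-characteristic condition for restricting $\pi_2^*\xi$ to $\Graph(p)$ follows from the hypothesis on $\Gamma$. Since $CN_{\Graph(p)}^{Y\times X}=\{((y,p(y)),(-d_y^*p(v),v)):y\in Y,\ v\in T^*_{p(y)}X\}$, an element of the intersection $\pi_2^*(\Gamma)\cap CN_{\Graph(p)}^{Y\times X}$ must have its $T_y^*Y$-component simultaneously equal to $0$ and to $-d_y^*p(v)$, which forces $d_y^*p(v)=0$ together with $(p(y),v)\in\Gamma$. By the definition of $N_p$ this means $(p(y),v)\in\Gamma\cap N_p$, and the hypothesis forces $v=0$, so the intersection lies in the zero section. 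The standard non-characteristic pullback lemma for closed embeddings---restriction to a closed submanifold $N\subset M$ is a well-defined continuous operation on generalized sections whose wave front set meets $CN_N^M$ only in the zero section, with the wave front set of the restriction bounded by the pullback---then produces $\gamma^*(\pi_2^*\xi)\in\G(Y,p^*(E))$ with $\WF(\gamma^*(\pi_2^*\xi))\subset\gamma^*(\pi_2^*(\Gamma))$. A direct unwinding of Definition~\ref{def:dir_and_inv_image}\eqref{def:dir_and_inv_image:2} (compatible with the correspondence viewpoint of Remark~\ref{rem:simp_gem}) shows $\gamma^*\circ\pi_2^*=p^*$ as operations on subsets of cotangent bundles, yielding the desired bound $\WF(p^*\xi)\subset p^*(\WF(\xi))$.

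Continuity of the composite $p^*:\G_\Gamma(X,E)\to\G(Y,p^*(E))$ follows from the continuity of $\pi_2^*$ and of the non-characteristic restriction, and uniqueness of the extension follows from this continuity together with the density of $C^\infty(X,E)$ in $\G_\Gamma(X,E)$ in the natural topology on the latter; both facts are recorded in Appendix~\ref{app:WF}. The main obstacle is the non-characteristic pullback lemma for closed embeddings in the non-Archimedean setting: it is classical in the Archimedean case (H\"ormander), and its $p$-adic analogue follows the spirit of \cite{Hef} and \cite{Aiz}, but establishing it cleanly requires a careful set-up of the topology on $\G_\Gamma$ and the verification that smooth approximations pull back continuously, which we defer to Appendix~\ref{app:WF}. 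Once this lemma is in hand, the proposition reduces to the factorization $p=\pi_2\circ\gamma$ together with the bookkeeping of conormal bundles above.
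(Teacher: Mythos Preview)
Your proposal is correct, and the graph factorization $p=\pi_2\circ\gamma$ together with the verification that $\Gamma\cap N_p\subset X$ translates into the non-characteristic condition for $\gamma$ is done cleanly.

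The paper's own treatment is more minimal than yours: in Appendix~\ref{app:WF} the proposition is simply restated (as Proposition~\ref{prop:2WF_prop3}) and attributed directly to \cite[Theorem~8.2.4]{Hor} in the Archimedean case and \cite[Theorem~2.8]{Hef} in the non-Archimedean case, with no further argument; the explicit approximation procedure in Remark~\ref{rem:iver_im} is recorded to make the extension concrete. So the paper defers the \emph{entire} statement to the literature, whereas you insert the intermediate reduction to the closed-embedding case before deferring. Your route is the standard way the general result is assembled from its pieces and makes the logical structure transparent; the paper's route is shorter because H\"ormander and Heifetz already handle the general map directly. One small point: you invoke Proposition~\ref{prop:WF_prop2}\eqref{prop:WF_prop2:2} for the wave front bound under $\pi_2^*$, but you also need continuity of $\pi_2^*$ from $\G_\Gamma(X,E)$ into $\G_{\pi_2^*(\Gamma)}(Y\times X,\pi_2^*E)$ in the topology of \S\ref{app:WF}; this is straightforward for submersions but worth flagging explicitly rather than leaving it implicit.
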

}
\RamiAD{
\begin{definition}\label{def:cont_dep}
Let $\xi\in \G(X \times Y)$ be a generalized function on a product of analytic manifolds. We will say that $\xi$ depends continuously on $Y$ if
for any $f \in \Sc(X,D_X)$ the generalized function $\xi_f \in \G(Y) $ given by $\xi_f(g)=\xi(f \boxtimes g)$ is continuous.\footnote{\RamiAQ{$f \boxtimes g \in \Sc(X\times Y,D_X \otimes D_Y)\cong \Sc(X\times Y,D_{X \times Y})$ denotes the density given by $(f \boxtimes g)(x,y):=f(x) \otimes g(y)$.}}
In this case we \DrinI{define} $\xi|_{X \times \{y\}} \in  \G(X \times \{y\}) $   by  $\xi|_{X \times \{y\}}(f):=\xi_f(y)$.
\end{definition}

{
\begin{rem}
In the situation of Definition~\ref{def:cont_dep} the generalized functions $\xi_y:=\xi|_{X \times \{y\}}$, $y\in Y$, form a \emph{continuous family} (i.e., the map $Y\to\G(X)$ defined by $y\mapsto\xi_y$ is continuous). Thus one  gets a bijection between generalized functions on $X \times Y$ depending continuously on $Y$ and continuous families of generalized functions on $X$ parametrized by~$Y$.
\end{rem}
}

\begin{proposition}   \label{prop:Treves}
Let $\xi\in \G(X \times Y)$ be a generalized function on a product of analytic manifolds. Assume that
{
\begin{equation} \label{e:wavefront_condit}
WF(\xi) \cap CN_{X \times \{y\}}^{X \times Y}\subset X \times Y
\end{equation}
}
and that $\xi$ depends continuously\footnote{In fact, \eqref{e:wavefront_condit} implies that $\xi$ depends continuously on $Y$ (cf. the discussion after Proposition 6.11 in \cite{tre}).
We will not need this implication.} on $Y$.
Then
$\xi|_{X \times \{y\}}=j^*(\xi)$ where $j:X \times \{y\}\hookrightarrow X \times Y$ is the embedding.
\end{proposition}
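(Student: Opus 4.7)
Both $j^{*}\xi$ and $\xi|_{X\times\{y\}}$ live in $\G(X\times\{y\})$: the former is well-defined by Proposition~\ref{prop:WF_prop3}, since the set $N_j$ of that proposition is exactly $CN^{X\times Y}_{X\times\{y\}}$ (the codifferential $(d_xj)^{*}:T^{*}_{(x,y)}(X\times Y)\to T^{*}_xX$ is projection onto the first factor, with kernel the conormal bundle), and so the wave front hypothesis $WF(\xi)\cap CN^{X\times Y}_{X\times\{y\}}\subset X\times Y$ is exactly what is required. My plan is to test both distributions against an arbitrary $f\in\Sc(X,D_X)$ and identify the two pairings with the same common limit
$$\lim_{n\to\infty}\langle\xi,f\boxtimes\phi_n\rangle,$$
where $\phi_n\in\Sc(Y,D_Y)$ is any sequence of densities with $\int\phi_n=1$ whose supports contract to $\{y\}$, so that $\phi_n\to\delta_y$ weakly against continuous functions on $Y$.

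The side involving $\xi|_{X\times\{y\}}$ is immediate from the definitions. By the definition of $\xi_f$ one has $\langle\xi,f\boxtimes\phi_n\rangle=\xi_f(\phi_n)$, and since $\xi_f$ is by hypothesis a continuous function on $Y$, the right-hand side converges to $\xi_f(y)=\langle\xi|_{X\times\{y\}},f\rangle$.

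The side involving $j^{*}\xi$ is the substantive step. For smooth $\xi$ the equality $\langle j^{*}\xi,f\rangle=\lim_n\langle\xi,f\boxtimes\phi_n\rangle$ is immediate from Fubini together with the weak convergence $\phi_n\to\delta_y$. To extend to general $\xi\in\G_\Gamma(X\times Y)$ I would approximate $\xi$ by smooth functions $\xi_k\to\xi$ in the $\G_\Gamma$-topology recalled in Appendix~\ref{app:WF} (produced by local convolution with a mollifier that respects the wave front constraint), invoke the continuity of $j^{*}:\G_\Gamma\to\G(X)$ granted by Proposition~\ref{prop:WF_prop3}, and interchange the $k$- and $n$-limits. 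The main obstacle is precisely this interchange: it requires equicontinuity of the functionals $\eta\mapsto\langle\eta,f\boxtimes\phi_n\rangle$ on $\G_\Gamma$ as $n\to\infty$, which is supplied by the wave front hypothesis. That some such condition is necessary is illustrated by $\xi=\delta_{X\times\{y\}}$, whose wave front set is $CN^{X\times Y}_{X\times\{y\}}$: the hypothesis fails, and the two candidate restrictions genuinely differ. The argument is the non-Archimedean analogue of the standard fact in H\"ormander's theory that pullback on $D'_\Gamma$ is the unique continuous extension of restriction from smooth functions.
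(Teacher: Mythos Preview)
Your reduction is correct: both $j^*\xi$ and $\xi|_{X\times\{y\}}$ are well defined, $N_j=CN^{X\times Y}_{X\times\{y\}}$, and the pairing $\langle\xi|_{X\times\{y\}},f\rangle=\xi_f(y)=\lim_n\langle\xi,f\boxtimes\phi_n\rangle$ is exactly what continuous dependence gives. The problem is the other half of the argument. You correctly identify the limit interchange as the obstacle and then assert that the wave front hypothesis ``supplies'' the required equicontinuity of $\eta\mapsto\langle\eta,f\boxtimes\phi_n\rangle$ on $\G_\Gamma$. That assertion is precisely what has to be proved, and proving it amounts to re-establishing Proposition~\ref{prop:WF_prop3} with an extra uniformity in $n$; it is not a formality.

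The paper avoids the double limit altogether. It computes $j^*\xi$ via the explicit approximation of Remark~\ref{rem:iver_im}: $\xi_i:=\rho_{\lambda_i}(f_1)\cdot(\rho_{\lambda_i^{-1}}(f_2)*\xi)$ are smooth and $j^*(\xi_i)\to j^*(\xi)$ weakly. The single extra idea is to take $f_1=g_1\boxtimes h_1$ and $f_2=g_2\boxtimes h_2$ compatible with the product $X\times Y$. Because $\xi_i$ is built \emph{from} $\xi$ by multiplication and convolution, one can unwind $\langle j^*(\xi_i),f\rangle$ and rewrite it directly as a pairing with $\xi$: it becomes $\langle\xi,G_i\boxtimes H_i\rangle=\xi_{G_i}(H_i)$, where $G_i\to f$ in $\Sc(X,D_X)$ (in the non-Archimedean case $G_i=f$ for $i$ large, by local constancy) and $H_i\in\Sc(Y,D_Y)$ is the $Y$-factor of the shrinking mollifier, an approximate $\delta_y$. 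Now there is only one index $i$, and the continuous-dependence hypothesis (together with the trivially continuous map $G\mapsto\xi_G$) gives $\xi_{G_i}(H_i)\to\xi_f(y)$ directly. No interchange is needed because the \emph{same} sequence simultaneously smooths $\xi$ (so that $j^*$ can be applied) and produces the approximation $\phi_n$ you were introducing separately.

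In short, your two independent sequences $(\xi_k)$ and $(\phi_n)$ create the interchange problem; the fix is to use one product-type mollification that plays both roles at once.
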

\RamiAM{
}
}

Combining Proposition~\ref{prop:WF_prop3}  and Proposition~\ref{prop:Treves} we get the following
\begin{cor}   \label{c:Treves}
In the situation of Proposition~\ref{prop:Treves} one has
$$WF(\xi|_{X \times \{y\}})\subset j^*(WF(\xi)).$$
\end{cor}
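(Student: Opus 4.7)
The plan is to directly combine the two preceding propositions, with the only nontrivial point being to identify the hypothesis of Proposition~\ref{prop:Treves} with the hypothesis of Proposition~\ref{prop:WF_prop3} applied to the embedding $j:X\times\{y\}\hookrightarrow X\times Y$.

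First, I would observe that for the embedding $j$, the set $N_j$ defined in Proposition~\ref{prop:WF_prop3} is precisely the conormal bundle $CN_{X\times\{y\}}^{X\times Y}$ (viewed inside $T^*(X\times Y)$). Indeed, for a point $w\in X\times\{y\}$ the codifferential $d_w^*j: T^*_{j(w)}(X\times Y)\to T^*_w(X\times\{y\})$ is just the restriction map, whose kernel is by definition the conormal fiber. Thus the hypothesis of Proposition~\ref{prop:Treves}, namely $WF(\xi)\cap CN_{X\times\{y\}}^{X\times Y}\subset X\times Y$, is exactly the hypothesis $\Gamma\cap N_j\subset X\times Y$ of Proposition~\ref{prop:WF_prop3} with $\Gamma:=WF(\xi)$ (which is closed and conic by Proposition~\ref{prop:WF_prop2}).

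Next, by Proposition~\ref{prop:WF_prop3}, the pullback $j^*$ is defined on $\xi$ (since $WF(\xi)$ satisfies the above condition) and satisfies
\[
WF(j^*(\xi))\subset j^*(WF(\xi)).
\]
On the other hand, the continuous dependence hypothesis together with the same wave-front condition puts us in the setting of Proposition~\ref{prop:Treves}, which asserts the equality
\[
\xi|_{X\times\{y\}}=j^*(\xi).
\]
Combining these two facts yields $WF(\xi|_{X\times\{y\}})=WF(j^*(\xi))\subset j^*(WF(\xi))$, which is the desired conclusion.

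I do not expect any genuine obstacle; the only delicate point is the bookkeeping that matches $N_j$ with the conormal bundle of the fiber. Everything else is a formal composition of the two previously established results.
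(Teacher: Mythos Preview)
Your proof is correct and follows exactly the approach indicated in the paper, which simply states that the corollary is obtained by combining Proposition~\ref{prop:WF_prop3} and Proposition~\ref{prop:Treves}. You have correctly spelled out the one detail worth noting, namely that $N_j = CN_{X\times\{y\}}^{X\times Y}$ so that the wave-front hypothesis of Proposition~\ref{prop:Treves} is precisely what is needed to invoke Proposition~\ref{prop:WF_prop3}.
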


\RamiAbs{below are leftovers which we will probably not need.
\RamiAA{
\begin{proposition} \label{prop:WF_prop4}
Let $\phi:X \to Y$ be a proper map of  analytic manifolds.
Let $Z \subset Y$ be a closed submanifold. Assume that $\phi$ is transversal to $Z$, i.e. for any $x\in X$ such that $\phi(x)\in Z$, we have $d_x(T_x(X))+T_{\phi(x)}Z=T_{\phi(x)}Y$. Let $W=\phi^{-1}(Z)$. Note that it is a submanifold.
Let $\xi\in \G(X)$ be such that $WF(\xi) \cap CN_{W}^X \subset X.$ Then
\begin{enumerate}
\item $WF(\phi_*(\xi))\cap CN_{W}^Y \subset Y.$
\item $\phi_*(\xi)|_Z=(\phi|_W) _*(\xi|_{W})$. Here the restriction means the inverse image with respect to the corresponding embedding
\end{enumerate}

The analogous result holds for generalized sections of locally
constant sheaves.
\end{proposition}
}

\RamiQO{I'll try to find a reference. If I don't find,  I'll write a proof.}
\RamiAB{
\begin{proposition} \label{prop:WF_prop5}
Let $p:Y\to X$ be a proper map of  analytic manifolds. Let $W$ be a finite dimensional  $F$-vector space. Let

$$p':=p\times Id_W:Y \times W\to X \times W \text{ and } p'':=p\times Id_{W^*}:Y \times W^*\to X \times W^*.$$
Let $\Gamma_1 \subset T^*(X \times W),\Gamma_2\subset T^*(X \times W^*)$. Assume that
$$\Gamma_1 \cap N_{p'} \subset X \times W \text{ and } \Gamma_2 \cap N_{p''} \subset X \times W^*.$$
Let $\xi \in \Sc_{\Gamma_1}^*(X\times W)$ be such that $WF(\cF_W(\xi)) \subset \Gamma_2$.
Then $(p')^*(\cF^{*}_W(\xi))=\cF^{*}_W((p')^*(\xi))$.
\end{proposition}
}

\RamiAB{
\begin{proof}
Without loss of generality we may assume that $X$ is a vector space and the projection of the support of $\xi$ to $X$ is compact. Fix Haar measures on $X$ and $W$.

Let $\lambda \in  F$ such that $|\lambda| >1$. Let $\alpha\in C^\infty_c(X)$ and $\beta\in \Sc(W)$ be such that
$$\alpha(1)=\beta(1)=1,\quad \int \alpha=\int \beta=1$$
and $\cF(\beta)\in C^\infty_c(W^{*}).$
Let $$\alpha_{i}=\rho_{\lambda^{i}} \alpha, \quad\beta_{i}=\rho_{\lambda^{i}} \beta.$$ Let $\xi_i=  \beta_{i}\cdot (\beta_{-i}*\alpha_{-i}*\xi) \cdot (|\lambda|^{i\cdot \dim(X \times W)})$.
\RamiQP{I hope that the normalization $ (|\lambda|^{i\cdot \dim(X \times W)})$ is correct. I'll check it again.}
It is easy to see that $\xi_i$
 and $\cF_W(\xi_i)$ are smooth compactly supported. \RamiQP{In the Archimedean case it is Schwartz and not compactly supported.}

 Similarly to \cite[Theorem 8.2.3]{Hor}  it is also easy to see that $\xi_i \to \xi$ and   $\cF_W(\xi_i) \to \cF_W(\xi)$ in the topologies
on $C^{-\infty}_{\Gamma_1}(X\times W, D_{X\times W})$ and
$C^{-\infty}_{\Gamma_2}(X\times W^*,D_{X}\boxtimes \C_{W^*})$. This implies the assertion.
\end{proof}
}

\RamiQP{This proof belongs to the appendix, I put it here for the time being
}
\RamiQP{Currently the proof (and the formulation) is only valid for the non-Archimedean case, but it can be easily adapted to the Archimedean case
}
}

\section{WF-holonomic distributions}\label{sec:WF-Holonomic}
\RamiQ{
$ $
It seems that we  need to weaken the notion of WF-holonomic to make it more appropriate to the analytic world. Although
 it should not matter for Theorem \ref{thm:main_b}, it is relevant for Theorem \ref{thm:main_c}.
Since we are proving Theorem \ref{thm:main_b} via Theorem \ref{thm:main_c}, it is  more convenient to use only one
 notion of WF-holonomic. The only disadvantage is  that we will prove regularity only  in an analytically open dense
 set and not a Zariski open  dense set. It should be fixed in the next version, when we give an explicit bound.}

  \DrinD{
 \subsection{Recollections on isotropic and Lagrangian conic subvarieties of $T^*(X)$}\label{sec:lag}
 Let $M$ be a symplectic algebraic manifold and $V\subset M$ a {constructible subset\footnote{{A  subset of an algebraic variety is said to be constructible if it is a finite union of locally closed subsets. A theorem of Chevalley says that the image of a constructible subset under a regular map is constructible. (A similar statement for preimages is obvious.)}}.}
 We say that $V$ is \emph{isotropic} (resp. \emph{Lagrangian}) if there is an open dense {subset  $V'\subset V$ which is a smooth isotropic (resp. Lagrangian) locally closed subvariety  in $M$}.


 \begin{rem}    \label{r:easy_facts}
 The closure of an  isotropic (resp. Lagrangian) subset is isotropic (resp. Lagrangian).
 The union of two  isotropic (resp. Lagrangian) subset is isotropic (resp. Lagrangian).
 \end{rem}

\begin{proposition} \label{prop:sub_iso}
If $V\subset M$ is isotropic then so is any {constructible subset} $Z\subset V$.
\end{proposition}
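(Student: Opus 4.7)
The plan is to use Hironaka's resolution theorem (Theorem~\ref{thm:Hir}) to transport the isotropy condition on $V'\subset V$ (the smooth isotropic locally closed subvariety guaranteed by the definition of isotropic) first to $\bar V$ and then to arbitrary closed subvarieties inside it. Since $V'$ is locally closed in $M$ and dense in $V$, it is open and dense in the Zariski closure $\bar V$. We choose a resolution $\pi\colon \widetilde V\to \bar V$; composing with the inclusion $\bar V\hookrightarrow M$ gives a morphism $f\colon \widetilde V\to M$ from a smooth variety whose pullback $f^{*}\omega$ is an algebraic $2$-form vanishing on the open dense subset $\pi^{-1}(V')\subset\widetilde V$, and therefore vanishing identically (an algebraic form on a smooth variety that vanishes on a Zariski dense open set vanishes everywhere).

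Given a constructible $Z\subset V$, let $\bar Z$ be its Zariski closure. We apply Hironaka once more to the closed subvariety $\pi^{-1}(\bar Z)\subset\widetilde V$ to obtain a smooth variety $\widetilde Z$ with a proper surjection onto $\pi^{-1}(\bar Z)$; the composition $g\colon \widetilde Z\to M$ then still satisfies $g^{*}\omega=0$ and has image $\bar Z$. Next we invoke the standard fact that if a morphism $g\colon X\to M$ from a smooth variety satisfies $g^{*}\omega=0$, then the smooth locus of every irreducible component of $\overline{g(X)}$ is isotropic in $M$: by generic smoothness, at a general point of a suitable irreducible component of $X$ the differential $dg$ is surjective onto the tangent space of the corresponding component of $\overline{g(X)}$, so $g^{*}\omega=0$ forces $\omega$ to vanish there, and Zariski density propagates this to the whole smooth locus. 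Applied to our $g$, this shows that every irreducible component of $\bar Z$ is isotropic, and hence $\bar Z^{\mathrm{sm}}$ itself is a smooth isotropic locally closed subvariety of $M$.

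It remains to pass from $\bar Z^{\mathrm{sm}}$ to $Z$: since $Z$ is dense in $\bar Z$ and $\bar Z^{\mathrm{sing}}$ has strictly smaller dimension than $\bar Z$, the intersection $Z\cap \bar Z^{\mathrm{sm}}$ is open and dense in $Z$; choosing inside it a smooth open dense locally closed subvariety (which exists by the general structure of constructible subsets) gives the required smooth isotropic locally closed subvariety, open and dense in $Z$. The principal obstacle in this plan is precisely the case where $\bar Z$, or one of its irreducible components, lies inside the singular locus of $\bar V$: in that case the isotropy of $\bar V^{\mathrm{sm}}$ gives no direct information about tangent spaces along $\bar Z$. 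Hironaka's theorem is what handles this obstacle, because pulling back to a smooth resolution converts what looks like a degenerate situation along a singular stratum into a vanishing statement for an algebraic $2$-form on a dense open subset of a smooth variety, where standard extension-by-continuity arguments apply.
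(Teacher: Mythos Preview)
Your proof is correct. The paper does not give its own proof of this proposition; it simply cites \cite[Proposition 1.3.30]{CG} and \cite[\S 1.5.16]{CG}. So the relevant comparison is between your argument and the one in Chriss--Ginzburg.

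The Chriss--Ginzburg argument is more elementary: it does not use resolution of singularities. Roughly, to show that a closed irreducible $Z\subset \bar V$ is isotropic, one works at a general smooth point $z\in Z$ and uses a limit-of-tangent-spaces argument---there exist smooth points $v_i\in V'$ converging to $z$, and a subsequential limit $L$ of the isotropic subspaces $T_{v_i}V'$ in the Grassmannian is again isotropic; one then checks (this is the content of \cite[\S 1.5.16]{CG}, a Whitney-type statement) that $T_zZ\subset L$.

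Your approach replaces this limit argument by Hironaka's theorem. The key reformulation you use is that a subset is isotropic precisely when it is the image of a morphism $f$ from a smooth variety satisfying $f^*\omega=0$; this property is manifestly inherited by closed subvarieties upon resolving, and generic smoothness in characteristic~$0$ does the rest. Given that Hironaka's theorem is already a central tool in this paper (Theorem~\ref{thm:Hir}), your route is quite natural in context and arguably more transparent than the tangent-cone style argument, though it imports a much heavier theorem. Either approach is fine here; yours has the advantage of avoiding the somewhat delicate limit-of-tangent-spaces step.
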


The statement is nontrivial because $Z$ may be contained in the set of singular points of $V$.
For a proof, see, e.g., \cite[Proposition 1.3.30]{CG} and  \cite[\S 1.5.16]{CG}.

Now let $M=T^*(X)$, where $X$ is a smooth algebraic manifold. The multiplicative group acts
on $M$ by homotheties. A subvariety of $M$ is said to be \emph{conic} if it is stable with respect to this action. If $A\subset X$ is a smooth algebraic subvariety then the conormal bundle
$CN_A^X$ and its closure $\overline{CN_{A}^X}$ are conic Lagrangian subvarieties of $T^*(X)$.
It is well known that any closed conic Lagrangian subvariety of $T^*(X)$ is a finite union of
varieties of the form $\overline{CN_{A}^X}$. Here is a slightly more general statement.

\begin{lemma}    \label{lem:isotr_discrip}
Let $X$ be an algebraic manifold and $C \subset T^*(X)$ a closed conic algebraic subvariety.
Then the following properties of $C$ are equivalent:

\begin{enumerate}
\item $C$ is isotropic; \label{1}
\item $C$ is contained in a Lagrangian subvariety
of $T^*(X)$; \label{2}
\item  There is a finite collection of smooth locally closed subvarieties $A_i \subset X$ such that
$$C \subset \bigcup_i \overline{CN_{A_i}^X}\,;$$ \label{3}
\item There is a finite collection of smooth locally closed subvarieties $A_i \subset X$ such that
$$C \subset \bigcup_i {CN_{A_i}^X}\,.$$ \label{4}
\end{enumerate}
\end{lemma}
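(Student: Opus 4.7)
The plan is to prove the implications cyclically: $(4)\Rightarrow(3)\Rightarrow(2)\Rightarrow(1)\Rightarrow(4)$. The first three are essentially formal. For $(4)\Rightarrow(3)$ one has $CN_{A_i}^X\subset\overline{CN_{A_i}^X}$. For $(3)\Rightarrow(2)$, each $\overline{CN_{A_i}^X}$ is a conic Lagrangian subvariety of $T^*(X)$ (as recalled in the paragraph preceding the lemma), and Remark~\ref{r:easy_facts} iterated shows that their finite union is Lagrangian, so $C$ is contained in a Lagrangian subvariety. For $(2)\Rightarrow(1)$, a Lagrangian subvariety is in particular isotropic (its dense smooth open part is Lagrangian, hence isotropic), and Proposition~\ref{prop:sub_iso} then yields that $C$ is isotropic.

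The substantive content is $(1)\Rightarrow(4)$, which I would prove by Noetherian induction on $d:=\dim\overline{\pi(C)}$, where $\pi\colon T^*(X)\to X$ is the projection. Decomposing into irreducible components we may assume $C$ is irreducible. The base case $d=0$ is immediate since then $\pi(C)$ is a finite set $\{x_1,\dots,x_k\}$ and $C\subset\bigcup_i T^*_{x_i}(X)=\bigcup_i CN_{\{x_i\}}^X$. For the inductive step, set $Y:=\overline{\pi(C)}$, let $Y^0\subset Y$ be its open dense smooth locus, and put $U:=X\setminus(Y\setminus Y^0)$, so that $A:=Y\cap U=Y^0$ is smooth and closed in $U$, hence smooth and locally closed in $X$. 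The key geometric claim is the inclusion
\[
C\cap\pi^{-1}(U)\subset CN_A^X. \qquad(\ast)
\]
Granting $(\ast)$, the set $C':=C\setminus\pi^{-1}(U)$ is a closed conic algebraic subvariety of $T^*(X)$, isotropic by Proposition~\ref{prop:sub_iso}, with $\pi(C')\subset Y\setminus Y^0$ and hence $\dim\overline{\pi(C')}<d$. The induction hypothesis then covers $C'$ by finitely many $CN_{A_j}^X$, and combining this with $(\ast)$ yields $(4)$.

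To establish $(\ast)$ I would use the standard observation that the tautological $1$-form $\alpha$ on $T^*(X)$ vanishes on any conic isotropic subvariety. Indeed, let $E$ be the Euler vector field (the generator of the homothety action), which is tangent to $C$ by conicity; the identity $\iota_E\,d\alpha=\alpha$, together with the isotropy condition $d\alpha|_C=0$, gives $\alpha(v)=d\alpha(E,v)=0$ for every $v$ tangent to the smooth locus of $C$. Now let $C^0\subset C$ be the open subset of smooth points $(a,\xi)$ with $a\in A$ at which $d_{(a,\xi)}\pi\bigl(T_{(a,\xi)}C\bigr)=T_aY=T_aA$. Since $\pi|_C$ dominates $Y$ and $C$ is irreducible, $C^0$ is open and dense in $C$, and therefore dense in the open subset $C\cap\pi^{-1}(U)$. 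For $(a,\xi)\in C^0$, the vanishing of $\alpha$ on $T_{(a,\xi)}C$ translates into $\langle\xi,T_aA\rangle=0$, i.e.\ $(a,\xi)\in CN_A^X$. Since $A$ is closed in $U$, the conormal bundle $CN_A^X$ is closed in $\pi^{-1}(U)$, so the inclusion $C^0\subset CN_A^X$ propagates by taking closure in $\pi^{-1}(U)$ to all of $C\cap\pi^{-1}(U)$, proving $(\ast)$. The main delicate point is precisely this closure step: it is essential that $A$ be closed in the open $U$ (which is why we excised the singular locus of $Y$), as $CN_A^X$ need not be closed in $T^*(X)$ itself.
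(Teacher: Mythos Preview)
Your proof is correct and the overall architecture matches the paper's: the easy implications $(4)\Rightarrow(3)\Rightarrow(2)\Rightarrow(1)$ are disposed of the same way (using Remark~\ref{r:easy_facts} and Proposition~\ref{prop:sub_iso}), and the work goes into $(1)\Rightarrow(4)$ via induction over the dimension of the projection to $X$.

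Where you diverge is in the mechanism for the key inclusion $(\ast)$. The paper runs a double induction on $\dim C$ and $\dim P_{T^*X}(C)$: it first strips off the singular locus of $C$ (induction on $\dim C$), then restricts to the locus of regular values of the projection (Sard), and finally argues pointwise on each fiber $Y=q^{-1}(x)$ that $T_yY=\ker d_yq\subset(\Im d_yq)^{\perp}=CN^X_{U,x}$ by symplectic orthogonality, using conicity of $\overline{Y}$ to upgrade this tangential statement to $Y\subset CN^X_{U,x}$. Your argument instead reduces to irreducible $C$, runs a single induction on $\dim\overline{\pi(C)}$, and uses the identity $\iota_E\,d\alpha=\alpha$ to show directly that the tautological $1$-form vanishes on the smooth locus of any conic isotropic $C$; this immediately gives $\xi\in CN^X_{A,a}$ at points where $d\pi(T_{(a,\xi)}C)=T_aA$, and a closure step finishes. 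Your route is a bit more conceptual and avoids the separate reduction to smooth $C$ and the fiberwise tangent-space argument; the paper's route is more hands-on and makes the role of conicity in pinning down the fiber (not just its tangent space) more visible. Both are standard and essentially equivalent in strength.
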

This lemma is standard. For completeness, we include its proof in
\RamiAF{Appendix \ref{app:symp_geom}.} 


\medskip

Now let $S\subset T^*X(F)$ be \emph{any} conic subset (not necessarily an algebraic subvariety). Its Zariski closure $\bar S\subset T^*X$ is also conic.

\begin{lemma}   \label{l:closure}
$\bar S$ has the equivalent properties from Lemma~\ref{lem:isotr_discrip} if and only if
there is a finite collection of smooth locally closed subvarieties $A_i \subset X$ such that
$S \subset \bigcup\limits_i {CN_{A_i}^X (F)}\,$.
\end{lemma}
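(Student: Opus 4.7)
The plan is to apply Lemma~\ref{lem:isotr_discrip} in both directions, reducing the statement about $F$-point containment to an algebraic statement about Zariski closures. The hypothesis that $\bar S$ is conic (asserted in the setup of the lemma) ensures that Lemma~\ref{lem:isotr_discrip} applies directly to $\bar S$, so beyond a short bookkeeping argument there is nothing further to prove.

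For the ``only if'' direction, I would assume $\bar S$ satisfies the equivalent conditions of Lemma~\ref{lem:isotr_discrip}. Property~(\ref{4}) of that lemma then produces a finite collection of smooth locally closed subvarieties $A_i \subset X$ with $\bar S \subset \bigcup_i CN_{A_i}^X$. Since $S \subset \bar S$ and $S \subset T^*X(F)$, intersecting both sides with $T^*X(F)$ immediately yields $S \subset \bigcup_i CN_{A_i}^X(F)$.

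For the ``if'' direction, I would suppose $S \subset \bigcup_i CN_{A_i}^X(F)$ for some finite collection of smooth locally closed subvarieties $A_i\subset X$. Each $CN_{A_i}^X(F)$ is contained (at the level of points) in the algebraic variety $CN_{A_i}^X$, hence in its Zariski closure $\overline{CN_{A_i}^X}$. The finite union $\bigcup_i \overline{CN_{A_i}^X}$ is Zariski closed in $T^*X$, so taking the Zariski closure of $S$ gives $\bar S \subset \bigcup_i \overline{CN_{A_i}^X}$. This is precisely property~(\ref{3}) of Lemma~\ref{lem:isotr_discrip}, so $\bar S$ satisfies the equivalent isotropy conditions.

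I do not anticipate any real obstacle: the argument is essentially a translation between the set-theoretic containment involving $F$-points and the algebraic containment of the Zariski closures, with all the substantive content already packaged in Lemma~\ref{lem:isotr_discrip}. The only point worth being careful about is that Lemma~\ref{lem:isotr_discrip} takes as input a closed conic algebraic subvariety of $T^*(X)$, and $\bar S$ is such because it is Zariski closed by construction and conic by the assertion at the start of the statement.
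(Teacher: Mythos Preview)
Your proof is correct and follows essentially the same approach as the paper: use property~(\ref{4}) of Lemma~\ref{lem:isotr_discrip} for the ``only if'' direction (pass to $F$-points), and for the ``if'' direction take Zariski closures to land in $\bigcup_i \overline{CN_{A_i}^X}$, which is property~(\ref{3}). The paper's proof is just a two-line version of exactly this argument.
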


\begin{proof}
If $\bar S \subset \bigcup\limits_i  CN_{A_i}^X$\RamiM{,} then
$S \subset \bigcup\limits_i {CN_{A_i}^X (F)}\,$. If
$S \subset \bigcup\limits_i {CN_{A_i}^X (F)}\,$\RamiM{,} then
$\bar S \subset \bigcup\limits_i  \overline{CN_{A_i}^X}$.
\end{proof}
}

{The following lemma is well known (see Appendix \ref{app:symp_geom} for a proof).}
\RamiAF{
\begin{lemma}\label{lem:iso_dir_invs_im}
Let $p:X\to Y$ be a morphism of algebraic manifolds. {Let $T\subset T^*X$ and $S\subset T^*Y$ be constructible subsets.}
\begin{enumerate}
\item If $T$ is isotropic then $p_*(T)$ is.
\item If $S$ is isotropic then $p^*(S)$ is.
\end{enumerate}
\end{lemma}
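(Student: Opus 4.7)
The plan is to reduce both statements, via Lemma~\ref{lem:isotr_discrip} and Remark~\ref{r:easy_facts}, to the case where the isotropic input is a single conormal bundle $CN_A^X$ (for part~(1)) or $CN_B^Y$ (for part~(2)), and then exhibit explicit conormal bundles into which the image lands.

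For~(1), assume $T=CN_A^X$ with $A\subset X$ smooth locally closed. Using generic smoothness applied iteratively, I stratify $A$ into finitely many smooth locally closed subvarieties $A_j$ such that $p|_{A_j}:A_j\to Y$ has constant rank and its image $B_j:=p(A_j)$ is a smooth locally closed subvariety of $Y$ of the same dimension as the rank (this uses that $p(A_j)$ is constructible, together with a further refinement by generic smoothness). The key computation is then a one-liner: for $x\in A_j$ and $\xi\in T^*_{p(x)}Y$ with $(d_xp)^*\xi\in CN_A^X$, the covector $(d_xp)^*\xi$ annihilates $T_xA\supset T_xA_j$, so $\xi$ annihilates $d_xp(T_xA_j)=T_{p(x)}B_j$, i.e.\ $\xi\in CN_{B_j}^Y$. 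Hence $p_*(CN_A^X)\subset\bigcup_j CN_{B_j}^Y$, which is isotropic by Lemma~\ref{lem:isotr_discrip}.

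For~(2), take $S=CN_B^Y$ with $B\subset Y$ smooth locally closed. Stratify the constructible subset $p^{-1}(B)\subset X$ into finitely many smooth locally closed subvarieties $C_k$. For $x\in C_k$ and $\xi\in T^*_{p(x)}Y$ with $(p(x),\xi)\in CN_B^Y$, every $u\in T_xC_k$ satisfies $d_xp(u)\in T_{p(x)}B$ since $p(C_k)\subset B$, hence
\[
\langle (d_xp)^*\xi,u\rangle=\langle\xi,d_xp(u)\rangle=0.
\]
Therefore $(d_xp)^*\xi\in CN_{C_k}^X$, giving $p^*(CN_B^Y)\subset\bigcup_k CN_{C_k}^X$, again isotropic by Lemma~\ref{lem:isotr_discrip}.

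The only nontrivial ingredient is the existence of the stratifications above; this is a standard consequence of generic smoothness, Chevalley's theorem on images of constructible sets, and the fact that every constructible subset of a smooth algebraic variety admits a finite partition into smooth locally closed subvarieties. Accordingly I expect no real obstacle: once one reduces to a single conormal bundle via Lemma~\ref{lem:isotr_discrip}, both parts amount to the elementary identity $\langle(d_xp)^*\xi,u\rangle=\langle\xi,d_xp(u)\rangle$ combined with bookkeeping over the strata.
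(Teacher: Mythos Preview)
Your reduction step has a gap: Lemma~\ref{lem:isotr_discrip} applies only to \emph{closed conic} subvarieties of $T^*X$, but $T$ and $S$ in the lemma are not assumed to be conic. A general isotropic constructible subset need not sit inside any finite union of conormal bundles: for instance the graph $\{(x,x):x\in\mathbb{A}^1\}\subset T^*\mathbb{A}^1$ is a smooth isotropic (even Lagrangian) curve, yet the only conormal bundles in $T^*\mathbb{A}^1$ are pieces of the zero section and individual cotangent fibers, and the graph meets each of these in at most one point. So your passage to the special case $T=CN_A^X$ (resp.\ $S=CN_B^Y$) is unjustified for the lemma as stated.

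Once that reduction is granted, your stratification arguments and the tangent-space computations $\langle (d_xp)^*\xi,u\rangle=\langle\xi,d_xp(u)\rangle$ are correct; in particular your argument does prove the lemma under the extra hypothesis that $T$ and $S$ are conic (and this already suffices for every application made in the paper). But the lemma is asserted for arbitrary constructible isotropic subsets.

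The paper's proof avoids conicity altogether. It observes (Remark~\ref{rem:simp_gem}) that both $p_*$ and $p^*$ are realized by the single correspondence $\Lambda_p\subset T^*X\times T^*Y$, checks that $\Lambda_p$ is Lagrangian for the form $\omega_{T^*X}\oplus(-\omega_{T^*Y})$, and then invokes the general fact (\cite[Prop.~2.7.51]{CG} or \cite[Lemma~1]{G}) that a Lagrangian correspondence carries isotropic constructible subsets to isotropic constructible subsets. That argument never reduces to conormal bundles and hence never needs the conic hypothesis.
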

}
{For the definition of $p_*$ and $p^*$, see Definition~\ref{def:dir_and_inv_image} and Remark~\ref{rem:simp_gem}. Note that since $T$ and $S$ are constructible so are  $p_*(T)$ and $p^*(S)$.}


 \subsection{WF-holonomic distributions}

\newWFHol{
\begin{definition}
Let $X$ be an \RamiJ{algebraic manifold over $F$} \DrinD{and let $E$ be a locally constant sheaf on $X(F)$}.
 A distribution $\xi \in \Sc^*(X\RamiJ{(F)},E)$ is said to be \emph{\RamiJ{algebraically} WF-holonomic} if \RamiAF{the \RamiAH{Zariski} closure of $WF(\xi)$ is isotropic.} 
\end{definition}
\begin{remark}
{
By Lemmas~\ref{lem:isotr_discrip} and \ref{l:closure}, $\xi$ is algebraically WF-holonomic if and only if
$WF(\xi) \subset \bigcup\limits_i {CN_{A_i}^X (F)}$ for some smooth locally closed subvarieties
$A_1,\ldots, A_n\subset X$.
}
\end{remark}


\begin{remark}
It can happen that the Zariski closure of $WF(\xi)$ is isotropic but not Lagrangian (simple examples are given in \cite[Appendix A]{Dri}).
\end{remark}

\RamiJ{
\begin{remark}
One can also define a more general notion of ``analytically WF-holonomic distribution'' for analytic manifolds. However, we
will not discuss it in this paper. So we will use the expression ``WF-holonomic'' as a shorthand for
 ``algebraically WF-holonomic''.
\end{remark}
}

}
\begin{remark}
{In general, the notion of WF-holonomicity is not as powerful as the notion  of holonomicity given by the theory
 of D-modules.
 For example, it is not true that the Fourier transform of a  WF-holonomic distribution on an affine space is WF-holonomic. Yet if the variety $X$ is compact\RamiM{,} then the notion of WF-holonomicity
 seems to be a good candidate for  replacing the notion of holonomicity in the non-Archimedean
case.
}
\RamiQ{Is this remark appropriate? }
\end{remark}

The next lemma {follows immediately from \DrinD{statements (2) and (3) of } Proposition~\ref{prop:WF_prop}}.
\begin{lemma}\label{lem:hol_reg}
Let $X$
be an \RamiJ{algebraic $F$-manifold} and $E$ a \RamiF{locally
constant sheaf} over \RamiJ{$X(F)$}.
If $\xi \in \G(X (F),E)$ is WF-holonomic
then there exists \RamiJ{a Zariski} open dense subset $U \subset X$
such that $\xi|_{\RamiJ{U(F)}}$
\RamiE{is smooth}.
\end{lemma}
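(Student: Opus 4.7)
The plan is to use the characterization of WF-holonomic distributions via conormal bundles together with statements (2) and (3) of Proposition~\ref{prop:WF_prop}. Since $\xi$ is WF-holonomic, the Zariski closure of $WF(\xi)$ is isotropic. By Lemmas~\ref{lem:isotr_discrip} and \ref{l:closure}, this means there exists a finite collection of smooth locally closed subvarieties $A_1,\ldots,A_n\subset X$ such that
\[
WF(\xi)\subset \bigcup_{i=1}^n CN_{A_i}^X(F).
\]

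Next, I would separate the $A_i$'s into two groups: those whose closure $\overline{A_i}$ equals $X$ and those whose closure is a proper closed subvariety of $X$. For indices $i$ in the first group, $A_i$ is a smooth open dense subset of $X$, so $CN_{A_i}^X$ coincides with the zero section (identified with $A_i\subset X$) and is therefore contained in $X$. Define
\[
U := X \setminus \bigcup_{i\,:\,\overline{A_i}\neq X} \overline{A_i}.
\]
This $U$ is Zariski open, and it is dense because it is the complement of a finite union of proper closed subvarieties of $X$.

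Now I would verify that $\xi|_{U(F)}$ is smooth. By Proposition~\ref{prop:WF_prop}(\ref{prop:WF_prop:3}), we have $WF(\xi|_{U(F)}) = WF(\xi)\cap T^*(U)(F)$. For indices $i$ with $\overline{A_i}\neq X$, we have $A_i\cap U=\emptyset$ by construction, so $CN_{A_i}^X(F)\cap T^*(U)(F)=\emptyset$. For the remaining indices, $CN_{A_i}^X(F)\cap T^*(U)(F)\subset U(F)$ as noted above. Combining these,
\[
WF(\xi|_{U(F)})\subset U(F),
\]
i.e.\ $WF(\xi|_{U(F)})$ is contained in the zero section. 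By Proposition~\ref{prop:WF_prop}(\ref{prop:WF_prop:2}), this means precisely that $\xi|_{U(F)}$ is smooth.

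There is no serious obstacle here: the lemma is essentially a formal consequence of the definition of WF-holonomicity (together with the geometric description of isotropic conic subvarieties supplied by Lemma~\ref{lem:isotr_discrip}) and the two listed properties of the wave front set. The only mildly nontrivial point is the dichotomy between locally closed subvarieties $A_i$ that are open in $X$ (contributing only the zero section) and those of strictly smaller dimension (which get cut away when passing to $U$).
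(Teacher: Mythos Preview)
Your proof is correct and takes essentially the same approach as the paper, which simply states that the lemma ``follows immediately from statements (2) and (3) of Proposition~\ref{prop:WF_prop}''; you have merely filled in the details that the paper leaves implicit, namely unpacking the definition of WF-holonomic via Lemmas~\ref{lem:isotr_discrip} and~\ref{l:closure} and then splitting the $A_i$ according to whether they are open in $X$.
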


\RamiAF{The fact that inverse and direct images preserve \RamiAH{isotropicity} (Lemma \ref{lem:iso_dir_invs_im}) and the properties of the wave front set (Propositions \ref{prop:WF_prop2} and \ref{prop:WF_prop3}) imply the following proposition:}

\RamiL{
\begin{proposition} \label{prop:WF_Hol}
$ $
Let $X$
be an \RamiJ{algebraic $F$-manifold}.
\begin{enumerate}
\item \label{prop:WF_Hol:1} Let $E$ be a locally
constant sheaf over $X(F)$,  let $\xi \in \G(X,E)$ be a WF-holonomic generalized section and let
$p: Y \to X$ be \RamiAF{a morphism.
Assume that $\WF(\xi) \cap N_p \subset X$. }
Then $p^*(\xi)$ is WF-holonomic.
\item Let $ {q}: X \to Y$ be a \RamiR{regular} map, let $E$ be a locally
constant sheaf over $Y(F)$  and let $\xi \in \Sc(X,q^*\RamiM{(E)})$
 be a WF-holonomic distribution.
Assume that the map $\supp(\xi)\to Y(F)$ induced by $q$ 
is proper (as a continuous map).
Then $ {q}_*(\xi)$
is WF-holonomic.
\end{enumerate}
\end{proposition}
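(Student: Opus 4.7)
The plan is to combine the wave-front bounds supplied by Propositions \ref{prop:WF_prop2} and \ref{prop:WF_prop3} with the stability of isotropicity under direct and inverse images (Lemma \ref{lem:iso_dir_invs_im}), which is exactly what was set up in \S\ref{sec:lag} for this type of argument. Throughout, let $C\subset T^*X$ denote the Zariski closure of $WF(\xi)$; by WF-holonomicity of $\xi$, the constructible set $C$ is isotropic.

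For part \eqref{prop:WF_Hol:1} I would first invoke Proposition~\ref{prop:WF_prop3}: the hypothesis $WF(\xi)\cap N_p\subset X$ is exactly what it requires in order for $p^{*}(\xi)\in\G(Y,p^{*}E)$ to be well defined, and that same proposition yields the bound $WF(p^{*}(\xi))\subset p^{*}(WF(\xi))$. Next I would observe that the set-theoretic inverse image of Definition~\ref{def:dir_and_inv_image} and the algebraic inverse image of a closed conic subvariety are given by the same incidence relation, so $p^{*}(WF(\xi))\subset (p^{*}C)(F)$; consequently the Zariski closure of $WF(p^{*}(\xi))$ is contained in the constructible subvariety $p^{*}C\subset T^*Y$. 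By Lemma~\ref{lem:iso_dir_invs_im}, $p^{*}C$ is isotropic, and by Proposition~\ref{prop:sub_iso} every constructible subset of an isotropic set is isotropic. Therefore the Zariski closure of $WF(p^{*}(\xi))$ is isotropic, i.e. $p^{*}(\xi)$ is WF-holonomic.

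For part (2) the argument is entirely parallel, with direct image replacing inverse image. The properness of $q|_{\supp(\xi)}$ permits the application of Proposition~\ref{prop:WF_prop2}\eqref{prop:WF_prop2:3}, giving $WF(q_*\xi)\subset q_*(WF(\xi))$; the pointwise definition of $q_*$ (Definition~\ref{def:dir_and_inv_image}) matches the algebraic direct image, so $q_*(WF(\xi))\subset (q_*C)(F)$; Lemma~\ref{lem:iso_dir_invs_im} then provides isotropicity of the constructible set $q_*C$; and Proposition~\ref{prop:sub_iso} transfers this to the Zariski closure of $WF(q_*\xi)$.

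There is no serious obstacle here --- essentially everything has been prepared so that the proof becomes a one-paragraph assembly. The only point deserving a line of care is the compatibility between the two incarnations of $p^{*}$ and $q_*$, namely the one defined on conic subsets of $T^*(-)(F)$ in Definition~\ref{def:dir_and_inv_image} and the one on constructible algebraic subsets used in Lemma~\ref{lem:iso_dir_invs_im}. Since both are literally given by the same pointwise incidence relation, the inclusions $p^{*}(A(F))\subset (p^{*}A)(F)$ and $q_*(A(F))\subset (q_*A)(F)$ are immediate from unwinding definitions, and this is what glues the analytic wave-front bounds to the algebraic isotropicity statements.
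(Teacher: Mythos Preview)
Your proposal is correct and follows exactly the approach the paper indicates: the paper simply asserts that the proposition follows from Lemma~\ref{lem:iso_dir_invs_im} together with Propositions~\ref{prop:WF_prop2} and~\ref{prop:WF_prop3}, and your write-up is a faithful unpacking of that one-line deduction. The only minor imprecision is the claim that the Zariski closure of $WF(p^{*}\xi)$ lies in $p^{*}C$ rather than in $\overline{p^{*}C}$; since the closure of an isotropic set is isotropic (Remark~\ref{r:easy_facts}), this does not affect the argument.
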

}


\RamiG{
We will also use the following corollary of Proposition \ref{prop:WF_prop} \RamiAH{\eqref{it:G_act}}
\begin{corollary} \label{cor:WF_Hol}
$ $
Let $X$ be an \RamiJ{algebraic manifold} and $E$ a \RamiF{locally
constant sheaf} over \RamiJ{$X(F)$}. Let an \RamiJ{algebraic} group $G$ act on $X$ and \RamiJ{let $G(F)$ act on}
 $E$. Let $U \subset X$ be a
$G$-stable
open set and $Z=X-U$. Let $\xi \in \G(X\RamiJ{(F)},E)^{\RamiJ{G(F)}}$. Suppose $Z$ has a finite number of
$G$-orbits and $\xi|_{\RamiJ{U(F)}}$ is smooth. Then $\xi$ is WF-holonomic.
\end{corollary}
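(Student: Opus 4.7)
The plan is to bound $WF(\xi)$ separately over $U(F)$ and over $Z(F)$, and then conclude via Lemma~\ref{l:closure}. Over $U(F)$, the smoothness of $\xi|_{U(F)}$ combined with parts \eqref{prop:WF_prop:2} and \eqref{prop:WF_prop:3} of Proposition~\ref{prop:WF_prop} forces $WF(\xi)\cap T^*U(F)$ to lie in the zero section of $T^*U(F)$; since $U$ is open in $X$, this zero section coincides with $CN_U^X(F)$.

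Over $Z(F)$, I would invoke Proposition~\ref{prop:WF_prop}\eqref{it:G_act} applied to the analytic group $G(F)$ (which acts on $X(F)$ and on $E$ and preserves $\xi$), obtaining
\[
WF(\xi)\ \subset\ \{(x,v)\in T^*X(F) : v(\g(F)\cdot x)=0\}.
\]
For $x\in X(F)$, the subspace $\g(F)\cdot x\subset T_xX(F)$ is exactly the tangent space at $x$ to the algebraic $G$-orbit through $x$, which is a standard consequence of smoothness of the orbit map. By hypothesis, $Z$ is exhausted by finitely many such orbits $O_1,\dots,O_k$, each a smooth locally closed subvariety of $X$ because orbits of algebraic group actions always are. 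Hence
\[
WF(\xi)\cap T^*X(F)|_{Z(F)}\ \subset\ \bigcup_{i=1}^k CN_{O_i}^X(F).
\]

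Putting the two bounds together gives
\[
WF(\xi)\ \subset\ CN_U^X(F)\ \cup\ \bigcup_{i=1}^k CN_{O_i}^X(F),
\]
which expresses $WF(\xi)$ as a subset of a finite union of sets of the form $CN_A^X(F)$ with $A$ a smooth locally closed subvariety of $X$. Lemma~\ref{l:closure} then yields that the Zariski closure of $WF(\xi)$ is isotropic, i.e.\ that $\xi$ is WF-holonomic. I do not anticipate any substantial obstacle; the only routine bookkeeping is the identification of the analytic-orbit tangent space $\g(F)\cdot x$ with the tangent space to the algebraic $G$-orbit, allowing us to pass from the pointwise conormal description supplied by Proposition~\ref{prop:WF_prop}\eqref{it:G_act} to a finite union of honest conormal bundles of the algebraic orbits.
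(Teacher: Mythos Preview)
Your proposal is correct and matches the paper's approach. The paper does not spell out a proof of this corollary at all; it simply introduces it as ``the following corollary of Proposition~\ref{prop:WF_prop}\eqref{it:G_act}'', leaving the argument implicit. Your write-up is exactly the intended elaboration: bound $WF(\xi)$ over $U(F)$ by the zero section using smoothness (Proposition~\ref{prop:WF_prop}\eqref{prop:WF_prop:2},\eqref{prop:WF_prop:3}), bound it over $Z(F)$ by the finitely many conormal bundles $CN_{O_i}^X(F)$ using $G(F)$-invariance (Proposition~\ref{prop:WF_prop}\eqref{it:G_act}), and then invoke Lemma~\ref{l:closure} (equivalently the remark after the definition of WF-holonomicity) to conclude.
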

By twisting the action of $G$ on $E$ by a quasi-character, we obtain the following \RamiM{version} of
Corollary~\ref{cor:WF_Hol}.
\begin{corollary} \label{cor:WF_Hol2}
Let $X,G,E,Z,U$ be as in Corollary \ref{cor:WF_Hol}.
Let
\RamiAS{$\xi \in \G(X{(F)},E)^{}$}. \RamiM{Suppose $\xi|_{\RamiJ{U(F)}}$} is smooth and the line $\C\xi\subset\G(X\RamiJ{(F)},E)$
is $G\RamiJ{(F)}$-stable (i.e., $\xi \in \G(X\RamiJ{(F)},E)^{G\RamiJ{(F)},\chi}$ for some quasi-character
$\chi :G\RamiJ{(F)}\to\C^{\times}$\RamiJ{)}. Then
$\xi$ is WF-holonomic.
\end{corollary}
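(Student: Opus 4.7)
My plan is to reduce Corollary~\ref{cor:WF_Hol2} to Corollary~\ref{cor:WF_Hol} by a twisting construction. The key observation is that the conclusion of Corollary~\ref{cor:WF_Hol} (i.e., $WF(\xi)\subset\bigcup_x CN_{Gx}^X$ after using Proposition~\ref{prop:WF_prop}\eqref{it:G_act}, which then gives WF-holonomicity via Lemma~\ref{lem:isotr_discrip}) only cares about the $G$-action on $X$ itself, not about the specific lift of this action to $E$. Since the sheaf $E$ and the generalized section $\xi$ are unchanged by a rescaling automorphism of $E$, we are free to modify the $G$-action on $E$ by any $\C^\times$-valued character of $G(F)$.

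Concretely, I would define a new $G(F)$-action on $E$ by the formula $g\cdot_{\mathrm{new}} s:=\chi(g)^{-1}(g\cdot_{\mathrm{old}} s)$, where $\chi:G(F)\to\C^\times$ is the quasi-character provided by the hypothesis. This is a well-defined action because multiplication by a nonzero scalar on $E$ is a sheaf automorphism commuting with the underlying $G$-action on $X$. With respect to this twisted action, the induced action on $\G(X(F),E)$ sends $\xi$ to $\chi(g)^{-1}\cdot g\cdot_{\mathrm{old}}\xi=\chi(g)^{-1}\cdot\chi(g)\xi=\xi$. Hence $\xi$ is now genuinely $G(F)$-invariant in the sense required by Corollary~\ref{cor:WF_Hol}.

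Finally I would verify that the remaining hypotheses of Corollary~\ref{cor:WF_Hol} are automatically inherited: the set $U\subset X$ is $G$-stable (as a subset of $X$, unchanged), $Z=X-U$ still has finitely many $G$-orbits (this is a property of the $G$-action on $X$, unaffected by the twist), and $\xi|_{U(F)}$ is smooth (this is a local property of the distribution, unaffected). Applying Corollary~\ref{cor:WF_Hol} to $\xi$ regarded as a $G(F)$-invariant generalized section of $(E,\cdot_{\mathrm{new}})$ yields that $\xi$ is WF-holonomic, as desired. I do not anticipate any serious obstacle; the entire proof is essentially bookkeeping around the fact that the wave front set depends only on the underlying distribution and not on how we choose to lift a group action to a coefficient sheaf.
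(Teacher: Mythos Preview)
Your proposal is correct and is exactly the paper's approach: the paper simply remarks, just before stating the corollary, that one obtains it from Corollary~\ref{cor:WF_Hol} ``by twisting the action of $G$ on $E$ by a quasi-character.'' Your write-up is a faithful unpacking of that one line.
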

}
\section{ Proof of \DrinI{Theorems \ref{thm:main_a}-\ref{thm:main_bb}}}\label{sec:pf_main_bb}

\DrinD{In this section we prove the theorems formulated in \S\ref{ss:main}.


Using the notion of \RamiJ{WF-holonomic} distribution from \S\ref{sec:WF-Holonomic}, one can reformulate Theorem~\ref{thm:main_bb} as follows.

\begin{theorem}   \label{t:C}
Let $W$ be a finite-dimensional $F$-vector space and $X,Y$ be algebraic manifolds. Let
$\phi: X \to Y \times W$ be a proper map
 and let $\omega$ be a \RamiR{regular} top differential form on $X$.
Then the partial  Fourier transform
\footnote{{Partial Fourier transform was introduced in
Definition \ref{def:FT}. The symbols $\Sc^*$ and $\G$ were introduced in \S\S\S\ref{sssec:fun_sp}. For the symbols
$D_{Y(F)}^{Y(F)\times W^*}$ and $D^{Y(F)\times W^*}_{W^*}$, see  \S\S\S\ref{sssec:op_dist}.}}
$$\cF_W^*(\phi_*(|\omega|)) \in \Sc^*( {Y(F)} \times W^*,D_{Y(F)}^{Y(F)\times W^*})=\G( {Y(F)}\times
 W^*,D^{Y(F)\times W^*}_{W^*})$$  is
WF-holonomic.



%
\end{theorem}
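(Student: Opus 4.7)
The plan is to follow the strategy sketched in \S\ref{ss:idea}: combine Nagata compactification, Hironaka's resolution of singularities, and a torus-equivariance argument to reduce to a local toric model where Corollary~\ref{cor:WF_Hol2} applies directly.

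First I would reduce to the case in which $X$ is a Zariski open subset of $Y$, the map $\phi : X \to Y \times W$ is the graph of a regular function $p : X \to W$, and the complement $Y \setminus X$ is an SNC divisor. Nagata's theorem (Theorem~\ref{thm:Nag}) compactifies $X$ over $Y \times W$, and Hironaka's theorem (Theorem~\ref{thm:Hir}) then resolves the compactification so that the boundary is SNC and the extension of $\omega$ is a rational top form whose divisor is supported on the boundary. Since $\phi$ is proper to begin with, the compactified map remains proper; so the commutation of $\cF_W^*$ with proper pushforward (Proposition~\ref{prop:F_prop}\eqref{prop:F_prop:3}) together with the preservation of WF-holonomicity under proper pushforward (Proposition~\ref{prop:WF_Hol}\eqref{prop:WF_Hol:1}) allow the passage back to the original claim. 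Absorbing the projection to $Y$ then yields the reduced setup.

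Next I would reduce to the case $\dim W = 1$. Writing $W = W_0 \oplus F$, Proposition~\ref{prop:F_prop}\eqref{prop:F_prop:4} expresses $\cF_W^*$ as the composition $\cF_{W_0}^* \circ \cF_F^*$; regarding $Y \times W_0$ as the new base space, an induction on $\dim W$ reduces the theorem to the case $W = F$. In the one-dimensional case, WF-holonomicity can be checked locally on $Y$ by Proposition~\ref{prop:WF_prop}\eqref{prop:WF_prop:3}, so I would shrink $Y$ to an \'etale neighborhood of a boundary point in which $\omega$ and $p$ take monomial form, namely $\omega = u \cdot \prod y_i^{a_i} \, dy_1 \wedge \cdots \wedge dy_d$ and $p = v \cdot \prod y_i^{b_i}$, where $u, v$ are units on the neighborhood, $a_i \in \Z_{\ge 0}$, and $b_i \in \Z$.

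Finally, in this toric local model the algebraic torus $T = \mathbb{G}_m^d$ acts on $Y$ by coordinate dilations and on $W = F$ via the character $\chi(t) = \prod t_i^{b_i}$, producing a diagonal action on $Y \times W$. The distribution $\phi_*(|\omega|)$ transforms under $T(F)$ by a quasi-character, hence so does its partial Fourier transform on $Y \times W^*$, where $T$ acts through $\chi^{-1}$ on $W^*$. The complement of the open $T$-orbit in $Y \times W^*$ is a finite union of $T$-orbits (cut out by vanishing of coordinate subsets), so Corollary~\ref{cor:WF_Hol2} reduces the problem to verifying smoothness of the partial Fourier transform on the open orbit; that smoothness is a direct computation, since on the open orbit the Fourier integral of a monomial density against a character is manifestly locally constant in $(y, w^*)$ once all $y_i$ and $w^*$ are invertible. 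The main technical obstacle lies not in the local model itself but in the preceding reductions: one must verify at each step that the intersection hypotheses of Proposition~\ref{prop:WF_Hol} and the properness conditions needed to commute $\cF_W^*$ with pushforward are satisfied, so that the global conclusion genuinely reassembles from the local one.
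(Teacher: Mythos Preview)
Your step~2 contains a genuine gap. You propose to reduce to $\dim W=1$ by writing $W=W_0\oplus F$, factoring $\cF_W^*=\cF_{W_0}^*\circ\cF_F^*$, and inducting on $\dim W$. But this induction does not close: after applying $\cF_F^*$ with base $Y\times W_0$, the resulting distribution on $Y(F)\times W_0\times F^*$ is no longer of the form $\phi'_*(|\omega'|)$ for a proper morphism $\phi'$ from a smooth variety with a regular top form, so the inductive hypothesis for $\cF_{W_0}^*$ does not apply. Nor can you bypass this by saying ``$\cF_F^*(\phi_*|\omega|)$ is WF-holonomic, now apply $\cF_{W_0}^*$'': the Fourier transform of a WF-holonomic distribution need not be WF-holonomic (this is precisely the remark after Proposition~\ref{prop:WF_Hol}).

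The paper's reduction to $\dim W=1$ is different and is the main new ingredient you are missing. After the Nagata--Hironaka step one has (locally on $Y$) a map $\phi:Y\to\overline W$ written as $\phi(y)=(\alpha(y):p(y))$ with either $p$ or $\alpha$ nowhere vanishing. On the charts where $\alpha:Y\to W$ is nowhere zero, Lemma~\ref{lem:red_to_dim_1} shows
\[
\hat\eta_{\phi,\omega}=g^*(\hat\eta_{1/p,\omega}),\qquad g(y,\xi)=(y,\langle\xi,\alpha(y)\rangle),
\]
where $g:Y\times W^*\to Y\times F$ is a \emph{submersion} (since $\alpha$ has no zeros). Thus the multi-dimensional Fourier transform is exhibited as a pullback of a one-dimensional one, and WF-holonomicity is inherited via Proposition~\ref{prop:WF_Hol}\eqref{prop:WF_Hol:1}. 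This avoids any induction on $\dim W$. Your steps~1 and~3 are essentially correct and match the paper (step~1 corresponds to the reduction to Proposition~\ref{thm:main_c}; step~3 is Lemma~\ref{lem:dim_1} via Lemma~\ref{lem:equv} and Corollary~\ref{cor:WF_Hol2}), but without the pullback trick of Lemma~\ref{lem:red_to_dim_1} the bridge between them is missing.
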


Theorem~\ref{thm:main_b} is a particular case of Theorem~\ref{t:C} when
$Y$ is a point. Theorem \ref{thm:main_a} follows from Theorem~\ref{thm:main_b}
by virtue of Lemma \ref{lem:hol_reg}. Thus it remains to prove Theorem~\ref{t:C}.
}

\subsection{Reduction to the key special case}  \label{ss:Reduction to key}
$ $
\RamiAM{
\begin{notation}
For a vector space $W,$ we denote by $\overline W$ the projective space of one dimensional subspaces of $W \oplus F$. We consider $W$ as an open subset of $\overline W$.
\end{notation}}
\RamiR{Using Hironaka's \DrinG{theorem and Nagata's theorem}, we}
 will deduce Theorem \ref{t:C} from \DrinG{Proposition~\ref{thm:main_c}, which is, in fact, a special case of  Theorem \ref{t:C}. To formulate Proposition~\ref{thm:main_c}, we need some notation.}

\DrinB{
\begin{notation}\label{not:eta}
Let $Y$ be an  algebraic
manifold and $W$ a vector space. Let $\phi:Y\to \overline{W}$ be an
 {algebraic}
map.
 We set $Y_0 := \phi^{-1}(W)$  and $Y_{\infty}:=\phi^{-1}(W_\infty)$. Assume $Y_0$  is
dense {in $Y$}. Let $\omega$ be a  {rational}
top differential form on $Y$ which is
  {regular}
 on $Y_0$ and let
 $\omega_0:=\omega|_{Y_0}$.  Define $i: Y_0 \hookrightarrow Y\times W$ by $i(y):=(y,\phi (y))$ and
set
 {$$\eta_{\phi,\omega}:=\DrinD{i}_*(|\omega_0|)\in\Sc^*(Y(F)\times W)\, .$$
We also set $$\hat{\eta}_{\phi,\omega}:= \cF^{*}_{W}(\eta_{\phi,\omega})\in
\DrinD{\Sc^*(Y(F) \times W^*,D_{Y(F)}^{Y(F)\times W^*})=
\G(Y\RamiM{(F)} \times W^*,D^{Y(F)\times W^*}_{W^*})}.$$}
\end{notation}

\begin{rem}\label{rem:eta}
$i_*(|\omega_0|)$ is a well-defined measure because the embedding $i$ is closed; to see this,
represent $i: Y_0 \hookrightarrow Y\times W$ as the composition
\DrinC{$$Y_0 {\buildrel{\sim}\over{\longleftarrow}}\Gamma_{\phi}\cap (Y_0\times W)=
\Gamma_{\phi}\cap (Y\times W)\hookrightarrow Y\times W,$$}
where $\Gamma_{\phi}\subset Y\times\overline{W}$ is the graph of $\phi$.
\end{rem}
}

\RamiR{
Now we can formulate the key special case of Theorem \ref{t:C}:}
\begin{proposition} \label{thm:main_c}
Let $Y, W, \phi,\omega, Y_\infty,Y_0$ be as in Notation \ref{not:eta}. Let $Z \subset Y$ be the zero locus of
$\omega$. \RamiM{Assume  $Z \cup  Y_\infty$} is an SNC \RamiP{divisor.}
Then the partial Fourier transform
$\RamiM{\hat \eta_{\phi,\omega}}$
is \RamiJ{WF-holonomic}.
\end{proposition}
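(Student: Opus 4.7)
The plan is to combine Zariski-localization on $Y$ with torus symmetry on a monomial local model. Since WF-holonomicity can be verified on an étale cover of $Y$ (by assembling finitely many local conormal subvarieties into a global isotropic subvariety via Lemmas~\ref{lem:isotr_discrip} and~\ref{l:closure}), and since over $Y_0$ the distribution $\hat\eta_{\phi,\omega}$ equals the smooth function $\psi(\langle\phi(y),\hat w\rangle)$ times the smooth measure $|\omega_0|$, the wave front over $Y_0$ lies in the zero section; only a neighborhood of a point $y_0 \in Y_\infty$ requires work.

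Next, in an étale neighborhood of $y_0$, I would pick coordinates $y_1,\ldots,y_n$ adapted to the SNC structure so that $Z\cup Y_\infty = \{y_1\cdots y_k = 0\}$ and $\omega = u(y)\prod y_i^{b_i}\,dy_1\wedge\cdots\wedge dy_n$ with $u$ a unit. After possibly a further Hironaka-type monomialization of the components of $\phi$ and a reduction to the scalar case $\dim W = 1$, I may assume $\phi = y^a\cdot v(y)$ for an integer tuple $a$ and a unit $v$. Setting $\tilde\phi(y) := y^a$ and $\tilde\omega := \prod y_i^{b_i}\,dy_1\wedge\cdots\wedge dy_n$ for the purely monomial model with distribution $\tilde\eta := \tilde i_*(|\tilde\omega_0|)$, one checks that $\eta = \sigma_*(\pi^{*}(|u|)\cdot\tilde\eta)$ for the étale automorphism $\sigma\colon(y,w)\mapsto(y,v(y)w)$ of $Y\times W$, and hence Proposition~\ref{prop:F_prop2} (applied to the scalar-multiplication data $\nu=v$) gives $\hat\eta = |u|\cdot\tau^{*}\hat{\tilde\eta}$ for the étale automorphism $\tau\colon(y,\hat w)\mapsto(y,v(y)\hat w)$ of $Y\times W^*$.

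The monomial model then admits a torus action: $T = \mathbb{G}_m^n$ scales the $Y$-coordinates and acts on $W = F$ by the character $\chi(t) = t^a$, under which $\tilde\phi$ is equivariant and $|\tilde\omega|$ is semi-invariant. Consequently $\C\tilde\eta$ is a $T(F)$-stable line, and partial Fourier in $W$ transports this to a $T(F)$-stable line through $\hat{\tilde\eta}$ (for the dual character action on $W^*$). Since $\hat{\tilde\eta}$ is smooth on the $T$-stable open $Y_0\times W^*$ and the complement $Y_\infty\times W^*$ decomposes into finitely many $T$-orbits (by the coordinate-stratification of $Y_\infty$ together with the two-orbit structure on $W^*$ for the scalar character $\chi$), Corollary~\ref{cor:WF_Hol2} yields WF-holonomicity of $\hat{\tilde\eta}$. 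Then $WF(\hat\eta) \subset \tau^{*}(WF(\hat{\tilde\eta}))$ by Propositions~\ref{prop:WF_prop2}\eqref{prop:WF_prop2:2} and~\ref{prop:WF_prop}\eqref{prop:WF_prop:4}, and isotropicity is preserved by the étale symplectomorphism $\tau^{*}$ (Lemma~\ref{lem:iso_dir_invs_im}). Gluing over a finite étale cover of $Y$ then gives a single isotropic algebraic subvariety of $T^*(Y\times W^*)$ containing $WF(\hat\eta_{\phi,\omega})$.

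The hard part will be the monomialization of $\phi$, which is not forced by the SNC hypothesis on $Z\cup Y_\infty$ alone and generally requires additional Hironaka-type blow-ups together with the reduction to one-dimensional $W$. A secondary challenge is carefully tracking how the partial Fourier transform interacts with the étale automorphism $\sigma$ on the $W$-fiber, including the distributional subtleties near $Y_\infty$ where the naive pointwise Fourier formula diverges.
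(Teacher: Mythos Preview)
Your overall plan---Zariski-localize on $Y$, reduce to a monomial model, and exploit torus equivariance via Corollary~\ref{cor:WF_Hol2}---is exactly the paper's. But the crucial step is left unexplained, and your guess at how to fill it is off track.

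The gap is the reduction to $\dim W=1$. You flag it as ``the hard part'' and suggest it needs ``additional Hironaka-type blow-ups together with the reduction to one-dimensional $W$''. In fact no further resolution is required. The paper's device (Lemma~\ref{lem:red_to_dim_1}) is this: near any point of $Y_\infty$ one can write $\phi=(\alpha:p)$ with $\alpha:Y\to W$ nowhere vanishing and $p:Y\to F$ regular, and then one has the exact identity
\[
\hat\eta_{\phi,\omega}\;=\;g^*\bigl(\hat\eta_{1/p,\,\omega}\bigr),\qquad g(y,\xi):=(y,\langle\xi,\alpha(y)\rangle),
\]
proved by a one-line application of Proposition~\ref{prop:F_prop2}. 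Here $g:Y\times W^*\to Y\times F$ is a \emph{submersion} (because $\alpha\neq 0$), so Proposition~\ref{prop:WF_Hol} transfers WF-holonomicity from $\hat\eta_{1/p,\omega}$ to $\hat\eta_{\phi,\omega}$. Now $p$ is a single scalar function with $\{p=0\}=Y_\infty$ contained in the given SNC divisor, so in any \'etale chart adapted to that divisor the divisor of $p$ is already supported on coordinate hyperplanes---$p$ is a monomial times a unit with no extra blow-ups needed. Corollary~\ref{cor:homo} strips the unit and one lands in the purely monomial situation of Lemma~\ref{lem:dim_1}, where your torus-orbit argument (which is exactly Lemma~\ref{lem:equv}) applies. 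Your ``secondary challenge'' about an \'etale automorphism $\sigma$ of the $W$-fibre is therefore bypassed as well: the paper does not twist $W$ by a unit, it replaces the $W$-direction by a single $F$-direction via the submersion $g$.

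A minor side remark: over $Y_0$ the measure $|\omega_0|$ is continuous but not smooth where $\omega$ vanishes (and $Z$ need not lie inside $Y_\infty$), so the wave front there is not automatically in the zero section. This case (where one can take $p$ nonvanishing, so $\phi$ lands in $W$) is handled by Lemma~\ref{lem:exp_comp}, which reduces to showing $|\omega|$ itself is WF-holonomic---again the same monomial local model via Lemma~\ref{lem:sp_case}.
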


We will prove this proposition in section \ref{sec:pf_C}.
\RamiR{
\begin{remark}
Note that Proposition \ref{thm:main_c} is indeed a special case of Theorem \ref{t:C}. Namely, if we take $X,Y,\phi:X \to Y \times W$ and $\omega$
 from Theorem \ref{t:C} equal to $Y_0,Y,i:Y_0 \to Y \times W$ and $\omega_0$ from Notation \ref{not:eta},
 then we obtain the assertion of Proposition~\ref{thm:main_c}.
\DrinG{Taking $\phi$ to be equal to $i$} is possible because, as mentioned in Remark \ref{rem:eta}, the map $i$ is a closed embedding and hence proper.
\end{remark}
}


\RamiF{In some cases, one can describe
$ {\hat \eta_{\phi,\omega}}$
explicitly. Namely, we have the following
straightforward calculation:}
\begin{lemma} \label{lem:exp_comp}
Let $(Y,W,\phi,\omega)$ be as above. Suppose $\Im \phi \subset W$.
Define  $f_\phi \in C^{\infty}(Y\RamiM{(F)}\times W^*)$ by $f_{\phi}(y,\xi):=\psi(\langle \xi,\phi(y) \rangle).$

Then the generalized section $ {\hat \eta_{\phi,\omega}}$ \DrinD{is equal}
 {to} the continuous section
$f_{\phi} \cdot  {pr_{\RamiS{W}}}^{*}(|\omega|)$, where $pr_{\RamiS{W}}:Y\times W \to Y$ is the projection.
\end{lemma}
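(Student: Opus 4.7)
The plan is to unfold the definitions directly. Since $\Im\phi\subset W$, we have $Y_0=Y$ and $\omega_0=\omega$, so $\eta_{\phi,\omega}=i_*(|\omega|)$ where $i(y)=(y,\phi(y))$. By definition, $\cF_W^*$ is dual to $\cF_{W^*}:\Sc(Y(F)\times W^*,D_{Y(F)}^{Y(F)\times W^*})\to\Sc(Y(F)\times W)$, which is given fiberwise by ordinary Fourier transform along the $W^*$-direction.

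For a test section $g\in\Sc(Y(F)\times W^*,D_{Y(F)}^{Y(F)\times W^*})$, one computes
$$\langle\hat{\eta}_{\phi,\omega},g\rangle=\langle i_*(|\omega|),\cF_{W^*}(g)\rangle=\int_{Y(F)}\cF_{W^*}(g)(y,\phi(y))\,|\omega|(y).$$
Under the canonical identification $D_{Y(F)}^{Y(F)\times W^*}\cong pr_{W^*}^*(D_{W^*})$, the slice $g(y,\cdot)$ is a Schwartz density on $W^*$, and the defining formula for $\cF_{W^*}$ gives
$$\cF_{W^*}(g)(y,\phi(y))=\int_{W^*}\psi(\langle\xi,\phi(y)\rangle)\,g(y,\xi)=\int_{W^*}f_{\phi}(y,\xi)\,g(y,\xi).$$
Substituting back and interchanging the order of integration yields
$$\langle\hat{\eta}_{\phi,\omega},g\rangle=\int_{Y(F)\times W^*}f_{\phi}(y,\xi)\,g(y,\xi)\cdot pr^*(|\omega|),$$
where $pr:Y(F)\times W^*\to Y(F)$ denotes the projection. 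This is exactly the evaluation of the continuous section $f_{\phi}\cdot pr^*(|\omega|)$ against $g$, which proves the claim.

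There is no serious obstacle: the lemma is a routine definitional verification, and the Fubini step is legitimate because $\cF_{W^*}(g)\in\Sc(Y(F)\times W)$ and its support projects to a compact subset of $Y(F)$. The only mild subtlety is the bookkeeping of density bundles; in particular, the symbol $pr_W$ in the statement should be interpreted as the projection $Y\times W^*\to Y$ (rather than $Y\times W\to Y$), since $\hat{\eta}_{\phi,\omega}$ lives on $Y(F)\times W^*$.
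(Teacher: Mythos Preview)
Your proof is correct and is precisely the ``straightforward calculation'' the paper alludes to in introducing the lemma (the paper does not give an explicit proof). Your observation about the projection is also apt: the paper writes $pr_W:Y\times W\to Y$, but since $\hat\eta_{\phi,\omega}$ and $f_\phi$ live on $Y(F)\times W^*$, the intended map is the projection $Y\times W^*\to Y$, as the subsequent Remark~\ref{rem:exp_comp} (which integrates over $W^*$) makes clear.
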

\DrinC{
\begin{rem} \label{rem:exp_comp}
In fact, the formula
\begin{equation}   \label{e:always_true'}
 {\hat \eta_{\phi,\omega}}=f_{\phi} \cdot  {pr}^{*}(|\omega|)
\end{equation}
holds without assuming that $\Im \phi \subset W$ if the r.h.s. of \eqref{e:always_true'} is understood appropriately.
 More precisely, Definition~\ref{def:FT} and the definition of $\eta_{\phi,\omega}$
(see Notation~\ref{not:eta})
immediately imply that the scalar product of $ {\hat \eta_{\phi,\omega}}$ with any
$h\in  \Sc (Y\DrinD{(F)}\times W^*,D^{Y\RamiM{(F)}\times W^*}_{Y\RamiM{(F)}} )$ equals
the
\RamiR{iterated}
integral $$\int\limits_{Y_0}\int\limits_\RamiW{W^*} hf_{\phi} \cdot  {pr}^{*}(|\omega|)$$
(the latter makes sense because after integrating along $\RamiW{W^*}$\RamiM{,} one gets a measure on $Y_0\RamiM{(F)}$
\emph{with compact support}).
\end{rem}
}

Now let us prove Theorem  {\ref{t:C}} using   {Proposition} \ref{thm:main_c}.

 {
\begin{proof}[Proof of Theorem  {\ref{t:C}}]


  \DrinG{Applying Nagata's Theorem~\ref{thm:Nag} to the composition
$X\overset{\phi}\longrightarrow Y\times W\hookrightarrow  Y\times \overline W$ we get a commutative diagram
\[
\xymatrix
{        X   \ar@{}[r]|-*[@]{\hookrightarrow}   \ar@{->}_{\phi}[d]        & \overline X \ar@{->}_{\overline \phi}[d] \\
         Y\times W \    \ar@{}[r]|-*[@]{\hookrightarrow}   &  Y\times\overline W  \\ }
\]
in which the map $\overline X\to Y\times \overline W$ is proper and the map $X\RamiR{\hookrightarrow} \overline X$ is an open embedding.
Identify  $X$ with its image in $\overline X$.}

%

Let $Z \subset X$ be the zero locus of $\omega$, $\overline Z$ be its closure in $\overline X$ and
$\overline {X}_\infty:=\overline {X}-X$.
Let $\Xi :=\overline Z \cup \overline {X}_\infty$ and $U:=\overline X-\Xi\DrinH{\subset X}$. 
Let $\RamiS{\rho}:\tilde X \to \overline{X}$  be a resolution of singularities of $\overline{X}$ that strictly resolves $\Xi$\RamiM{,}
such that
$\RamiS{\rho}_{\RamiS{\rho}^{-1}(U\RamiI{)}}:\RamiS{\rho}^{-1}(U) \to U$
is an isomorphism.
Identify $U$ with $\RamiS{\rho}^{-1}({U})$.

Let $\pi_Y:Y \times \overline W \to Y$ and $\pi_{\overline W}:Y \times \overline W \to \overline W$
 be the projection\DrinH{s}.
\RamiR{Let $\alpha_0:\tilde X \to Y$ be the composition
$$\tilde X \overset{\RamiS{\rho}}{\to}  \DrinH{\overline X \overset{\overline\phi}{\to} Y} \times
 \overline W \overset{\pi_{ Y}}{\to} Y$$
and $\alpha:\tilde X \times W \to Y \times W$ be $\alpha_0 \times Id_{\DrinH{W}}$.
 }
Clearly $\alpha$ is proper. Let $\beta: \tilde X \to \overline W$ be
\RamiR{the composition
$$\tilde X \overset{\RamiS{\rho}}{\to}  \DrinH{\overline X \overset{\overline\phi}{\to} Y} \times  \overline W \overset{\pi_{\overline W}}{\to} \overline W$$}
Let $\omega'=\omega|_U$\RamiM{,} and consider $\omega'$ as a rational form on $\tilde X$. Let $Z' \subset \tilde X$
 be its zero locus.
Note that
$Z' \cup \beta^{-1}(W_\infty)=\RamiS{\rho}^{-1}(\Xi)$ is an SNC divisor.
\RamiAQ{Let $j:U \to \tilde X$ be the open dense embedding and $i:U \to \tilde X \times W$ be the map given by $i(x):=(x,\beta (y))$}.
We have:
$$\phi_*(|\omega|)=\RamiAQ{\phi_* (j_*(|\omega'|))=(\phi \circ j)_*(|\omega'|)=(\alpha \circ i)_*(|\omega'|)=\alpha_*(i_*(|\omega'|))=}\alpha_*(\eta_{\beta,\omega'})$$
and hence\RamiAQ{, by a standard property of Fourier transform (Proposition \ref{prop:F_prop}\eqref{prop:F_prop:3}), we get}
\RamiAQ{$$\cF^*_{W}(\phi_*(|\omega|))=\alpha_*(\hat \eta_{\beta,\omega'}).$$}
By  {Proposition} \ref{thm:main_c}, the distribution
$\hat \eta_{\beta,\omega\RamiAQ{'}}$
 is  WF-holonomic. Thus by Proposition  \ref{prop:WF_Hol}
$\cF^*_{W}(\phi_*(|\omega|))$
is WF-holonomic\RamiM{.}
\end{proof}
}

\subsection{Proof of \DrinH{ Proposition \ref{thm:main_c}} } \label{sec:pf_C}
\setcounter{lemma}{0}
The proof of  {Proposition}   \ref{thm:main_c} is based on
the key Lemmas~\ref{lem:red_to_dim_1} and \ref{lem:dim_1} below.
\subsubsection{\DrinB{\RamiR{T}he key lemmas}}
\DrinB{
Recall that if $W$ is a finite-dimensional vector space over $F$\RamiM{,} then  $\overline{W}$ stands for the space of lines in $\RamiG{ W \oplus F}$. The image in $\overline{W}$ of a nonzero vector
$(w,a)\in W\oplus F$ will be denoted by $\RamiG{(w:a)}$.

\begin{lemma} \label{lem:red_to_dim_1}
Let $Y$ be an  {algebraic} manifold and $W$  a vector space over $F$,  {with}  $\dim W<\infty$.
Let $\phi :Y\to \overline{W}$ be a map defined by $\phi(y)=\RamiG{( {\alpha}(y):p(y))}$,
 where $ {\alpha}:Y\to W$ and
$p:Y\to F$ are \RamiJ{regular}\RamiF{, $p \neq 0$ on a dense subset} and $ {\alpha}$ has no zeros. Let $\omega$ and
$\eta_{\phi,\omega}\in\Sc^*(Y {(F)}\times W)$ be as in Notation \ref{not:eta},
so $  {\hat \eta}_{\phi,\omega}\in
 \RamiF{\Sc^*}(Y {(F)}\times W^*,D^{Y {(F)}\times W^*}_{Y {(F)}})$.
Considering $\frac{1}{p}$ as a map $Y \to \mathbb{P}^1=\bar{\A}^1$, we also get
$\eta_{\frac{1}{p},\omega}\in\Sc^*(Y {(F)}\times F)$ and
$ {\hat \eta}_{\frac{1}{p},\omega}\in \RamiF{\Sc^*}(Y {(F)}\times F,
\DrinD{D^{Y(F)\times F}_{Y (F)}})$.
Then
\begin{equation}   \label{e:equality_in_question}
 {\hat \eta}_{\phi,\omega}  =g^*( {\hat \eta}_{\frac{1}{p},\omega}),
\end{equation}
where 
$g:Y \times W^{*} \to Y \times F$ is defined by
\DrinD{
\begin{equation}   \label{e:def_of_g}
g(y,\xi ):=(y,\langle \xi , {\alpha}(y)\rangle \,), \quad\quad y\in Y,\;\xi\in W^*.
\end{equation}
}


\end{lemma}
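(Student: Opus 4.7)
The plan is to identify the map $g$ with a map of the form $\rho_{\nu^t}$ (in the sense of Notation~\ref{not:F_prop}) and then deduce \eqref{e:equality_in_question} by combining Proposition~\ref{prop:F_prop2} with a direct pushforward computation on the ``pre-Fourier'' side.

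First, I would introduce $\nu: Y \to \mathrm{Mon}(F,W)$ defined by $\nu(y)(t) := t\,\alpha(y)$; this lands in $\mathrm{Mon}(F,W)$ precisely because $\alpha$ is nowhere zero. Its transpose $\nu^t: Y \to \mathrm{Hom}(W^*, F^*) \cong \mathrm{Hom}(W^*,F)$ is given by $\nu^t(y)(\xi) = \langle \xi, \alpha(y)\rangle$. Comparing with \eqref{e:def_of_g}, we see $g = \rho_{\nu^t}$, while $\rho_\nu: Y \times F \to Y \times W$ sends $(y,t)$ to $(y, t\,\alpha(y))$. Since $\nu$ factors through $\mathrm{Mon}(F,W)$, the map $g = \rho_{\nu^t}$ is a submersion and $\rho_\nu$ is a closed embedding, so the pullback and pushforward appearing in Proposition~\ref{prop:F_prop2} are defined. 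That proposition then yields the identity
$$\cF^{*}_{W}\circ (\rho_\nu)_* \;=\; g^{*}\circ \cF^{*}_{F}$$
on the appropriate functional spaces.

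Next, I would verify by a one-line direct computation that
$$(\rho_\nu)_*\bigl(\eta_{\frac{1}{p},\omega}\bigr) \;=\; \eta_{\phi,\omega}.$$
Indeed, by Notation~\ref{not:eta} we have $\eta_{\frac{1}{p},\omega} = j_*(|\omega_0|)$ with $j: Y_0 \hookrightarrow Y\times F$ given by $j(y)=(y,1/p(y))$, and $\eta_{\phi,\omega} = i_*(|\omega_0|)$ with $i(y)=(y,\alpha(y)/p(y))$. The computation $(\rho_\nu \circ j)(y) = (y,\,(1/p(y))\alpha(y)) = (y,\phi(y)) = i(y)$ together with functoriality of the pushforward gives the claim. (As explained in Remark~\ref{rem:eta}, both $i$ and $j$ factor as closed embeddings into $Y\times W$ and $Y\times F$ respectively, so these pushforwards are well-defined measures.)

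Combining the two steps,
$$\hat\eta_{\phi,\omega} \;=\; \cF^{*}_{W}(\eta_{\phi,\omega}) \;=\; \cF^{*}_{W}\bigl((\rho_\nu)_*(\eta_{\frac{1}{p},\omega})\bigr) \;=\; g^{*}\bigl(\cF^{*}_{F}(\eta_{\frac{1}{p},\omega})\bigr) \;=\; g^{*}(\hat\eta_{\frac{1}{p},\omega}),$$
which is \eqref{e:equality_in_question}. There is no genuine analytic difficulty here: the whole argument is bookkeeping around Proposition~\ref{prop:F_prop2}, with the only thing to check being the tautological identification $g = \rho_{\nu^t}$ and the equality $\rho_\nu \circ j = i$. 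The step most likely to cause friction is keeping the transposition conventions between $W$ and $W^*$ (and between $F$ and $F^*$) straight, but once $\nu$ is fixed everything is forced.
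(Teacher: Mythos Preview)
Your proof is correct and follows essentially the same approach as the paper: define $\nu(y)(\lambda)=\lambda\,\alpha(y)$, identify $g=\rho_{\nu^t}$, check $(\rho_\nu)_*\eta_{1/p,\omega}=\eta_{\phi,\omega}$ via $\rho_\nu\circ j=i$, and apply Proposition~\ref{prop:F_prop2}. The paper's proof is the same argument in the same order.
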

}
Note that \emph{the map $g$ is a submersion} (because $ {\alpha}$ has no zeros),  so we have a well-defined map
$g^*:  \RamiF{\Sc^*}(Y\RamiM{(F)}\times F,\DrinD{D^{Y(F)\times F}_{Y (F)}} )\to \RamiF{\Sc^*}(Y\RamiM{(F)}\times W^*,
\DrinD{D^{Y(F)\times W^*}_{Y (F)}})$ \DrinH{and the r.h.s. of \eqref{e:equality_in_question} makes sense.}



\begin{rem}
If $p$ has no zeros then Lemma~\ref{lem:red_to_dim_1} is obvious. To see this, note that by
Lemma~\ref{lem:exp_comp}, in this case
$$ {\hat \eta_{\phi,\omega}}=f_{\phi} \cdot \RamiS{pr_{W}}^{*}(|\omega|), \quad
g^*( {\hat \eta_{\frac{1}{p},\omega}})=\RamiS{g^*(f_{\frac{1}{p}} \cdot pr_{F}^{*}(|\omega|))=g^*(f_{\frac{1}{p}}) \cdot pr_{W}^{*}(|\omega|)},$$
where $f_{\phi}:Y\RamiM{(F)}\times W^*\to\C$ and $f_{\frac{1}{p}}:Y\RamiM{(F)}\times F\to\C$ are defined by
\begin{equation}   \label{e:fphi}
f_{\phi}(y,\xi )=\psi  {\left(  \frac{\langle\xi,  {\alpha}(y)\rangle}{p(y)}  \right)} ,\quad\quad y\in Y,\;\xi\in  W^*,
\end{equation}
\begin{equation}   \label{e:f1/p}
f_{\frac{1}{p}}(y,\nu )=\psi  {\left(  \frac{\nu}{p(y)}  \right)} ,\quad\quad y\in Y,\;\nu\in  F,
\end{equation}
so \eqref{e:equality_in_question} follows from the equality $f_{\phi}=g^*(f_{\frac{1}{p}})$,
which is obvious by \eqref{e:def_of_g}, \eqref{e:fphi}, and \eqref{e:f1/p}.
(The case where $p$ has zeros is not much harder in view of Remark \ref{rem:exp_comp}.)
\end{rem}

\RamiR{Let us give a complete proof now.}

\begin{proof}[\RamiR{Proof of Lemma \ref{lem:red_to_dim_1}}]
\RamiF{
Let $Y_0=p^{-1}(F-\DrinD{ \{ 0\} } ),\omega_0=\omega|_{Y_0}$,  $i_{\frac{1}{p}}:Y_0 \to Y \times F$ be the graph of $\frac{1}{p}$ and
$i_{\phi}:Y_0 \to Y \times W$ be the graph of $\DrinD{\phi }$.

\RamiR{Recall that $Mon(F,W)$ \DrinH{stands for} the space of monomorphisms from $F$ to $W$.} Let $\nu:Y \to Mon(F,W)$ be given by $\nu(y)(\lambda)=\lambda \cdot {\alpha}(\DrinD{y} )$.
Let $\rho_\nu:Y \times  F \to Y \times  W$ be the corresponding map (as in \DrinD{N}otation \ref{not:F_prop}).
\RamiR{The map $i_{\phi}$ \RamiS{is equal to} the composition
$$Y_{\RamiT{0}} \overset{ i_{\frac{1}{p}}}{\to} Y \times  F \overset{\rho_\nu}{\to} Y \times  W.$$
Thus,}
$$(\rho_\nu)_* \eta_{\frac{1}{p},\omega}= (\rho_\nu)_* ( (i_{\frac{1}{p}})_*(|\omega_0|))=
(i_{\phi})_*(|\omega_0|)= \eta_{\phi,\omega}\, .$$
\RamiR{Note that $\rho_{\nu^t}=g$.} Thus by Proposition \ref{prop:F_prop2},
$${\hat \eta_{\phi,\omega}} = (\rho_{\nu^t})^*  (\cF^{*}_{F}(\eta_{\frac{1}{p},\omega}))
 =g^*(\cF^{*}_{F}(\eta_{\frac{1}{p},\omega}))\DrinD{\, .}$$
 }
\end{proof}

\begin{lemma}  \label{lem:dim_1}
Let $Y$ be the affine space with coordinates $y_1, \dots, y_n.$ Let $p:Y \to F$ \RamiAS{be} defined by
$p= \prod_{i=1}^n y_i^{l_i}$,
where
$l_i \in \Z_{\geq 0}$. Let $\omega$ be the top differential form on $Y$
given by $\omega= (\prod_{i=1}^n y_i^{r_i}) d{y_1} \wedge \dots \wedge d{y_n}$, where $r_i \in \Z$.
 Suppose $r_i \geq 0$ whenever $l_i=0$,
\RamiG{
so $\omega$ is regular on the set $Y_0:=\{ y\in Y | p(y)\neq 0\}$ and therefore
$\eta_{\frac{1}{p},\omega}$ is well-defined.
}

Then $\RamiT{\hat \eta}_{\frac{1}{p},\omega}$ is  {\RamiJ{WF-holonomic}}.
\end{lemma}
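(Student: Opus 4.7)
Proof plan.

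I plan to exploit the natural $T := \mathbb{G}_m^n$-symmetry of the situation and then invoke Corollary~\ref{cor:WF_Hol2}. Let $T$ act on $Y = \A^n$ by coordinate scaling, and extend this to a $T$-action on $Y \times F^*$ by
\[
t \cdot (y,\nu) := \Bigl((t_1 y_1, \ldots, t_n y_n),\ \bigl(\textstyle\prod_i t_i^{l_i}\bigr)\nu\Bigr),
\]
as well as to the ``predual'' $T$-action on $Y \times F$ given by $t \cdot (y,s) := (t \cdot y, (\prod_i t_i^{-l_i}) s)$. Since $p(t \cdot y) = (\prod_i t_i^{l_i})\, p(y)$, the graph embedding $i \colon Y_0 \hookrightarrow Y \times F$ of $1/p$ is $T$-equivariant, and the measure $|\omega_0|$ on $Y_0$ transforms under $T$ by the quasi-character $\chi(t) := \prod_i |t_i|^{-(r_i+1)}$. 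Thus $\eta_{1/p,\omega} = i_* |\omega_0|$ generates a $T(F)$-stable line in $\Sc^*(Y(F) \times F)$, and by the standard compatibility of partial Fourier transform with linear actions (which introduces only a Jacobian twist on the character), $\hat\eta_{1/p,\omega}$ generates a $T(F)$-stable line in $\G(Y(F) \times F^*, D^{Y(F) \times F^*}_{Y(F)})$ for the dual $T$-action above.

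Next I take $U := Y_0 \times F^*$, a $T$-stable Zariski open subset of $Y \times F^*$, and claim that $\hat\eta_{1/p,\omega}|_{U(F)}$ is smooth. Indeed, on $Y_0$ the rational map $1/p$ lands in $F$ (not just $\PP^1$), so Lemma~\ref{lem:exp_comp} applies to $(Y_0, F, 1/p, \omega_0)$ and, combined with the restriction property of partial Fourier transform (Proposition~\ref{prop:F_prop}(1)), gives
\[
\hat\eta_{1/p,\omega}|_{U(F)} = f \cdot \mathrm{pr}^*(|\omega_0|),\qquad f(y,\nu) := \psi(\nu/p(y)),
\]
where $\mathrm{pr} \colon U \to Y_0$ is the projection. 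Here $f$ is locally constant on $U(F)$ because $\psi$ is locally constant and $(y,\nu) \mapsto \nu/p(y)$ is analytic on $U$; and $|\omega_0|$ is smooth on $Y_0(F)$ because for each $i$ with $l_i > 0$, $y_i$ is non-vanishing so $|y_i|^{r_i}$ is locally constant on $F^\times$ (the normalized absolute value being discretely valued), while for $i$ with $l_i = 0$ the hypothesis $r_i \geq 0$ makes $y_i^{r_i} dy_i$ smooth on $F$.

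Third, the complement $Z := (Y - Y_0) \times F^*$ decomposes into finitely many $T$-orbits. The $T$-orbits on $Y = \A^n$ are indexed by the support pattern $S \subset \{1, \ldots, n\}$ of $y$, giving $2^n$ orbits. For a point $(y, \nu) \in Z$ we have $p(y) = 0$, hence some $i_0 \in I := \{j : l_j > 0\}$ satisfies $y_{i_0} = 0$, i.e., $i_0 \notin S$. The one-parameter subgroup $\{t_j = 1\ \forall\, j \neq i_0\}$ then fixes $y$ and scales $\nu$ by $t_{i_0}^{l_{i_0}}$, covering all of $F^\times$ since $l_{i_0} > 0$. Therefore within each allowable support pattern the slice meets $Z$ in at most two $T$-orbits (according to $\nu = 0$ or $\nu \neq 0$), yielding at most $2^{n+1}$ orbits on $Z$ in total.

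Combining the $T(F)$-stable line, the smoothness of $\hat\eta_{1/p,\omega}$ on $U(F)$, and the finiteness of $T$-orbits on $Z$, Corollary~\ref{cor:WF_Hol2} applies and yields that $\hat\eta_{1/p,\omega}$ is WF-holonomic. The main subtle point is the verification that partial Fourier transform preserves $T(F)$-equivariance of lines---a routine check involving only the Jacobian character---and the identification of the smooth locus via Lemma~\ref{lem:exp_comp}; everything else is orbit bookkeeping on $\A^n \times \A^1$.
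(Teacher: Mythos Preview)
Your approach is exactly the paper's: establish that $\hat\eta_{1/p,\omega}$ spans a $T(F)$-stable line (this is the content of Lemma~\ref{lem:equv}) and then invoke Corollary~\ref{cor:WF_Hol2}. The orbit count on $Z=(Y-Y_0)\times F^*$ is correct (with the minor caveat that ``covering all of $F^\times$'' should refer to algebraic orbits, which is what Corollary~\ref{cor:WF_Hol2} requires).

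There is, however, a gap in your smoothness claim on $U=Y_0\times F^*$. You assert that for $i$ with $l_i=0$ the hypothesis $r_i\ge 0$ ``makes $y_i^{r_i}\,dy_i$ smooth on $F$''. This confuses algebraic regularity of the form with local constancy of the associated measure: one has $|y_i^{r_i}\,dy_i|=|y_i|^{r_i}|dy_i|$, and for $r_i>0$ the function $|y_i|^{r_i}$ is continuous but \emph{not} locally constant at $y_i=0$. So $|\omega_0|$ need not be a smooth density on $Y_0(F)$, and hence $\hat\eta_{1/p,\omega}|_{U(F)}$ need not be smooth. (Concretely: take $n=1$, $l_1=0$, $r_1=1$; then $p\equiv 1$, $Y_0=Y$, and $\hat\eta=\psi(\xi)\,|y|\,|dy|$, which is not locally constant near $y=0$.) Shrinking $U$ to $(\G_m)^n\times F^*$ restores smoothness but breaks the finite-orbit condition on the complement: on a stratum where $y_{i_0}=0$ with $l_{i_0}=0$ but all $y_j\ne 0$ for $j\in I$, the function $\xi/p(y)$ is a $T$-invariant, giving infinitely many orbits.

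The repair is small. Keep your $U$ and $Z$, but replace ``smooth'' on $U$ by ``WF-holonomic'': on $U$ one has $\hat\eta=f\cdot pr^*(|\omega_0|)$ with $f$ locally constant, so $WF(\hat\eta|_U)\subset pr^*(WF(|\omega_0|))$; and $|\omega_0|$ is a $(\G_m)^n$-eigenmeasure on $Y_0$, a space on which $(\G_m)^n$ has only finitely many orbits (the $2^{|[n]\setminus I|}$ coordinate strata), so $|\omega_0|$ is WF-holonomic by Corollary~\ref{cor:WF_Hol2} applied on $Y_0$. Combining this with your (correct) equivariance bound over $Z$ gives that $WF(\hat\eta)$ lies in a finite union of conormal bundles, and you are done. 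Equivalently, one may split off the variables with $l_i=0$ and write $\hat\eta_{1/p,\omega}$ as an exterior product of a ``pure $l_i>0$'' factor (to which your argument applies verbatim) with the measure $\prod_{l_i=0}|y_i|^{r_i}|dy_i|$, which is WF-holonomic for the same reason.
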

\RamiG{
\RamiR{This lemma} follows from the next \DrinH{one} combined with Corollary \ref{cor:WF_Hol2}.
\begin{lemma}\label{lem:equv}
\DrinH{In the situation of Lemma~\ref{lem:dim_1} one has}
$$\pi(\DrinH{\alpha_1, \cdots,\alpha_n})(\RamiT{\hat \eta}_{\frac{1}{p},\omega})=|\prod_{i=1}^n \alpha_i^{-1-r_i}|\RamiT{\hat \eta}_{\frac{ 1}{p},\omega}, \quad \DrinH{(\alpha_1, \cdots,\alpha_n)\in (F^{\times})^n ,}$$
\DrinH{where $\pi$ denotes the following action of $(F^{\times})^n$ on $Y \times  F$:}
$$\pi(\alpha_1, \cdots,\alpha_n)\cdot (y_1, \cdots,y_n,\xi)\DrinH{:= }
(\alpha_1 y_1, \cdots,\alpha_n y_n, \xi \prod_{i=1}^n \alpha_i^{l_i}).$$
\end{lemma}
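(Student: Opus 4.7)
The plan is to exhibit the entire setup as equivariant under the torus $(F^{\times})^n$ and then propagate this equivariance through the partial Fourier transform via Proposition~\ref{prop:F_prop2}.

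First I would introduce a ``pre-Fourier'' action of $(F^{\times})^n$ on $Y\times F$ given by
$$\tilde\pi(\alpha)(y_1,\ldots,y_n,\nu):=(\alpha_1 y_1,\ldots,\alpha_n y_n,\,c\nu),\qquad c:=\Big(\prod_{i=1}^n\alpha_i^{l_i}\Big)^{-1},$$
and write $T(\alpha):Y\to Y$ for the torus action $y\mapsto(\alpha_1 y_1,\ldots,\alpha_n y_n)$. Since $p(T(\alpha)y)=(\prod_{i=1}^n\alpha_i^{l_i})\,p(y)$, one has $1/p(T(\alpha)y)=c/p(y)$, so the graph embedding $i:Y_0\to Y\times F$, $y\mapsto(y,1/p(y))$, is equivariant: $\tilde\pi(\alpha)\circ i=i\circ T(\alpha)$. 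A direct coordinate computation gives $T(\alpha)^*\omega=(\prod_{i=1}^n\alpha_i^{r_i+1})\omega$, hence $T(\alpha)_*|\omega|=|\prod_{i=1}^n\alpha_i^{-1-r_i}|\cdot|\omega|$. Pushing forward through $i$ yields
$$\tilde\pi(\alpha)_*\eta_{\frac{1}{p},\omega}=i_*(T(\alpha)_*|\omega|)=\Big|\prod_{i=1}^n\alpha_i^{-1-r_i}\Big|\cdot\eta_{\frac{1}{p},\omega}.$$

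Next I would transport this identity through $\cF^*_F$. Factor $\tilde\pi(\alpha)=(T(\alpha)\times\id_F)\circ(\id_Y\times M_c)$, where $M_c$ denotes multiplication by $c$ on $F$. The first factor acts only on $Y$ and so commutes with the partial Fourier transform in the $F$-variable. For the second factor I would apply Proposition~\ref{prop:F_prop2} with $X=Y$, $L=W=F$, and the constant map $\nu\equiv c\in Mon(F,F)=F^{\times}$; under the standard self-duality $F^*\cong F$ the transpose $\nu^t$ equals $c$ as well, so the commutative diagram specializes to $\cF^*_F\circ(\id_Y\times M_c)_*=(\id_Y\times M_c)^*\circ\cF^*_F$. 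By item~(\ref{it:dist_notstd3}) of \S\ref{sssec:op_dist}, $(\id_Y\times M_c)^*=(\id_Y\times M_{c^{-1}})_*$. Composing, $\cF^*_F\circ\tilde\pi(\alpha)_*=\pi(\alpha)_*\circ\cF^*_F$, because $(T(\alpha)\times M_{c^{-1}})(y,\xi)=(\alpha y,(\prod_{i=1}^n\alpha_i^{l_i})\xi)=\pi(\alpha)(y,\xi)$. Applying this to $\eta_{\frac{1}{p},\omega}$ and combining with the identity above gives
$$\pi(\alpha)(\hat\eta_{\frac{1}{p},\omega})=\pi(\alpha)_*\hat\eta_{\frac{1}{p},\omega}=\cF^*_F(\tilde\pi(\alpha)_*\eta_{\frac{1}{p},\omega})=\Big|\prod_{i=1}^n\alpha_i^{-1-r_i}\Big|\cdot\hat\eta_{\frac{1}{p},\omega},$$
which is the claim.

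The main subtlety is bookkeeping of conventions: matching the identification $T_*=(T^{-1})^*$ for diffeomorphisms (item~(\ref{it:dist_notstd3}) of \S\ref{sssec:op_dist}) with the directions of the arrows in the dual diagram of Proposition~\ref{prop:F_prop2}, and confirming that under $F^*\cong F$ the operation $\nu\mapsto\nu^t$ is the identity on scalars, so that $\rho_\nu$ and $\rho_{\nu^t}$ are both ``multiplication by $c$'' on their respective spaces. Once these are pinned down, the entire argument is formal.
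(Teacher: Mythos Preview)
Your proof is correct and follows essentially the same strategy as the paper's: both compute how the torus acts on $\eta_{\frac{1}{p},\omega}$ via the equivariance of the graph embedding and the scaling of $|\omega|$, and then use Proposition~\ref{prop:F_prop2} to track how the partial Fourier transform intertwines the homothety in the $F$-coordinate with the one in the dual coordinate. The paper writes this as $\pi_1(t)\cF^*_F(\eta_{\frac{1}{p},\omega})=\cF^*_F(\eta_{\frac{1}{\pi_1(t)(p)},\pi_1(t)(\omega)})$ and then unwinds using Lemma~\ref{lem:loc_hom}, whereas you package the same computation by explicitly introducing the ``pre-Fourier'' action $\tilde\pi$ and the factorization $\tilde\pi(\alpha)=(T(\alpha)\times\id_F)\circ(\id_Y\times M_c)$; the content is identical.
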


\DrinD{
\begin{remark}
 By Lemma \ref{lem:exp_comp} and Remark \ref{rem:exp_comp},
$$\RamiT{\hat \eta}_{\frac{1}{p},\omega}=g\cdot |d{y_1} \wedge \dots \wedge d{y_n}|,$$
where $g$ is the function
$$g(y,\xi )=\psi(\xi \cdot \prod_{i=1}^n y_i^{-l_i} ) \cdot
\prod_{i=1}^n |y_i|^{r_i} $$
considered 
 as a \emph{generalized} function on the \emph{whole} $Y\RamiM{(F)} \times F$ (namely, to compute its scalar product with
 any test function, one integrates first with respect to $\xi$ and then with respect to $y$). So
 Lemma~\ref{lem:equv} just says that the equality
$$g(\alpha_1^{-1}y_1,\ldots ,\alpha_n^{-1}y_n,\xi\prod_{i=1}^n \alpha_i^{-l_i})
=\prod_{i=1}^n |\alpha_i|^{-r_i}\cdot g(y_1,\ldots y_n,\xi )$$
holds in $\G (Y\RamiM{(F)}\times F)$ (not merely on the locus $y_i\ne 0$).
This is clear. On the other hand, a formal proof of the lemma is given below.
\end{remark}

}
}

%
\begin{proof}[\RamiG{Proof of Lemma \ref{lem:equv}}]
Consider the action $\pi_1$ of $T:=(F^\times)^n$ on $Y$ given by
$\pi_1(\alpha_1, \cdots,\alpha_n)\cdot (y_1, \cdots,y_n)=  (\alpha_1 y_1, \cdots,\alpha_n y_n).$
 Let $t=(\alpha_1, \cdots,\alpha_n)$.
Clearly,
$$\pi_1(t)(p)=( \prod_{i=1}^n \alpha_i^{-l_i})\cdot p\quad  \quad \text{ and } \quad \quad \pi_1(t)(\omega)=(\prod_{i=1}^n \alpha_i^{{-1-r_i}})  \cdot \omega.$$
Thus by
Proposition \ref{prop:F_prop2},  we have
\begin{multline*}
\pi_1(t)\cF^{*}_F(\eta_{\frac{1}{p},\omega}) =\cF^{*}_F(\pi_1(t)^{}\eta_{\frac{1}{p},\omega})=\cF^{*}_F(\eta_{\frac{1}{\pi_1(t)^{}(p)},\pi_1(t)^{}(\omega)})=
\\ =\cF^{*}_F\left(\eta_{\frac{ \prod_{i=1}^n \alpha_i^{l_i}}{p},(\prod_{i=1}^n \alpha_i^{-1-r_i})\omega}\right)=
   \RamiE{|\prod_{i=1}^n \alpha_i^{-1-r_i}|\cdot \cF^{*}_F(\rho_{\prod_{i=1}^n \alpha_i^{l_i}}(\eta_{\frac{1}{p},\omega}))=}
\\
\RamiE{= |\prod_{i=1}^n \alpha_i^{-1-r_i}|\cdot \rho_{\prod_{i=1}^n \alpha_i^{-l_i}}\left(\cF^{*}_F(\eta_{\frac{1}{p},\omega})\right).}
\end{multline*}
\RamiG{This implies
$$\pi(t)^{}\cF^{*}_W(\eta_{\frac{1}{p},\omega})=|\prod_{i=1}^n \alpha_i^{-1-r_i}|\cdot \cF^{*}_W(\eta_{\frac{ 1}{p},\omega}).$$
}

\end{proof}

\subsubsection{\DrinH{Proof} \RamiF{of  {Proposition} \ref{thm:main_c}}}
\DrinH{Let us} introduce the following \RamiG{\emph{ad hoc}} terminology.
\begin{definition}
A quadruple $(Y,W,\phi,\omega)$ as in Notation \ref{not:eta} is said to be
``good" if  $\RamiT{\hat \eta}_{\phi,\omega} \in \RamiF{\Sc^*}( {Y(F)}\times W^*,
\DrinD{D^{Y(F)\times W^*}_{Y (F)}})$
is  {\RamiJ{WF-holonomic}}.
\end{definition}
\DrinH{Our goal is to show that any quadruple $(Y,W,\phi,\omega)$ satisfying the conditions of Proposition~\ref{thm:main_c} is good. We will need  the following  obvious lemma.}
\begin{lemma} \label{lem:loc_hom}
Let $(Y,W,\phi,\omega)$ be as above.
\RamiR{Let  $e:U \to Y$ be an {\et} map and $f\in O^{\times}(Y)$ be \DrinH{an} invertible  regular function. Then
\begin{enumerate}
 \item \label{lem:loc_hom:1}
$\eta_{\phi,f\omega}= |f| \cdot \eta_{\phi,\omega}.$
 \item \label{lem:loc_hom:2} \DrinH{Let} $\rho_{f}:Y \times W \to Y \times W $ \DrinH{denote} the homothety action as in \DrinH{N}otation \ref{not:F_prop}. Then
$$\eta_{f \phi,\omega}= \rho_{f}(\eta_{ \phi,\omega}).$$
\item \label{lem:loc_hom:3}  \RamiS{Let $e^*(\phi)$ denote the composition $$U \overset{e}{\to} Y \overset{\phi}{\to}\overline W$$

Then}
$$\eta_{e^*(\phi),e^*(\omega)}= (e \times Id_W)^*( \eta_{ \phi,\omega})$$

\end{enumerate}
}
\end{lemma}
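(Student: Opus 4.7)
All three statements amount to unwinding the definition $\eta_{\phi,\omega}=(i_\phi)_*(|\omega_0|)$, where $i_\phi:Y_0\hookrightarrow Y\times W$ is the graph of $\phi$ restricted to $Y_0=\phi^{-1}(W)$. The key observation, common to all three items, is that invertibility of $f$ (in parts (\ref{lem:loc_hom:1})--(\ref{lem:loc_hom:2})) and \'etaleness of $e$ (in part (\ref{lem:loc_hom:3})) ensure that $Y_0$, viewed as an open subset, is preserved by the relevant construction.

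For (\ref{lem:loc_hom:1}), I would note that $|f\omega|=|f|\cdot|\omega|$ as measures on $Y_0(F)$, so by the projection formula applied to the closed embedding $i_\phi$,
\[
\eta_{\phi,f\omega}=(i_\phi)_*(|f|_{Y_0}\cdot|\omega_0|)=(i_\phi)_*\bigl((i_\phi)^*(\mathrm{pr}_Y^*|f|)\cdot|\omega_0|\bigr)=\mathrm{pr}_Y^*|f|\cdot\eta_{\phi,\omega},
\]
which is exactly the asserted identity (the function $|f|$ on $Y\times W$ being understood via pullback from $Y$).

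For (\ref{lem:loc_hom:2}), since $f\in O^\times(Y)$, the locus $(f\phi)^{-1}(W)$ equals $Y_0$, and the graph map factors as $i_{f\phi}=\rho_f\circ i_\phi$, because $\rho_f(y,w)=(y,f(y)w)$ sends $(y,\phi(y))$ to $(y,f(y)\phi(y))$. Pushing forward $|\omega_0|$ along this composition gives $\eta_{f\phi,\omega}=(\rho_f)_*\eta_{\phi,\omega}$, which under the convention $(\rho_f)_*=(\rho_f^{-1})^*$ (item (\ref{it:notstd3}) of the notation) is $\rho_f(\eta_{\phi,\omega})$.

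For (\ref{lem:loc_hom:3}), I would form the commutative square
\[
\xymatrix{
U_0 \ar[r]^{i_{e^*\phi}} \ar[d]_{e|_{U_0}} & U\times W \ar[d]^{e\times\mathrm{Id}_W} \\
Y_0 \ar[r]^{i_\phi} & Y\times W
}
\]
where $U_0=e^{-1}(Y_0)$, and observe that it is Cartesian; since $e$ is \'etale, so is $e\times\mathrm{Id}_W$, and $|e^*\omega_0|=(e|_{U_0})^*|\omega_0|$ as locally constant measures. The conclusion then follows from the standard base change identity for pushforward of densities along a Cartesian square with \'etale vertical maps, namely $(i_{e^*\phi})_*(e|_{U_0})^*=(e\times\mathrm{Id}_W)^*(i_\phi)_*$, which applied to $|\omega_0|$ yields the claim. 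The only mild subtlety anywhere in the proof is remembering that $(e\times\mathrm{Id}_W)^*$ on distributions is well-defined because $e\times\mathrm{Id}_W$ is a submersion, so there is no obstacle beyond bookkeeping.
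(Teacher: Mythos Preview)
Your proof is correct. The paper itself provides no proof of this lemma: it is introduced as ``the following obvious lemma'' and the text moves on immediately, so your detailed verification simply spells out what the authors regard as a routine unwinding of the definition $\eta_{\phi,\omega}=(i_\phi)_*(|\omega_0|)$.
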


\DrinH{Let us now study how the property of being good  depends on $(Y,W,\phi,\omega)$.}

\begin{proposition}[Locality] \label{prop:loc}
Let $(Y,W,\phi,\omega)$ be as above.
 {
\begin{enumerate}
 \item \label{prop:loc:1} Let $Y=\bigcup U_i$ be a Zariski open cover of $Y$. Suppose that
 the quadruple $(U_i,W,\phi|_{U_i},\omega|_{U_i})$ is good for each $i$ .
Then the quadruple $(Y,W,\phi,\omega)$ is good.
\item \label{prop:loc:2}Let $e:U \to Y$ be an  {\et} map. Suppose that the quadruple $(Y,W,\phi,\omega)$ is good. Then the
 quadruple $(U,W,\RamiS{e^*(\phi)},e^*(\omega))$ is good.
\end{enumerate}
}
\end{proposition}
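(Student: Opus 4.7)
Both parts reduce to tracking how the partial Fourier transform and the wave-front set interact with restrictions/pullbacks along the $Y$-direction, plus the fact (Lemma~\ref{lem:iso_dir_invs_im}) that inverse images preserve isotropicity.

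\textbf{Part (1).} The plan is to show that $\hat\eta_{\phi,\omega}$ restricted to each $U_i(F)\times W^{*}$ equals $\hat\eta_{\phi|_{U_i},\omega|_{U_i}}$ and then combine the local isotropic bounds into a global one. Concretely, $\eta_{\phi,\omega}|_{U_i(F)\times W}=\eta_{\phi|_{U_i},\omega|_{U_i}}$ by the definitions in Notation~\ref{not:eta}, and by Proposition~\ref{prop:F_prop}(1) the partial Fourier transform in $W$ commutes with restriction along an open subset of $Y$, so
$$\hat\eta_{\phi,\omega}\bigm|_{U_i(F)\times W^{*}}=\hat\eta_{\phi|_{U_i},\omega|_{U_i}}.$$
By hypothesis each right-hand side is WF-holonomic, so there is an isotropic algebraic subvariety $L_i\subset T^{*}(U_i\times W^{*})$ whose $F$-points contain $WF(\hat\eta_{\phi|_{U_i},\omega|_{U_i}})$. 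Applying Proposition~\ref{prop:WF_prop}(3) (locality of $WF$) gives
$$WF(\hat\eta_{\phi,\omega})\cap T^{*}(U_i(F)\times W^{*})\subset L_i(F).$$
Let $L\subset T^{*}(Y\times W^{*})$ be the Zariski closure of $\bigcup_i L_i$ in $T^{*}(Y\times W^{*})$; since $\{U_i\}$ covers $Y$, the sets $T^{*}(U_i(F)\times W^{*})$ cover the ambient total space, hence $WF(\hat\eta_{\phi,\omega})\subset L(F)$. Each $L_i$ is isotropic and closures/unions of isotropic subvarieties remain isotropic (Remark~\ref{r:easy_facts}), so $L$ is isotropic, proving goodness of $(Y,W,\phi,\omega)$.

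\textbf{Part (2).} The plan is to identify $\hat\eta_{e^{*}(\phi),e^{*}(\omega)}$ as the pullback of $\hat\eta_{\phi,\omega}$ along the étale submersion $e\times\mathrm{Id}_{W^{*}}$, then invoke the wave-front bound for inverse images and the preservation of isotropicity under inverse image. By Lemma~\ref{lem:loc_hom}(3),
$$\eta_{e^{*}(\phi),e^{*}(\omega)}=(e\times\mathrm{Id}_W)^{*}(\eta_{\phi,\omega}).$$
Since $e$ is étale (in particular, a submersion in the analytic sense) and the Fourier transform acts only in the $W$-direction, the pullback along $e$ commutes with $\cF^{*}_W$; applying $\cF^{*}_W$ to the previous identity yields
$$\hat\eta_{e^{*}(\phi),e^{*}(\omega)}=(e\times\mathrm{Id}_{W^{*}})^{*}(\hat\eta_{\phi,\omega}).$$
By Proposition~\ref{prop:WF_prop2}(2), applied to the submersion $e\times\mathrm{Id}_{W^{*}}$,
$$WF(\hat\eta_{e^{*}(\phi),e^{*}(\omega)})\subset (e\times\mathrm{Id}_{W^{*}})^{*}\bigl(WF(\hat\eta_{\phi,\omega})\bigr).$$
If $L\subset T^{*}(Y\times W^{*})$ is an isotropic algebraic subvariety containing $WF(\hat\eta_{\phi,\omega})$, then $(e\times\mathrm{Id}_{W^{*}})^{*}(L)$ contains the right-hand side, and is isotropic by Lemma~\ref{lem:iso_dir_invs_im}(2). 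Hence $\hat\eta_{e^{*}(\phi),e^{*}(\omega)}$ is WF-holonomic.

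\textbf{Main obstacle.} Both parts are essentially formal once one has the correct compatibility of the partial Fourier transform with, respectively, open restriction and étale pullback along $Y$. The only mildly non-trivial step is the commutation identity $\cF^{*}_W\circ(e\times\mathrm{Id}_W)^{*}=(e\times\mathrm{Id}_{W^{*}})^{*}\circ\cF^{*}_W$ used in part (2); one verifies it on Schwartz test densities via Fubini (separating the $Y$- and $W$-integrations) and dualizes, using that $e$ being étale makes the pullback of generalized sections well-defined.
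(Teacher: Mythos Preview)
Your proposal is correct and follows essentially the same route as the paper. The paper's proof of part~(1) is identical in substance (restriction of $\eta$, compatibility of $\cF^*_W$ with open restriction via Proposition~\ref{prop:F_prop}, locality of $WF$ via Proposition~\ref{prop:WF_prop}); for part~(2) the paper simply cites Lemma~\ref{lem:loc_hom}\eqref{lem:loc_hom:3} together with Proposition~\ref{prop:WF_Hol}, which packages the two ingredients you unpacked (Proposition~\ref{prop:WF_prop2}\eqref{prop:WF_prop2:2} and Lemma~\ref{lem:iso_dir_invs_im}) into a single statement, and leaves the commutation $\cF^*_W\circ(e\times\mathrm{Id}_W)^*=(e\times\mathrm{Id}_{W^*})^*\circ\cF^*_W$ implicit.
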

\begin{proof}
$ $
\begin{enumerate}
 \item By Lemma \ref{lem:loc_hom}\RamiS{\eqref{lem:loc_hom:3}}, we have $\eta_{\phi|_U,\omega|_U}= (  \eta_{ \phi,\omega})|_{U \times W}$.
By Lemma \ref{prop:F_prop}, we get  {$\hat \eta_{\phi|_U,\omega|_U}=  (\hat \eta_{ \phi,\omega})|_{U(F) \times W}$}.
 By Proposition \ref{prop:WF_prop}, this gives
 {$\WF(\hat \eta_{\phi|_U,\omega|_U})= \WF(\hat \eta_{ \phi,\omega})\cap (T^*(U \times W))(F)$}.
This immediately implies the assertion.
\item  {Follows immediately from Lemma \ref{lem:loc_hom}\RamiS{\eqref{lem:loc_hom:3}} and Proposition \ref{prop:WF_Hol}. }
\end{enumerate}
\end{proof}

\begin{proposition}[\RamiG{Homogeneity}] \label{prop:homo}
Let $(Y,W,\phi,\omega)$ be a good quadruple. Let $f_1,f_2 \in O^{\times}(Y)$ be invertible \RamiJ{regular} functions. Then $(Y,W,f_1\phi,f_2 \omega)$ is good.
\end{proposition}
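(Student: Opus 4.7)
The plan is to reduce the problem to two independent observations: changing $\omega$ by the factor $f_2$ amounts to multiplication of $\hat\eta_{\phi,\omega}$ by the locally constant function $|f_2|$, and changing $\phi$ by the factor $f_1$ amounts to applying the automorphism $\rho_{f_1}^*$ to $\hat\eta_{\phi,\omega}$. Since WF-holonomicity is preserved by both operations, the proposition follows. I would handle the two factors separately and then compose.

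First I would treat the replacement $\omega\rightsquigarrow f_2\omega$. By Lemma~\ref{lem:loc_hom}\eqref{lem:loc_hom:1}, $\eta_{\phi,f_2\omega}=|f_2|\cdot\eta_{\phi,\omega}$. Since $f_2\in O^\times(Y)$ is nowhere vanishing, the function $|f_2|:Y(F)\to q^{\Z}$ is continuous with discrete image and hence locally constant on $Y(F)$ (this uses that $F$ is non-Archimedean). Viewed as a locally constant function on $Y(F)\times W$ pulled back from $Y(F)$, it commutes with $\cF_{W}^*$ by Proposition~\ref{prop:F_prop}(2), so $\hat\eta_{\phi,f_2\omega}=|f_2|\cdot\hat\eta_{\phi,\omega}$. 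Then Proposition~\ref{prop:WF_prop}\eqref{prop:WF_prop:4} gives $\WF(\hat\eta_{\phi,f_2\omega})\subset\WF(\hat\eta_{\phi,\omega})$, so WF-holonomicity is preserved.

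Next I would treat $\phi\rightsquigarrow f_1\phi$. By Lemma~\ref{lem:loc_hom}\eqref{lem:loc_hom:2}, $\eta_{f_1\phi,\omega}=(\rho_{f_1})_*(\eta_{\phi,\omega})$, where $\rho_{f_1}:Y\times W\to Y\times W$ is the fiberwise scaling. Applying Proposition~\ref{prop:F_prop2} with $L=W$ and $\nu=f_1:Y\to F\subset\End(W)$ (note $f_1\in O^\times(Y)$ lands in $\mathrm{Mon}(W,W)$ and $\nu^t=f_1$ as a scalar on $W^*$), the second commutative diagram yields
\[
\hat\eta_{f_1\phi,\omega}=\cF^*_{W}\bigl((\rho_{f_1})_*\eta_{\phi,\omega}\bigr)
=(\rho_{f_1})^*\bigl(\cF^*_{W}\eta_{\phi,\omega}\bigr)=(\rho_{f_1})^*\hat\eta_{\phi,\omega}.
\]
Since $f_1$ is invertible, $\rho_{f_1}:Y\times W^*\to Y\times W^*$ is an algebraic automorphism, hence a submersion with empty critical conormal set, and Proposition~\ref{prop:WF_Hol}\eqref{prop:WF_Hol:1} shows that inverse image under $\rho_{f_1}$ preserves WF-holonomicity.

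Combining the two steps, starting from a good $(Y,W,\phi,\omega)$ we first conclude that $(Y,W,\phi,f_2\omega)$ is good, and then that $(Y,W,f_1\phi,f_2\omega)$ is good. The only substantive point to verify is the local constancy of $|f_2|$, which is specific to the non-Archimedean setting; the Archimedean version would instead invoke that $|f_2|$ is a smooth positive function and that multiplication by such a function preserves WF-holonomicity.
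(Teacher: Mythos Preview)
Your proof is correct and follows essentially the same approach as the paper. The paper combines the two modifications in one line, writing $\eta_{f_1\phi,f_2\omega}=\rho_{f_1}(|f_2|\cdot\eta_{\phi,\omega})$ and then $\hat\eta_{f_1\phi,f_2\omega}=|f_2|\cdot\rho_{f_1^{-1}}(\hat\eta_{\phi,\omega})$, whereas you separate the $f_2$ and $f_1$ steps; but the ingredients (Lemma~\ref{lem:loc_hom}, Proposition~\ref{prop:F_prop2}, and the fact that pullback by an isomorphism and multiplication by a locally constant function preserve WF-holonomicity) are identical.
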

\begin{proof}
By  Lemma \ref{lem:loc_hom}\RamiS{(\ref{lem:loc_hom:1}},\RamiS{\ref{lem:loc_hom:2})}, we have $\eta_{f_1 \phi,f_2 \omega}= (\rho_{f_1}(|f_2| \cdot \eta_{ \phi,\omega})$. By \RamiE{Proposition}  \ref{prop:F_prop2}, we get
\RamiE{$\cF_W^*(\eta_{f_1 \phi,f_2 \omega})= |f_2| \cdot\rho_{f_1^{-1}} (\cF_W^*(\eta_{ \phi,\omega}))$}.
 By Proposition \ref{prop:WF_prop}, this gives $\WF(\cF_W^*(\eta_{f_1 \phi,f_2 \omega}))\subset \rho_{f_1^{ }}^*(\WF(\cF_W^*((  \eta_{ \phi,\omega}))))$ . This immediately implies the assertion.
\end{proof}
 {
\begin{corollary}  \label{cor:homo}
Let $(Y,W,\phi,\omega)$ be a quadruple as above. \DrinH{Assume that $\dim W=1$, so we can interpret $\phi$ as a rational function on $Y$. Then} the property of being good depends only on the divisors of
$\phi$ and $\omega$.
\end{corollary}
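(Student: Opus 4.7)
The plan is to deduce this directly from the Homogeneity Proposition~\ref{prop:homo}. Since $\dim W = 1$, after choosing a trivialization $W \cong F$ we have $\overline{W} \cong \mathbb{P}^1$, and a map $\phi: Y \to \overline{W}$ with $Y_0 = \phi^{-1}(W)$ dense corresponds precisely to a nonzero rational function on $Y$. So the statement makes sense: the divisor $\mathrm{div}(\phi)$ is a well-defined principal divisor on $Y$, and likewise $\mathrm{div}(\omega)$ is well-defined since rational top differential forms on the smooth variety $Y$ form an invertible $\mathcal{O}_Y$-module.

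What I would show is that if $(Y,W,\phi,\omega)$ and $(Y,W,\phi',\omega')$ are two quadruples as in Notation~\ref{not:eta} with $\mathrm{div}(\phi) = \mathrm{div}(\phi')$ and $\mathrm{div}(\omega) = \mathrm{div}(\omega')$, then one is good iff the other is. The key algebro-geometric input is that since $Y$ is smooth (hence normal), any rational function on $Y$ with trivial divisor is a regular invertible function, i.e., lies in $O^\times(Y)$. Applying this to the ratio $\phi'/\phi$ (a rational function on $Y$ with trivial divisor) yields $\phi' = f_1 \phi$ for some $f_1 \in O^\times(Y)$. Similarly, $\omega'/\omega$ is a well-defined rational function on $Y$ (the ratio of two nonzero rational sections of the canonical bundle) with trivial divisor, so $\omega' = f_2\omega$ for some $f_2 \in O^\times(Y)$.

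Finally I would invoke Proposition~\ref{prop:homo}: if $(Y,W,\phi,\omega)$ is good, then so is $(Y,W,f_1\phi,f_2\omega) = (Y,W,\phi',\omega')$. The reverse implication is obtained by symmetry, applying the same proposition with $f_1^{-1}, f_2^{-1} \in O^\times(Y)$ to $(Y,W,\phi',\omega')$. I do not anticipate any serious obstacle; the entire argument is a routine combination of the homogeneity statement with the normality of $Y$, and the only minor point to verify is the identification of rational functions / top forms having the same divisor with pairs differing by a unit, which is standard for smooth varieties.
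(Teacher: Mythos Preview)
Your proposal is correct and is precisely the argument the paper has in mind: the corollary is stated immediately after Proposition~\ref{prop:homo} with no proof, because it follows at once by writing $\phi' = f_1\phi$, $\omega' = f_2\omega$ with $f_1,f_2\in O^\times(Y)$ (using smoothness of $Y$) and invoking the homogeneity proposition.
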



We will also need the following standard lemma.
\begin{lemma} \label{lem:mon}
Let $D \subset U$ be an SNC divisor inside a smooth algebraic variety. Let $D_i$ be a collection of divisors in $U$ supported\footnote{\RamiR{Recall that a divisor on a manifold is an integral linear combination of
irreducible subvarieties of codimension $1$. The words ``supported in $D$"
mean that each of the subvarieties is contained in $D$.}} in $D$.
  Then there exist
 a Zariski cover $V_j$ of $U$,  {\et} maps $e_j:V_j \to \A^n$  and divisors $D_{ji}$ in $\A^n$ such that
 $(D_i)|_{V_j}=e_j^*(D_{ji})$ and  \DrinH{each} $D_{ji}$ \DrinH{is} supported \RamiM{in} the union of the coordinate hyperplanes.
\end{lemma}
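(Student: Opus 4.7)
The plan is to build the étale charts locally at each point $x \in U$ and collect them into the desired Zariski cover $\{V_j\}$. Since the statement is local on $U$, it suffices, for each $x$, to produce a single Zariski neighborhood $V_x$ with an étale map $e_x : V_x \to \A^n$ such that every $D_i|_{V_x}$ is the pullback of a divisor on $\A^n$ supported in the coordinate hyperplanes.

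Fix $x \in U$, and let $D^{(1)}, \ldots, D^{(k)}$ be the irreducible components of $D$ passing through $x$. I first choose a Zariski neighborhood $V_x$ of $x$ small enough to miss all components of $D$ that do not pass through $x$. Since $U$ is smooth and each $D^{(l)}$ is of pure codimension one, after shrinking $V_x$ further I can assume that each $D^{(l)} \cap V_x$ is cut out by a single regular function $f_l \in \oo(V_x)$. The SNC hypothesis says that the $D^{(l)}$ meet transversally at $x$, so the differentials $df_1(x), \ldots, df_k(x) \in T^*_x U$ are linearly independent. I extend them to a regular system of parameters $f_1, \ldots, f_n$ at $x$, where $n = \dim U$; this is possible because $U$ is smooth.

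Now consider the map $e_x = (f_1, \ldots, f_n) : V_x \to \A^n$. Its differential at $x$ is an isomorphism, so after a last shrinking of $V_x$ I can assume that $e_x$ is étale. By construction, $D^{(l)} \cap V_x = e_x^{-1}(\{y_l = 0\})$ for $l = 1, \ldots, k$. Since each $D_i$ is supported in $D$, it has the form $D_i = \sum_l m_{i,l} D^{(l)}$ for integers $m_{i,l}$, and on $V_x$ only the indices $l \in \{1, \ldots, k\}$ contribute. Setting $D_{i,x} := \sum_{l=1}^k m_{i,l} \{y_l = 0\}$, a divisor in $\A^n$ supported in the union of coordinate hyperplanes, one gets $(D_i)|_{V_x} = e_x^*(D_{i,x})$, which is what is required.

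The main technical point is constructing the étale coordinates compatibly with all components $D^{(l)}$ simultaneously; this is essentially the statement that transversally meeting smooth divisors in a smooth ambient space can be straightened to coordinate hyperplanes via an étale map. The two ingredients are the smoothness of $U$ (making each $D^{(l)}$ locally principal) and the transversality built into the SNC hypothesis (ensuring linear independence of the $df_l(x)$). Everything else reduces to a matter of shrinking Zariski neighborhoods.
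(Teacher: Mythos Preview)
Your argument is correct and is precisely the standard one. Note that the paper itself does not give a proof of this lemma: it is introduced with the phrase ``We will also need the following standard lemma'' and is left unproved, so there is no paper proof to compare against.

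One small remark: with the paper's definition of an SNC divisor (existence, at each point, of a Zariski neighborhood with an \'etale map to $\A^n$ under which $D$ pulls back from a union of coordinate hyperplanes), the \'etale chart is already handed to you. Your construction of $e_x$ from local equations $f_1,\dots,f_k$ of the components through $x$ is essentially a rederivation of that chart; alternatively, you could start from the chart $\alpha$ given by the definition, shrink so that each $\alpha^{-1}(\{y_l=0\})$ is a single irreducible component of $D$, and then read off the multiplicities. Either way the content is the same, and your write-up is fine as it stands.
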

}
\RamiR{
Now we are ready to prove the following particular case of
Proposition \ref{thm:main_c}.
\begin{lemma}\label{lem:sp_case}
Let $(Y,W,\phi,\omega)$  be as in  Proposition~\ref{thm:main_c}. Moreover,
suppose that $W=F$ and
$0\not\in\phi (Y)$. Then $(Y,W,\phi,\omega)$  is good.
\end{lemma}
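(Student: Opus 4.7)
The plan is to localize on $Y$ using \'etale coordinates in which $p := 1/\phi$ and $\omega$ become monomials, and then invoke Lemma \ref{lem:dim_1}. First I would observe that since $W=F$ and $0 \notin \phi(Y)$, the function $p := 1/\phi$ is a \emph{regular} function $Y \to F$ with zero locus $p^{-1}(0) = Y_\infty$; moreover, on $Y_0$ the map $\phi$ coincides with $1/p$, so $\eta_{\phi,\omega}=\eta_{1/p,\omega}$. Note that $\mathrm{div}(p)$ is effective (since $p$ is regular) and $\mathrm{div}(\omega)$ has negative components only along $Y_\infty$ (since $\omega$ is regular on $Y_0$); both divisors are supported in the SNC divisor $Z \cup Y_\infty$.

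Next I would apply Lemma \ref{lem:mon} to $Z\cup Y_\infty$ with $D_1 := \mathrm{div}(p)$ and $D_2 := \mathrm{div}(\omega)$, obtaining a Zariski cover $\{V_j\}$ of $Y$, \'etale maps $e_j : V_j \to \mathbb{A}^n$, and divisors
$D_{j,1} = \sum_i l_i H_i$ and $D_{j,2} = \sum_i r_i H_i$ on $\mathbb{A}^n$ (supported in coordinate hyperplanes) such that $\mathrm{div}(p)|_{V_j} = e_j^*(D_{j,1})$ and $\mathrm{div}(\omega)|_{V_j} = e_j^*(D_{j,2})$. Define the monomial model on $\mathbb{A}^n$ by
\[
p_j := \prod_i y_i^{l_i}, \qquad \omega_j := \Bigl(\prod_i y_i^{r_i}\Bigr)\, dy_1 \wedge \cdots \wedge dy_n.
\]
Since $p|_{V_j}$ and $e_j^*(p_j)$ have the same divisor on the smooth variety $V_j$, their ratio $u_1 := p|_{V_j}/e_j^*(p_j)$ lies in $\mathcal{O}^\times(V_j)$, and likewise $u_2 := \omega|_{V_j}/e_j^*(\omega_j) \in \mathcal{O}^\times(V_j)$.

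To conclude, I combine the available reductions. By Proposition \ref{prop:loc}\eqref{prop:loc:1} it suffices to show each $(V_j, F, \phi|_{V_j}, \omega|_{V_j})$ is good. By Proposition \ref{prop:homo}, absorbing the units $1/u_1$ and $u_2$, this is equivalent to goodness of $(V_j, F, e_j^*(1/p_j), e_j^*(\omega_j))$, which by Proposition \ref{prop:loc}\eqref{prop:loc:2} follows from goodness of $(\mathbb{A}^n, F, 1/p_j, \omega_j)$; and the latter is precisely the content of Lemma \ref{lem:dim_1}. The only point requiring care, which I regard as the main obstacle, is the verification of the hypothesis of Lemma \ref{lem:dim_1} that $r_i \geq 0$ whenever $l_i = 0$: this expresses the absence of poles of $\omega$ along those components of the SNC divisor that lie in $Y_0$ rather than in $Y_\infty$, and it follows from the assumption that $\omega$ is regular on $Y_0$ by pulling back to the \'etale chart $V_j$.
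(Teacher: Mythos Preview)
Your proof is correct and follows essentially the same approach as the paper's: rewrite $\phi=1/p$, apply Lemma~\ref{lem:mon} to pass to \'etale monomial charts, use locality and homogeneity to reduce to the monomial case, and invoke Lemma~\ref{lem:dim_1}. The only differences are cosmetic: you use Proposition~\ref{prop:homo} directly where the paper cites its corollary (Corollary~\ref{cor:homo}), and you explicitly verify the hypothesis $r_i\ge 0$ when $l_i=0$ of Lemma~\ref{lem:dim_1}, which the paper leaves implicit.
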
}
\begin{proof}
\RamiR{
The proof is based on Lemma \ref{lem:dim_1}.
We can rewrite $\phi=1/p$, \DrinH{where} $p$ is a regular function on $Y$.} We know that 
 \RamiR{$\Xi:= Y_\infty  \cup Z$} is \RamiM{an} {SNC} divisor in
  \RamiR{$Y$.}

Let $D_1$ be the divisor of $p$ and $D_2$ the divisor of 
\RamiR{$\omega$}.
By Lemma \ref{lem:mon}, we can find a Zariski cover  $V_j$ of
\RamiR{$Y$}, {\et} maps $e_j:V_j \to \A^n$,
and  divisors $D_{\RamiI{j1}},D_{\RamiI{j2}}$ such that
$(D_i)|_{V_j}=e_j^*(D_{ji})$ and the divisors $D_{ji}$ are supported \RamiM{in} the union of the coordinate hyperplanes.
By Proposition \ref{prop:loc}\RamiS{\eqref{prop:loc:1}}, it is enough to show that for any $j$
the quadruple  $(V_j,F,\frac{1}{p|_{V_j}},(\omega|_{V_j}))$ is good.
By Corollary \ref{cor:homo}, we may replace $p|_{V_j}$ and $\omega|_{V_j}$ by any other function and form with the
same divisor.
 So by Proposition \ref{prop:loc}\RamiS{\eqref{prop:loc:2}}, it is enough to show that
$(\A^n,F,\frac{1}{q},\eps)$ is good 
\RamiR{for some regular function $q$  and top differential form $\eps$ on $\A^n$ such that} the divisor of $q$ is
 $D_{\RamiI{j1}}$ and the divisor
of $\eps$ is $D_{\RamiI{j2}}$.
This follows from Lemma  \ref{lem:dim_1}.
\end{proof}

\begin{proof} [Proof of  {Proposition} \ref{thm:main_c}]
$ $
Without loss of generality we may assume that $Y$ is irreducible and $\omega \neq 0$.
%
%

We have to show that $(Y,W,\phi,\omega)$  is good.
 {We can cover $Y$ by open subsets $U_i$ so that $\phi|_{U_i}=(f_i(x):p_i(x))$,
 where for each $i$, one of the maps $f_i:U_i \to W$ and $p_i:U_i \to F$ never vanishes.}

By Proposition \ref{prop:loc}\RamiS{\eqref{prop:loc:1}},
 it is enough to show that $(U_i,W,\phi|_{U_i},\omega|_{U_i})$ is good.
\begin{itemize}
 \item The case \RamiI{when}  {$p_i$ never vanishes}.\\
By Lemma \ref{lem:exp_comp} \RamiAQ{(and the fact that a pullback of a WF-holonomic distribution is WF-holonomic -- see Proposition \ref{prop:WF_Hol}\eqref{prop:WF_Hol:1})} it is enough to show that $|\omega|$ is \RamiJ{a WF-holonomic} distribution on $U_i$.
For this it is enough to show that
$(U_{ {i}},F,1,\omega|_{U_ {i}})$ is good.
\RamiR{This follows from }
\DrinH{Lemma \ref{lem:sp_case}.}
 \item \RamiF{The case  \RamiI{when}  {$f_i$ never vanishes}.}\\
 By Lemma \ref{lem:red_to_dim_1}, there exists a submersion
$g:U_{i} \times W^\RamiAQ{*} \to U_{i} \times F$ such that
 $\RamiT{\hat \eta}_{\phi,\omega}  =g^*({\hat \eta_{\frac{1}{{p_i}},\omega}}).$
So by Proposition \ref{prop:WF_Hol}, it is enough to show that $(U_i,F,\frac{1}{{p_i}},\omega|_{U_i})$ is good.
\RamiR{This again follows from }
\DrinH{Lemma \ref{lem:sp_case}.}
\end{itemize}
\end{proof}


%




\section{Proof of Theorem  \ref{thm:futerO}}\label{sec:ex_boun}
\setcounter{lemma}{0}


\subsection{A fact from symplectic geometry}
We will need the following lemma, which will be proved in Appendix \ref{app:symp_geom}.


\RamiCPD{
 \begin{lemma}    \label{l:symplectic_lem}
Let $\RamiAJ{W}$ be a finite dimensional vector space over $F$ and $X$ be a manifold. Let $E$ be a vector bundle over $\RamiAJ{Y}$ which is a subbundle of the trivial vector bundle $\RamiAJ{Y} \times \RamiAJ{W}$. Let $E^\bot\subset \RamiAJ{Y} \times \RamiAJ{W}^*$ be its orthogonal complement.
Then $CN_{E^\bot}^{\RamiAJ{Y} \times \RamiAJ{W}^*}=CN_E^{\RamiAJ{Y} \times \RamiAJ{W}}$.

Here
$$CN_E^{\RamiAJ{Y} \times \RamiAJ{W}} \subset T^*(\RamiAJ{Y} \times \RamiAJ{W})=T^*(\RamiAJ{Y}) \times \RamiAJ{W} \times \RamiAJ{W}^*,$$
$$CN_{E^\bot}^{\RamiAJ{Y} \times \RamiAJ{W}^*} \subset T^*(\RamiAJ{Y} \times \RamiAJ{W}^*)=T^*(\RamiAJ{Y}) \times \RamiAJ{W}^* \times \RamiAJ{W},$$
and the symplectic manifolds $T^*(\RamiAJ{Y}) \times \RamiAJ{W} \times \RamiAJ{W}^*$ and $T^*(\RamiAJ{Y}
) \times \RamiAJ{W} \times \RamiAJ{W}^*$ are identified via the map
$$\RamiAJ{W} \times \RamiAJ{W}^*{\buildrel{\sim}\over{\longrightarrow}} \RamiAJ{W}^* \times \RamiAJ{W}, \quad \RamiAJ{(w,\phi)\mapsto (\phi,-w)}.$$
 \end{lemma}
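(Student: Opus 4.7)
The plan is to establish the lemma by explicit computation in local coordinates, reducing to a straightforward (if bookkeeping-heavy) linear-algebra verification. First I would work locally on $Y$: since $E\subset Y\times W$ is a subbundle of constant rank, I can shrink $Y$ and choose a direct-sum decomposition $W=U\oplus U^c$ such that $U^c$ is a complement to every fiber $E_y$ in the chosen neighborhood. Then $E_y$ is the graph of a linear map $f(y)\colon U\to U^c$, so $E=\{(y,u,f(y)u)\}$, and the orthogonal complement is expressed as the graph of $-f(y)^*\colon (U^c)^*\to U^*$, i.e.\ $E^\bot=\{(y,-f(y)^*\eta,\eta)\}$.

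Next I would compute the two conormal bundles explicitly. A general tangent vector to $E$ at $(y,u,f(y)u)$ has the form $(\dot y,\dot u,(df_y\dot y)u+f(y)\dot u)$. Imposing the vanishing condition on a covector $(\alpha,\xi,\eta)\in T_y^*Y\times U^*\times (U^c)^*$ yields $\xi=-f(y)^*\eta$ and $\alpha=-g(y,u,\eta)$, where $g(y,u,\eta)\in T_y^*Y$ is defined by $g(y,u,\eta)(\dot y):=\eta\bigl((df_y\dot y)u\bigr)$. An entirely parallel calculation for $E^\bot$ in $Y\times W^*$, at the point $(y,-f(y)^*\eta_0,\eta_0)$, yields covectors $(\beta,u,z)\in T_y^*Y\times U\times U^c$ with $z=f(y)u$ and $\beta=g(y,u,\eta_0)$, so $u\in U$ parametrizes the conormal fiber.

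Finally, I would apply the identification $(y,\alpha,w,\phi)\leftrightarrow (y,\alpha,\phi,-w)$ from the statement to the explicit formula for $CN_E^{Y\times W}$. This sends the point $(y,-g(y,u,\eta),u,f(y)u,-f(y)^*\eta,\eta)$ to $(y,-g(y,u,\eta),-f(y)^*\eta,\eta,-u,-f(y)u)$. Using the linearity of $g$ in its second argument, $g(y,-u,\eta)=-g(y,u,\eta)$, the substitution $u\leadsto -u$ turns this into the point $(y,g(y,u,\eta),-f(y)^*\eta,\eta,u,f(y)u)$, which is precisely the general element of $CN_{E^\bot}^{Y\times W^*}$. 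Hence the two conormal bundles coincide under the given symplectic identification.

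The main obstacle is purely notational: keeping straight the various canonical identifications $W^{**}\cong W$, $T^*(Y\times W)=T^*Y\times W\times W^*$ versus $T^*(Y\times W^*)=T^*Y\times W^*\times W$, and the sign in the symplectomorphism $(w,\phi)\mapsto(\phi,-w)$. The mechanism that makes the signs work is precisely the odd linearity $g(y,-u,\eta)=-g(y,u,\eta)$, which absorbs the sign flip produced by the identification. A more conceptual route — reducing to the trivial case $E=Y\times V$ by a $y$-dependent linear change of variables on $W$ and its dual on $W^*$ — also works, but requires verifying that the two cotangent lifts commute with the symplectic identification, which amounts to essentially the same sign computation.
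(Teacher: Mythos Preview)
Your computation is correct: the local graph description of $E$ and $E^\bot$, the two conormal-bundle calculations, and the final sign-absorption via $u\leadsto -u$ all check out.

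However, your route is genuinely different from the paper's main proof. The paper argues as follows: set $L:=CN_E^{Y\times W}$, observe that $L$ is a closed Lagrangian in $T^*Y\times W\times W^*$ whose image in $Y\times W^*$ is $E^\bot$, and then show that $L$ is conic over $Y\times W^*$ by a $\mathbb{G}_m^3$-symmetry trick. Namely, $L$ is stable under $(1,\lambda,\lambda^{-1})$ because $E$ is $\mathbb{G}_m$-stable in $Y\times W$, and stable under $(\lambda,1,\lambda)$ because it is conic in $T^*(Y\times W)$; multiplying gives stability under $(\lambda,\lambda,1)$, which is conicity in $T^*(Y\times W^*)$. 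A conic Lagrangian sitting over $E^\bot$ must be $CN_{E^\bot}^{Y\times W^*}$, and you are done with no coordinates at all. The paper does also record, in a remark, a coordinate-free computational approach via the bilinear map $B_x\colon E_x\times E_x^\bot\to T_x^*Y$ coming from the differential of the classifying map to the Grassmannian; your $g(y,u,\eta)$ is exactly $B_y$ expressed in graph coordinates, so your argument is essentially a local-coordinate version of that remark. The symmetry proof buys freedom from bookkeeping and makes the result feel inevitable; your explicit computation buys a concrete formula for the conormal fibers, at the cost of the sign-tracking you rightly flag as the main hazard.
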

}

\subsection{Some notation}  \label{ss:notation_explicit}
$ $

\RamiAK{
\begin{notn} $ $
\begin{enumerate}
\item
Let $W$ be a vector space. We denote by $\mathbb{P}(W)$  the {projective space whose points are
1-dimensional subspaces} in $W^*$. We denote by $Taut_{\mathbb{P}(W)}$ the tautological  {line} bundle of $\mathbb{P}(W)$ which is {a} subbundle of the {trivial} bundle with fiber $W^*$.

\item
\RamiAM{Recall that
\RamiP{$\overline{W}:=\mathbb{P}(W^* \oplus F)$}.
}
\item
Let $W$ be a finite dimensional vector space over $F$ and $X$ be a manifold. Let $E$ be a {vector bundle} over $X$ which is a subbundle of the {trivial} bundle $X \times W$. Then
its orthogonal complement in $X \times W^*$ is denoted by $E^\bot$.

In particular, we denote by $Taut_{\mathbb{P}(W)}^\bot$ the orthogonal complement to $Taut_{\mathbb{P}(W)}\,$, which is a co-dimension $1$ subbundle of the {trivial} bundle $\mathbb{P}(W) \times W$.
\end{enumerate}
\end{notn}
}
\begin{notation}
  \label{notation:S}
Given a morphism $f:M\to N$ between algebraic manifolds, define $\RamiAJ{\Crit}_f\subset T^*N$ by
$\RamiAJ{\Crit}_f:=f_*(M)$ where $M\subset T^*M$ is the zero section and $f_*$ is as in Definition~\ref{def:dir_and_inv_image}(\ref{def:dir_and_inv_image:3}).
\end{notation}

\begin{rem}   \label{r:S isotropic}
By Lemma~\ref{lem:iso_dir_invs_im}, $\RamiAJ{\Crit}_f$ is isotropic. It is easy to see that if $f:M\to N$ is proper then
the subset $\RamiAJ{\Crit}_f\subset T^*N$ is closed.
\end{rem}

\begin{rem}
The fiber of $\RamiAJ{\Crit}_f$ over $y\in N$ is nonzero if and only if $y$ is a critical value for $f:M\to N$.
\end{rem}

\begin{rem}
Even if $F=\mathbb{C}$, it can happen that $\Crit_f$ is not Lagrangian (a simple example is given in\cite[Appendix A]{Dri}).
\end{rem}

\RamiCPD{
\begin{notation}   \label{n:SNC_notation}
For an SNC divisor $D$ on some algebraic manifold,
let $\hat{D_1}$ denote the disjoint union of the irreducible components of $D$, let $\hat{D_2}$ denote the disjoint union of the pairwise intersections of the irreducible components of $D$, and so on. Let $\hat{D}$ denote the disjoint union of $\hat{D_i}$, $i\ge 1$. Clearly
$\hat D$ is smooth, and if $D$ is projective then so is $\hat{D}$.

\end{notation}
}

%
%

\subsection{An explicit upper bound for the wave front}
Let $F$ be a local field of characteristic 0. Because of the numerous \RamiAJ{references} to \S\ref{sec:pf_main_bb}, the reader may assume for a while that $F$ is non-Archimedean. However, it will be clear from  \RamiAJ{\S\S}\ref{ssec:arc_pf} that this assumption is not necessary either in  \S\ref{sec:pf_main_bb} or here.

\subsubsection{The goal} \label{sss:thegoal}
Consider the following setting (it is essentially\footnote{
{Only the notation is slightly different: here we denote by $X$ and $\phi$ the objects that were denoted by $\tilde X$ and $\bar \phi \circ \rho$ in the proof of Theorem \ref{t:C}}.} the same as in the proof of Theorem \ref{t:C} given at the end of 
\RamiAJ{\S\S}\ref{ss:Reduction to key}).

Let $X,Y$ be  {algebraic} manifolds over $F$ and $W$ a vector space over $F$,  {with}  $\dim W<\infty$.
Let $\phi :X\to Y \times \overline{W}$ be a proper map.  \RamiT{Let $X_0:=\phi^{-1}(Y \times W)$.} Let $\omega$ be a top differential form on $X\RamiT{_0}$.
 Let $Z$ be the \RamiT{closure of the} zero set of $\omega$ \RamiT{in $X$}.
 Let $D:=Z \cup \phi^{-1}((\overline{W}-W)\times Y)$.
Assume that $D$ is an SNC divisor.

Then we have the distribution $(\phi|_{X_0})_*(|\omega|))$ on $Y(F)\times W$.
In \S\ref{sec:pf_main_bb} we proved that its partial Fourier transform
$\cF^*(\RamiT{(\phi|_{X_0})}_*(|\omega|)))$ is holonomic, which means that
\begin{equation}   \label{e:property_needed}
WF(\cF^*(\RamiT{(\phi|_{X_0})}_*(|\omega|)))\subset L(F)
\end{equation}
for \emph{some} isotropic algebraic subvariety $L\subset T^*(Y \times  W^{*})$.

Our goal now is to describe a \emph{specific} $L$ with property \eqref{e:property_needed}. The
definition of $L$ given below is purely algebro-geometric, so the fact that this $L$ satisfies
\eqref{e:property_needed} will imply Theorem~\ref{thm:futerO}.

\subsubsection{Definition of $L$}   \label{sss:Definition of L}
Let \RamiAJ{$\hat D$} be as in Notation~\ref{n:SNC_notation}.
Let \RamiAJ{$\hat D'$} be the union of those components of \RamiAJ{$\hat D$} whose image in \RamiAJ{$ D$} is contained in $\phi^{-1}(\overline{W}-W)$\RamiT{.}

Let $\pi:X\to \overline{W}$ and $\tau:X\to Y$ be the  compositions of $\phi :X\to Y \times \overline{W}$ with the projections $Y \times \overline{W}\to \overline{W}$ and $Y \times \overline{W}\to Y$.
{
Let
$$\pii:\RamiAJ{\hat D'} \to\overline{W}-W= \mathbb{P}(W^*)$$
denote the map induced by $\pi:X\to \overline{W}$.
Set
}
 $\RamiAJ{E}:=\pii^{*}(Taut_{\mathbb{P}(W^{*})})$.
 Recall (see \RamiAK{\S\S\ref{ss:notation_explicit}}) that
$Taut_{\mathbb{P}(W^{*})}\subset \mathbb{P}(W^{*})\times W$; accordingly,
 we \RamiAL{have a map $E \to X \times W$} .
Set 
\begin{equation}   \label{e:tDdef}
\RamiAL{\tD:=}(X\times 0) \sqcup\RamiAJ{ (\hat{D} \times 0)} \sqcup{E}\, ;
\end{equation}
where $\sqcup$ stands for the disjoint union; clearly
$\RamiAL{\tD}$ is a manifold equipped with a natural map
$\RamiAL{\mmup}:\RamiAL{\tD} \to X \times W$.
Let $\RamiAL{\mmu}:\RamiAL{\tD} \to Y \times W$ be the composition $$\RamiAL{\tD}  \overset{\RamiT{\RamiAL{\mmup}}}{\to} X \times W\overset{\tau \times Id}{\to}Y \times  W.$$
Let $\RamiAJ{\Crit}_{\RamiAL{\mmu}}$ be as in Notation  \ref{notation:S}. By Remark~\ref{r:S isotropic},
$\RamiAJ{\Crit}_{\RamiAL{\mmu}}\subset T^*(Y \times  W)$ is an isotropic closed algebraic subvariety.

Finally, define $L\subset T^*(Y \times  W^{*})$ to be the image of $\RamiAJ{\Crit}_{\RamiAL{\mmu}}$ under the symplectic isomorphism $T^*(Y \times  W){\buildrel{\sim}\over{\longrightarrow}} T^*(Y \times  W^*)$ from Lemma \ref{l:symplectic_lem}.
\begin{thm} \label{exp}
We keep the notation of \S\ref{sss:thegoal}. Let $L$ be as in \S\ref{sss:Definition of L}.  Then
$WF(\cF^*(\RamiT{(\phi|_{X_0})}_*(|\omega|)))\subset L(F)$.
\end{thm}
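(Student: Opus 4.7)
The plan is to reduce to Proposition \ref{thm:main_c} (applied to the $\overline W$-component $\pi = \pi_{\overline W}\circ\phi : X\to\overline W$) and then to retrace its proof quantitatively, keeping track of an explicit isotropic subvariety bounding the wave front at each step. Let $i: X_0\hookrightarrow X\times W$ be the graph of $\pi|_{X_0}$. Then $(\phi|_{X_0})_*(|\omega|) = (\tau\times\mathrm{Id}_W)_* i_*(|\omega|) = (\tau\times\mathrm{Id}_W)_*\eta_{\pi,\omega}$, so
$$\cF^*_W\bigl((\phi|_{X_0})_*|\omega|\bigr) = (\tau\times\mathrm{Id}_{W^*})_*\hat\eta_{\pi,\omega}.$$
By Proposition \ref{prop:WF_prop2}\eqref{prop:WF_prop2:3} together with the functoriality $\Crit_{f\circ h}=f_*\Crit_h$ of Notation \ref{notation:S}, it therefore suffices to show that $WF(\hat\eta_{\pi,\omega})$ is contained, via the symplectic isomorphism of Lemma \ref{l:symplectic_lem}, in $\Crit_{\mu'}(F)$, where $\mu':\tilde D\to X\times W$ is the natural map; then pushing forward by $\tau\times\mathrm{Id}$ converts $\Crit_{\mu'}$ into $\Crit_\mu$ and delivers $L(F)$.

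To produce this local bound, I apply the étale-local reduction of Proposition \ref{prop:loc} and Lemma \ref{lem:mon} to work in the affine model of Lemma \ref{lem:dim_1}: $X=\mathbb{A}^n$ with coordinates $y_1,\dots,y_n$, the SNC divisor $D$ is the coordinate hyperplane arrangement, and either $\pi$ takes values in $W$ (treated directly via Lemma \ref{lem:exp_comp}) or Lemma \ref{lem:red_to_dim_1} lets us pull back from the one-dimensional model $1/p$, with $p$ a monomial, via the submersion $g(x,\xi)=(x,\langle\xi,\alpha(x)\rangle)$ where $\alpha:X\to W$ has no zeros. In the one-dimensional model, the torus equivariance of Lemma \ref{lem:equv} combined with Proposition \ref{prop:WF_prop}\eqref{it:G_act} bounds $WF(\hat\eta_{1/p,\omega})$ by the union of conormal bundles of orbits of $(F^\times)^n$ acting on $\mathbb{A}^n\times F$. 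The orbits are parameterized by (i) a subset $I\subset\{1,\dots,n\}$ of vanishing coordinates, and (ii) whether the fiber coordinate $\xi\in F$ is zero or not.

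The identification of these orbits with pieces of $\tilde D$ then proceeds as follows. An orbit with $\xi=0$ has conormal bundle matching, after pullback by $g$ and application of Lemma \ref{l:symplectic_lem}, the $X\times 0$ summand of $\tilde D$ (when $I=\emptyset$) or the $\hat D\times 0$ summand (otherwise). An orbit with $\xi\neq 0$ forces $p=0$, placing the underlying stratum inside $\phi^{-1}(\overline W - W)$ and hence inside $\hat D'$; for such an orbit, the cotangent direction of $\xi$ is pulled back by $g^*$ to the hyperplane $\{v\in W^*:\langle v,\alpha(x)\rangle=0\}\subset W^*$, whose symplectic dual in $W$ (Lemma \ref{l:symplectic_lem}) is the line $F\cdot\alpha(x)$. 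At the boundary $\alpha(x)$ up to scaling equals $\pi_\infty(x)\in\mathbb{P}(W^*)$, so this line is exactly the fiber of $E=\pi_\infty^*(Taut_{\mathbb{P}(W^*)})$, matching the $E$ summand of \eqref{e:tDdef}. Patching local bounds using Proposition \ref{prop:loc} and Proposition \ref{prop:WF_prop}\eqref{prop:WF_prop:3} yields the desired global inclusion.

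The main technical obstacle will be the explicit verification in the third paragraph: unpacking the definition of $g^*$ from Definition \ref{def:dir_and_inv_image} and the symplectic isomorphism of Lemma \ref{l:symplectic_lem} in coordinates to confirm that the $\xi\neq 0$ torus orbits correspond exactly to the tautological line over $\pi_\infty(x)$. A secondary subtlety is checking that the patching across étale charts is compatible with the global projective construction of $\overline W$ at the boundary; concretely, this amounts to showing (via Proposition \ref{prop:homo}) that different local choices of the nonvanishing map $\alpha$ differ by an invertible regular function and hence determine the same line in $W$, i.e., the same point of $\mathbb{P}(W^*)$, so the local identifications assemble to a global one.
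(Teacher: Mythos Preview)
Your overall strategy matches the paper's: factor through $\hat\eta_{\pi,\omega}$, reduce to the one-dimensional monomial model via \'etale localization and Lemma~\ref{lem:red_to_dim_1}, bound the wave front there by torus equivariance (Lemma~\ref{lem:equv} and Proposition~\ref{prop:WF_prop}\eqref{it:G_act}), pull back by $g$, and push forward by $\tau\times\mathrm{Id}$. The paper organizes this slightly differently---it first obtains a bound $\Crit_\nu\subset T^*(Y\times W^*)$ built from $G=\pi_\infty^*(Taut^\bot)$ (Proposition~\ref{exp_1}) and only then applies Lemma~\ref{l:symplectic_lem} to pass to $\Crit_{\mu'}\subset T^*(Y\times W)$ (Corollary~\ref{c:based on symplectic lemma})---but the content is the same.

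However, your explicit identification in the third paragraph has the roles of $\xi=0$ and $\xi\neq 0$ swapped. In the one-dimensional model the pieces of $\Crit_{\nu^1}\subset T^*(Y\times F)$ are: the zero section; the conormals $CN_{\hat D_I\times F}^{Y\times F}$ (living over \emph{all} $\xi$, with the dual cotangent coordinate $\zeta=0$); and the conormals $CN_{\hat D'_I\times\{0\}}^{Y\times F}$ (living over $\xi=0$, with $\zeta$ free). Under $g^*$ and then Lemma~\ref{l:symplectic_lem} it is the \emph{last} piece---supported over $\xi=0$, not $\xi\neq 0$---that becomes $CN_E^{X\times W}$: indeed $g^{-1}(\hat D'_I\times\{0\})=\{(y,w^*):y\in\hat D'_I,\ \langle w^*,\alpha(y)\rangle=0\}=G_I$, and $G_I^\bot=E_I$ is the tautological line. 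By contrast, at a point with $\xi\neq 0$ and $y\in\hat D'_I$, the orbit equations $y_i\eta_i+\xi l_i\zeta=0$ (with $y_i=0$, $l_i>0$) force $\zeta=0$, so the conormal sits in $CN_{\hat D_I\times F}$ and corresponds to the $\hat D\times 0$ summand, not to $E$. So your sentence ``the cotangent direction of $\xi$ is pulled back by $g^*$ to the hyperplane \ldots'' is describing the $\xi=0$ case, not the $\xi\neq 0$ case.

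A second, smaller point (which the paper's one-line proof of Lemma~\ref{lem:exp_bn} also glosses over): the torus-orbit conormal bound by itself is slightly larger than $\Crit_{\nu^1}$ over strata $\hat D_I$ with $I\cap\{i:l_i>0\}=\emptyset$ (there it allows $\zeta\neq 0$). You close this by also invoking Lemma~\ref{lem:exp_comp}: on $\{p\neq 0\}\times F$ the distribution equals $\psi(\xi/p)\cdot|\omega|$, which is smooth in $\xi$, forcing $\zeta=0$ there. The paper handles this via the ``$p$ never vanishes'' branch of the case split in Proposition~\ref{exp_1}.
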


Before proving \RamiW{Theorem  \ref{exp}}, {let us formulate two corollaries in the case that $Y$ is a point. In this case the wave front in question is a subset of $T^*(W^*)=W^*\times W$, and the next corollary gives an upper bound for its intersection with $\RamiAO{(W^*-0)}\times (W-0)$.
}

\begin{cor}\label{cor:exp}
Let $X$ be  an {algebraic} manifold and $W$ a vector space over $F$,  with  $\dim W<\infty$.
Let $\phi:X\to \overline{W}$ be a proper map.  \RamiT{Let  $X_0:=\phi^{-1}(\RamiAJ{ W})$.} Let $\omega$ be a top differential form on $X\RamiT{_0}$. Let $Z$ be the \RamiT{closure of the} zero set of $\omega$  \RamiT{in $X$}. Let $D=Z \cup \phi^{-1}(\overline{W}-W)$.
Assume that $D$ is an SNC divisor.

Let $\RamiAJ{\hat D}$ be as in Notation~\ref{n:SNC_notation}.
Let $\RamiAJ{\hat D'}$ be the union of those components of $\RamiAJ{\hat D}$ whose image in $\RamiAJ{D}$ is contained in \RamiAJ{$X-X_{0}$}. 
Let 
$\RamiAL{\pii}:\RamiAJ{\hat D'}\to \overline{W}-W=\mathbb{P}(W^*)$ be the natural map.
{Define $\mathrm{PCrit}_{{\pii}}\subset\mathbb{P}(W^*)\times\mathbb{P}(W)$ to be the set of pairs $(z,H)$, where $z\in \mathbb{P}(W^*)$ and
$H\subset \mathbb{P}(W^*)$ is a projective hyperplane\footnote{Recall that a point of $\mathbb{P}(W)$ is the same as a hyperplane $H\subset\mathbb{P}(W^*)$.} containing $z$ such that $\pii :\hat D'\to \mathbb{P}(W^*)$ is not transversal\footnote{By definition, non-transversality of $\pii :\hat D'\to \mathbb{P}(W^*)$ to $H$ at $x\in \pii^{-1}(H)$
means that the image of $d_x\pii :T_x\hat D'\to T_{\pii (x)}\mathbb{P}(W^*)$ is contained in $T_{\pii (x)}H$.
}
to $H$ at some point of  $\pii^{-1}(z)$. Let $L'\subset (W^*-0)\times (W-0)$ denote the preimage of $\mathrm{PCrit}_{{\pii}}$ with respect to the map
$$(W^*-0)\times (W-0)\to\mathbb{P}(W)\times\mathbb{P}(W^*)=\mathbb{P}(W^*)\times\mathbb{P}(W).$$
Then
\begin{enumerate}
\item[(a)] \RamiAJ{$L' \subset (W^*-0) \times(W-0)$ is  an  isotropic closed algebraic subvariety.}
\item[(b)]
$WF(\RamiT{\cF^*((\phi|_{X_0})_*(|\omega|))})\cap ((W^*-0) \times(W-0)) \subset \RamiT{L'(F)}.$

\end{enumerate}
}
\end{cor}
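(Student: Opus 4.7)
The plan is to deduce both parts of the corollary from Theorem~\ref{exp} (applied with $Y=\mathrm{pt}$) by computing explicitly the piece of $L\subset T^*(W^*)$ lying over $(W^*-0)\times(W-0)$.

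First I would unwind the definition of $L$ from \S\ref{sss:Definition of L}. The map $\mmu:\tD\to W$ of \eqref{e:tDdef} is zero on the first two summands $X\times 0$ and $\hat D\times 0$, so they contribute to $\Crit_\mmu$ only over $0\in W$; after the symplectic twist $(w,v)\mapsto(v,-w)$ of Lemma~\ref{l:symplectic_lem} their contribution lies entirely in $W^*\times\{0\}$ and drops out. Only the summand $E=\pii^*(Taut_{\mathbb{P}(W^*)})\subset\hat D'\times W$, together with its projection to $W$, can therefore contribute to $L\cap((W^*-0)\times(W-0))$.

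The core is then a tangent-space computation. At a point $(x_0,w_0)\in E$ with $w_0\in\ell_{x_0}:=\pii(x_0)$ and $w_0\ne 0$, I would pick a local non-vanishing section $s$ of $E$ near $x_0$ so that $\mmu$ is locally $(x,t)\mapsto t\,s(x)$; its differential at $(x_0,t_0)$ has image $\ell_{x_0}+t_0\,d_{x_0}s(T_{x_0}\hat D')\subset W$. Using the canonical identification $T_{\ell}\mathbb{P}(W^*)=\Hom(\ell,W/\ell)$, this image reduced modulo $\ell_{x_0}$ equals $d_{x_0}\pii(T_{x_0}\hat D')(w_0)\subset W/\ell_{x_0}$, so its annihilator in $W^*$ consists of those $v$ which (i)~vanish on $\ell_{x_0}$ and (ii)~kill $d_{x_0}\pii(\dot x)(w_0)$ for every $\dot x\in T_{x_0}\hat D'$. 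Viewing a nonzero $v$ as a hyperplane $H_v\subset\mathbb{P}(W^*)$ (equivalently, as a point of $\mathbb{P}(W)$), condition~(i) is exactly $\ell_{x_0}\in H_v$, and condition~(ii) is exactly the non-transversality of $\pii$ to $H_v$ at $x_0$ in the sense of the footnote. Applying the symplectic twist and taking the union over such $x_0$ identifies $L\cap((W^*-0)\times(W-0))$ with the preimage $L'$ of $\mathrm{PCrit}_{\pii}$, yielding~(b).

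For~(a) I would argue as follows. Since $\phi:X\to\overline W$ is proper and $\overline W$ is projective, $X$, and hence $\tD$, is projective, so $\mmu:\tD\to W$ is proper. Remark~\ref{r:S isotropic} then gives that $\Crit_\mmu\subset T^*W$ is a closed isotropic algebraic subvariety; its image $L\subset T^*(W^*)$ under the symplectic isomorphism of Lemma~\ref{l:symplectic_lem} is likewise closed and isotropic, so $L'=L\cap((W^*-0)\times(W-0))$ is closed in that open set and, as a constructible subset of the isotropic $L$, is isotropic by Proposition~\ref{prop:sub_iso}. The one delicate step is the tangent-space computation at $(x_0,w_0)$; once the identifications involving $Taut_{\mathbb{P}(W^*)}$ and $T_\ell\mathbb{P}(W^*)=\Hom(\ell,W/\ell)$ are set up correctly, matching non-transversality with the orthogonality condition defining $\Crit_\mmu$ is a direct verification.
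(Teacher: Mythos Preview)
Your proof is correct and follows the same overall strategy as the paper: apply Theorem~\ref{exp} with $Y$ a point, observe that only the summand $E$ of $\tD$ contributes over $W-0$, and match the resulting piece of $\Crit_\mmu$ with $\mathrm{PCrit}_{\pii}$.

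The execution differs slightly. Where you carry out an explicit tangent-space computation at points of $E$ (choosing a local section $s$ and unwinding the identification $T_\ell\mathbb{P}(W^*)=\Hom(\ell,W/\ell)$), the paper argues more structurally: it notes that $\mmu|_{\mmu^{-1}(W-0)}$ sits in a Cartesian square over $\pii:\hat D'\to\mathbb{P}(W^*)$ via the projection $p:W-0\to\mathbb{P}(W^*)$, whence $\Crit_\mmu\cap((W-0)\times W^*)=p^*(\Crit_{\pii})$, and then invokes Remark~\ref{r:PCrit} to identify $L'$ with $p^*(\Crit_{\pii})\cap((W^*-0)\times(W-0))$. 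This gives part~(a) immediately from Lemma~\ref{lem:iso_dir_invs_im} (isotropy is preserved under $p^*$) and Remark~\ref{r:S isotropic}, without passing through the full $L$. Your route is perfectly valid and perhaps more self-contained; the paper's is shorter and makes the functoriality transparent.

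One minor slip: you write that $X$, and hence $\tD$, is \emph{projective}. What you actually need (and what holds) is that $X$ is \emph{proper} over $F$ (being proper over the projective $\overline W$), which suffices to make $\mmu:\tD\to W$ proper.
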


{
\begin{rem}   \label{r:PCrit}
The set $\mathrm{PCrit}_{{\pii}}\subset\mathbb{P}(W^*)\times\mathbb{P}(W)$ introduced above is the ``projectivization" of the set $\Crit_{\pii}\subset T^*(\mathbb{P}(W^*))$ from
Notation~\ref{notation:S}. More precisely, $\mathrm{PCrit}_{{\pii}}$ canonically identifies with the quotient
of $\Crit_{\pii}-\mbox{\{zero section\}}$ by the action of $\mathbb{G}_m\,$.
\end{rem}
}

{
\begin{proof}
Let $p:W-0\to \mathbb{P}(W^{*})$ be the canonical map. We have an isotropic closed algebraic subvariety
\begin{equation}    \label{e:p*Crit}
p^*(\Crit_{\pii})\subset T^*(W-0)=(W-0)\times W^*=W^*\times (W-0).
\end{equation}
By Remark~\ref{r:PCrit} and the definition of $L'$, we have
\begin{equation}   \label{e:formula for L'}
L'=p^*(\Crit_{\pii})\cap ((W^*-0)\times (W-0)),
\end{equation}
which proves statement (a).

Let $\RamiAJ{\tD}$ and $\RamiAL{\mmu}{:\tD\to Y\times W} = W$ be as in \S\ref{sss:Definition of L}.
By  Theorem \ref{exp},
\begin{equation}   \label{eq:2WF_bnd}
WF(\RamiT{\cF^*((\phi|_{X_0})_*(|\omega|))})\cap (W^* \times (W-0))
\subset \Crit_{\mmu} (F)\cap ((W-0) \times W^*)
\end{equation}
(here we  identify $W \times W^*$ with $W^* \times W$, just as in formula \eqref{e:p*Crit}).

Our $\tD$ was defined by formula~\eqref{e:tDdef} to be a disjoint union of three sets. It is clear that
$\mmu^{-1}(W-0)$ is contained in the third one, denoted by $E$. Moreover, one has a Cartesian square
\[
\xymatrix
{       \mu^{-1}(W-0) \ar@{->}^{}[r] \ar@{->}^{}[d]& W-0
 \ar@{->}^{p}[d]  \\
        \hat D' \ar@{->}^{\pii}[r]                                 & \mathbb{P}(W^*)  \\
}
\]
So the r.h.s. of  \eqref{eq:2WF_bnd} equals $p^*(\Crit_{\pii})$. Thus we see that
\[
WF(\RamiT{\cF^*((\phi|_{X_0})_*(|\omega|))})\cap ((W^*-0) \times (W-0))\subset p^*(\Crit_{\pii})
\cap ((W^*-0) \times (W-0)).
\]
Combining this with \eqref{e:formula for L'}, we get statement (b).
\end{proof}
}

\begin{cor}   \label{c:explcit_open}
Let $X,W,\phi,\omega,{\hat D'}$ and $\RamiAL{\pii}$ be as in  Corollary \ref{cor:exp}.
Define $U$ to be the set of all $\ell \in W^*-0$ such that the map $\RamiAL{\pii}:{\hat D'}\to  \mathbb{P}(W^*)$ is transversal to the hyperplane $H_{\ell}\subset  \mathbb{P}(W^*)$ corresponding to $\ell$.
Then $\cF^*((\phi|_{X_0})_*(|\omega|))|_{U(F)}$ is smooth.




\end{cor}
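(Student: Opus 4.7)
\medskip

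\noindent\textbf{Proof proposal.}
The plan is to deduce this corollary directly from Corollary~\ref{cor:exp} together with the basic properties of the wave front set. First I would translate the conclusion ``$\cF^*((\phi|_{X_0})_*(|\omega|))|_{U(F)}$ is smooth'' into a statement about the wave front set: by Proposition~\ref{prop:WF_prop}\eqref{prop:WF_prop:2} and \eqref{prop:WF_prop:3}, smoothness on $U(F)$ is equivalent to
\[
WF(\cF^*((\phi|_{X_0})_*(|\omega|)))\cap (U(F)\times (W-0))=\emptyset,
\]
where we identify $T^*(W^*)$ with $W^*\times W$. (The zero-section part of the wave front set over $U(F)$ poses no obstruction to smoothness, only the nonzero-covector part does.)

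Next, by Corollary~\ref{cor:exp}(b), since $U\subset W^*-0$, this intersection is contained in $L'(F)\cap (U(F)\times(W-0))$. So it suffices to show
\[
L'\cap (U\times (W-0))=\emptyset.
\]
This is the only real step, and I expect it to be a direct unwinding of the definitions rather than a serious obstacle. Suppose $(\ell,w)\in L'$ with $\ell\in U$ and $w\in W-0$. By definition of $L'$, the pair $([w],[\ell])\in\mathbb{P}(W^*)\times\mathbb{P}(W)$ belongs to $\mathrm{PCrit}_{\pii}$. Unpacking the definition of $\mathrm{PCrit}_{\pii}$, this means that the hyperplane $H_{\ell}\subset\mathbb{P}(W^*)$ contains the point $z:=[w]$ (equivalently $\langle\ell,w\rangle=0$) and that $\pii:\hat D'\to\mathbb{P}(W^*)$ fails to be transversal to $H_\ell$ at some point of $\pii^{-1}(z)\subset\pii^{-1}(H_\ell)$. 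But this contradicts the defining property of $U$, namely that $\pii$ is transversal to $H_\ell$ at every point of $\pii^{-1}(H_\ell)$. Hence no such $(\ell,w)$ exists.

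Combining the two steps finishes the proof. The only subtlety to double-check is the identification of the ``covector direction'' in $T^*(W^*)$ with $W$ and the matching with the $(W^*-0)\times (W-0)$ factor appearing in Corollary~\ref{cor:exp}(b), and the compatibility of the map $(W^*-0)\times(W-0)\to\mathbb{P}(W)\times\mathbb{P}(W^*)$ used in the definition of $L'$ with the convention that a point of $\mathbb{P}(W)$ is a hyperplane in $\mathbb{P}(W^*)$; but both are purely book-keeping and are already built into the formulation of Corollary~\ref{cor:exp}.
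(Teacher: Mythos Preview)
Your proof is correct and follows essentially the same approach as the paper's own proof. The paper phrases the argument slightly differently---it observes that Corollary~\ref{cor:exp} yields smoothness on the set $(W^*-0)\setminus q^{-1}(p(\mathrm{PCrit}_{\pii}))$, where $q:W^*-0\to\mathbb{P}(W)$ and $p:\mathbb{P}(W^*)\times\mathbb{P}(W)\to\mathbb{P}(W)$ are the projections, and then notes that this set equals $U$---but this is exactly your argument repackaged: your emptiness statement $L'\cap(U\times(W-0))=\emptyset$ is equivalent to $U\subset (W^*-0)\setminus q^{-1}(p(\mathrm{PCrit}_{\pii}))$, and your unwinding of $\mathrm{PCrit}_{\pii}$ is the same identification.
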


It is clear that $U$ is Zariski open in $W^*- 0$. 


\begin{proof}
{
By Corollary~\ref{cor:exp}, $\cF^*((\phi|_{X_0})_*(|\omega|))$  is smooth on the open subset
\begin{equation} \label{e:open_of_smoothness}
(W^*-0)-q^{-1}(p(\mathrm{PCrit}_{{\pii}}))\subset W^*-0,
\end{equation}
where $q:W^*-0\to  \mathbb {P}(W)$ and $p:\mathbb {P}(W^*) \times \mathbb {P}(W) \to \mathbb {P}(W)$ are the projections and $\mathrm{PCrit}_{{\pii}}\subset\mathbb{P}(W^*)\times\mathbb{P}(W)$ is as in Corollary~\ref{cor:exp}.
The subset \eqref{e:open_of_smoothness} clearly equals $U$.
}
\end{proof}

\subsection{Proof of Theorem~\ref{exp}}
We will proceed in stages analogous to the stages of the proof of  theorem \ref{thm:main_bb}.

\begin{lemma}\label{lem:exp_bn}
Let $Y$ be an algebraic manifold.
Let $p :Y\to F$ be a regular function such that
 $p \neq 0$ on a dense subset \RamiT{$Y_0$}. Let $\omega$ be a \RamiT{rational} top differential form \RamiT{on $Y$ which is regular on $Y_0$}. Let $D$ be the union of the zero sets of $\omega$ and of $p$. Assume that $D$ is an SNC divisor. {Just as in Notation~\ref{not:eta}, let
$\eta_{\frac{1}{p},\omega}\in\Sc^*(Y(F)\times F)$ denote the pushforward of $|\omega |$ with respect to
the map $Y_0\to Y\times F$ given by $y\mapsto (y,p(y)^{-1})$.
Let
${\hat \eta}_{\frac{1}{p},\omega}\in {\Sc^*}(Y(F)\times F, {D^{Y(F)\times F}_{Y (F)}})$ be its partial Fourier transform.
}

Let $\RamiAJ{\hat D}$ be as in Notation~\ref{n:SNC_notation}.
\RamiT{Let $\RamiAJ{\hat D'}$ be the union of those components of $\RamiAJ{\hat D}$ whose image in $\RamiAJ{ D}$ is contained in} {the zero set of $p$.}

Let $\RamiAL{\widetilde H}:= (\RamiAJ{Y}\times F)  \sqcup  \RamiT{(\RamiAJ{\hat D}\times F)} \sqcup
 \RamiT{(\RamiAJ{\hat D}' \times 0)}$. We have a natural map $\RamiT{\nu}:\RamiT{\RamiAL{\widetilde H}} \to Y \times F$.

Then
$$WF({\hat \eta}_{\frac{1}{p},\omega})\subset \RamiAJ{\Crit}_{\RamiT{\nu}}(F)$$


\end{lemma}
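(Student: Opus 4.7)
The plan is to mirror the proof of Proposition~\ref{thm:main_c} in \S\ref{sec:pf_C}, but tracking the wave front set precisely. The assertion is local on $Y$ in the étale topology: the wave front set behaves well under étale pullback (Proposition~\ref{prop:WF_prop}, Proposition~\ref{prop:WF_Hol}), and $\Crit_\nu$ depends only on the SNC stratification of $D$, hence pulls back cleanly. Applying Lemma~\ref{lem:mon} together with the homogeneity property (Proposition~\ref{prop:homo}), I reduce to the case $Y=\mathbb{A}^n$, $p=\prod_i y_i^{l_i}$ with $l_i\geq 0$, and $\omega=(\prod_i y_i^{r_i})\,dy_1\wedge\cdots\wedge dy_n$ with $r_i\geq 0$ whenever $l_i=0$. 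Setting $A:=\{i:l_i>0\}$ and $B:=\{i:r_i>0\}$, the case $A=\emptyset$ is trivial, so I assume $A\neq\emptyset$. Factoring off coordinates with $l_i=r_i=0$ (where the distribution is simply tensored with Haar measure, and both sides of the desired inclusion split in a compatible way), I may further assume $A\cup B=\{1,\ldots,n\}$.

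\textbf{Torus bound for the wave front set.} In this monomial model, Lemma~\ref{lem:equv} exhibits $\hat\eta_{1/p,\omega}$ as a semi-invariant for the action $\pi$ of $T=(F^{\times})^n$ on $Y\times F$. By Lemma~\ref{lem:exp_comp}, its restriction to the open set $Y_0(F)\times F$ is the smooth continuous section $\psi(\xi/p(y))\cdot pr^{*}|\omega|$, whose wave front set is the zero section. Combined with Corollary~\ref{cor:WF_Hol2} applied to the $T$-semi-invariance, this yields
\[
WF(\hat\eta_{1/p,\omega})\subset (Y(F)\times F)\cup\bigcup_{z\in (\{p=0\}\times F)(F)} CN_{Tz}^{Y\times F}(F),
\]
where $Y(F)\times F$ denotes the zero section in $T^{*}(Y\times F)$.

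\textbf{Identification with $\Crit_\nu(F)$, and the main obstacle.} The $T$-orbits on $Y\times F$ are parametrized by pairs $(S,\varepsilon)$, where $S\subseteq\{1,\ldots,n\}$ records the vanishing locus of the coordinates and $\varepsilon\in\{0,1\}$ records whether $\xi=0$; since $A\neq\emptyset$, the factor $\prod_i\alpha_i^{l_i}$ surjects onto $F^{\times}$, so the locus $\xi\neq 0$ forms a single orbit on each $y$-stratum. An orbit lies in $\{p=0\}\times F$ precisely when $S\cap A\neq\emptyset$; such $S$ automatically satisfy $S\subseteq A\cup B$ after the reduction, so they correspond bijectively to the components of $\hat D'$. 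The decoration $\varepsilon=1$ then matches the $\hat D\times F$ stratum of $\widetilde H$, whose conormal bundle in $Y\times F$ is $CN_{\hat D'}^Y\times F\times\{0\}$, while $\varepsilon=0$ matches the $\hat D'\times 0$ stratum, with conormal bundle $CN_{\hat D'}^Y\times\{0\}\times F$; the zero section arises from the $Y\times F$ stratum. Together these realize the right-hand side of the above inclusion as a subset of $\Crit_\nu(F)$, yielding the lemma. The principal technical obstacle is verifying that the localization, monomialization, and factoring off of inert coordinates are compatible with the global construction of $\widetilde H$ and $\Crit_\nu$; once this bookkeeping is pinned down, the orbit-versus-stratum dictionary in the monomial model is a direct calculation.
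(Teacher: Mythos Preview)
Your approach is essentially identical to the paper's: reduce to the monomial case $Y=\mathbb{A}^n$ with $p$ and $\omega$ monomials (as in the proof of Lemma~\ref{lem:sp_case}), then invoke the torus equivariance of Lemma~\ref{lem:equv} together with Proposition~\ref{prop:WF_prop}\eqref{it:G_act} to bound $WF$ by conormals to torus orbits, and identify these with strata of $\widetilde H$.

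One small correction: the restriction of $\hat\eta_{1/p,\omega}$ to $Y_0(F)\times F$ is only \emph{continuous} (Lemma~\ref{lem:exp_comp} says ``continuous section''), not smooth; $|\omega|$ may fail to be locally constant along $\{y_i=0\}$ for $i\in B\setminus A$, so its wave front set there is not just the zero section. This does not damage your conclusion: the bound from Proposition~\ref{prop:WF_prop}\eqref{it:G_act} applies directly to the semi-invariant $\hat\eta_{1/p,\omega}$ on all of $Y\times F$ and already covers those conormals via the $\hat D\times F$ pieces of $\widetilde H$. You should cite Proposition~\ref{prop:WF_prop}\eqref{it:G_act} rather than Corollary~\ref{cor:WF_Hol2} (which only concludes WF-holonomicity, not the explicit orbit bound), and the separate treatment of $Y_0$ can simply be dropped.
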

\RamiT{
\begin{proof}
Just as in the proof of lemma \ref{lem:sp_case}, it suffices to consider the case where
%
%
$Y$ is an affine space and $p$ and $\omega$ are given by monomials. In this case the statement follows from
Lemma \ref{lem:equv}
and Proposition \ref{prop:WF_prop}\eqref{it:G_act}. \end{proof}
}


\begin{prop}\label{exp_1}
Let $Y$ be an  {algebraic} manifold and $W$ a vector space over $F$,  {with}  $\dim W<\infty$.
Let $\phi :Y\to \overline{W}$ be a regular map. \RamiT{Let $Y_0:=\phi^{-1}( W)$.}  Let $\omega$ \RamiT be a {rational} top differential form \RamiT{on $Y$ which is regular on $Y_0$}. Let $Z$ be the zero set of $\omega$. Let $D=Z \cup \phi^{-1}(\overline{W}-W)$. Assume that $D$ is an SNC divisor.
{Just as in Notation~\ref{not:eta}, let
$\eta_{\phi,\omega}\in\Sc^*(Y {(F)}\times W)$ denote the pushforward of $|\omega |$ with respect to
the map $Y_0\to Y\times W$ given by $y\mapsto (y,\phi (y))$.
Let
${\hat \eta}_{\phi,\omega}\in {\Sc^*}(Y {(F)}\times W^{*},{D^{Y(F)\times W^{*}}_{Y (F)}})$ be its partial Fourier transform.
}

Let $\RamiAJ{\hat D}$ be as in Notation~\ref{n:SNC_notation}.
Let $\RamiAJ{\hat D'}$ be the union of those components of ${\RamiAJ{\hat D}}$ whose image in $\RamiAJ{ D}$ is contained in
$\phi^{-1}(\overline{W}-W)$\RamiT{.}

Consider $\phi_{\RamiAJ{\hat D'}}$ as a map \RamiAJ{$\phi_{\hat D'}:\hat D' \to \mathbb{P}(W^*)$} and  \RamiAJ{$G:=\phi^{*}_{\hat D'}(Taut_{\mathbb{P}(W^{*})}^\bot)$} as a subvariety of $Y \times W^*$. Let
$\RamiAL{\widetilde H}:= (Y\times W^{*})\sqcup  \RamiT{(\RamiAJ{\hat D} \times W^{*})} \sqcup G
$.
We have a natural map $\RamiT{\nu}:\RamiAL{\widetilde H} \to Y \times W^{*}$.


Then
$WF({\hat \eta}_{\phi,\omega})\subset \RamiAJ{\Crit}_{\RamiT{\nu}}(F)$.
\end{prop}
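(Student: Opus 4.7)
My plan is to closely mimic the proof of Proposition~\ref{thm:main_c} from \S\S\ref{sec:pf_C}, but with explicit bookkeeping of the wave front set at each reduction step. Using the fact that both $WF$ (Proposition~\ref{prop:WF_prop}\eqref{prop:WF_prop:3}) and the set $\Crit_\nu$ are local on the target $Y \times W^*$, I reduce via a Zariski cover to the case of a single chart $U_i \subset Y$ on which either (a) $\phi|_{U_i}$ maps entirely into $W$, or (b) $\phi|_{U_i} = (\alpha:p)$ with $\alpha: U_i \to W$ a nowhere-vanishing regular map and $p \in \mathcal{O}(U_i)$.

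In case (a), Lemma~\ref{lem:exp_comp} gives $\hat\eta_{\phi,\omega} = f_\phi \cdot pr_Y^*(|\omega|)$ with $f_\phi$ smooth, so by Proposition~\ref{prop:WF_prop}\eqref{prop:WF_prop:4} combined with Proposition~\ref{prop:WF_prop2}\eqref{prop:WF_prop2:2} we have $WF(\hat\eta_{\phi,\omega}) \subset pr_Y^*(WF(|\omega|))$. The same local monomial/torus-invariance argument used to prove Lemma~\ref{lem:equv}, combined with Proposition~\ref{prop:WF_prop}\eqref{it:G_act}, bounds the wave front set of $|\omega|$ by the union of conormal bundles to the components of $\hat D$ (note that in this case $\hat D'\cap U_i = \emptyset$). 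Pulling back under $pr_Y$ yields contributions of the form $(y,\xi,\beta,0) \in T^*(Y\times W^*)$ with $\beta$ conormal to a component of $\hat D$, which lie in the $\hat D \times W^*$ piece of $\Crit_\nu$.

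In case (b), Lemma~\ref{lem:red_to_dim_1} gives $\hat\eta_{\phi,\omega} = g^*(\hat\eta_{1/p,\omega})$ where $g(y,\xi) = (y, \langle\xi,\alpha(y)\rangle)$ is a submersion $U_i \times W^* \to U_i \times F$. Lemma~\ref{lem:exp_bn} then gives $WF(\hat\eta_{1/p,\omega}) \subset \Crit_{\nu_0}(F)$ for the one-dimensional map $\nu_0: \widetilde H_0 \to U_i \times F$ with $\widetilde H_0 = (U_i \times F) \sqcup (\hat D \times F) \sqcup (\hat D' \times 0)$, and Proposition~\ref{prop:WF_prop2}\eqref{prop:WF_prop2:2} yields $WF(\hat\eta_{\phi,\omega}) \subset g^*(\Crit_{\nu_0}(F))$. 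It remains to verify $g^*(\Crit_{\nu_0}) \subset \Crit_\nu$ piece by piece: the zero section and the $\hat D \times F$ contributions pull back straightforwardly to the zero section and the $\hat D \times W^*$ piece, respectively, while the critical $\hat D' \times 0$ piece pulls back to the $G$-piece.

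The main obstacle is this last symplectic identification. A direct computation of $dg^*$ at a point $(y,\xi) \in U_i \times W^*$ with $\langle\xi,\alpha(y)\rangle = 0$ and $y \in \hat D'$ shows that $dg^*(\beta_0, c) = (\beta_0 + c\,\xi \circ d\alpha,\; c\cdot\alpha(y))$ for $\beta_0 \in CN_{\hat D'}^Y$ and $c \in F$. On the other hand, the tangent space to $G = \phi^*_{\hat D'}(Taut^\perp_{\mathbb{P}(W^*)})$ at $(y,\xi)$ consists of those $(v, \eta) \in T_y\hat D' \oplus W^*$ satisfying $\eta(\alpha(y)) + \xi(d\alpha(v)) = 0$, and a direct calculation of its conormal (using the identification of Lemma~\ref{l:symplectic_lem}) reproduces precisely the same formula. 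This verifies $g^*(\Crit_{\nu_0}) \subset \Crit_\nu|_{U_i \times W^*}$, and combining the local bounds over the Zariski cover completes the proof.
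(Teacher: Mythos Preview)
Your proof is correct and follows essentially the same approach as the paper: localize via a Zariski cover, split into the two cases ($p$ nowhere vanishing vs.\ $\alpha$ nowhere vanishing), invoke Lemma~\ref{lem:exp_comp} and Lemma~\ref{lem:exp_bn} in the first case and Lemma~\ref{lem:red_to_dim_1} plus Lemma~\ref{lem:exp_bn} in the second, and then compare $g^*(\Crit_{\nu_0})$ with $\Crit_\nu$ piece by piece.

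The one place where the paper is a bit more economical is the final identification in case (b). Rather than computing $CN_G^{Y\times W^*}$ by hand, the paper simply observes that $G_j = g^{-1}(\hat D'_j \times \{0\})$: indeed, $g(y,\xi)=(y,\langle\xi,\alpha(y)\rangle)$, so $(y,\xi)\in g^{-1}(\hat D'_j\times\{0\})$ iff $y\in\hat D'_j$ and $\xi\perp\alpha(y)$, which is exactly the fiber of $\phi_{\hat D'}^*(Taut_{\mathbb{P}(W^*)}^\bot)$ over $y$. Since $g$ is a submersion, the general identity $g^*(CN_Z^N)=CN_{g^{-1}(Z)}^M$ then gives $g^*(CN_{\hat D'_j\times\{0\}}^{Y\times F})=CN_{G_j}^{Y\times W^*}$ immediately, bypassing your explicit differential computation. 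Your direct calculation is of course equivalent and correct.
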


\RamiP{
\begin{proof}
{We follow the proof of Proposition  \ref{thm:main_c}. The claim is local, so we can reduce the problem to the case when $\phi=(f:p)$ where $f:Y \to W$ and $p:Y\to F$ are regular functions and one of them never vanishes. Let us analyze the two cases:
\begin{itemize}
 \item The case when  $p$ never vanishes.\\
By Lemma \ref{lem:exp_comp}.
$$WF({\hat \eta}_{\phi,\omega}) =WF(f_{\phi} \cdot  {pr_{W}}^{*}(|\omega|))\RamiAQ{\subset} WF({pr_{W}}^{*}(|\omega|))\RamiAQ{\subset} {pr_{\RamiS{W}}}^{*}(WF(|\omega|)).$$
 The assertion follows now from Lemma \ref{lem:exp_bn} (after noticing that in this case $D_i'$ and $G_i$ are empty).

 \item \RamiF{The case  \RamiI{when}  {$f$ never vanishes}.}\\
 By Lemma \ref{lem:red_to_dim_1}, we have a submersion
$g:Y \times W^{{*}} \to Y \times F$ such that
 $\hat \eta_{\phi,\omega}  =g^*({\hat \eta_{\frac{1}{{p}},\omega}}).$
So by Proposition \ref{prop:WF_prop2} \eqref{prop:WF_prop2:2}, $WF(\hat \eta_{\phi,\omega})\subset g^*(WF({\hat \eta_{\frac{1}{{p}},\omega}}))$.
Let $\RamiAL{\widetilde H}^1$ and $\nu^1:\RamiAL{\widetilde H}^1 \to Y \times F$ be the variety $\RamiAL{\widetilde H}$ and map $\nu$ from Lemma \ref{lem:exp_bn}.  By Lemma \ref{lem:exp_bn}, it is enough to show that $\RamiAJ{\Crit}_\nu= g^*(\RamiAJ{\Crit}_{\nu^1})$.
Let \RamiAJ{$\hat{D}_{j} ,\hat D'_{j}\subset Y$ and $  G_{j}\subset Y \times W^*$} be the components of \RamiAJ{$\hat{D},\hat D'$ and $G$}.   Note that
$$\RamiAJ{\Crit}_{\nu^1}
=(Y\times F)\cup \bigcup CN_{\RamiT{\hat{D}_{\RamiAJ{j}}\ \times F} }^{Y\times F} \cup \bigcup CN_{\RamiAJ{\hat D'_{j}} \times 0}^{Y\times F}$$
and
\begin{equation}   \label{e:S_nu}
\RamiAJ{\Crit}_{\nu}
=(Y\times W^{*})\cup \bigcup CN_{\hat{D}_{\RamiAJ{j}} \times W^{*}}^{Y\times W^{*}} \cup \bigcup CN_{G_{\RamiAJ{j}}}^{Y\times W^{*}}
\end{equation}

The assertion follows now from the fact that $G_{\RamiAJ{j}}=g^{-1}(\RamiAJ{\hat D_{j}} \times 0) $.

\end{itemize}
}
\end{proof}

}

%

\begin{cor}   \label{c:based on symplectic lemma}
In the notations of Proposition \ref{exp_1},
let $\RamiAJ{E}:=\phi^{*}_{\RamiAJ{\hat D'}}(Taut_{\mathbb{P}(W^{*})})$ as
 a subvariety of $Y \times W$.
Let $\RamiAL{\tD}:= (Y\times 0)\sqcup  \RamiAJ{(\hat D \times 0)  \RamiAS{\sqcup} E}\,$.
We have a natural map $\RamiAL{\mmu}:\RamiAL{\tD} \to Y \times W$. Let
$\rho:T^*(Y\times W) {\buildrel{\sim}\over{\longrightarrow}} T^*(Y\times
 W^*)$
be the standard identification (\RamiAE{as in Lemma \ref{l:symplectic_lem}}).


Then
$WF({\hat \eta}_{\phi,\omega})\subset \rho(\RamiAJ{\Crit}_{\RamiAL{\mmu}})(F)$.

\end{cor}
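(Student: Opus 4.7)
My plan is to deduce Corollary~\ref{c:based on symplectic lemma} from Proposition~\ref{exp_1} by establishing the set-theoretic identity $\Crit_\nu=\rho(\Crit_\mmu)$ of subsets of $T^*(Y\times W^*)$. Once this is verified, the bound $WF(\hat\eta_{\phi,\omega})\subset \Crit_\nu(F)$ supplied by Proposition~\ref{exp_1} transports directly to the desired bound $WF(\hat\eta_{\phi,\omega})\subset \rho(\Crit_\mmu)(F)$.

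The first step is to decompose $\Crit_\mmu$ according to the three components of $\tD=(Y\times 0)\sqcup(\hat D\times 0)\sqcup E$. Since the restriction of $\mmu$ to each component is a locally closed embedding into $Y\times W$, the definition of $\Crit$ gives
$$\Crit_\mmu= CN_{Y\times 0}^{Y\times W}\;\cup\;\bigcup_j CN_{\hat D_j\times 0}^{Y\times W}\;\cup\;\bigcup_j CN_{E_j}^{Y\times W},$$
where $\hat D_j$ and $E_j$ run over the components of $\hat D$ and $E$. The corresponding decomposition of $\Crit_\nu$ is already furnished by formula~\eqref{e:S_nu}. It therefore suffices to verify the three matchings
$$\rho(CN_{Y\times 0}^{Y\times W})=Y\times W^{*},\quad \rho(CN_{\hat D_j\times 0}^{Y\times W})=CN_{\hat D_j\times W^{*}}^{Y\times W^{*}},\quad \rho(CN_{E_j}^{Y\times W})=CN_{G_j}^{Y\times W^{*}}.$$
The first two are routine coordinate computations using the explicit formula for $\rho$: in both cases the conormal bundle splits off a $T^*Y$-factor that $\rho$ preserves, while the remaining factor is simply a swap of $W$ and $W^*$.

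The only nontrivial identification is the third, and I expect this to be where the real content lies. Applied to the subbundle $E_j\subset\hat D'_j\times W$ whose orthogonal complement is $G_j\subset\hat D'_j\times W^{*}$, Lemma~\ref{l:symplectic_lem} yields precisely $\rho(CN_{E_j}^{\hat D'_j\times W})=CN_{G_j}^{\hat D'_j\times W^{*}}$. The technical point I still need to handle is to upgrade this from conormals inside $T^*(\hat D'_j\times W^{(*)})$ to conormals inside $T^*(Y\times W^{(*)})$. For this I would invoke the standard description of nested conormals: since at any $(y,w)\in E_j$ the inclusion $T_{(y,w)}E_j\subset T_y\hat D'_j\oplus W\subset T_yY\oplus W$ holds, the annihilator $CN_{E_j}^{Y\times W}$ at $(y,w)$ is obtained from $CN_{E_j}^{\hat D'_j\times W}$ by adjoining the entire $CN_{\hat D'_j}^{Y}$-summand in the $T^*Y$-factor, and the analogous statement holds for $G_j$. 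Because $\rho$ acts as the identity on the $T^*Y$-factor, the identification from Lemma~\ref{l:symplectic_lem} propagates without modification, completing the match and hence the proof.
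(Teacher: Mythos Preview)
Your proposal is correct and follows essentially the same route as the paper: reduce via Proposition~\ref{exp_1} to the identity $\Crit_\nu=\rho(\Crit_\mmu)$, decompose both sides componentwise, and match pieces using Lemma~\ref{l:symplectic_lem} together with $E_j=G_j^\bot$. The only difference is cosmetic---the paper applies $\rho^{-1}$ to $\Crit_\nu$ while you apply $\rho$ to $\Crit_\mmu$---and you are in fact more explicit than the paper about the passage from conormals in $T^*(\hat D'_j\times W^{(*)})$ to conormals in $T^*(Y\times W^{(*)})$, which the paper leaves implicit.
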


\begin{proof}
{By Proposition~\ref{exp_1}, it suffices to}
\RamiAI{
show that $\RamiAJ{\Crit}_{\nu}
= \rho(\RamiAJ{\Crit}_{\RamiAL{\mmu}})$. Combining formula~\eqref{e:S_nu} with
Lemma \ref{l:symplectic_lem}, we get
$$\RamiAJ{\Crit}_{\nu}
=\rho\left( (Y\times 0)\cup \RamiAJ{\bigcup CN_{\hat{D}_{j} \times0}^{Y\times W} \cup \bigcup CN_{G_{j}^{\bot}}^{Y\times W}}\ \right).$$
Clearly, $\RamiAJ{E=G^{\bot}\subset \hat D \times W.}$ Thus
$ G_{\RamiAJ{j}}^{\bot}$ are the components of $\RamiAJ{E}$. Therefore,
$(Y\times 0)\cup \ \bigcup CN_{\hat{D}_{\RamiAJ{j}} \times0}^{Y\times W} \cup \bigcup CN_{G_{\RamiAJ{j}}^{\bot}}^{Y\times W} = \RamiAJ{\Crit}_{\RamiAL{\mmu}}\,$.
}
\end{proof}

From Corollary~\ref{c:based on symplectic lemma} one deduces the following statement (\RamiT{using Proposition \ref{prop:WF_prop2}\eqref{prop:WF_prop2:3} in the same way as in the proof of Theorem \ref{t:C}}):
\RamiT{
\begin{cor}
In the situation of Theorem \ref{exp}, we have
$$WF(\cF^*((\phi|_{X_0})_*(|\omega|)))\subset (\tau \times \id_{W^*})_{*}(\rho(\RamiAJ{\Crit}_\RamiAL{\mmup}))(F)$$
where $\RamiAL{\mmup}:\RamiAL{\tD} \to X \times W$ and $\tau:X\to Y$ are as in \S\ref{sss:Definition of L}\ and $\rho:T^*(Y\times W) {\buildrel{\sim}\over{\longrightarrow}} T^*(Y\times W^*)$ is the standard identification (\RamiAE{as in Lemma \ref{l:symplectic_lem}}). \qed
\end{cor}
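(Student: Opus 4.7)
The plan is to reduce to Corollary~\ref{c:based on symplectic lemma} by a pushforward argument that exactly parallels the end of the proof of Theorem~\ref{t:C}. Let $\pi:X\to\overline{W}$ denote the composition of $\phi$ with the projection $Y\times\overline{W}\to\overline{W}$, and let $i:X_0\hookrightarrow X\times W$ be the graph of $\pi|_{X_0}$. This is a closed embedding (cf.\ Remark~\ref{rem:eta}), and by Notation~\ref{not:eta} one has $\eta_{\pi,\omega}=i_*(|\omega|)$. The tautological factorization $\phi|_{X_0}=(\tau\times\id_W)\circ i$ gives $(\phi|_{X_0})_*(|\omega|)=(\tau\times\id_W)_*(\eta_{\pi,\omega})$. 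Since $\phi$ is proper and $\overline W$ is projective, the map $\tau=\mathrm{pr}_Y\circ\phi$ is proper, hence so is $\tau\times\id_{W^*}$. Proposition~\ref{prop:F_prop}\eqref{prop:F_prop:3} therefore yields
$$\cF_W^*\bigl((\phi|_{X_0})_*(|\omega|)\bigr)=(\tau\times\id_{W^*})_*(\hat\eta_{\pi,\omega}).$$

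Next I would apply Corollary~\ref{c:based on symplectic lemma} to the quadruple $(X,W,\pi,\omega)$, with the role of $Y$ there played by $X$ here. The hypothesis that $D=Z\cup\pi^{-1}(\overline W - W)$ is an SNC divisor is precisely what is assumed in Theorem~\ref{exp}, and the definitions of $\hat D$, $\hat D'$, $E$ and $\tD$ in that corollary coincide with the ones in \S\ref{sss:Definition of L}. Corollary~\ref{c:based on symplectic lemma} thus produces the bound
$$WF(\hat\eta_{\pi,\omega})\subset \rho(\Crit_{\mu'})(F),$$
where $\mu':\tD\to X\times W$ is as in \S\ref{sss:Definition of L} and $\rho$ denotes the standard symplectic identification at base $X$ (as in Lemma~\ref{l:symplectic_lem}).

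Finally I would combine this with the wave-front bound under proper pushforward: Proposition~\ref{prop:WF_prop2}\eqref{prop:WF_prop2:3} applied to the proper map $\tau\times\id_{W^*}$ gives $WF((\tau\times\id_{W^*})_*\xi)\subset(\tau\times\id_{W^*})_*(WF(\xi))$ for any relevant $\xi$. Taking $\xi=\hat\eta_{\pi,\omega}$ and using the two displays above, we conclude
$$WF\bigl(\cF_W^*((\phi|_{X_0})_*(|\omega|))\bigr)\subset (\tau\times\id_{W^*})_*\bigl(\rho(\Crit_{\mu'})(F)\bigr)\subset (\tau\times\id_{W^*})_*\bigl(\rho(\Crit_{\mu'})\bigr)(F),$$
the last inclusion being the routine observation that the set-theoretic pushforward of $F$-points of a constructible subset is contained in the $F$-points of its (constructible) algebraic pushforward.

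I do not anticipate any real obstacle: once Corollary~\ref{c:based on symplectic lemma} is in hand, the corollary is a purely formal assembly of (i) the graph factorization $\phi|_{X_0}=(\tau\times\id_W)\circ i$, (ii) compatibility of the partial Fourier transform with proper pushforward, and (iii) the standard bound for the wave front set under proper pushforward. The only two minor points to verify are the properness of $\tau$, which is immediate from the projectivity of $\overline W$, and the passage from $F$-points to algebraic pushforward in the last step, which is equally routine.
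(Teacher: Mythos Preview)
Your argument is correct and follows exactly the approach the paper indicates: the paper merely says the corollary is deduced ``using Proposition~\ref{prop:WF_prop2}\eqref{prop:WF_prop2:3} in the same way as in the proof of Theorem~\ref{t:C}'', and you have spelled out precisely those steps --- the graph factorization, compatibility of partial Fourier transform with proper pushforward, and the wave-front bound under proper pushforward applied to Corollary~\ref{c:based on symplectic lemma}. (Note, incidentally, that the paper's statement of the corollary has a harmless typo: the $\rho$ should be the identification at base $X$, not $Y$, as you correctly use.)
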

}

Theorem \ref{exp} follows from this corollary in view of the following lemma:

\begin{lemma}
Let $X,Y$ be manifolds and $V$ be a vector space. Let $A \subset T^*(X \times V) = T^*(X \times V^*)$  be a subset. Let $\phi:X \to Y$ be a map. Then $(\phi \times Id_V)_*(A)=(\phi \times Id_{V^*})_*(A)$.

Note that in the left hand side $A$ is considered as a subset in $T^*(X \times V)$ and in the right hand side $A$ is considered as a subset in $T^*(X \times V^*)$. The equality is under the standard identification $T^*(Y \times V) = T^*(Y \times V^*)$.
\end{lemma}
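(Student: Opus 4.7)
The plan is to unwind both sides using Definition~\ref{def:dir_and_inv_image}(\ref{def:dir_and_inv_image:3}) together with the explicit identification $V\times V^*\to V^*\times V$, $(v,\xi)\mapsto(\xi,-v)$ from Lemma~\ref{l:symplectic_lem}. The key observation is that this identification only touches the $V$/$V^*$ factors of the cotangent bundle (and leaves the $T^*X$ factor alone), while the differential of $\phi\times\id_V$ (respectively of $\phi\times\id_{V^*}$) acts as $d\phi^*$ on the $T^*X$ factor and as the identity on the $V^*$ factor (respectively $V$ factor). Since the two operations act on disjoint ``coordinates'', they commute.

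More concretely, I would write $T^*(X\times V)=T^*X\times V\times V^*$ and compute that a point $(y,v,\eta,\xi)\in T^*Y\times V\times V^{*}$ lies in $(\phi\times\id_V)_*(A)$ iff there exists $x\in\phi^{-1}(y)$ with $(x,v,(d_x\phi)^*\eta,\xi)\in A$. An analogous formula holds for $(\phi\times\id_{V^*})_*(A)\subset T^*Y\times V^*\times V$, with $A$ now interpreted as a subset of $T^*(X\times V^*)$. Applying the identification from Lemma~\ref{l:symplectic_lem} both on the source (to reinterpret the membership condition $(\cdot)\in A$) and on the target (to compare subsets of $T^*(Y\times V)$ with subsets of $T^*(Y\times V^*)$), each occurrence of the substitution $(v,\xi)\leftrightarrow(\xi,-v)$ cancels: the ambient identification on the target exactly undoes the identification used to reinterpret $A$, because both are the same map on the $V\times V^*$ factor. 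After this cancellation, the two defining conditions literally coincide.

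The only ``obstacle'' is keeping track of the bookkeeping of signs and of which space $A$ is sitting in at each step; but once the coordinate formulas are written out, the claim becomes essentially tautological. More conceptually, the statement reflects the fact that the symplectic isomorphism $T^*V\cong T^*V^*$ is natural with respect to any morphism whose ``$V$-part'' is the identity, so it intertwines the cotangent pushforwards induced by $\phi\times\id_V$ and $\phi\times\id_{V^*}$.
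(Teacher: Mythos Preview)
Your proof is correct and essentially the same as the paper's. The paper packages the same computation by invoking Remark~\ref{rem:simp_gem} and observing that $\Lambda_{\phi\times\id_V}=\Lambda_{\phi\times\id_{V^*}}$ as subsets of $T^*(X\times V)\times T^*(Y\times V)=T^*(X\times V^*)\times T^*(Y\times V^*)$; your explicit unwinding of the membership condition in $(\phi\times\id_V)_*(A)$ is precisely the verification of that equality of correspondences.
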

\begin{proof}

The \RamiAE{lemma  follows from Remark \ref{rem:simp_gem} and} the equality
$$\Lambda_{\phi \times Id_V}=\Lambda_{\phi \times Id_{V^*}}\, ,$$
where $\Lambda_{\phi \times Id_V}$ and $\Lambda_{\phi \times Id_{V^*}}$ have the same meaning as in
Remark \ref{rem:simp_gem}.
\end{proof}


\section{Proof of Theorem \ref{thm:futer} \RamiAF{in the non-Archimedean case}}\label{sec:pf_futer}
{In this section we deduce Theorem \ref{thm:futer} from Theorem \ref{thm:futerO} assuming that the local field $F$ is non-\RamiAS{Archimedean}. A slight modification of the same argument allows to prove Theorem~\ref{thm:futer} in the Archimedean case as well, see \S\ref{ssec:arc_pf} below.}

\medskip

$ $
%
\RamiAbs{
\RamiY{We will need the following lemma:}
\begin{lemma}\label{lem:part_Fo}
Let $X$ be an analytic manifold, let $f$ be an analytic function on it and $\xi$ be a distribution on it.
\RamiAA{Let $i:X\to X\times F$ be defined by $i(x):=(x,f(x))$.}
 Let $\eta:=\RamiW{\cF\RamiAA{^*}_F(i_*\xi)}$. Then
 \begin{enumerate}
 \item \label{lem:part_Fo:1}  $WF(\eta) \subset  T^*(X) \times F \subset T^*(X \times F)$
 \item \label{lem:part_Fo:2} $j^*(\eta)=(\psi \circ f)\cdot\xi $, where $j:X \to  X \times F$ is given by $j(x)=(x,1)$. \RamiQO{Maybe it should be $-1$, I need to check. } Note that $j^*(\eta)$ is defined by virtue of \eqref{lem:part_Fo:1} and Proposition~\ref{prop:WF_prop3}.
 \end{enumerate}
  \end{lemma}

  \begin{proof}
  $ $
 \begin{enumerate}
 \item Follows from the definition of the wave front set.
 \item By continuity, it \RamiAA{is} enough to consider the case when $\xi$ is a Schwartz \RamiAA{function}. In this case, the assertion is obvious.
 \end{enumerate}
  \end{proof}

\RamiY{
  \begin{cor}\label{cor:part_Fo}
Let $X,Y$ be analytic manifolds, let $f$ be an analytic function on $X$ and $\xi$ be a distribution on $X$. Let $\phi:X \to Z$ be a proper map. Let
 $\phi':=\phi\times f:X \to Z \times F$
 Let $\eta:=\cF_F(\phi'_*(\xi))$. Then
 \begin{enumerate}
 \item \label{cor:part_Fo:1}  $WF(\eta) \subset  T^*(Z) \times F \subset T^*(Z \times F)$
 \item \label{cor:part_Fo:2} $\RamiAA{k}^*(\eta)= \RamiAA{\phi_{*}}((\psi \circ f)\cdot\xi)$, where $\RamiAA{k}:Z \to  \RamiAA{Z} \times F$ is given by $\RamiAA{k}(\RamiAA{z})=(\RamiAA{z},1)$. Again $j^*(\eta)$ is defined by virtue of \eqref{lem:part_Fo:1} and Proposition~\ref{prop:WF_prop3}.
 \end{enumerate}
  \end{cor}
    \RamiAA{
  \begin{proof}
  Let $i:X\to X\times F$ and $j:X\to X \times F$ be as in Lemma \ref{lem:part_Fo}. Then
$\eta:=\cF^{\RamiAA{*}}_F(\phi'_*\xi )$ is equal to the direct image of $\RamiAL{\mmup}:=\cF^{\RamiAA{*}}_F(i_*\xi)$ with
respect to the map $\alpha: X\times F\to Z\times F$ induced by $\phi :X\to Z$. By lemma \ref{lem:part_Fo} we have $WF(\mu) \subset  T^*(X) \times F \subset T^*(X \times F)$ and $(\psi \circ f)\cdot\xi =j^{*}(\mu)$. By Proposition \ref{prop:WF_prop4} this implies  both assertions.
  \end{proof}
  }

\RamiQO{
Here is an alternative proof of Corollary \ref{cor:part_Fo}, which does not use Lemma \ref{lem:part_Fo} and Proposition \ref{prop:WF_prop4}. First we need the following lemma:

\begin{lemma}\label{lem:part_Fo2}
Let $X$ be an analytic manifold. Let  $\xi$ be a distribution on $X \times F$ such that $p|_{\supp(\xi)}$ is proper, where $p:X \times F$ is the projection.\\ Let $\eta:=\cF_F(\xi)$. Then
 \begin{enumerate}
 \item \label{lem:part_Fo2:1}  $WF(\eta) \subset  T^*(X) \times F \subset T^*(X \times F)$
 \item \label{lem:part_Fo2:2} $j^*(\eta)=p_{*}((\psi \circ pr_{F})\cdot\xi)$, where $j:X \to  X \times F$ is given by $j(x)=(x,1)$ and $pr_{F}:X\times F\to F$ is the projection.  Note that $j^*(\eta)$ is defined by virtue of \eqref{lem:part_Fo:1} and Proposition~\ref{prop:WF_prop3}.
 \end{enumerate}
  \end{lemma}

  \begin{proof}
  $ $
 \begin{enumerate}
 \item Follows from the definition of the wave front set.
 \item By continuity, it is enough to consider the case when $\xi$ is a Schwartz function. In this case, the assertion is obvious.
 \end{enumerate}
  \end{proof}

  \begin{cor}\label{cor:part_Fo2}
Let $X,Y$ be analytic manifolds, let $f$ be an analytic function on $X$ and $\xi$ be a distribution on $X$. Let $\phi:X \to Z$ be a proper map. Let
 $\phi':=\phi\times f:X \to Z \times F$
 Let $\eta:=\cF_F(\phi'_*(\xi))$. Then
 \begin{enumerate}
 \item \label{cor:part_Fo2:1}  $WF(\eta) \subset  T^*(Z) \times F \subset T^*(Z \times F)$
 \item \label{cor:part_Fo2:2} ${k}^*(\eta)= {\phi_{*}}((\psi \circ f)\cdot\xi )$, where ${k}:Z \to  {Z} \times F$ is given by ${k}({z})=({z},1)$. Again $j^*(\eta)$ is defined by virtue of \eqref{lem:part_Fo:1} and
 Proposition~\ref{prop:WF_prop3}.
 \end{enumerate}
  \end{cor}
  \begin{proof}
  Part \eqref{cor:part_Fo2:1} obviously follows from Lemma \ref{lem:part_Fo2}. Part \eqref{cor:part_Fo2:2} follows from Lemma \ref{lem:part_Fo2} and the following equality:
  ${\phi_{*}}((\psi \circ f)\cdot\xi )=p_{*}(\phi'_{*}((\psi \circ f)\cdot\xi ))=p_{*}( (\psi \circ pr_{F})\cdot \phi'_{*}{\xi} )$.
   \end{proof}

  If I don't find a reference for Proposition \ref{prop:WF_prop4}, I prefer this proof. If I do, then I don't know. What do you prefer?
 }

}
}
\RamiAbs{
\RamiAC{We will 
need the following geometric lemmas:
\RamiQT{You have proposed a different, more general, exposition of those lemmas, but I still need to think it through. So I leave the lemmas here for the time being.}
\begin{lemma}\label{lem:geom_1}
Let $Y$ be an algebraic manifold and  $W$ a finite dimensional $F$ vector space. Let $X=Y \times W \times F$. Let $L$ be a conic (locally closed) isotropic smooth subvariety of $T^*(X)$, which is stable under
 homotheties of $ W \times F$. Let $Z:=Y \times W \times \{1\}$.

Then
\begin{enumerate}
\item \label{lem:geom_1:1} $L \cap CN_Z^{X} \subset X$
\item \label{lem:geom_1:2} $L$ intersects transversally  $T^*(X)|_Z\subset T^*(X)$
\end{enumerate}
\end{lemma}
\begin{proof}
$ $
\begin{enumerate}
\item A general point in $CN_Z^{X}$ looks like $x=(y,w,1,0,0,\alpha)$. Assume $x \in L$. We have to show that $\alpha =0$. Assume the contrary.  Since $L$ is conic, we have:
\begin{equation}\label{pf_lem:geom_1:1}
(0,0,0,0,0,1) \in T_x L \subset T_x (T^*(X))=T_y Y \times W  \times F \times  T^{*}_y Y\times W^*  \times F.
\end{equation}
Since $L$ is stable with respect to homothety by $ W \times F$, we have:
\begin{equation} \label{pf_lem:geom_1:2}
(0,w,1,0,0,0) \in T_x L.
\end{equation}
This contradicts the fact that $L$ is isotropic.
\item  Follows immediately from formula \eqref{pf_lem:geom_1:2} above.
\end{enumerate}
\end{proof}
\begin{lemma}\label{lem:geom_2}
Let $X$ be an algebraic manifold and $Z \subset X$ be an algebraic submanifold. Let $L$ be a conic (locally closed) isotropic smooth subvariety of $T^*(X)$. Assume that
\begin{enumerate}
\item  $L \cap CN_Z^{X} \subset X$,
\item  $L$ intersect transversally  $T^*(X)|_Z\subset T^*(X)$.
\end{enumerate}
Then $i^{*}(L) \subset T^*(Z)$ is isotropic, where $i:Z \hookrightarrow X$ is the embedding, and  $i^{*}(L) $ has the same meaning as
in Definition \ref{def:dir_and_inv_image}\eqref{def:dir_and_inv_image:2}.
\end{lemma}
\RamiQR{Maybe we can put the proof in Appendix B.}
\begin{proof}
Let $L':= L \cap T^*(X)|_Z$ and $L'':=i^{*}(L)$. Without loss of generality we may assume that the map $L' \to L''$ is a submersion. The statement now reduces to the case when $X,Z,L$ are linear spaces, which is obvious.
\end{proof}

\begin{corollary} \label{cor:geom}
Let everything be as in
Lemma  \ref{lem:geom_1}, but without the assumption that $L$ is smooth.
Let $i:Y \times W \to Y \times W \times F$ be given by $i(y,w)=(y,w,1)$.
 Then

\begin{enumerate}
\item \label{cor:geom:1} $L \cap N_i \subset X.$
\item \label{cor:geom:2} $i^*(L) \subset T^*(Y \times W)$ is isotropic.
\end{enumerate}

\end{corollary}
\begin{proof}
We can stratify $L$ into smooth submanifolds, and then prove for each one separately using lemmas \ref{lem:geom_1} and \ref{lem:geom_2}.
\end{proof}

}
     Now we are ready to prove Theorem \ref{thm:futer}.
}%
\DrinI{In Theorem \ref{thm:futer} we are given $\RamiAS{\phi :X \to Y\times W}$ and $p:X\to K$. Let
  $\phi'=\phi\times p :X \to Y\times W \times K$. The idea is to apply Theorem \ref{thm:futerO} to $W\times K$ instead of $W$ and $\phi':X\to Y\times W \times K$ instead of $\phi :=X \to Y\times W$. Let
 $L' \subset T^*(Y\times W^{*} \times K)$ be the isotropic subvariety provided by Theorem \ref{thm:futerO} in this situation (in particular, $L'$ is stable under the homotheties of $W^*$). We can also assume that $L'$ is conic (otherwise replace $L'$ by its biggest conic subvariety).
Consider the embedding $ j:Y\times W^*=Y\times W^*\times\{ 1\}\hookrightarrow Y\times W^*\times F$. 
\RamiAE{Define $L$ to be the Zarizki closure\footnote{\RamiAE{In fact, $j^*(L')$ is closed, but this is not essential to us.}} of $j^*(L')$,}
 where $j^*$ has the same meaning as in Definition \ref{def:dir_and_inv_image}\eqref{def:dir_and_inv_image:2}.}
\RamiAC{By \RamiAF{Lemma \ref{lem:iso_dir_invs_im} {and Remark~\ref{r:easy_facts}},}
 $L$ is isotropic.}


 \DrinI{Let us show that $L$ has the property required in Theorem~\ref{thm:futer}.
  Let $F$ be a local field equipped with an embedding $K\hookrightarrow F$. Set $W_F:=W\otimes_KF$. The problem is to show that the wave front of the distribution
\begin{equation}  \label{e:ditsribution_in_question}
\RamiAB{\mu:=}\cF^*_{W_F}((\phi_F)_*((\psi \circ p_F) \cdot |\omega_F|))
\RamiAD{=\cF^*_{W_F}((\phi_F)_*(p_{F}^{*}(\psi ) \cdot |\omega_F|))}
\end{equation}
is contained in $L(F)$. By the definition of $L'$, the wave front of the distribution
\begin{equation}    \label{e:ditsribution_we_control}
\RamiAB{\mu':=}\cF^*_{W _F\times F}((\phi'_F)_*(|\omega_F|))
\end{equation}
is contained in $L'(F)$.

 }


\DrinI{First, let us show that
\begin{equation} \label{e:mu_and_eta}
\mu =\RamiAD{\mu'|_{Y(F)\times \RamiAE{W_F^{*}} \times \{1\}}}
\end{equation}
where 
the equality \eqref{e:mu_and_eta} is understood in the sense of
Definition\RamiAD{ \ref{def:cont_dep}. }
\DrinI{To this end, for each $t\in F$ consider the distribution
\[
\mu_t:=\cF^*_{W_F}((\phi_F)_*(p_{F}^{*}(\psi_{t})  \cdot |\omega_F|)),
\] where $\psi_{t}$ is the additive character of $F$ defined by $\psi_t(x)=\psi(tx)$. Note that $\mu_1=\mu$, so
\eqref{e:mu_and_eta} follows from the next lemma. }
\begin{lem}\label{lem:mu_t}
$ $
\begin{enumerate}
\item \label{lem:mu_t:1} $\{\mu_t\}_{t\in F}$ is a continuous family of distributions\footnote{{As usual, the space of distributions on $Y(F)\times W_F^*\,$ is equipped with the weak topology.}} on $Y(F)\times W_F^*\,$.

\item \label{lem:mu_t:2}
The distribution on $Y(F)\times W_F^* \times F$ corresponding to the family $\{\mu_t\}$ equals $\mu'$; \RamiAE{that is, for any $f\in \Sc(Y(F)\times W^{*}_F ,\C_{Y(F)} \boxtimes D_{W^{*}_F})$ and $g\in \Sc(F)$, we have
\begin{equation}
\langle \mu',f \boxtimes g\rangle=\int_{t\in F}\mu_t (f)g(t) dt.
\end{equation}
 }
\end{enumerate}
\end{lem}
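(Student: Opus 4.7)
The plan is to derive both parts simultaneously from a direct Fubini-type computation that unwinds the definition of $\mu'$ using the adjointness between $\cF^{*}$ and $\cF$, the pushforward-pullback duality, and the factorization of iterated Fourier transform (Proposition \ref{prop:F_prop}\eqref{prop:F_prop:4}).

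Fix $f \in \Sc(Y(F)\times W^{*}_F ,\C_{Y(F)} \boxtimes D_{W^{*}_F})$ and $g\in \Sc(F)$. Writing $\phi_F = (\tau_F, \phi_{W,F})$ for the $Y$- and $W$-components of $\phi_F$, and using Proposition \ref{prop:F_prop}\eqref{prop:F_prop:4} to split $\cF_{W_F \times F}(f \boxtimes g) = \cF_{W_F}(f) \boxtimes \cF_F(g)$, we obtain
$$\langle \mu', f \boxtimes g\rangle
= \int_{X(F)} \cF_{W_F}(f)\bigl(\tau_F(x), \phi_{W,F}(x)\bigr) \cdot \cF_F(g)(p_F(x)) \, d|\omega_F|(x).$$
Substituting $\cF_F(g)(s) = \int_F \psi(ts) g(t) dt$ and applying Fubini's theorem yields
$$\langle \mu', f \boxtimes g\rangle
= \int_F g(t) \left[\int_{X(F)} \psi\bigl(t\cdot p_F(x)\bigr) \cdot \cF_{W_F}(f)\bigl(\tau_F(x), \phi_{W,F}(x)\bigr) \, d|\omega_F|(x)\right] dt.$$
The inner bracket is, by the definitions of direct image and of $\cF^{*}_{W_F}$, precisely $\mu_t(f)$. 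This proves part \eqref{lem:mu_t:2}.

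For part \eqref{lem:mu_t:1}, the same formula
$$\mu_t(f) = \int_{X(F)} \psi\bigl(t\cdot p_F(x)\bigr) \cdot \cF_{W_F}(f)\bigl(\tau_F(x), \phi_{W,F}(x)\bigr) \, d|\omega_F|(x)$$
gives continuity of $t\mapsto \mu_t(f)$ directly. Indeed, in the non-Archimedean setting $\cF_{W_F}(f)$ is itself Schwartz, hence compactly supported; properness of $\phi_F$ then forces the integrand to be supported on the compact set $\phi_F^{-1}(\Supp \cF_{W_F}(f))$, on which $|\omega_F|$ is finite. Since $\psi$ is locally constant, the function $t\mapsto \psi(t\cdot p_F(x))$ is locally constant in $t$ uniformly in $x$ over this compact set, so $t\mapsto \mu_t(f)$ is (locally constant, hence) continuous.

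The only technical point is the application of Fubini, which is justified by the compactness of $\phi_F^{-1}(\Supp \cF_{W_F}(f))$ together with the local finiteness of $|\omega_F|$ and the boundedness of $\psi$ and $g$. I expect this to be the main —indeed only— obstacle, and it is minor.
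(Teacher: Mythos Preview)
Your proof is correct and follows essentially the same route as the paper: both unwind $\langle\mu',f\boxtimes g\rangle$ via the adjointness of $\cF^*$ with $\cF$ and of pushforward with pullback, and then interchange the $t$-integral with the integral against $|\omega_F|$ to identify the result with $\int_F \mu_t(f)g(t)\,dt$. The only cosmetic difference is the justification of the interchange: the paper observes that in the non-Archimedean case the $t$-integral is literally a finite sum (since $g$ is locally constant with compact support), whereas you phrase it as a Fubini argument backed by compactness of $\phi_F^{-1}(\Supp\cF_{W_F}(f))$; these amount to the same thing, and your explicit argument for part~\eqref{lem:mu_t:1} (local constancy of $t\mapsto\mu_t(f)$ via uniform local constancy of $\psi(t\cdot p_F(x))$ on a compact) simply spells out what the paper calls ``clear.''
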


\begin{proof}
Statement \eqref{lem:mu_t:1} is clear.
\RamiAE{Let us } prove \eqref{lem:mu_t:2}. \RamiAE{We have:
\RamiAD{
%
%
\begin{multline*}
 \left \langle \mu',f \boxtimes g\right\rangle  =\left \langle (\phi'_F)_*(|\omega_F|),\cF_{W_F} (f) \boxtimes \cF_{F} (g)\right\rangle=\left \langle |\omega_F|,(\phi'_F)^*(\cF_{W_F} (f) \boxtimes \cF_{F} (g))\right\rangle=\\
=\left \langle |\omega_F|,(\phi_F)^*(\cF_{W_F} (f)) \cdot p_{F}^*(\cF_{F} (g))\right\rangle=\left \langle |\omega_F|,(\phi_F)^*(\cF_{W_F} (f)) \cdot  p_{F}^*\left(\int_{t\in F}\psi_t \cdot g(t)dt\right)\right\rangle
\end{multline*}
On the other hand
\begin{multline*}
 \int_{t\in F} \left \langle\mu_t ,f \right\rangle g(t) dt
 =\int_{t\in F}\left \langle(\phi_F)_*(|\omega_F| \cdot p_{F}^*(\psi_t)),\cF_{W_F} (f)  \right\rangle g(t) dt=\\= \int_{t\in F}\left \langle(|\omega_F| \cdot p_{F}^*(\psi_t),(\phi_F)^*(\cF_{W_F}
(f))\right\rangle g(t)  dt =  \int_{t\in F} \left\langle |\omega_F|,(\phi_F)^*(\cF_{W_F} (f)) \cdot p_{F}^*(\psi_t)\right\rangle g(t)  dt
\end{multline*}
So it remains to prove that
$$\left \langle |\omega_F|,(\phi_F)^*(\cF_{W_F} (f)) \cdot  p_{F}^*\left(\int_{t\in F}\psi_t \cdot g(t)dt\right)\right\rangle=  \int_{t\in F} \left\langle |\omega_F|,(\phi_F)^*(\cF_{W_F} (f)) \cdot p_{F}^*(\psi_t)\right\rangle g(t)  dt$$
This follows from the fact that, for each particular $f$ and $g$, the integral can be replaced by a finite sum.
}
}
\end{proof}

 }

\DrinI{Thus we have proved
\RamiAE{\eqref{e:mu_and_eta}.}
By assumption, the wave front of $\mu'$ is contained in $L'(F)$.
So by {Corollary~\ref{c:Treves}}, to prove that the wave front of $\mu$ is contained in $L(F):=(\overline{j^*L'})(F)$, it suffices to check that $L'(F)$ satisfies the condition of
{Proposition~\ref{prop:Treves}}.
 In other words, we have to check that if $z\in \RamiAE{j}(Y\times \RamiAE{W^{*}})$ and
$\xi\in T^*_z(Y\times \RamiAE{W^{*}}\times K)$ are such that $(z,\xi )\in L'$ and $\xi$ is conormal to $k(Y\times W)$ then $\xi =0$.
Recall that $L'$  is assumed to be conic and stable under the action of the \RamiAH{multiplicative} group ${\mathbb G}_m$ on $Y\times \RamiAE{W^{*}}\times K$ that comes from  homotheties of $\RamiAE{W^{*}}\times K$; in other words, $L$ is stable under
${\mathbb G}_m\times {\mathbb G}_m\,$. So the tangent space to the $({\mathbb G}_m\times {\mathbb G}_m)$-orbit of $(z,\xi )$ has to be isotropic. This means that $\xi$ vanishes on the tangent space to the ${\mathbb G}_m$-orbit
of $z$. On the other hand, $\xi$ is assumed to be conormal to $\RamiAE{j}(Y\times \RamiAE{W^{*}})$. So $\xi=0$.
}

\section{\RamiT{The Archimedean case}} \label{sec:arc}
\RamiT{
In \S\S \ref{ssec:arc_dist} we recall the terminology relevant for the Archimedean case (in particular, the notion of partially Schwartz distribution).
In \S\S \ref{ssec:arc_pf} we
explain what should be added to the proof from \S\ref{sec:pf_main_bb}
 of non-Archimedean case
to make it valid in the Archimedean case \DrinI{(essentially, the only new ingredient is the elementary
Lemma~\ref{lem:dim_1_arch}).}

Throughout the section $F$ is an Archimedean field (i.e. $F$ is $\R$ or $\C$). Recall that we equip
$F$ with the  \emph{normalized} absolute value, which in case of $F=\C$ is the \emph{square} of the classical one.

\subsection{Distributions in the Archimedean case} \label{ssec:arc_dist}
Let $M$ be a smooth (real) manifold. Recall that the space $C^\infty_c(M)$ of test functions on $M$ is the space of smooth compactly supported functions endowed with the \RamiU{standard}
topology (\RamiU{recall that in this topology, a sequence} converges if and only if it has a compact joint support and converges uniformly with all its derivatives). Recall also that the space of distributions $C^{-\infty}(M,D_M)$ on $M$ is defined to be the dual of $C^\infty_c(M)$. Similarly, for any  \DrinI{smooth vector} bundle we can consider its smooth compactly supported sections and generalized sections.

\RamiAG{We will use the same notations as in \S\S\ref{ssec:dist} but we will replace $\Sc$ with $C^\infty_c$ and $\G$ with $C^{-\infty}$. The reason is that $\Sc$ and $\G$ stands for Schwartz, and in the non-Archimedean case Schwartz functions are just smooth compactly supported functions and Schwartz distributions are just distributions, unlike the Archimedean case.

\RamiAM{The content of \S\S\S\ref{sssec:fun_sp} and \S\S\S\ref{sssec:op_dist}}
hold\RamiAM{s} for the Archimedean case, with the obvious modifications (e.g. $l$-spaces are replaced with smooth manifolds and locally constant sheaves are replaced with  \DrinI{smooth vector} bundles). The statements of  \S\S\S\ref{sssec:wfs} hold with minor modifications.
\RamiAH{In particular,
{the role of Definition \ref{def:cont_dep} is played by the following one.}

\begin{definition}\label{def:cont_dep_arch}
Let $\xi\in C^{-\infty}(X \times Y)$ be a generalized function on a product
of analytic manifolds. We will say that $\xi$ depends continuously on $Y$ if

(i) for any $f \in C^{\infty}_c(X,D_X)$, the generalized function $\xi_f \in
C^{-\infty}(Y) $ given by
$\xi_f(g)=\xi(f \boxtimes g)$ is continuous;

(ii) for any $y \in Y$, the
functional $f \mapsto \xi_f(y)$ is continuous.

In this case we \DrinI{define} $\xi|_{X \times \{y\}} \in  C^{-\infty}(X
\times \{y\})$   by  $\xi|_{X \times \{y\}}(f):=\xi_f(y)$.
\end{definition}

{
\begin{rem}   \label{r:closed graph}
Using the closed graph theorem one can show that (i) implies (ii) (and moreover, (i) implies continuity of the map $C^{\infty}_c(X,D_X)\to C(Y)$ given by $f\mapsto\xi_f$). We will not need this fact.
\end{rem}
}

}
\RamiAH{
 We present the rest of the content of \S\S\S\ref{sssec:wfs},} with more details, for both  the Archimedean and the non-Archimedean case, in Appendix \ref{app:WF}.}

In order to discuss partial Fourier transform \RamiAG{as in  \S\S\S\ref{sssec:ft}} we will need to discuss test functions which are partially Schwartz.
\begin{definition}
Let $M$ be a smooth manifold and $V$ be a real vector space.
\begin{enumerate}
\item We define the space \RamiU{$C_{c}^{\infty,V}(M \times V)$} of partially Schwartz (along $V$) test functions on $M \times V$ to be the space of all smooth functions $f$ on $M \times V$ such that $\supp(f) \subset \RamiU{K} \times V$ for some compact $\RamiU{K \subset M}$, and for any polynomial differential operator $D$ on $V$ \RamiU{and any smooth differential operator $D'$ on $M$}, the function $\RamiU{D'}Df$ is bounded.
\item
\RamiU{We define a topology on this space in the following way. For any compact $K \subset M$, we let $C^{\infty,V}_K(M \times V)$ be the subspace of $C_{c}^{\infty,V}(M \times V)$ that consists of all functions 
supported on $K \times V$. We define the topology on $C^{\infty,V}_K(M \times V)$ by the semi-norms $f \mapsto |D'Df|$, where $D'$ and $D$ are as above. We define the topology on  $C_{c}^{\infty,V}(M \times V)$ to be the direct limit topology.}



\item \DrinI{Let $\xi$ be a distribution  on $M \times V$, i.e., a continuous linear functional on
$C^\infty_c(M\times V)$.
We say that  $\xi$ is \emph{partially Schwartz} along $V$ if this functional} can be continuously extended to the space of partially Schwartz test functions.
\item We can clearly extend the above definition to generalized sections of bundles of the type $E \boxtimes D_V$, where $E$ is a bundle on $M$.
\item We say that a generalized section of $E \boxtimes \C_V$, (where $\C_V$ is the constant bundle on $V$) is partially Schwartz if it becomes so after multiplication by a Haar measure on $V$.
\item The space of Schwartz generalized sections will be denoted by $C^{-\infty,V}(\dots)$.
\end{enumerate}
\end{definition}

\DrinI{Now the results of  \S\S\S\ref{sssec:ft} (with natural modifications) are valid (with the same standard proofs) for distributions which are partially Schwartz along the relevant vector space. Here is the precise formulation, whose only new ingredient is the fact that the operations performed on distributions preserve the partially Schwartz property.}

\begin{proposition} \label{prop:F_prop_arch}
Let $W,L$ be a real vector spaces and $X$ be a smooth manifold.
Let $\xi \in C^{-\infty}(X \times W,D_{X \times W})$, which is Schwartz along $W$.

\begin{enumerate}
\item
Let $U \subset X$ be an open set. Then 
{$\xi_{U\times W}$ is partially Schwartz along $W$ and} $\cF_{W}^{{*}}(\xi)|_{U\times W^{\RamiAQ{*}}}=\cF_{W}^{{*}}(\xi|_{U\times W})$.
\item 
{Let $\xi' \in C^{-\infty}(X \times W,D_{X \times W})$ and \RamiAS{l}et $X=\bigcup U_i$ be an open cover of $X$. Assume that $\xi'|_{U\times W}$ is partially Schwartz along $W$. Then $\xi'$ is  partially Schwartz along $W$.}
\item Let $f\in C^\infty(X)$ be a smooth function. Then 
{$f\xi$ is partially  Schwartz along $W$ and} $\cF_{W}^{{*}}(f\xi)=f\cF_{W}^{{*}}(\xi).$
\item Let $p:X \to Y$ be a proper map of smooth manifolds. Then 
{$p_*\xi$ is partially  Schwartz along $W$ and} $\cF_{W}^{{*}}(p_*\xi)=p_*\cF_{W}^{{*}}(\xi).$
\item Let $\nu$ be as  in Lemma \ref{prop:F_prop2} and  $\rho_\nu$  be as in Notation \ref{not:F_prop}
Then 
{the vertical arrows in the following diagram preserve the space of partially Schwartz functions and} it is commutative.

\begin{equation*}
\xymatrix
{        C^{-\infty,W^*}(X \times W^*)  \ar@{<-}^{\cF^*_{W}\quad}[r] \ar@{<-}^{(\rho_{{\nu^t}})^*}[d]& C^{-\infty,W}(X \times W,\C _X\boxtimes D_{W}) \ar@{<-}^{(\rho_{\nu})_* }[d]  \\
         C^{-\infty,L^*}(X \times L^*)  \ar@{<-}^{\cF^*_L\quad}[r]                                 & C^{-\infty,L}(X \times L,\C _X\boxtimes D_{L})
}
\end{equation*}
\end{enumerate}
\end{proposition}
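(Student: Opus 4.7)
The plan is to dualize each statement and reduce it to a continuity assertion on the space of partially Schwartz test functions, combined with a classical identity that already holds on ordinary Schwartz functions. The core technical input is that for each operation $T$ appearing in (1)--(5), one verifies that the corresponding adjoint operation $T^\vee$ maps $C_{c}^{\infty,V}$ continuously into $C_{c}^{\infty,V'}$ for the appropriate vector spaces; then the dual $T = (T^\vee)^*$ automatically sends the dual (partially Schwartz distributions) to partially Schwartz distributions. Each commutation with $\cF_W$ or $\cF_W^*$ then follows from the case of Schwartz functions in the $W$-direction, which is classical.

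For (1), restriction to $U\times W$ corresponds dually to extension by zero $C_{c}^{\infty,W}(U\times W)\hookrightarrow C_{c}^{\infty,W}(X\times W)$, which is continuous by inspection of the defining seminorms (the compactness in the $X$-direction is preserved). Commutation $\cF_W^*(\xi)|_{U\times W^*}=\cF_W^*(\xi|_{U\times W})$ is immediate since $\cF_W$ acts only in the $W$-variable. Part (2) is then a partition-of-unity argument: choose $\chi_i\in C_c^\infty(U_i)$ summing to $1$ on any given compact set of $X$; each $\chi_i\xi$ is partially Schwartz by (3) applied locally, and their sum locally equals $\xi$. For (3), Leibniz gives continuity of multiplication by $f\in C^\infty(X)$ on $C_{c}^{\infty,W}(X\times W,\C_X\boxtimes D_W)$ (only the $X$-derivatives of $f$ enter, and these are bounded on compact sets), and the Fourier-transform commutation is obvious since $f$ does not depend on $W$.

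For (4), one checks that if $p:X\to Y$ is proper and $g\in C_{c}^{\infty,W}(Y\times W)$ then $(p\times\mathrm{id}_W)^* g$ lies in $C_{c}^{\infty,W}(X\times W)$, with continuity coming from properness (compactness of $\mathrm{supp}_X$ is preserved) and the chain rule (the bounds on $D'Dg$ transfer to bounds on $D'$-derivatives of the pullback). The commutation $\cF_W^*(p_*\xi)=p_* \cF_W^* (\xi)$ reduces to the dual identity $(p\times\mathrm{id})^*\circ \cF_W=\cF_W\circ (p\times\mathrm{id})^*$ on test functions, which is trivial fiberwise. For (5), the maps $\rho_\nu$ and $\rho_{\nu^t}$ are fiberwise linear automorphisms (in fact embeddings/submersions) of the total space over $X$ with smoothly varying matrix. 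Their actions on $C_{c}^{\infty,W}$ and $C_{c}^{\infty,L^*}$ transform the polynomial-times-derivative seminorms in a controlled way, since $\nu(x)\in\mathrm{Mon}(L,W)$ and its inverse on its image are smooth in $x$; hence partially Schwartz is preserved. The commutativity of the diagram reduces, fiber by fiber over $X$, to the classical identity that partial Fourier transform intertwines a linear change of variable on $W$ with the transpose change of variable on $W^*$, up to a Jacobian that is absorbed into the identification $D_W/D_{L}$ implicit in the source/target line bundles.

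The only real obstacle is bookkeeping of the seminorms in (5), where one must check that pulling back under $\rho_\nu$ does not destroy decay in the $L$-direction; this is where the hypothesis $\nu(x)\in \mathrm{Mon}(L,W)$ (that $\nu(x)$ is an embedding) is used, ensuring that any polynomial on $L$ is bounded by a polynomial on $W$ restricted via $\nu(x)$, uniformly on compact sets of $X$. Once this is in place, every other claim reduces to well-known identities for Schwartz-class partial Fourier transform, extended from $C_{c}^{\infty,V}$ to its dual by continuity.
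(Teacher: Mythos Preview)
The paper does not actually prove this proposition: immediately before stating it, the authors simply assert that ``the results of \S\S\S\ref{sssec:ft} (with natural modifications) are valid (with the same standard proofs) for distributions which are partially Schwartz along the relevant vector space,'' noting that ``the only new ingredient is the fact that the operations performed on distributions preserve the partially Schwartz property.'' Your proposal is exactly the standard argument the paper is gesturing at---verify that each adjoint operation is continuous on the appropriate space $C_c^{\infty,V}$ of partially Schwartz test functions, then dualize---and it is correct.
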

%
%
\subsection{\DrinI{On the proofs of} the main results in the Archimedean case} \label{ssec:arc_pf}
$ $
\RamiY{The proof of Theorem \ref{thm:main_C_Arch}  follows the same lines as the proof of Theorem \ref{thm:main_bb}, but in each step we have to check that the distributions we consider are partially Schwartz along the relevant vector space. In other words we should prove part\RamiAE{s} \RamiAG{\eqref{thm:main_C_Arch:1} and \eqref{thm:main_C_Arch:2}} of Theorem \ref{thm:main_C_Arch}
together.
The reduction to Lemma \ref{lem:dim_1}
      is the same as  in  Theorem \ref{thm:main_bb}, but in Lemma  \ref{lem:dim_1} itself we need to be more careful. Namely, we have to precede it with the following lemma:}

 \begin{lemma}  \label{lem:dim_1_arch}
Let $Y$ be the affine space with coordinates $y_1, \dots, y_n.$ Let $p:Y \to F$ be defined by
$p= \prod_{i=1}^n y_i^{l_i}$,
where
$l_i \in \Z_{\geq 0}$. Let $\omega$ be the top differential form on $Y$
given by $\omega= (\prod_{i=1}^n y_i^{r_i}) d{y_1} \wedge \dots \wedge d{y_n}$, where $r_i \in \Z$.
 Suppose $r_i \geq 0$ whenever $l_i=0$,
so $\omega$ is regular on the set $Y_0:=\{ y\in Y | p(y)\neq 0\}$.
Define $i:Y_0\hookrightarrow Y \times F$ by $i(y):=(y,p(y)^{-1})$.
 Then the distribution $i_*(|\omega|)$ is Schwartz, \RamiU{and in particular it is partially Schwartz} along~$F$.
\end{lemma}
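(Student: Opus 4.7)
The plan is to unwind the pushforward $i_*(|\omega|)$ as an explicit iterated integral and bound its pairing with a partially Schwartz test function directly. For $f\in C^{\infty,F}_c(Y(F)\times F)$ supported in $K\times F$ with $K\subset Y(F)$ compact, one has
\[
\langle i_*(|\omega|),\, f\rangle \;=\; \int_{Y_0(F)} f\bigl(y,p(y)^{-1}\bigr) \prod_{i=1}^n |y_i|^{r_i}\, dy_1\cdots dy_n ,
\]
so the task is to show this integral converges absolutely and is continuous in $f$ for the partially Schwartz topology.

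The essential input is the Schwartz decay of $f$ in the $F$-variable: for each integer $M\geq 0$, the functional $\nu_M(f):=\sup_{(y,t)\in K\times F}(1+|t|)^M|f(y,t)|$ is (up to a positive constant) controlled by one of the partially Schwartz seminorms, hence continuous. Thus $|f(y,t)|\leq \nu_M(f)(1+|t|)^{-M}$ uniformly on $K\times F$. Substituting $t=p(y)^{-1}$ and using the elementary inequality $(1+|p(y)|^{-1})^{-M}\leq |p(y)|^{M}$ valid when $|p(y)|\leq 1$, on the locus $\{|p(y)|\leq 1\}\cap K$ I obtain the pointwise bound
\[
\bigl|f(y,p(y)^{-1})\bigr|\prod_i |y_i|^{r_i} \;\leq\; \nu_M(f)\,\prod_i |y_i|^{r_i+Ml_i} ,
\]
while on $\{|p(y)|\geq 1\}\cap K$ the integrand is bounded by $\nu_0(f)\prod_i|y_i|^{r_i}$, and there the constraint $\prod|y_i|^{l_i}\geq 1$ keeps the $y_i$'s with $l_i\geq 1$ in a compact subset of $F^\times$, so all the remaining exponents are harmless.

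I now choose $M$ large enough that $r_i+Ml_i\geq 0$ for every $i$ with $l_i\geq 1$; for indices with $l_i=0$ the exponent $r_i$ is already nonnegative by hypothesis. The integrand is then dominated on $K$ by a bounded continuous function of $y$, yielding
\[
|\langle i_*(|\omega|),f\rangle| \;\leq\; C_K\bigl(\nu_M(f)+\nu_0(f)\bigr) ,
\]
which establishes partial Schwartzness along $F$. The stronger claim that $i_*(|\omega|)$ is Schwartz on the whole $Y(F)\times F$ is obtained by the same argument supplemented with seminorms of the form $(1+|y|)^N$ to control decay at infinity in $y$; since the lemma records this only for context and it is not used in the sequel, I would not formalize it further. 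The main subtlety, which the above handles, is the interplay between the exponents $r_i$ and $l_i$ near $\{p=0\}$: the case split in the hypothesis ($r_i\geq 0$ whenever $l_i=0$) is precisely what guarantees that a single choice of $M$ renders every exponent $r_i+Ml_i$ nonnegative.
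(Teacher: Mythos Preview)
Your proof is correct and rests on the same key observation as the paper's: for $M$ large enough, all exponents $r_i+Ml_i$ are nonnegative (equivalently, $p(y)^M\cdot y^r$ is a polynomial), which is exactly what the hypothesis ``$r_i\ge 0$ whenever $l_i=0$'' is designed to guarantee.

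The paper's argument is organized a bit differently. Rather than splitting into the regions $|p(y)|\le 1$ and $|p(y)|\ge 1$, it introduces the integrable weight $s(y)=\prod_i(1+|y_i|^2)$, bounds the integral by $C\cdot\sup_{y\in Y_0} s(y)|y^r||f(y,p(y)^{-1})|$, writes $|y^r|=|q(y)|\cdot|p(y)|^{-N}$ with $q$ a polynomial, and then replaces $|p(y)|^{-N}|f(y,p(y)^{-1})|$ by $\sup_x |x|^N|f(y,x)|$. This gives directly an estimate of the form $|\langle i_*(|\omega|),f\rangle|\le \sup_{(y,x)}|u(y,x)f(y,x)|$ for a polynomial $u$, which yields the full Schwartz property (not just partial Schwartz along $F$) in one stroke, without a case analysis. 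Your approach buys concreteness and makes the role of the compact support in $y$ explicit; the paper's buys the stronger conclusion and a shorter argument. Both are fine for the application, since only partial Schwartzness along $F$ is used downstream.
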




\begin{proof}
Consider the scalar product of $i_*(|\omega|)$ against $f \in C_c^\infty(Y \times F)$. It suffices to get for it an estimate of the form
\begin{equation}   \label{e:form_of_estiamate}
|i_*(|\omega|)(f)|\le \sup_{(y,x)\in Y\times F} |u(y,x)f(y,x)|,
\end{equation}
where $u$ is some polynomial on $Y\times F$.

For brevity, write $y$ instead of $(y_1,\dots,y_n)$ and $y^r$ instead of $\prod\limits_{i=1}^n y_i^{r_i}$.
Set
$$s(y):= \prod\limits_{i=1}^n (1+|y_i^{2}|).$$
We have
\begin{multline*}
|i_*(|\omega|)(f)|=|\int_{Y_0} y^{r} f(y,p(y)^{-1})dy |\leq\\
\leq \int_{Y_0} |y^{r}| \cdot|f(y,p(y)^{-1})|dy\leq C\cdot \sup_{y\in Y_0} s(y)\cdot |y^r|\cdot |f(y,p(y)^{-1})|,
\end{multline*}
where $C:= \int\limits_{Y}  s(y)^{-1}dy$.

The conditions on $l_i$ and $r_i$ imply that for $N$ big enough the function $q(y):=p(y)^N\cdot y^r$ is a polynomial. We have
\begin{multline*}
 \sup_{y\in Y_0} s(y)\cdot |y^r|\cdot |f(y,p(y)^{-1})|=\sup_{y\in Y_0} s(y)\cdot |q(y)|\cdot |p(y)|^{-N}\cdot |f(y,p(y)^{-1})|\le\\
 \le
\sup_{(y,x)\in Y\times F} s(y)\cdot |q(y)|\cdot |x^N|\cdot |f(y,x)|.
\end{multline*}
Thus we get an estimate of the form \eqref{e:form_of_estiamate}.
\end{proof}
}
\RamiY{Theorems \ref{thm:futerO} and \ref{exp} and Corollary \ref{cor:exp} are also proven in the same way as in the non-Archimedean case. So we are left with Lemma \ref{lem:Sch2} and Theorem  \ref{thm:futer}.}

\RamiAG{In fact, we will need a slightly stronger version of Lemma \ref{lem:Sch2}.
For its formulation we will need the following notion.\footnote{{In connection with
Definition~\ref{d:strictly continuous}, see Definition~\ref{def:cont_dep_arch} and Remark~\ref{r:closed graph}.}}
\begin{defn}   \label{d:strictly continuous}
Let $V$ be a real vector space and $X,Y$ be  smooth manifolds.
We call a family of generalized functions $\xi_t \in C^{-\infty,V}(Y \times V)$ parameterized by $t \in X$ strictly continuous if it gives rise to a continuous map $C_c^{\infty,V}(Y \times V,D_{Y \times V}) \to C(X)$\RamiAH{, where the topology on $C(X)$ is the open compact one}.
\end{defn}
The following lemma is a  stronger version of Lemma \ref{lem:Sch2}:
\begin{lem} \label{lem:Sch3}
{
In the situation of  Lemma \ref{lem:Sch2}, set
$$\xi_t:= (\phi_F)_*((\psi_t \circ p_F) \cdot |\omega_F|)\in C^{-\infty}(Y(F) \times W_F,D_{Y(F) \times W_F}),$$
where $t\in F$ and $\psi_{t}$ is the additive character of $F$ defined by $\psi_t(x)=\psi(tx)$. Then each $\xi_t$
is partially Schwartz along $W_F$ and the family of distributions $\xi_t$, $t\in F$, is  strictly continuous.
}
\end{lem}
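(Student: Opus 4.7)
The plan is to adapt the proof of Theorem~\ref{thm:main_C_Arch}(\ref{thm:main_C_Arch:1}) from \S\S\ref{ssec:arc_pf} by incorporating the oscillating factor $\psi_t \circ p_F$ throughout, while keeping all estimates uniform in $t$. First I would absorb $p$ into the target geometry by working with $\phi' := \phi \times p : X \to Y \times W \times K$, and then apply Nagata's compactification and Hironaka's desingularization to the composition $X \to Y \times W \times K \hookrightarrow Y \times \overline W \times \overline K$. This yields a proper extension $\overline{\phi'} : \bar X \to Y \times \overline W \times \overline K$ with $\bar X$ smooth, $\bar X \setminus X$ together with the zero locus of $\omega$ forming an SNC divisor, and $\omega$ acquiring monomial local models at infinity.

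Using the locality statement Proposition~\ref{prop:loc} (valid in both settings) I would then localize on $Y$ and pass to étale charts where the situation looks like the local monomial model of Lemma~\ref{lem:dim_1_arch}: coordinates $y_1, \ldots, y_n$ on $Y$ in which the coordinates of $\phi$ are Laurent monomials, $p$ is a monomial, and $\omega = (\prod y_i^{r_i})\, dy_1 \wedge \cdots \wedge dy_n$ with exponents satisfying the conditions of Lemma~\ref{lem:dim_1_arch}. In this local model the pairing $\xi_t(f)$ against a partially Schwartz test function $f$ becomes an explicit integral. Since $|\psi_t \circ p_F|\equiv 1$ pointwise, every absolute-value estimate in the proof of Lemma~\ref{lem:dim_1_arch} goes through unchanged, producing an inequality of the form
\[
|\xi_t(f)| \le C\cdot\sup_{(y,w)\in Y\times W} |u(y,w)\, f(y,w)|
\]
with $u$ a polynomial on $Y\times W$ and $C$ a constant, both independent of $t$. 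This shows that each $\xi_t$ is partially Schwartz along $W_F$, with the continuous extension bounded uniformly in $t$.

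For strict continuity in $t$, the uniform domination just obtained supplies an integrable majorant, while $t \mapsto \psi_t(p(x))$ is continuous pointwise. Lebesgue's dominated convergence theorem then gives continuity of $t \mapsto \xi_t(f)$ on $F$ for every $f \in C_c^{\infty,W_F}(Y(F)\times W_F, D_{Y(F)\times W_F})$, and the $t$-uniformity of the dominating polynomial upgrades this to continuity of the assignment $f \mapsto (t \mapsto \xi_t(f))$ from $C_c^{\infty,W_F}$ to $C(F)$ in the compact-open topology. The main obstacle is to verify that every step of the geometric reduction is genuinely $t$-uniform; this is essentially automatic since the compactification, the resolution of singularities, and the étale charts depend only on $X, Y, \phi, p, \omega$ and not on $t$, but one should make explicit that the pullback/pushforward transitions in Proposition~\ref{prop:loc} introduce no $t$-dependence either, which again comes down to $|\psi_t| \equiv 1$.
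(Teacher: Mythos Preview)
Your approach is correct but differs from the paper's. The paper's proof is modular: it applies Theorem~\ref{thm:main_C_Arch}\eqref{thm:main_C_Arch:1} to $\phi' = \phi \times p$ as a black box to get $\xi' := (\phi'_F)_*(|\omega_F|)$ partially Schwartz along $W_F \times F$, multiplies by $1_{Y(F)\times W_F} \boxtimes \psi_t$ to obtain the strictly continuous family $\xi'_t$, and then projects along $F$ via a new ingredient, Lemma~\ref{lem:dir_is_Sch} (pushforward along $Y \times V_1 \times V_2 \to Y \times V_1$ preserves the Schwartz-along-$V_1$ property and strict continuity when the support has finite projection to $Y\times V_1$, which here follows from properness of $\phi$). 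Your approach instead re-runs the whole proof of Theorem~\ref{thm:main_C_Arch}\eqref{thm:main_C_Arch:1} with the bounded factor $\psi_t \circ p_F$ carried along, avoiding Lemma~\ref{lem:dir_is_Sch} at the cost of re-examining each reduction step. One point you should make explicit: after compactifying $\phi'$ rather than $\phi$, the SNC divisor gains components over the locus $\{p = \infty\}$, and you must check these do not violate the hypothesis of Lemma~\ref{lem:dim_1_arch} (namely that the pulled-back form $\omega'$ is regular wherever the map to $\overline W$ is finite). This holds precisely because $\phi$ is proper: one checks that $X = (\bar{\phi'})^{-1}(Y\times W \times \overline K)$, so every point of $\bar X' \setminus X$ already maps to $W_\infty$, and hence every boundary component has $l_i > 0$ in the local monomial model.
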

}

\RamiAG{In order to prove this lemma} we will need the following \RamiAG{one}:
\begin{lem}\label{lem:dir_is_Sch}
Let $Y$ be \RamiAG{an algebraic} manifold and let $V_1,V_2$ be finite dimensional $F$-vector spaces. Choose a  \RamiAG{Haar measure} on $V_2$.  Let $Z\subset Y \times V_1\times V_2$ be an algebraic subvariety such that the projection of $Z$ \RamiAG{to} $Y \times V_1$ is proper (and hence finite). Let $\xi$ be a distribution on $Y(F) \times V_1\times V_2$ which is Schwartz along $V_1\times V_2$ and supported on $Z(F)$. Let $p: Y \times V_1\times V_2 \to  Y \times V_1$ be the projection. Then $p_*(\xi)$ is Schwartz along $V_1$.

\RamiAG{
Moreover, if $\xi_t \in C^{-\infty,V_1\times V_2}(Y(F) \times V_1\times V_2)$ is a strictly continuous family of distributions which are supported on $Z(F)$, then $p_*(\xi_{t}) \in C^{-\infty,V_1}(Y(F) \times V_1)$ is a strictly continuous family of distributions.
}
\end{lem}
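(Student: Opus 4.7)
The plan is to define $p_*(\xi)(f)$ for $f\in C_c^{\infty,V_1}(Y(F)\times V_1)$ by the formula
\[
p_*(\xi)(f) := \xi(\chi \cdot p^*(f)),
\]
where $\chi$ is a smooth cutoff equal to $1$ in a neighborhood of $Z(F)$ whose support is polynomially controlled in the $v_2$-direction. The bulk of the work is to construct $\chi$ so that $\chi\cdot p^*(f)$ is partially Schwartz along $V_1\times V_2$ whenever $f$ is partially Schwartz along $V_1$, and so that the assignment $f\mapsto\chi\cdot p^*(f)$ is continuous.

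Using a smooth partition of unity in the $Y$-direction, I first reduce to the case where $Y$ is an affine open subvariety, so we may speak of polynomials in $y$ as well. The key algebraic input is the hypothesis that $\pi: Z\to Y\times V_1$ is proper: since $Z$ is algebraic, $\pi$ is in fact finite, and eliminating $v_2$ from the ideal of $Z$ produces a polynomial $Q(y,v_1,v_2)$ that is monic in $v_2$ and vanishes on $Z$. Standard bounds for roots of monic polynomials then yield, for each compact $K\subset Y(F)$, a polynomial $P_K$ such that $|v_2|\le P_K(v_1)$ on $Z(F)\cap(K\times V_1\times V_2)$.

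Given $f$ supported in $K\times V_1$, I take $\chi(v_1,v_2):=\rho(|v_2|^2/P_K(v_1)^2)$, where $\rho$ is smooth with $\rho=1$ on $[0,4]$ and $\rho=0$ on $[9,\infty)$. Then $\chi=1$ in a neighborhood of $Z(F)\cap(K\times V_1\times V_2)$, and on $\supp(\chi)$ one has $|v_2|\le 3P_K(v_1)$. A direct computation shows that each derivative of $\chi$ in $v_1$ and $v_2$ is bounded by a polynomial in $v_1$ on the support of $\chi$; hence for any polynomial differential operator $D$ in $(v_1,v_2)$ and any smooth differential operator $D'$ in $y$, the function $D'D(\chi\cdot p^*(f))$ is bounded in terms of the Schwartz seminorms of $f$ in $v_1$ (with shifted indices depending on $P_K$), since Schwartz decay of $f$ in $v_1$ absorbs any polynomial growth coming from $\chi$ and from monomials $v_2^\alpha$. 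This gives that $\chi\cdot p^*(f)\in C_c^{\infty,V_1\times V_2}$ and that the map $f\mapsto\chi\cdot p^*(f)$ is continuous $C_c^{\infty,V_1}\to C_c^{\infty,V_1\times V_2}$. Independence of the definition of $p_*(\xi)(f)$ from the choice of $\chi$ follows from the support hypothesis on $\xi$: any two admissible cutoffs differ by a function vanishing on a neighborhood of $Z(F)$, on which $\xi$ pairs trivially with Schwartz test functions.

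The first assertion then follows: $p_*(\xi)$ extends continuously to $C_c^{\infty,V_1}$, so it is partially Schwartz along $V_1$. For the moreover part, the same formula $p_*(\xi_t)(f)=\xi_t(\chi\cdot p^*(f))$ works. Strict continuity of $\xi_t$ means the map $g\mapsto(t\mapsto\xi_t(g))$ is continuous $C_c^{\infty,V_1\times V_2}\to C(X)$; composing with the continuous assignment $f\mapsto\chi\cdot p^*(f)$ yields the required continuous map $C_c^{\infty,V_1}\to C(X)$, i.e.\ the strict continuity of $p_*(\xi_t)$. The main obstacle is the derivative estimate on $\chi$ in the construction step, but the explicit form $\chi=\rho(|v_2|^2/P_K(v_1)^2)$ reduces this to straightforward bookkeeping, since the growing denominators $P_K$ cancel the growth coming from $v_2$ on $\supp(\chi)$ and all residual polynomial growth in $v_1$ is absorbed by the Schwartz decay of $f$.
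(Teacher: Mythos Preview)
Your proof is correct and follows essentially the same approach as the paper: both use finiteness of the projection to get a polynomial bound $\|v_2\|\le p(y,v_1)$ on $Z(F)$, build a smooth cutoff $\chi=\phi(\|v_2\|/p)$ that equals $1$ near $Z(F)$, and observe that $g\mapsto \chi\cdot p^*(g)$ extends continuously to the partially Schwartz space. The only cosmetic differences are that the paper constructs a single global cutoff using a polynomial $p(y,v_1)$ on affine $Y$ (with $p\ge 1$ built in), whereas you build $\chi$ depending on the compact $Y$-support $K$; you should also ensure $P_K\ge 1$ so that the quotient $|v_2|^2/P_K(v_1)^2$ and its derivatives are well-behaved, but this is a trivial adjustment.
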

\RamiAG{
For the proof we will need the following lemma:
\begin{lem}\label{lem:ex_poly}
Let $Y$ be an affine algebraic manifold and $V$ be a finite dimensional $F$-vector space. Let $Z\subset Y \times V$ be an algebraic subvariety such that the projection of $Z$ to $Y$ is proper (and hence finite). Then there exists a real polynomial $p$ on $Y$ and a norm $\| \cdot \|$ on $V$ such that for any $(y,v) \in Z(F)$, we have {$\max (\| v \|,1) \le p(y)$.}

%
\end{lem}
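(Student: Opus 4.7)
The plan is to reduce the statement to the classical fact that a monic polynomial equation forces a bound on its roots in terms of its coefficients, and then to massage this bound into a polynomial estimate.

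First I would use that the projection $Z\to Y$ is proper with affine fibers (since the fibers sit inside $V$), hence finite. Choose linear coordinates $v_1,\dots,v_n$ on $V$ and consider the regular functions $\bar v_i:=v_i|_Z\in\cO(Z)$. Finiteness of $Z\to Y$ means that $\cO(Z)$ is a finite $\cO(Y)$-module, so each $\bar v_i$ is integral over $\cO(Y)$: there exist regular functions $a_{i,1},\dots,a_{i,d_i}\in\cO(Y)$ such that
\[
\bar v_i^{d_i}+a_{i,1}\bar v_i^{d_i-1}+\cdots +a_{i,d_i}=0.
\]
Evaluating at an $F$-point $(y,v)\in Z(F)$ and applying the standard ``Cauchy bound'' on the roots of a monic polynomial (which holds for any absolute value on $F$, possibly with an extra universal constant depending on whether $F$ is Archimedean), one gets
\[
|v_i|\;\le\; C\bigl(1+\max_j |a_{i,j}(y)|^{1/j}\bigr)
\]
for some constant $C$ independent of $(y,v)$.

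Next I would define the norm $\|v\|:=\max_i |v_i|$ on $V$. Combining the above estimates yields $\|v\|\le C'\bigl(1+\sum_{i,j}|a_{i,j}(y)|^{d_i}\bigr)$, using that $x^{1/j}\le 1+x^{d_i}$ for $x\ge 0$ and $j\le d_i$. The remaining issue is that $|a_{i,j}(y)|$ is not itself a real polynomial in $y$, but it is dominated by one: for $F=\R$ one has $|a(y)|\le 1+a(y)^2$, and for $F=\C$ the \emph{normalized} absolute value is $|a(y)|=a(y)\overline{a(y)}$, already a real polynomial in the real and imaginary coordinates on $Y(F)$. In either case $|a_{i,j}(y)|^{d_i}$ is bounded by a real polynomial in $y$. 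Taking $p(y)$ to be (say) $1$ plus an appropriate constant multiple of the sum of these dominating polynomials gives a real polynomial on $Y$ with $\max(\|v\|,1)\le p(y)$ for every $(y,v)\in Z(F)$.

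The only real issue to watch out for is bookkeeping of constants in the Cauchy bound when $F=\C$ with the normalized (i.e.\ squared) absolute value, since it fails the triangle inequality; but since we only need an estimate up to a multiplicative constant (which can be absorbed into $p$), this causes no difficulty. Hence the main obstacle is essentially notational rather than conceptual, and the statement follows.
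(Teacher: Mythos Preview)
Your proposal is correct and follows essentially the same approach as the paper: both use finiteness of the projection to write each coordinate of $V$ as a root of a monic polynomial with coefficients in $\cO(Y)$, and then invoke the standard root bound. The paper's proof is extremely terse (after writing down the integral equations it simply says ``This easily implies the assertion''), whereas you have spelled out the Cauchy bound and the passage from $|a_{i,j}(y)|$ to a genuine real polynomial on $Y$; these details are exactly what the paper is suppressing.
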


\begin{proof}
Let $z_i$ be the coordinates on $V$. Since the projection of $Z$ to $Y$ is  finite, we can find polynomials $\{a_{ij}\}_{j=1\dots N_i}$ on $Y$ such that $(z_i)^{N_i+1}+\sum_{j=1\dots N_i} a_{ij}(y)(z_i)^j=0$ for all $(y,z)\in Z(F)$. This easily implies the assertion.
\end{proof}

\begin{proof}[Proof of Lemma \ref{lem:dir_is_Sch}]
{We can assume that $Y$ is affine.}
By Lemma \ref{lem:ex_poly}, we can find a real polynomial $p$ on $Y \times V_1$ such that for any $(y,v_1,v_2)\in Z(F)$, we have 
{$\max (\| v \|,1) \le p(y)$.} Let $\phi$ be a smooth  function on $\R$ such that $\phi([-1,1])=1$ and $\phi(\R-[-2,2])=0$. Let $f\in C^\infty(Y \times V_1\times V_2)$ be defined by
$f(y,v_1,v_2)=\phi(\| v_2 \|/p(y,v_1)).$ Let $$pr:X \times V_1 \times V_2 \to X \times V_1$$ be the projection. Define $$pr^*_p:C_c^{\infty}(X \times V_1,D_{X \times V_1})\to C_c^{\infty}(X \times V_1 \times V_2,D_{X \times V_1 \times V_2})$$ by $pr^*_p(g)=pr^*(g)\cdot f$. It is easy to see that $pr^*_p$  can be continuously extended to a map $$C_c^{\infty,V_1}(X \times V_1)\to C_c^{\infty,V_1\times V_2}(X \times V_1 \times V_2)$$   and that for any $g\in C_c^{\infty}(X \times V_1)$ and $\xi \in C_c^{-\infty}(X \times V_1 \times V_2)$, we have:
$$\langle \xi, pr^*_p(f)\rangle=\langle pr_*(\xi) ,f \rangle.$$ This proves the assertion.
\end{proof}

Now we can deduce Lemma \ref{lem:Sch3} from Lemma \ref{lem:dir_is_Sch} and Theorem  \ref{thm:main_C_Arch}\eqref{thm:main_C_Arch:1}.
\begin{proof}[Proof of Lemma \ref{lem:Sch3}]
Let   $\phi'=\phi\times p :X \to Y\times W \times K$ and $\xi':=(\phi'_F)_*(|\omega_F|)$. By Theorem \ref{thm:main_C_Arch}\eqref{thm:main_C_Arch:1} the distribution $\xi'$ is partially Schwartz with respect to $W \times F$. For any $t \in F$, let $\xi'_t :=\xi' \cdot 1_{Y(F) \times W_F} \boxtimes \psi_t$. It is easy to see that $\xi'_t$ is a strictly continuous family of partially Schwartz distributions and $\pr_*(\xi'_t)=\xi_t$. Lemma \ref{lem:dir_is_Sch} now implies the assertion.
\end{proof}

Now let us prove Lemma \ref{lem:mu_t}  in the Archimedean case. The distributions $\mu_t$ and $\mu'$ from Lemma~\ref{lem:mu_t} can be written as
\[
\mu_t=\cF_{W_F}^*(\xi_t) ,\quad \mu'=\cF_{W_F}^*(\eta ) ,
\]
where
\begin{equation}  
\xi_t:=(\phi_F)_*((\psi_t \circ p_F) \cdot |\omega_F|), \quad t\in F,
\end{equation}
\begin{equation}    
\eta:=\cF_F^*((\phi'_F)_*(|\omega_F|)),
\end{equation}
and $\phi':X \to Y\times W \times K$ is defined by $\phi'=\phi\times p$.
By Lemma~\ref{lem:Sch3}, each $\xi_t$ is partially Schwartz along $W_F$ and the family of distributions
$\{\xi_t\}$ is  strictly continuous. So each $\mu_t$ is a well-defined distribution and the family $\{\mu_t\}$ is  continuous. This proves Lemma~\ref{lem:mu_t}(1). It is easy to check that the distribution on
$Y(F)\times W_F \times F$ corresponding to the family $\{\xi_t\}$ equals $\eta$. By strict continuity of
$\{\xi_t\}$, this implies Lemma~\ref{lem:mu_t}(2), which says that the distribution on
$Y(F)\times W^*_F \times F$ corresponding to the family $\{\mu_t\}$ equals $\mu'$.

Theorem \ref{thm:futer} is deduced from Lemma~\ref{lem:mu_t} just as in the non-Archimedean case.
}

\appendix
\section{The wave front set} \label{app:WF}
\setcounter{lemma}{0}
In this section we give an overview of the theory of the wave front set \RamiAH{as developed in \cite{Hor} for the Archimedean case and in \cite{Hef} for the non-Archimedean case.

We will discuss these two cases simultaneously. We will discuss the wave front set of general distributions which are functionals on smooth compactly supported functions. We will use the notations $C^{-\infty}$ and $C_c^{\infty}$ for the spaces of generalized functions and test functions as in \S\S\ref{ssec:arc_dist}. Note that in the non-Archimedean case, there is no difference between Schwartz functions and smooth compactly supported functions, and between general distributions and Schwartz distributions.
}

We explain here the results that  we quote in \S\ref{sssec:wfs}. We give an explicit  reference for some of them and provide proofs for the others.

\begin{defn}\label{def:wf}$ $
\begin{enumerate}
\item
Let $V$ be an $F$-vector space, \RamiAH{with} $\dim V<\infty$.
Let $f \in C^{\infty}(V^*)$ and $w_0 \in V^*$. We  say that $f$ \emph{vanishes asymptotically in the direction of} $w_0$
if there exists $\rho \in \RamiAH{C^\infty_c}(V^*)$ with $\rho(w_0) \neq 0$ such that the function $\phi \in C^\infty(V^* \times F)$ defined by $\phi(w,\lambda):=f(\lambda w) \cdot \rho(w)$ is a Schwartz function.

\item
Let $U \subset V$ be an open set and $\nu \in \RamiAH{C^{-\infty}(U,D_U)}$. Let $x_0 \in U$ and $w_0 \in V^*$.
We say that $\nu$ is \emph{smooth at} $(x_0,w_0)$ if there exists a compactly supported \RamiAH{non-negative} function $\rho \in \RamiAH{C^\infty_c} (V)$ with $\rho(x_0)\neq 0$ such that $\cF^*(\rho \cdot \nu)$ vanishes asymptotically in the direction of
$w_0$.
\item
The complement in $T^*U$
of the set of smooth pairs $(x_0, w_0)$ of $\nu$ is called the
\emph{wave front set of} $\nu$ and denoted by $WF(\nu)$.
\end{enumerate}
\end{defn}

\DrinF{\begin{remark}
Let $WF_H (\nu )$ denote the wave front set defined by L.~H\"ormander \cite[Definition 8.1.2]{Hor} for $F=\R$ and by D.~Heifetz \cite{Hef} for non-\RamiP{Archimedean} fields $F$. Let us explain the relation between
$WF_H (\nu )$ and $WF (\nu )$. First of all, $WF_H (\nu )$ is a subset of $T^*U -(U \times \{ 0\})$
stable under multiplication by $\lambda\in\Lambda$, where $\Lambda \subset F^{\times}$ is
some open subgroup (the definition of $WF_H$ from \cite{Hef} explicitly depends on a choice of
$\Lambda$, H\"ormander always takes $\RamiAS{\Lambda}=\R_{> 0}$)\RamiO{, However it is not necessarily stable under multiplication by
 $F^{\times}$}. Second,
\begin{equation}\label{eq:WF_relation}
WF(\nu)-(U \times \{0\})= F^{\times}\cdot WF_H(\nu).
\end{equation}
To prove \eqref{eq:WF_relation} for $F=\R$, one needs the following observation.
In Definition \ref{def:wf}(2) we require not only the function $\cF^*( \rho \cdot \nu )$ to rapidly decay at $\infty$ but also the same property for $D \cF^*( \rho \cdot \nu )$, where $D$ is any differential operator with constant coefficients. However, it suffices to require the rapid decay of
$\cF^*( \rho \cdot \nu )$ (as in \cite[Definition 8.1.2]{Hor}): the rest follows from
\cite[Lemma 8.1.1]{Hor} combined with the formula $D \cF^*( \rho \cdot \nu )=\cF^*(p\cdot \rho \cdot \nu )$, where $p$  is the polynomial corresponding to $D$.
\end{remark}
}




The following lemma is trivial.
{
\begin{lemma} \label{lem:WF_prop_aff}

Proposition \ref{prop:WF_prop} \RamiAH{\eqref{prop:WF_prop:1}-\eqref{prop:WF_prop:4}}
holds for the case \RamiAH{when} $X\RamiAH{\subset}F^n$ \RamiAH{is an open set} and $\xi \in \RamiAH{C^{-\infty}(X,D_X)}$. Namely:
\begin{enumerate}
\item $P_{T^*(X)}(WF(\xi))=WF(\xi) \cap (X) =\Supp(\xi).$
\item \RamiAS{$WF(\xi) \subset X$ {if and only if }}$\xi$ \RamiE{is smooth}.
\item Let $U \subset X$ be an open set. Then $WF(\xi|_U) = WF(\xi) \cap T^*(U).$
\item Let $\xi' \in \RamiAH{C^{-\infty}(X,D_X)}$ and $f,f' \subset C^\infty(X)$. 
Then
$$WF(f \xi+f' \xi') \subset WF(\xi) \cup
WF(\xi').$$
\end{enumerate}

\end{lemma}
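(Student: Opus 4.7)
All four parts follow by directly unwinding Definition \ref{def:wf}, using only two elementary tools: injectivity of the Fourier transform on compactly supported distributions on $F^n$, and the stability identity asserting that if $\cF^{*}(\rho\xi)$ vanishes asymptotically in direction $w_0$ and $g\in C^\infty_c(X)$, then $\cF^{*}(g\rho\xi)$ does too (since the latter Fourier transform is the convolution $\cF^{*}(g)\ast\cF^{*}(\rho\xi)$, and convolution with a Schwartz function preserves asymptotic decay along any direction; cf.\ \cite[Lemma 8.1.1]{Hor}).

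For (1), I prove the two required inclusions separately. The inclusion $P_{T^{*}X}(WF(\xi))\subset \Supp(\xi)$ is immediate: if $x_0\notin \Supp(\xi)$, choose $\rho\in C^\infty_c(X)$ supported in the open complement of $\Supp(\xi)$ with $\rho(x_0)\neq 0$; then $\rho\xi=0$, so $\cF^{*}(\rho\xi)=0$ trivially vanishes asymptotically in every direction. The inclusion $\Supp(\xi)\subset WF(\xi)\cap X$ is the contrapositive: if $(x_0,0)\notin WF(\xi)$, then there exist $\rho$ and $\rho_2$ with $\rho(x_0)\neq 0\neq \rho_2(0)$ such that $\phi(w,\lambda):=\cF^{*}(\rho\xi)(\lambda w)\rho_2(w)$ is Schwartz on $V^{*}\times F$. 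Evaluating along the parametrized curve $w=v/\lambda$ for a fixed $v\in V^{*}$ and $|\lambda|\to\infty$, the Schwartz decay of $\phi$ forces $\cF^{*}(\rho\xi)(v)\rho_2(v/\lambda)\to 0$; since $\rho_2(v/\lambda)\to\rho_2(0)\neq 0$, we conclude $\cF^{*}(\rho\xi)(v)=0$ for every $v$, so $\rho\xi=0$ by Fourier injectivity, whence $x_0\notin \Supp(\xi)$.

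Part (3) is immediate from the definition: testing smoothness at $(x_0,w)$ amounts to producing a cutoff $\rho$ with $\rho(x_0)\neq 0$ and suitable Fourier decay, and such a cutoff may always be chosen supported inside any prescribed neighborhood of $x_0$; so the smoothness condition for $\xi$ at $(x_0,w)$ coincides with the one for $\xi|_{U}$. Part (4) follows by taking a common cutoff $\tilde\rho:=\rho\rho'$, where $\rho,\rho'$ witness $(x_0,w)\notin WF(\xi)$ and $(x_0,w)\notin WF(\xi')$ respectively; applying the stability identity with $g:=\tilde\rho f$ (and then $g:=\tilde\rho f'$), which are both compactly supported smooth, shows that $\cF^{*}(\tilde\rho f\xi)$ and $\cF^{*}(\tilde\rho f'\xi')$ still vanish asymptotically in direction $w$, and linearity of $\cF^{*}$ yields the same for their sum.

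The main step is the reverse direction of (2). The forward direction is routine: if $\xi$ is smooth then $\rho\xi$ is smooth compactly supported, so $\cF^{*}(\rho\xi)$ is Schwartz and vanishes asymptotically in every direction $w\neq 0$, giving $WF(\xi)\subset X$. Conversely, suppose $WF(\xi)\subset X$; then $\xi$ is smooth at every $(x_0,w)$ with $w\neq 0$, yielding cutoffs $\rho_w$ with $\rho_w(x_0)\neq 0$ whose Fourier transforms against $\xi$ decay in open conic neighborhoods of $w$. By compactness of the space of directions (the unit sphere in $V^{*}$ in the Archimedean case; finiteness of the $F^{\times}$-orbits in any bounded-valuation annulus in the non-Archimedean case), finitely many such conic neighborhoods cover all nonzero directions. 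Setting $\rho:=\prod_i\rho_{w_i}$ and iterating the stability identity, $\cF^{*}(\rho\xi)$ acquires asymptotic decay in every direction and is therefore Schwartz; hence $\rho\xi$ is smooth compactly supported and $\xi$ is smooth near $x_0$. This compactness-to-uniform-decay step is the only nonformal ingredient of the lemma.
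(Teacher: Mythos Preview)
Your proof is correct; the paper itself does not give a proof at all, merely declaring the lemma ``trivial'' immediately before its statement. What you have written is a reasonable unpacking of that assertion, and the only non-formal ingredient you use (the stability of asymptotic vanishing under multiplication by a compactly supported smooth function, via convolution on the Fourier side) is exactly the content of Lemma~\ref{lem:WF_mon} in the appendix, which the paper also treats as essentially obvious (citing \cite[Lemma 8.1.1]{Hor} for the Archimedean case). Your compactness argument for the nontrivial direction of (2) and your curve argument $w=v/\lambda$ for (1) are standard and fine.
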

}

%

\begin{corollary}
For any locally constant sheaf \RamiAH{(or, in the Archimedean case, a vector bundle)}  $E$ on \RamiAH{$X$,} we can define the wave
front set of any element in $\RamiAH{C^{-\infty}}(\RamiAH{X},E)$.
{Moreover, the last lemma (Lemma \ref{lem:WF_prop_aff}) will hold in this case, too.}
\end{corollary}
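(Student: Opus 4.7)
The plan is to reduce to the scalar case (Lemma \ref{lem:WF_prop_aff}) by working in local trivializations of $E$. Cover $X$ by open sets $U_\alpha$ over which $E$ admits a trivialization $\tau_\alpha : E|_{U_\alpha} \xrightarrow{\sim} U_\alpha \times \C^r$. Via $\tau_\alpha$, any generalized section $\xi \in C^{-\infty}(X,E)$ restricts to an $r$-tuple $(\xi_\alpha^1,\ldots,\xi_\alpha^r)$ of scalar generalized functions on $U_\alpha$. First, I would set, as a candidate definition,
\[
WF(\xi) \cap T^*U_\alpha \;:=\; \bigcup_{j=1}^{r} WF(\xi_\alpha^j) \;\subset\; T^*U_\alpha.
\]

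The main step is to check independence of the trivialization. On an overlap $U_\alpha \cap U_\beta$ the transition function is an invertible matrix $g_{\alpha\beta} = (g_{\alpha\beta}^{jk})$ whose entries are locally constant (in the non-Archimedean case) or smooth (in the Archimedean case). Writing $\xi_\beta^j = \sum_k g_{\alpha\beta}^{jk}\xi_\alpha^k$ and invoking part (4) of Lemma \ref{lem:WF_prop_aff}, together with part (2) applied to the (smooth) matrix entries, gives $\bigcup_j WF(\xi_\beta^j) \subset \bigcup_k WF(\xi_\alpha^k)$; by symmetry the opposite inclusion holds as well, so the two prescriptions agree on $U_\alpha \cap U_\beta$. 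Part (3) of the lemma then shows the local definitions glue to a well-defined closed subset $WF(\xi) \subset T^*X$.

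Once the definition is in place, each of the four properties extends directly. Properties (1) and (2) are local questions about the support and smoothness of $\xi$, both of which are detected trivialization-by-trivialization by the tuple $(\xi_\alpha^j)_j$; apply the scalar version on each $U_\alpha$. Property (3) is immediate from the construction, since restriction to an open subset commutes with taking a trivialization. For property (4), given $\xi, \xi' \in C^{-\infty}(X,E)$ and $f,f' \in C^\infty(X)$, trivialize and apply the scalar inequality componentwise: $WF(f\xi_\alpha^j + f'{\xi'_\alpha}^{\,j}) \subset WF(\xi_\alpha^j) \cup WF({\xi'_\alpha}^{\,j})$, and take the union over $j$.

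The only nontrivial point is the transition-function step, and even there the argument is not really an obstacle: the key input is simply that multiplication by a locally constant (resp.\ smooth) scalar function does not enlarge the wave front set, which is exactly property (4) (combined with property (2) to absorb the smooth cofactor) of the already-established scalar lemma. No new analytic input beyond Lemma \ref{lem:WF_prop_aff} is required.
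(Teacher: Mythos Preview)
Your argument is correct and is exactly the standard reduction the paper has in mind; the paper states this as a corollary without proof, relying implicitly on the same local-trivialization argument you spell out. One tiny remark: you do not actually need to invoke part (2) in the transition-function step, since part (4) alone (with $f'=0$) already gives $WF(g_{\alpha\beta}^{jk}\xi_\alpha^k)\subset WF(\xi_\alpha^k)$, and then part (4) again handles the sum.
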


\begin{proposition}
[\RamiAH{see \cite[Theorem 8.2.4]{Hor} and  \cite[Theorem 2.8.]{Hef}}]
\label{submrtion}
Let $U \subset F^m$ and $V \subset F^n$ be open subsets, and
suppose that  $f: U \to V$ is an analytic submersion. Then for any
$\xi \in \G(V)$, we have $WF(f^*(\xi)) \subset f^*(WF(\xi))$.
\end{proposition}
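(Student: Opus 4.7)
The plan is to exploit the local structure of a submersion together with the factorizability of Fourier transform on a product. By the implicit function theorem, every point of $U$ has an open neighborhood on which $f$ is conjugated by an analytic isomorphism to the first-factor projection $\pi : V' \times W \to V'$, where $V' \subset V$ and $W \subset F^{m-n}$ are open. Since both sides of the desired inclusion are local in $U$ (the left side by Lemma~\ref{lem:WF_prop_aff}(3), the right side by its pointwise definition) and the statement is trivial for an analytic isomorphism --- it amounts to a change of variables in the defining Fourier integral and is contained in \cite[Theorem~8.2.4]{Hor} in the Archimedean case and \cite[Theorem~2.8]{Hef} in the non-Archimedean case --- the proposition reduces to the case $f = \pi$.

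For $f = \pi$ the codifferential sends $\alpha$ to $(\alpha, 0)$, so Definition~\ref{def:dir_and_inv_image}(\ref{def:dir_and_inv_image:2}) gives
\[
\pi^*(WF(\xi)) = \{((v,w),(\alpha,0)) : (v,\alpha) \in WF(\xi)\}.
\]
Since $\pi^*(\xi) = \xi \boxtimes 1$ and the assertion is local in $(v_0,w_0)$, I would work with a product cutoff $\rho(v,w) = \rho_1(v)\rho_2(w)$ and compute
\[
\mathcal{F}^*(\rho\cdot\pi^*(\xi))(\alpha,\beta) = \mathcal{F}^*(\rho_1\xi)(\alpha)\cdot\widehat{\rho_2}(\beta),
\]
where $\widehat{\rho_2}$ is Schwartz on $W^*$ and $\mathcal{F}^*(\rho_1\xi)$ is smooth on ${V'}^*$ with polynomial growth (Paley--Wiener for the compactly supported distribution $\rho_1\xi$). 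To check that a point $((v_0,w_0),(\alpha_0,\beta_0))$ with either $\beta_0 \neq 0$ or $(v_0,\alpha_0) \notin WF(\xi)$ lies outside $WF(\pi^*(\xi))$, I would choose $\rho_1$ arbitrary nonzero at $v_0$ in the first subcase, or realizing smoothness of $\xi$ at $(v_0,\alpha_0)$ in the second subcase, combined with any $\rho_2 \in C^\infty_c(W)$ nonzero at $w_0$.

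The main obstacle is producing asymptotic vanishing in the sense of Definition~\ref{def:wf}(1), which demands rapid decay not merely along the ray $\lambda(\alpha_0,\beta_0)$ but uniformly in a conic neighborhood of this direction and jointly in all derivatives. In the first subcase this uniformity follows from the Schwartz estimate $|\widehat{\rho_2}(\lambda \beta)| \le C_N (1+|\lambda|)^{-N}$ holding uniformly for $\beta$ in a small compact set not containing $0$, which dominates the polynomial growth of $\mathcal{F}^*(\rho_1\xi)(\lambda\alpha)$ regardless of $\rho_1$. In the second subcase the required uniformity is built into the definition of $WF(\xi)$, and tensoring with the Schwartz factor $\widehat{\rho_2}$ preserves it. Both subcases run identically in the Archimedean and non-Archimedean settings, the only difference being the precise technical meaning of ``Schwartz'' in the asymptotic condition.
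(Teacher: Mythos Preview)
The paper does not supply its own proof of this proposition; it simply cites \cite[Theorem~8.2.4]{Hor} and \cite[Theorem~2.8]{Hef} in the proposition's header and moves on to the corollaries. Your argument is correct and is essentially the standard one underlying those references: localize via the implicit function theorem to reduce to a projection, then exploit the product factorization of the Fourier transform with a split cutoff. The two subcases you identify (nonzero fiber direction $\beta_0\neq 0$ handled by Schwartz decay of $\widehat{\rho_2}$ dominating polynomial growth, and $(v_0,\alpha_0)\notin WF(\xi)$ handled by choosing $\rho_1$ to witness this) exhaust the complement of $\pi^*(WF(\xi))$, and your treatment of the uniformity required by Definition~\ref{def:wf}(1) is accurate in both the Archimedean and non-Archimedean settings.
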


\begin{corollary} \label{iso}
Let $V, U \subset F^n$ be open subsets and $f: V \to U$ be an
analytic isomorphism. Then for any $\xi \in \G(V)$, we have
$WF(f^*(\xi)) = f^*(WF(\xi))$.
\end{corollary}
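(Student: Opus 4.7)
The plan is to deduce this corollary from Proposition~\ref{submrtion} by applying it twice: once to $f$ and once to the inverse map $g := f^{-1} : U \to V$. Since $f$ is an analytic isomorphism, both $f$ and $g$ are (in particular) analytic submersions, so Proposition~\ref{submrtion} applies to each of them.

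First, applying Proposition~\ref{submrtion} directly to $f$ and $\xi \in \G(V)$ yields one inclusion:
\[
WF(f^*(\xi)) \subset f^*(WF(\xi)).
\]
For the reverse inclusion, I would apply Proposition~\ref{submrtion} to the map $g$ and the distribution $f^*(\xi) \in \G(V)$, obtaining
\[
WF(g^*(f^*(\xi))) \subset g^*(WF(f^*(\xi))).
\]
Since $f \circ g = \mathrm{id}_U$, one has $g^*(f^*(\xi)) = \xi$ as distributions, so the left-hand side is just $WF(\xi)$.

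The final step is to apply $f^*$ (as an operation on subsets of cotangent bundles, as in Definition~\ref{def:dir_and_inv_image}) to both sides. The key observation is that because $f$ is an isomorphism, the cotangent lifts $(df)^*$ and $(dg)^*$ are mutually inverse fiberwise linear isomorphisms, so $f^* \circ g^* = \mathrm{id}$ on subsets of $T^*(V)$. Hence
\[
f^*(WF(\xi)) \subset f^*(g^*(WF(f^*(\xi)))) = WF(f^*(\xi)),
\]
which combined with the first inclusion yields the desired equality. The argument is entirely formal once Proposition~\ref{submrtion} is in hand; the only point requiring any care is checking that $f^* \circ g^* = \mathrm{id}$ at the level of subsets, which I expect to be the main (though essentially trivial) bookkeeping step.
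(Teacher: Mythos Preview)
Your proof is correct and is exactly the argument the paper has in mind: the corollary is stated without proof, being an immediate consequence of applying Proposition~\ref{submrtion} to both $f$ and $f^{-1}$ together with the elementary fact that $f^*\circ g^* = \mathrm{id}$ on subsets of the cotangent bundle. (Note a harmless typo in the statement, which you inherit: it should read $\xi \in \G(U)$ rather than $\G(V)$, consistent with $f^*(\xi)\in\G(V)$.)
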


\begin{corollary}\label{cor:bundle}
Let $X$ be an analytic manifold, $E$ be a locally constant sheaf \RamiAH{(or, in the Archimedean case, a vector bundle)}
on $X$ . We can define the wave front set of any element in
$\Sc^*(X,E)$ and $\G(X,E)$. Moreover, {Lemma \ref{lem:WF_prop_aff} and Proposition \ref{submrtion} hold for this case.}
\end{corollary}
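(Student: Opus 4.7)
The plan is to patch together the affine definition using charts and trivializations, and then verify the listed properties locally. First I would cover $X$ by an atlas of charts $\phi_i : U_i \to V_i \subset F^n$ such that $E|_{U_i}$ is trivial, choose trivializations $E|_{U_i} \simeq U_i \times \C^r$, and transport $\xi|_{U_i}$ to a tuple of $r$ distributions on $V_i$ via $\phi_i$. For each component I already have a wave front set by Lemma~\ref{lem:WF_prop_aff} extended to sheaves on open subsets of $F^n$, and the wave front set of a tuple is by definition the union of the wave front sets of the components (this is forced by property~(4) of Lemma~\ref{lem:WF_prop_aff}). Then I would define $\WF(\xi) \cap T^*U_i \subset T^*V_i$ to be this union, pulled back by $d\phi_i$.

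The central step is well-definedness: on an overlap $U_i \cap U_j$, the two candidate definitions of $\WF(\xi)|_{U_i \cap U_j}$ must agree. This follows from two inputs. First, any two trivializations of $E$ on a common open set differ by a locally constant invertible matrix-valued function, so passing between them preserves the wave front set by property~(4) of Lemma~\ref{lem:WF_prop_aff}. Second, the change of charts $\phi_j \circ \phi_i^{-1}$ is an analytic isomorphism between open subsets of $F^n$, so Corollary~\ref{iso} guarantees that the wave front set transforms correctly under it. Together these give a canonically defined closed subset $\WF(\xi) \subset T^*X$ independent of the atlas and the trivializations.

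Once well-definedness is established, each property reduces to its affine counterpart by restricting to a chart. Properties (1)--(4) of Lemma~\ref{lem:WF_prop_aff} are local on $X$, so they follow immediately from the already-known affine case. For Proposition~\ref{submrtion}, given an analytic submersion $f : X \to Y$ of manifolds and $\xi \in \G(Y, E)$, I would pick a point $x \in X$, choose local coordinates on $X$ and $Y$ and a local trivialization of $E$ near $f(x)$ so that in these coordinates $f$ becomes a linear projection $F^m \to F^n$, and then apply the affine version of Proposition~\ref{submrtion} componentwise. Since both the definition of $\WF$ and the operations $f^*$ and the formation of $f^*(\WF(\xi))$ are compatible with the chart and trivialization changes (by the well-definedness discussion above), the affine inclusion $\WF(f^*\xi) \subset f^*(\WF(\xi))$ globalizes.

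The only mildly delicate point is the reduction of the pullback step: one must ensure that $f^*$ on generalized sections commutes with restriction to charts and with the trivialization identifications, which it does tautologically from its definition in \S\S\S\ref{sssec:op_dist}. I do not expect any genuine obstacle beyond bookkeeping, since all the analytic content sits in Lemma~\ref{lem:WF_prop_aff}, Proposition~\ref{submrtion}, and Corollary~\ref{iso}, which are already in place.
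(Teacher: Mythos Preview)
Your proposal is correct and follows exactly the intended approach: the paper states Corollary~\ref{cor:bundle} as an immediate consequence of Corollary~\ref{iso} without writing out the details, and the patching-by-charts argument you give is precisely what is meant. One small terminological slip: in the Archimedean case $E$ is a smooth vector bundle, so the transition maps between trivializations are smooth rather than locally constant; but property~(4) of Lemma~\ref{lem:WF_prop_aff} is stated for $f,f'\in C^\infty(X)$, so your argument goes through unchanged.
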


%

\begin{proposition}
Proposition \ref{prop:WF_prop} \RamiAH{\eqref{it:G_act}}  holds.

\RamiAH{Namely,
  let} $G$ be an analytic group acting on an analytic manifold $X$ and a locally constant sheaf \RamiAH{(or, in the Archimedean case, a vector bundle)}   $E$ over it. Suppose $\xi \in \RamiAH{C^{-\infty}}(X,E)$ is
$G$-{invariant}. Then $$ WF(\xi) \subset \{(x,v) \in T^*X(F)|v(\g x)=0\}$$
where $\g$ is the Lie algebra of $G$.
\end{proposition}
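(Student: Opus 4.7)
The plan is to prove the contrapositive: I fix $(x_0, v_0)\in T^*X(F)$ with $v_0(\alpha(x_0))\neq 0$ for some $\alpha\in\g$, and show that $(x_0, v_0) \notin \WF(\xi)$. Since the wave front set is local (Proposition~\ref{prop:WF_prop}\eqref{prop:WF_prop:3}), I may freely shrink $X$ to a small neighborhood of $x_0$ throughout.

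First I straighten the vector field induced by $\alpha$. Since $\alpha(x_0)\neq 0$, I pick an analytic submanifold $T\subset X$ of codimension one through $x_0$ transversal to $\alpha(x_0)$. The $G$-action provides a one-parameter subgroup $\{\exp(t\alpha)\}$ acting on $X$; the map $\Phi:(t,y)\mapsto \exp(t\alpha)\cdot y$ from a neighborhood of $(0,x_0)$ in $F\times T$ into $X$ is analytic (this uses only the local analytic structure of $G$ and the action, which is available uniformly over any local field of characteristic zero) and its differential at $(0,x_0)$ is an isomorphism by the transversality of $T$. Hence $\Phi$ produces analytic coordinates $(y_1,\ldots,y_n)$ on a neighborhood $U$ of $x_0$ in which $\alpha=\partial/\partial y_1$ and $T=\{y_1=0\}$.

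Next I build an analytic trivialization of $E|_U$ equivariant under $\{\exp(t\alpha)\}$. Fix any analytic trivialization of $E|_T$ with basis sections $e_1,\ldots,e_k$ and extend it to $U$ by setting $e_i(\exp(t\alpha)\cdot y):=\exp(t\alpha)\cdot e_i(y)$ for $y\in T$, using the given $G$-action on $E$. In this trivialization $\exp(s\alpha)$ acts on local sections simply by translating the $y_1$-coordinate by $s$. The hypothesis of $G$-invariance of $\xi$, restricted to the subgroup $\{\exp(s\alpha)\}$, therefore becomes invariance of $\xi|_U$ under translation in $y_1$; by a standard argument this forces $\xi|_U=p^*\eta$ for the projection $p:U\to T$ along the $y_1$-direction and some $\eta\in\G(T,E|_T)$.

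Finally, Proposition~\ref{prop:WF_prop2}\eqref{prop:WF_prop2:2} applied to the submersion $p$ gives $\WF(\xi|_U)\subset p^*(\WF(\eta))$. By Definition~\ref{def:dir_and_inv_image}\eqref{def:dir_and_inv_image:2}, every element of $p^*(\WF(\eta))$ is a covector lying in the image of $(dp_y)^*$, i.e.\ vanishing on the fiber tangent direction $F\cdot\alpha(y)$. Since $v_0(\alpha(x_0))\neq 0$, the covector $v_0$ is not of this form, and so $(x_0,v_0)\notin\WF(\xi)$, as required. The main technical point is the construction of the analytic equivariant trivialization and the passage from $\{\exp(s\alpha)\}$-invariance of $\xi$ to the pullback statement, both of which rest on analytic ODE theory over an arbitrary local field of characteristic zero; with these in hand, the conclusion follows immediately from the quoted properties of the wave front set.
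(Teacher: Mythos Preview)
Your proof is correct. The paper itself does not give an argument here: it simply cites \cite[Theorem~4.1.5]{Aiz} for the non-Archimedean case and remarks that the same proof works in the Archimedean case. Your direct argument---straightening the vector field $\alpha$ via the local flow, choosing an equivariant trivialization of $E$, reducing $G$-invariance to translation-invariance in the first coordinate, and then invoking the pullback bound $WF(p^*\eta)\subset p^*(WF(\eta))$ from Proposition~\ref{prop:WF_prop2}\eqref{prop:WF_prop2:2}---is the natural proof and almost certainly what the cited reference does.

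Two small remarks. First, in the non-Archimedean case the exponential $\exp(t\alpha)$ is only defined for $t$ in an open additive subgroup $U_0\subset F$; you implicitly use this when you write ``local analytic structure'', but it is worth saying that one chooses the coordinate patch to have the form $U_0\times T_0$ with $U_0$ a compact open subgroup of $(F,+)$, so that translation by any $s\in U_0$ stays inside the patch. Second, the ``standard argument'' that translation-invariance forces $\xi|_U=p^*\eta$ is slightly different in the two cases: over $\R$ or $\C$ one uses $\partial_{y_1}\xi=0$, while over a non-Archimedean field one averages over the compact open subgroup $U_0$. Both are routine, but since the paper treats the two cases uniformly it would not hurt to say one sentence about each.
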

\RamiAH{
\begin{proof}
In the non-Archimedean case, this is Theorem 4.1.5  of \cite{Aiz}.
In the  Archimedean case, the same proof works.
\end{proof}
}
The following  proposition is {essentially} proved in \cite{Gab} for the Archimedean case.
 We include its proof
here for completeness.
\begin{proposition} \label{prop:wf_push}
Let $p:X \to Y$ be {an analytic map}
and \RamiL{$E$} be a locally constant sheaf \RamiAH{(or, in the Archimedean case, a vector bundle)}  over \RamiK{$Y$}.
 Let $\xi \in \RamiAH{C^{-\infty}(X,p^{\RamiAN{!}}(E))}$\RamiAN{, where $p^{!}$ is the pullback twisted by relative densities (see \S\S\S\ref{sssec:op_dist}\eqref{it:dist_notstd2})}. 
Assume $p|_{\supp (\xi)}$ is proper.
Then
 $$\WF(p_*(\xi)) \subset p_*(\WF(\xi)) $$
\end{proposition}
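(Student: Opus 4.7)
The plan is to argue by the contrapositive: fix $(y_0,\eta_0)\notin p_*(\WF\xi)$ and produce a cutoff $\rho\in C_c^{\infty}(Y)$ near $y_0$ and an open cone $\Gamma\ni\eta_0$ on which the partial Fourier transform of $\rho\cdot p_*\xi$ decays rapidly. Since the statement is local on $Y$, I would first reduce to the case where $X\subset F^n$ and $Y\subset F^m$ are open and $E$ is trivial; the general case then follows by Corollary~\ref{cor:bundle}. Properness of $p|_{\supp\xi}$ makes $K:=p^{-1}(y_0)\cap\supp\xi$ compact, and by hypothesis, for every $x\in K$ we have $(x,(d_xp)^*\eta_0)\notin\WF(\xi)$.

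Next, I would extract cone/cutoff data. For each $x\in K$, Definition~\ref{def:wf} gives $\phi_x\in C_c^\infty(X)$ with $\phi_x(x)\neq 0$ and an open conic neighborhood $\Gamma_x$ of $(d_xp)^*\eta_0$ on which $\mathcal{F}^*(\phi_x\xi)$ decays rapidly. By compactness of $K$, finitely many $x_1,\dots,x_N\in K$ suffice. Joint continuity of $(x,\eta)\mapsto(d_xp)^*\eta$ together with the openness of the $\Gamma_{x_j}$ allows me to shrink the supports of the $\phi_{x_j}$ to small neighborhoods $U_j\ni x_j$ and pick a single open cone $\Gamma\ni\eta_0$ such that $(d_xp)^*\eta\in\Gamma_{x_j}$ for all $x\in U_j$ and $\eta\in\Gamma$. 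By properness, I can choose $\rho\in C_c^\infty(Y)$ near $y_0$ with $p^{-1}(\supp\rho)\cap\supp\xi\subset\bigcup U_j$, and then take a partition of unity $\{\chi_j\}$ of $p^*\rho$ subordinate to $\{U_j\}$, so that for $\eta\in\Gamma$,
$$\mathcal{F}^*(\rho\cdot p_*\xi)(\eta)=\langle\xi,p^*\rho\cdot \psi(\langle\eta,p(\cdot)\rangle)\rangle=\sum_{j=1}^N\langle\chi_j\xi,\psi(\langle\eta,p(\cdot)\rangle)\rangle,$$
using the properness to justify the pushforward-pullback identity.

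The key step is then to bound each term. For each $j$, I would Taylor-expand the phase on $U_j$:
$$\langle\eta,p(x)\rangle=\langle\eta,p(x_j)\rangle+\langle(d_{x_j}p)^*\eta,x-x_j\rangle+r_j(x,\eta),$$
where $r_j$ vanishes to second order at $x=x_j$; in the non-Archimedean case $r_j$ is in fact locally constant and $\psi\circ r_j$ becomes a locally constant factor after shrinking $U_j$, while in the Archimedean case, shrinking $U_j$ makes $\psi(r_j(\cdot,\eta))$ a smooth function whose $C^\infty$ seminorms on $U_j$ are bounded uniformly in $\eta\in\Gamma$. Absorbing this factor and the constant $\psi(\langle\eta,p(x_j)\rangle)$, each term becomes (up to a bounded factor) $\mathcal{F}^*(\tilde\chi_j(\cdot,\eta)\xi)((d_{x_j}p)^*\eta)$, where $\tilde\chi_j(\cdot,\eta)$ is a family of test functions supported in $U_j$ with uniformly bounded seminorms. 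Since $(d_{x_j}p)^*\eta\in\Gamma_{x_j}$, rapid decay of $\mathcal{F}^*(\phi_{x_j}\xi)$ on $\Gamma_{x_j}$ combined with the standard fact that this decay is preserved under multiplication of $\phi_{x_j}$ by a smooth function with $U_j\subset\{\phi_{x_j}\neq 0\}$ yields the desired rapid decay of each summand, hence of $\mathcal{F}^*(\rho\cdot p_*\xi)$ on $\Gamma$.

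The main obstacle will be the uniform-in-$\eta$ control of the smooth multiplier $\psi(r_j(\cdot,\eta))$ and the passage from decay of $\mathcal{F}^*(\phi_{x_j}\xi)$ to decay of $\mathcal{F}^*(\tilde\chi_j(\cdot,\eta)\xi)$ on a slightly smaller cone. In the non-Archimedean case both issues are trivial since the characters and derivatives involved are locally constant, so the proof there is essentially combinatorial; in the Archimedean case the uniform bound on seminorms of $\psi(r_j(\cdot,\eta))$ for $\eta$ in a bounded subset of $\Gamma$ follows from the smoothness of $p$, and combined with the classical fact (used already by Hörmander) that multiplication by a fixed smooth compactly supported function preserves the class of distributions with rapidly decaying Fourier transform on a closed subcone of $\Gamma_{x_j}$, the argument goes through.
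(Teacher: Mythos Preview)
Your approach differs substantially from the paper's, which factors $p$ as a graph embedding followed by a projection. In that decomposition the phase $\langle\eta,p(x)\rangle$ is exactly linear in each piece: for the projection $pr:Y\times D\to Y$ it is $\langle\eta,y\rangle$, and for the closed embedding the Fourier transform can be computed explicitly. Your direct argument keeps the nonlinear phase and Taylor-expands it, which forces you to control the quadratic remainder $r_j(x,\eta)$.

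This is where there is a genuine gap. In the Archimedean case you assert that the $C^\infty$ seminorms of $\psi(r_j(\cdot,\eta))$ on $U_j$ are \emph{uniformly bounded in} $\eta\in\Gamma$. That is false: $r_j$ is linear in $\eta$, and while $\partial_{x} r_j$ can be made small near $x_j$, the second and higher $x$-derivatives of $r_j$ are of size $|\eta|$ regardless of how much you shrink $U_j$ (they come from the Hessian of $p$). Hence the $k$-th $C^k$ seminorm of $\psi(r_j(\cdot,\eta))$ grows like $|\eta|^k$. Your final paragraph then invokes H\"ormander's lemma on multiplication by a \emph{fixed} cutoff, but your multiplier $\tilde\chi_j(\cdot,\eta)$ depends on $\eta$; Lemma~\ref{lem:WF_mon} does not apply as stated. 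The argument can be salvaged---polynomial growth of the seminorms is absorbed by rapid decay once you redo the convolution estimate with an explicit $|\eta|^M$ loss---but you have not carried this out, and it is not as trivial as you suggest.

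The non-Archimedean claim is also wrong: $\psi(r_j(\cdot,\eta))$ is locally constant in $x$ for each fixed $\eta$, but the scale of local constancy shrinks as $|\eta|\to\infty$ (since $|r_j(x,\eta)|\le C|x-x_j|^2|\eta|$), so it is not a single locally constant factor on $U_j$ uniformly in $\eta$. The paper's factorization through the graph is precisely the device that sidesteps this nonlinear-phase difficulty in both cases.
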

\RamiAH{
For the proof we will need the following lemma:
\begin{lemma}\label{lem:WF_mon}
Let $V$ be an $F$-vector space, with $\dim V<\infty$. Let $\xi \in C^{-\infty}(V,D_V)$ be a compactly supported distribution. Assume $\cF(\xi)$ vanishes asymptotically along $v \in V^*$. Then for any $\rho \in C_c^{\infty}(V)$, the function $\cF(\rho \cdot \xi)$ vanishes asymptotically along $v$.
\end{lemma}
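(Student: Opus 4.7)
The plan is to use the convolution identity $\cF(\rho\cdot\xi) = c \cdot \cF(\rho) * \cF(\xi)$ (with $c$ a normalization constant), which reduces the problem to showing that convolution with $\cF(\rho)$ preserves asymptotic vanishing in the direction of $v$. Note that $\cF(\rho)$ is Schwartz in the Archimedean case and smooth with compact support in the non-Archimedean case; in either case it enjoys rapid decay at infinity, while $\cF(\xi)$ is a smooth function (since $\xi$ has compact support) of at most polynomial growth.

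First, I would let $\rho' \in C^\infty_c(V^*)$ be the cutoff provided by the hypothesis, so that $g(w,\lambda) := \rho'(w)\cF(\xi)(\lambda w)$ is Schwartz on $V^*\times F$, and choose $\tau\in C^\infty_c(V^*)$ with $\tau(v)\neq 0$, supported in a small enough neighborhood $U$ of $v$ that $|\rho'|$ is bounded below on $U+B$ for some compact neighborhood $B$ of $0$. After the substitution $u=\lambda s$, one has
$$
\tau(w)\,\cF(\rho\xi)(\lambda w) = c\,|\lambda|^{\dim V}\,\tau(w)\int \cF(\rho)(\lambda s)\,\cF(\xi)(\lambda(w-s))\,ds.
$$
I would then split the integral via a smooth cutoff $\chi$ supported in $B$ and equal to $1$ near $0$. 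On the short-range piece $\chi(s)\neq 0$, the point $w-s$ lies where $\rho'$ is bounded below, so we may rewrite $\cF(\xi)(\lambda(w-s))=\rho'(w-s)^{-1} g(w-s,\lambda)$ with $g$ Schwartz; since $\tau(w)\chi(s)/\rho'(w-s)$ is smooth and compactly supported in $(w,s)$, this contribution is Schwartz in $(w,\lambda)$. On the long-range piece $1-\chi(s)\neq 0$, either $\cF(\rho)(\lambda s)\equiv 0$ for $|\lambda|$ large (non-Archimedean case, by compactness of $\supp \cF(\rho)$) or $\cF(\rho)(\lambda s)$ decays faster than any polynomial in $|\lambda s|$ whereas $\cF(\xi)$ has polynomial growth (Archimedean case); either way this contribution is Schwartz as well.

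The main obstacle is verifying the Schwartz estimates on the short-range piece uniformly in $(w,\lambda)$: one must bring derivatives in $w$ and $\lambda$ inside the integral and control the joint decay of the integrand, combining the Schwartz property of $g(w-s,\lambda)$ with the estimates on $\cF(\rho)(\lambda s)$. The non-Archimedean case is substantially easier, since $\cF(\rho)$ has compact support and, for $|\lambda|$ large, the long-range term vanishes identically, reducing the argument essentially to a finite sum of translates of the hypothesis; the Archimedean case is the substantive one.
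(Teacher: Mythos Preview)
Your approach is correct and is essentially the standard proof of H\"ormander's Lemma 8.1.1. The paper itself does not give an independent argument: it simply cites \cite[Lemma 8.1.1]{Hor} for the Archimedean case and declares the non-Archimedean case obvious. So you are not taking a different route from the paper; you are supplying the proof that the paper outsources.

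A couple of minor remarks on presentation. First, when you pick $U$ and $B$ so that $|\rho'|$ is bounded below on $U+B$, note that the relevant set is actually $U-B$ (since $w\in U$, $s\in B$, and you evaluate at $w-s$); of course one may take $B$ symmetric, but it is worth being precise. Second, in the Archimedean long-range estimate you should make explicit that for $w$ in $\supp\tau$ and $|s|\ge c$ one has $|w-s|\le C|s|$, so that the polynomial growth $(1+|\lambda(w-s)|)^M$ of $\cF(\xi)$ is dominated by the rapid decay $(1+|\lambda s|)^{-N}$ of $\cF(\rho)$; after the substitution $t=\lambda s$ the tail integral $\int_{|t|\ge c|\lambda|}(1+|t|)^{M-N}\,dt$ then gives the required decay in $|\lambda|$ once $N$ is large enough. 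These are exactly the details behind H\"ormander's lemma, and your sketch captures them.
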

\begin{proof}
In the \RamiAS{Archimedean} case, this is  Lemma 8.1.1 from \cite{Hor}. In the non-\RamiAS{Archimedean} case, it is obvious.
\end{proof}
}

\begin{proof}[\RamiAH{Proof of Proposition \ref{prop:wf_push}}]
$ $
\begin{enumerate}[{Case} 1.]
 \item  $p$ is a submersion.\\
Without loss of generality, we may assume \RamiAH{that $E=D_Y$,} 
$Y=F^k$ and $X=Y \times D$, where $D \subset F^n$ is a standard
\RamiAH{open}
 poly-disk, \RamiAH{$\supp(\xi) \subset  Y \times D'$ where  $D' \subset D$ is a closed poly-disk} and $p$ is the projection. Let $y \in Y$. Let $v \in T^*_y{Y}$ be such that for any $x \in D$, we have $((y,x),(d_{(y,x)}(p))^*(v)) \notin \WF(\xi)$. We have to show that $p_*(\xi)$ is smooth at $(x,v)$.

By the definition of $WF(\xi)$, we can find for any $x \in \RamiAH{D}$  \RamiAH{a non-negative function $f_x\in C_c^\infty(X)$ such that $f_x((y,x))\neq 0$ and $\cF(f_x \cdot \xi)$ vanishing asymptotically in the direction of $(0,v)$.
{So} we can construct a non-negative function $f\in C_c^\infty(X)$ such that $f$ does not vanish on
{$p^{-1}(y)\cap (Y\times D')$} and $\cF(f \cdot \xi)$ vanishes asymptotically in the direction of $(0,v)$. By Lemma \ref{lem:WF_mon}, we may assume that {$f$ has the form $f(y',x')=g(y')$ for
some $g \in C_c^\infty(Y)$}. This implies that $\cF(g \cdot p_*(\xi))$ vanishes asymptotically in the direction of $v$.
}

%



\item  $p$ is a closed embedding.\\
Without loss of generality, we may \RamiM{assume  $E$} is trivial,
 $X=D^k$ \RamiM{and} $Y =X \times D^n$, where $D \subset F$ is a  disk and $p$ is the standard embedding.
In this case the assertion is obvious.

\item  The general case.\\
It follows from the previous cases by decomposing $p=pr \circ \gamma$, where $\gamma:X \to X \times Y$ is the graph embedding
and $pr:X \times Y \to Y$ is the projection.

\end{enumerate}
\end{proof}
\subsection{\RamiAM{Pullback of distributions}}
\RamiAM{In order to discuss pullback of
distributions under general maps,}%
\RamiAH{ we need to define the topology on the space $C_\Gamma^{-\infty}(X,E)$ of generalized sections whose wave front set is included in $\Gamma \subset T^*(X)$.
\begin{defn}
We will define the topology in terms of converging sequences rather than open sets, but one can easily modify this definition in order to get an actual definition of topology.
 Let us first define some auxiliary topologies on some related spaces.
\begin{enumerate}
 \item In the Archimedean case, the space of Schwartz functions on a vector space $V$ is equipped with a well known \Fre topology. In the non-Archimedean case, we say that a sequence of Schwartz functions  converges if all its elements are in the same finite dimensional vector space and it converges there.
\item We say that a sequence of functions $f_i$ in the space $C^{\infty}_{v_0}(V)$ of smooth functions on $V$ which vanishes asymptotically along ${v_0}$ converges if there exists a  $\rho \in C^\infty_c(V)$ with $\rho(v_0) \neq 0$ such that the sequence of functions $\phi_i \in C^\infty(V \times F)$ defined by $\phi(v,\lambda):=f_i(\lambda v) \cdot \rho(w)$ converges in the $\Sc(V)$.
\item We say that a sequence of distributions $\xi_i \in C^{-\infty}_\Gamma(V,D_v)$  converges if it weakly converges and for any $(x,w)\notin \Gamma,$ there exists a function $\rho \in C^\infty_c(V)$ with $\rho(x) \neq 0$ such that the sequence $\cF(\rho \cdot \xi_i) \in C^{\infty}_{w}(V^*)$ converges.
\item This easily defines  a topology on $C^{-\infty}_\Gamma(X,E)$, for any analytic variety $X$ and a locally constant sheaf (or in the Archimedean case, a vector bundle) on $X$.
\end{enumerate}
\end{defn}

\begin{proposition}[{\cite[Theorem 8.2.4.]{Hor} and  \cite[Theorem 2.8.]{Hef}}]
\label{prop:2WF_prop3}
Let $p: Y \to X$ be an analytic map of analytic manifolds, and let
$$N_p= \{(x,v)\in T^{*}X| x = p(y)  \text{ and } d_{\RamiAB{y}}^* p(v)=0 \text{ for some } y\in Y \}.$$
Let $E$ be a locally
constant sheaf (or, in the Archimedean case, a vector bundle) on $X$.
Let $\Gamma \subset T^{*}X$ be a conic closed subset such that $\Gamma \cap N_p \subset X$.

Then the  map $p^{*}:C^\infty(X,E)\to C^\infty(Y,p^*(E))$
 has a unique continuous extension
to a
map
$p^{*}:C^{-\infty}_\Gamma(X,E)\to C^{-\infty}(Y,p^*(E))$.
 Moreover,
for any $\xi \in C^{-\infty}_\Gamma(X,E)$, we have:
$WF(p^*(\xi)) \subset p^*(WF(\xi))$.
\end{proposition}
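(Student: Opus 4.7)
The plan is to follow the classical Hörmander/Heifetz argument, which reduces the general pullback theorem to two cases that are easier to handle individually: a submersion and a closed embedding. Since the statement is local on both $Y$ and $X$, I would first pass to a setting in which $X$ and $Y$ are open subsets of $F^m$ and $F^n$ respectively and $E$ is trivial, so that all spaces can be described in coordinates.

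The next step is to factor $p$ as $p=\mathrm{pr}_X\circ\gamma_p$, where $\gamma_p:Y\to Y\times X$ is the graph embedding $y\mapsto(y,p(y))$ and $\mathrm{pr}_X:Y\times X\to X$ is the projection. Because $\mathrm{pr}_X$ is a submersion, Proposition \ref{submrtion} (in the form extended by Corollary \ref{cor:bundle}) already provides a well-defined, continuous pullback $\mathrm{pr}_X^*$ on all of $C^{-\infty}(X,E)$, with the expected microlocal bound $WF(\mathrm{pr}_X^*\xi)\subset\mathrm{pr}_X^*(WF(\xi))$. This reduces the problem to pullback along the closed embedding $\gamma_p$. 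Writing out $\mathrm{pr}_X^*(N_{\gamma_p}\text{-type set})$ one checks that the hypothesis $\Gamma\cap N_p\subset X$ translates exactly to $\mathrm{pr}_X^*(\Gamma)\cap N_{\gamma_p}\subset Y\times X$, so we are reduced to restricting, to the graph $\gamma_p(Y)\subset Y\times X$, a distribution whose wave front set does not meet the conormal bundle of this submanifold.

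For this restriction step, the uniqueness of the extension is immediate: $C^\infty$ is dense in $C^{-\infty}_\Gamma$ for the topology introduced above (approximate $\xi$ by convolution with a smooth bump in the Archimedean case, or by $\xi\mapsto\xi*\mathbf{1}_K/\mathrm{vol}(K)$ for shrinking open compact subgroups $K$ in the non-Archimedean case), and the naive restriction map is continuous on $C^\infty$. For existence, I would take such an approximating sequence $\xi_n\to\xi$ in $C^{-\infty}_\Gamma$ and show that the smooth restrictions $\gamma_p^*\xi_n$ form a Cauchy sequence in $C^{-\infty}(Y)$. The point is the standard oscillatory-integral/stationary-phase estimate: if $\rho\in C^\infty_c$ is supported in a small neighborhood, then $\widehat{\rho\cdot\xi_n}(w)$ decays rapidly outside any conic neighborhood of the fiber of $\Gamma$, and by the hypothesis $\Gamma\cap N_{\gamma_p}\subset Y\times X$ this conic neighborhood is transverse to the codirections killed by $d\gamma_p^{*}$; integrating out the conormal variables therefore produces an absolutely convergent expression. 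The same estimate, carried out with a fixed test function and letting the sequence vary, yields continuity. Finally, the inclusion $WF(\gamma_p^*\xi)\subset\gamma_p^*(WF(\xi))$ follows by applying the same argument locally near a point $(y_0,w_0)\notin\gamma_p^*(WF(\xi))$: one chooses $\rho$ supported near $\gamma_p(y_0)$ and shows that the Fourier transform of $\rho\cdot\xi$ decays rapidly in the preimage cone.

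The main obstacle I anticipate is not any single inequality but the careful bookkeeping needed to make the passage from the weak convergence $\xi_n\to\xi$ to convergence of $\gamma_p^*\xi_n$ quantitative enough to conclude in the $\Gamma$-topology; in effect, one needs to verify that the continuity of pullback along $\gamma_p$ with respect to the above topology is already built into the very definition of that topology, which in turn rests on the transversality hypothesis $\Gamma\cap N_{\gamma_p}\subset Y\times X$. In the Archimedean case this is exactly \cite[Theorem 8.2.4]{Hor}; in the non-Archimedean case it is \cite[Theorem 2.8]{Hef}. I would therefore either invoke these references directly, or, if a self-contained argument is desired, carry out the Fourier decay estimate above, which is the only substantial technical content.
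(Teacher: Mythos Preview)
Your proposal is correct and follows the standard H\"ormander/Heifetz argument. Note, however, that the paper does not actually give its own proof of this proposition: it is stated with a direct citation to \cite[Theorem 8.2.4]{Hor} and \cite[Theorem 2.8]{Hef}, and the only additional content the paper supplies is Remark~\ref{rem:iver_im}, which records an explicit approximation procedure for computing $p^*(\xi)$ (regularize by convolution and cutoff, then pass to the limit). Your option of ``invoking these references directly'' is therefore exactly what the paper does; your graph-factorization sketch is the content of those references, and is moreover parallel to the factorization the paper uses in its proof of the pushforward Proposition~\ref{prop:wf_push}.
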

}

\begin{rem}\label{rem:iver_im}
Here is an explicit procedure to compute $p^*(\xi)$: we may assume that $X$ is a vector space and $E$ is trivial. Let $f_1\in C^{\infty}_c(X)$ and $f_2\in C^{\infty}_c(X,D_X)$ such that $f_1(0)=1$ and $\int f_2=1$. Let $\lambda_{i} \in F$ be a sequence that converges to infinity. Let $\xi_i:= \rho_{\lambda_{i}}(f_{1}) \cdot (\rho_{\lambda_{i}^{-1}}(f_{2}) * \xi)$, where $\rho_{\lambda_{i}}$ is the homothety as defined in Notation \ref{not:F_prop}. Note that $\xi_i$ are smooth and compactly supported functions. Now, $p^*(\xi)$ is the weak limit of $p^*(\xi_i)$.
\end{rem}

{
Now we can prove Proposition~\ref{prop:Treves}. First, let us recall its formulation.
Let $$\xi\in \G(X \times Y)$$ be a generalized function on a product of analytic manifolds. Assume that
$\xi$ depends continuously on $Y$, so for each $y\in Y$ we have the generalized function $\xi|_{X \times \{y\}}$
from Definition~\ref{def:cont_dep}. Assume also that $WF(\xi) \cap CN_{X \times \{y\}}^{X \times Y}\subset X \times Y$, so for each $y\in Y$ we have the pullback $j_y^*(\xi )$ in the sense of
Proposition~\ref{prop:2WF_prop3}, where
$j_y:X \times \{y\}\hookrightarrow X \times Y$ is the embedding. Proposition~\ref{prop:Treves} says that in this situation
$$j_y^*(\xi)=\xi|_{X \times \{y\}}\, .$$
To prove this equality, it suffices to compute $j_y^*(\xi)$ using Remark~\ref{rem:iver_im} and choosing $f_1, f_2\in C^{\infty}_c(X\times Y)$ to be compatible with the product structure on $X \times Y$.
}


\section{Symplectic geometry of the co-tangent bundle} \label{app:symp_geom}
In this section we \RamiAH{provide a proof of the facts from the symplectic geometry of the co-tangent bundle that we used in \S\S\ref{sec:lag}}\RamiAS{.}

\subsection{\RamiAH{Images of isotropic {subsets}}}
\RamiAH{
We will prove Lemma \ref{lem:iso_dir_invs_im} using  Remark \ref{rem:simp_gem}. For this, we will need the following notion:

\begin{defn}
Let $M$, $N$ be symplectic manifolds. A \hrefHid{http://ncatlab.org/nlab/show/Lagrangian+correspondence}{Lagrangian correspondence}  (or, respectively, isotropic correspondence) between them is a subvariety $L \subset M \times N$ which is Lagrangian (or, respectively, isotropic) with respect to the symplectic form $\omega_M \oplus (-\omega_N)$.
\end{defn}

Lemma \ref{lem:iso_dir_invs_im} follows now from the next one:
\begin{lemma}\label{lem:lag_cor}
$ $
 \begin{enumerate}
  \item \label{lem:lag_cor:1} Let $\phi:X_1 \to X_2$ be a morphism of manifolds. Then the correspondence $\Lambda_\phi \subset T^*(X_1) \times T^*(X_2)$ described in Remark \ref{rem:simp_gem} is Lagrangian.
  \item \label{lem:lag_cor:2} Let $M$, $N$ be algebraic symplectic manifolds. Let $L \subset M \times N$ be an isotropic  correspondence  between them. Let $I \subset M$  be an isotropic {constructible subset}. Then the  {constructible subset} $L(I) \subset N$  is also isotropic.
 \end{enumerate}
\end{lemma}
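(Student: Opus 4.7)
For part (1), the plan is to identify $\Lambda_\phi$ with a sign-twisted version of the conormal bundle of the graph $\Gamma_\phi \subset X_1 \times X_2$. Computing $T_{(x,\phi(x))} \Gamma_\phi = \{(a, d\phi(a)) : a \in T_x X_1\}$, one sees that the conormal bundle $CN_{\Gamma_\phi}^{X_1 \times X_2}$, which is automatically Lagrangian for the canonical symplectic form on $T^*(X_1 \times X_2) = T^*X_1 \times T^*X_2$, consists exactly of the points $((x, -d\phi^*w), (\phi(x), w))$. The involution $(v_1, v_2) \mapsto (v_1, -v_2)$ on $T^*X_1 \times T^*X_2$ is a symplectomorphism between the canonical form $\omega_1 \oplus \omega_2$ and the correspondence form $\omega_1 \oplus (-\omega_2)$, and it sends $CN_{\Gamma_\phi}$ to $\Lambda_\phi$. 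Hence $\Lambda_\phi$ is Lagrangian as a correspondence. A dimension count ($\dim\Lambda_\phi = \dim X_1 + \dim X_2 = \tfrac12\dim(T^*X_1 \times T^*X_2)$) gives a sanity check.

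For part (2), the plan is to perform a tangent space computation at a generic smooth point. After replacing $I$ and $L$ by smooth open dense locally closed isotropic subvarieties of themselves (as permitted by the definition of isotropic constructible set), set $W := L \cap (I \times N) \subset M \times N$ and consider the second projection $p_N: W \to N$, whose image is $L(I)$. By Chevalley's theorem together with generic smoothness, one can find smooth open dense subsets $U \subset W$ and $V \subset L(I)$ such that $p_N$ restricts to a smooth surjection $U \to V$. It then suffices to show that $V$ is isotropic in $N$, which by definition implies that $L(I)$ is isotropic as a constructible set.

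At a point $(x,y) \in U$, each tangent vector in $T_{(x,y)} U \subset T_{(x,y)} L \cap (T_x I \oplus T_y N)$ has the form $(u, v)$ with $u \in T_x I$, $v \in T_y N$, and $(u,v) \in T_{(x,y)} L$. For two such vectors $(u, v), (u', v')$, isotropy of $L$ with respect to $\omega_M \oplus (-\omega_N)$ yields $\omega_N(v, v') = \omega_M(u, u')$, while isotropy of $I$ gives $\omega_M(u, u') = 0$. Since $T_y V = dp_N(T_{(x,y)} U)$, this shows $T_y V$ is isotropic in $T_y N$. The only delicate point is the careful passage to smooth open dense subsets at each stage, which is a standard application of Chevalley's theorem combined with generic smoothness; this is the main obstacle, though a routine one.
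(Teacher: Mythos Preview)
Your proof is correct, but both parts take a different route from the paper.

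For part~(1), the paper uses the Liouville $1$-forms: if $\eta_i$ denotes the canonical $1$-form on $T^*X_i$, then one checks directly that the pullbacks of $\eta_1$ and $\eta_2$ to $\Lambda_\phi$ coincide, so $\omega_1-\omega_2=d(\eta_1-\eta_2)$ vanishes there. Your argument via the conormal bundle of the graph is equally standard and equally short; the Liouville-form argument is perhaps a touch more intrinsic, while yours has the advantage of making the Lagrangian property manifest (conormal bundles are always Lagrangian) without any computation beyond identifying the annihilator.

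For part~(2), the paper gives no argument at all and simply cites \cite[Prop.~2.7.51]{CG} and \cite[Lemma~1]{G}. Your direct tangent-space computation is exactly the standard proof one finds in those references. One small comment: you write $T_{(x,y)}U \subset T_{(x,y)}L \cap (T_xI \oplus T_yN)$; this containment is automatic at smooth points of the set-theoretic intersection, but to be safe you should pass to a dense open where $L$ and $I\times N$ meet cleanly (or just use that the tangent cone of an intersection is contained in the intersection of tangent spaces). This is implicit in your ``routine'' passage to smooth open dense subsets, so the argument stands.
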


{Both statements are well known. For the second one, see, e.g.,
\cite[Prop.~ 2.7.51]{CG} or \cite[Lemma~1]{G}. To prove the first one, note that the symplectic form on
$T^*(X_i)$ is the differential of the canonical 1-form $\eta_i$ on $T^*(X_i)$ and that the pullbacks of $\eta_1$  and $\eta_2$ to $\Lambda_\phi \subset T^*(X_1) \times T^*(X_2)$ are equal to each other.}

}

\subsection{\RamiAH{Equivalent definitions of isotropic  {subsets}}}
Let us now prove Lemma \ref{lem:isotr_discrip}.
%
%
%

Clearly (\ref{4}) $\Rightarrow$ (\ref{3}) $\Rightarrow$ (\ref{2}). By Proposition \ref{prop:sub_iso},
(\ref{2}) $\Rightarrow$ (\ref{1}).


Now \RamiAS{it remains} to show that \RamiK{(\ref{1})} $\Rightarrow$  (\ref{4}). We have to show that any isotropic $C \subset T^*X$
is contained in a union as in (\ref{4}). We will prove it by induction on $\dim (C)$ and $\dim P_{T^*X}(C)$\RamiAM{, where $P_{T^*(X)}:T^*(X)\to X$ is the projection}.
Let $C'$ be the set of smooth points in $C.$ Note that $\dim(C-C')< \dim (C)$ and by Proposition \ref{prop:sub_iso},
 $C-C'$ is isotropic. Thus, by induction, we may assume that  $C-C'$ satisfies (\ref{4}). Therefore it is enough
 to prove that $C'$ \RamiA{satisfies} (\ref{4}).

Consider the map $q:=P_{T^*X}|_{C'} $ as a map from $C'$ to
$\overline{P_{T^*X}(C')}$.   Let $U \subset \overline{P_{T^*X}(C')}$
be the  \RamiM{set of} those smooth points of $\overline{P_{T^*X}(C')}$ which are regular values of $q$,
 and let $C''=q^{-1}(U)$. Note that by the algebraic
 Sard lemma,
 $\dim(P_{T^*X}(\overline{C'-C''})){=\dim(\overline{P_{T^*X}({C'})}-U)}< \dim (P_{T^*X}(C')) =\dim (P_{T^*X}(C))$
 and by Proposition \ref{prop:sub_iso}, $\overline{C'-C''}$ is isotropic.
 Thus, by induction, we may assume that  $\overline{C'-C''}$ (and thus also ${C'-C''}$)
satisfies (\ref{4}). Therefore it is enough to prove that $C''$  satisfies (\ref{4}).
 We will prove that $C'' \subset {CN_{U}^X}$. For this let $x \in U $  and let $Y:=q^{-1}(x) \subset T^*_x X$.
 Fix any $y \in Y$. We know that $T_yC''$ is isotropic, i.e. $T_yC'' \bot (T_yC'')$.
Thus  we have $T_y Y = \ker d_y q \subset  (\Im d_y q)^\bot= CN_{U,x}^X.$ This implies that any connected
 component of $Y$ is a subset of a \RamiAH{shift} of $CN_{U,x}^X$. Since $\overline{Y}$  is conical, this shows that
 $Y \subset CN_{U,x}^X$.

\subsection{\RamiAH{Co-normal bundle to a subbundle.}}
\RamiAH{Finally, let us prove Lemma \ref{l:symplectic_lem}. First recall its formulation:
}
\RamiCPE{
\begin{lemma}    \label{l:symplectic_lem_pf}
Let $V$ be a finite dimensional vector space over $F$ and $X$ be a manifold.
Let $E$ be a vector bundle over $X$ which is a subbundle of the trivial vector bundle $X \times V$.
Then $CN_{E^\bot}^{X \times V^*}=CN_E^{X \times V}$.

Here
$$CN_E^{X \times V} \subset T^*(X \times V)=T^*(X) \times V \times V^*,$$
$$CN_{E^\bot}^{X \times V^*} \subset T^*(X \times V^*)=T^*(X) \times V^* \times V,$$
and the symplectic manifolds $T^*(X) \times V \times V^*$ and $T^*(X) \times V \times V^*$ are identified via the map
$$V \times V^*{\buildrel{\sim}\over{\longrightarrow}} V^* \times V, \quad (v,w)\mapsto (w,-v).$$
\end{lemma}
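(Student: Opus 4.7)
The plan is to reduce to a computation in a local frame. Since the statement is local on $X$, I choose sections $e_1,\dots,e_k\colon X \to V$ forming a local frame for $E$, and sections $f_1,\dots,f_m\colon X \to V^*$ forming a local frame for $E^\bot$ (with $k+m = \dim V$). I will use the identity obtained by differentiating the orthogonality relation $\langle f_j(x), e_i(x)\rangle \equiv 0$, namely
$$\langle df_j(\dot x), e_i(x)\rangle = -\langle f_j(x), de_i(\dot x)\rangle,$$
which is the Leibniz-type identity that will link the two conormal bundles.

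My first step is to parametrize $E$ as the image of $(x, a_1,\dots,a_k) \mapsto (x, \sum_i a_i e_i(x))$ and to read off $T_{(x,v)}E$ as spanned by the vertical vectors $(0, e_i(x))$ together with the horizontal lifts $(\dot x, \sum_i a_i de_i(\dot x))$, where $v = \sum a_i e_i(x)$. Annihilating these, one finds
$$CN_E^{X \times V} = \Bigl\{(x, v, \xi, w)\,:\,v \in E_x,\ w \in E_x^\bot,\ \xi(\dot x) = -\sum_i a_i \langle w, de_i(\dot x)\rangle\text{ for all }\dot x\Bigr\}.$$
An entirely symmetric computation in the frame $\{f_j\}$ yields an explicit formula for $CN_{E^\bot}^{X \times V^*}$.

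The second step is to check that the identification $(v, w) \mapsto (w, -v)$ maps one description onto the other. The ``vertical'' conditions $v \in E_x$ and $w \in E_x^\bot$ manifestly correspond, so the only content is to match the horizontal components. Writing $v = \sum a_i e_i(x)$ and $w = \sum b_j f_j(x)$, the formula above becomes $\xi(\dot x) = -\sum_{i,j} a_i b_j \langle f_j(x), de_i(\dot x)\rangle$, while the corresponding formula for $CN_{E^\bot}^{X \times V^*}$, with $v$ replaced by $-v$, is $\xi'(\dot x) = -\sum_j b_j \langle df_j(\dot x), -v\rangle = \sum_{i,j} a_i b_j \langle df_j(\dot x), e_i(x)\rangle$. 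The Leibniz identity above gives $\xi = \xi'$, completing the argument.

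The only delicate point is bookkeeping the signs: the sign $-v$ (rather than $+v$) in the identification $(v,w) \mapsto (w,-v)$ is precisely what is simultaneously needed for this map to be a symplectomorphism and for the two horizontal formulas to coincide via the Leibniz identity. No other obstacle is anticipated.
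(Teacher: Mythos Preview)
Your proof is correct. The local-frame computation and the use of the Leibniz identity from differentiating $\langle f_j,e_i\rangle\equiv 0$ are exactly what is needed, and your sign bookkeeping is right.

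Your approach is essentially the one the paper sketches in the Remark immediately following its main proof: there the conormal bundle is described via the bilinear map $B_x\colon E_x\times E_x^\bot\to T_x^*X$ coming from the differential of the classifying map $X\to\mathrm{Gr}(V)$, and the lemma follows from the symmetry of that description. Your formula $\xi(\dot x)=-\sum_{i,j}a_ib_j\langle f_j,de_i(\dot x)\rangle$ is precisely $\xi=-B_x(v,w)$ written in a frame.

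The paper's \emph{main} proof, however, is genuinely different and worth noting. Rather than computing $CN_E$ explicitly, it argues as follows: set $L:=CN_E^{X\times V}$; this is Lagrangian in $T^*(X\times V)=T^*X\times V\times V^*$, and one checks directly that its image in $X\times V^*$ is $E^\bot$. So it suffices to show $L$ is conic for the \emph{other} cotangent structure, i.e.\ stable under the subgroup $\{(\lambda,\lambda,1)\}\subset\mathbb{G}_m^3$ acting on $T^*X\times V\times V^*$. But $L$ is already stable under $\{(\lambda,1,\lambda)\}$ (it is conic in $T^*(X\times V)$) and under $\{(1,\lambda,\lambda^{-1})\}$ (because $E\subset X\times V$ is $\mathbb{G}_m$-stable), and $(\lambda,\lambda,1)=(1,\lambda,\lambda^{-1})\cdot(\lambda,1,\lambda)$. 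This argument avoids any local computation and makes transparent why the sign twist $(v,w)\mapsto(w,-v)$ is the right one: it is the symplectomorphism, and a conic Lagrangian projecting onto $E^\bot$ must be $CN_{E^\bot}$. Your approach, by contrast, gives the explicit pointwise description of both conormal bundles, which is more hands-on and perhaps more reusable if one later needs that description.
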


\begin{proof}
Set $L:=CN_E^{X \times V}$. Without loss of generality we may assume that $X$ is irreducible. Then so is $L$.
Clearly $L$ is a closed Lagrangian submanifold of
$T^*(X \times V)=T^*(X) \times V \times V^*$. 
{It is easy to check that the image of $L$ in $X \times V^*$ equals $E^\bot$. So it remains to show that
$L$ is conic as a submanifold of $T^*(X \times V^*)$. In terms of the action of
${\mathbb{G}}_m^3$ on $T^*(X) \times V \times V^*$, we have to show that $L$ is stable with respect to
the subgroup
\begin{equation}   \label{e:the_subgroup}
\{(\lambda, \lambda ,1)\,|\,\lambda\in {\mathbb{G}}_m\}\subset{\mathbb{G}}_m^3 \, .
\end{equation}

Since $E\subset X\times V$ is
${\mathbb{G}}_m$-stable the submanifold $L$ is stable with respect to the subgroup
\[
\{(1,\lambda,\lambda^{-1})\,|\,\lambda\in {\mathbb{G}}_m\}\subset{\mathbb{G}}_m^3\, .
\]
Clearly $L$ is conic as a submanifold of $T^*(X \times V)$, which means that $L$ is stable with respect to the subgroup
\[
\{(\lambda, 1, \lambda )\,|\,\lambda\in {\mathbb{G}}_m\}\subset{\mathbb{G}}_m^3\, .
\]
But $(\lambda, \lambda ,1)=(1,\lambda,\lambda^{-1})\cdot (\lambda, 1, \lambda )$,
so $L$ is stable with respect to the subgroup~\eqref{e:the_subgroup}.
}
\end{proof}

\begin{rem}
Here is a sketch of a slightly different proof of Lemma~\ref{l:symplectic_lem_pf}. The subbundle $E$ defines a map
\[
f:X\to \{\mbox{the Grassmannian of all subspaces of } V\}.
\]
Its differential at $x\in X$ is a linear map $T_xX\to\Hom (E_x,V/E_x)=\Hom (E_x,(E_x^\bot)^*)$; it defines a
bilinear map $B_x:E_x\times E_x^\bot\to T_x^*X$.
One checks that $CN_E^{X \times V}$ has the following description in terms of $B_x$:
let $(x,\xi )\in T^*X$, $v\in V$, $w\in V^*$, then
\begin{equation}
(x,\xi ,v,w)\in CN_E^{X \times V} \Leftrightarrow v\in E_x, \, w\in E_x^\bot , \, \xi =-B_x (v,w).
\end{equation}
Lemma~\ref{l:symplectic_lem_pf} follows from this description.
\end{rem}

\begin{rem}
Lemma \ref{l:symplectic_lem_pf} is closely related to the following fact: if $\xi\to X$ is any vector bundle and $\xi^*\to X$ is the dual bundle then
there is a canonical symplectomorphism between the cotangent bundles of $\xi$ and $\xi^*$ (see
\cite[Theorem 5.5]{MX} and also  \cite[Section 3.4]{Ro}).
\end{rem}
}

\end{document}